\documentclass[12pt,reqno]{amsart}

\usepackage{mathdots}

\usepackage{color}

%
\usepackage{pdfsync}

\usepackage{enumitem}

\usepackage{wasysym}

%
\usepackage{amssymb}


%

%
\usepackage{mathrsfs}

%

%

%
\DeclareMathAlphabet{\mathpzc}{OT1}{pzc}{m}{it}

%
\usepackage[all]{xy}

%
\usepackage{tikz}
\usetikzlibrary{arrows,decorations.pathmorphing,decorations.pathreplacing,positioning,shapes.geometric,shapes.misc,decorations.markings,decorations.fractals,calc,patterns}



\usepackage{float}

\usepackage[bottom]{footmisc}

\usepackage{mathtools}

\newcommand{\vertgeq}{\rotatebox{90}{$\,<\;\:$}}
\newcommand{\vertleq}{\rotatebox{90}{$\,>\;\:$}}

%
\entrymodifiers={+!!<0pt,\fontdimen22\textfont2>}

\setlength{\textwidth}{165mm}
\setlength{\textheight}{237mm}
\addtolength{\oddsidemargin}{-1.7cm}
\addtolength{\evensidemargin}{-1.7cm}
\addtolength{\topmargin}{-12mm}

\raggedbottom


%

\def\cC{\mathscr{C}}

\def\cS{\mathscr{S}}
\def\cT{\mathscr{T}}

%

\def\BQ{\mathbb{Q}}

\def\BZ{\mathbb{Z}}

%

%

\def\fS{\mathfrak{S}}
\def\fT{\mathfrak{T}}

%
\def\fa{\mathfrak{a}}
\def\fb{\mathfrak{b}}

\def\fv{\mathfrak{v}}

%

%


%

\def\adots{\mathinner{\mkern1mu\raise1.0pt\vbox{\kern7.0pt\hbox{.}}\mkern2mu\raise4.0pt\hbox{.}\mkern2mu\raise7.0pt\hbox{.}\mkern1mu}}

\def\dddots{\mathinner{\mkern1mu\raise10.0pt\vbox{\kern7.0pt\hbox{.}}\mkern2mu\raise5.3pt\hbox{.}\mkern2mu\raise1.0pt\hbox{.}\mkern1mu}}
\def\dddotssmall{\mathinner{\mkern1mu\raise7.0pt\vbox{\kern7.0pt\hbox{.}}\mkern-1mu\raise4pt\hbox{.}\mkern-1mu\raise1.0pt\hbox{.}\mkern1mu}}
\def\defect{\operatorname{def}}

\def\I{\mathrm{I}}
\def\II{\mathrm{II}}
\def\III{\mathrm{III}}
\def\IV{\mathrm{IV}}

\def\J{\mathrm{J}}
\def\K{\mathrm{K}}

\def\SL2{\operatorname{SL}_2}

%
\numberwithin{equation}{section}


%


%
\renewcommand{\labelenumi}{(\roman{enumi})}

%
\newtheorem{Lemma}{Lemma}[section]
\newtheorem{Theorem}[Lemma]{Theorem}
\newtheorem{Proposition}[Lemma]{Proposition}

\theoremstyle{definition}
\newtheorem{Definition}[Lemma]{Definition}
\newtheorem{Setup}[Lemma]{Setup}

\newtheorem{Construction}[Lemma]{Construction}
\newtheorem{Remark}[Lemma]{Remark}

\newtheorem{Notation}[Lemma]{Notation}

\newtheorem{Description}[Lemma]{Description}

\newtheorem*{bfhpg*}{}

\begin{document}

\setlength{\parindent}{0pt}
\setlength{\parskip}{7pt}

\title[All $\SL2$-tilings]{All $\SL2$-tilings come from infinite
  triangulations}

\author{Christine Bessenrodt}
\address{Institut f\"{u}r Algebra, Zahlentheorie und Diskrete
Mathematik, Fa\-kul\-t\"at f\"ur Mathematik und Physik, Leibniz
Universit\"{a}t Hannover, Welfengarten 1, 30167 Hannover, Germany}
\email{bessen@math.uni-hannover.de}
\urladdr{http://www2.iazd.uni-hannover.de/\~{ }bessen}

\author{Thorsten Holm}
\address{Institut f\"{u}r Algebra, Zahlentheorie und Diskrete
Mathematik, Fa\-kul\-t\"at f\"ur Mathematik und Physik, Leibniz
Universit\"{a}t Hannover, Welfengarten 1, 30167 Hannover, Germany}
\email{holm@math.uni-hannover.de}
\urladdr{http://www.iazd.uni-hannover.de/\~{ }tholm}

\author{Peter J\o rgensen}
\address{School of Mathematics and Statistics,
Newcastle University, Newcastle upon Tyne NE1 7RU, United Kingdom}
\email{peter.jorgensen@ncl.ac.uk}
\urladdr{http://www.staff.ncl.ac.uk/peter.jorgensen}


\keywords{Arc, disc with accumulation points, Conway--Coxeter
  frieze, Igusa--Todorov cluster category, Ptolemy formula, tiling,
  triangulation} 

\subjclass[2010]{05E15, 13F60}

\begin{abstract} 

An $\SL2$-tiling is a bi-infinite matrix of positive integers such
that each adjacent $2 \times 2$-submatrix has determinant $1$.  Such
tilings are infinite analogues of Conway--Coxeter friezes, and they
have strong links to cluster algebras, combinatorics, mathematical
physics, and representation theory.

\medskip
\noindent
We show that, by means of so-called Conway--Coxeter counting, every
$\SL2$-tiling arises from a triangulation of the disc with two, three
or four accumulation points.

\medskip
\noindent
This improves earlier results which only discovered $\SL2$-tilings with
infinitely many entries equal to $1$.  Indeed, our methods show that
there are large classes of tilings with only finitely many entries
equal to $1$, including a class of tilings with no $1$'s at all.  In
the latter case, we show that the minimal entry of a tiling is unique.

\end{abstract}

\maketitle

\setcounter{section}{-1}
\section{Introduction}
\label{sec:introduction}

A {\em Conway--Coxeter frieze of Dynkin type $A_n$} is an infinite
strip of positive integers of the form shown in Figure
\ref{fig:frieze}.
\begin{figure}
\[
  \xymatrix @-0.5pc @!0 {
    \cdots & 1 && 1 && *+[black]{1} && *+[black]{1} && *+[black]{1} && *+[black]{1} && *+[black]{1} && *+[black]{1} && *+[black]{1} \\
    2 && 1 && *+[black]{3} && *+[black]{2} && *+[black]{2} && *+[black]{1} && *+[black]{4} && *+[black]{2} && *+[black]{1} &\cdots \\
    \cdots & 1 && *+[black]{2} && *+[black]{5} && *+[black]{3} && *+[black]{1} && *+[black]{3} && *+[black]{7} && *+[black]{1} && 2 \\
    3 && *+[black]{1} && *+[black]{3} && *+[black]{7} && *+[black]{1} && *+[black]{2} && *+[black]{5} && *+[black]{3} && 1 &\cdots \\
    \cdots & *+[black]{2} && *+[black]{1} && *+[black]{4} && *+[black]{2} && *+[black]{1} && *+[black]{3} && *+[black]{2} && 2 && 1 \\
    *+[black]{1} && *+[black]{1} && *+[black]{1} && *+[black]{1} && *+[black]{1} && *+[black]{1} && *+[black]{1} && 1 && 1 &\cdots \\
                        }
\]
\caption{A Conway-Coxeter frieze of Dynkin type $A_4$.}
\label{fig:frieze}
\end{figure}
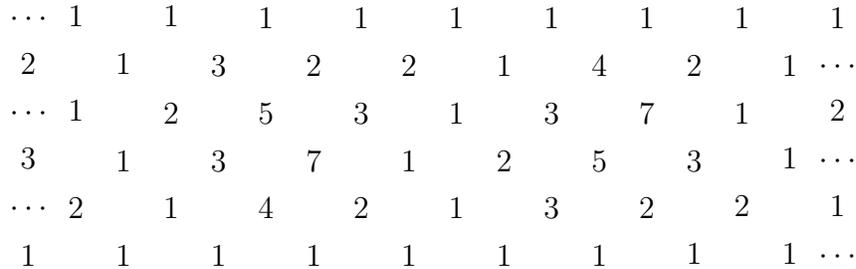
It consists of $n+2$ horizontal rows with an offset between odd
and even rows.  It is bordered by rows of ones and satisfies the
condition $ad - bc = 1$ for each ``diamond''
$\raisebox{13.3pt}{\xymatrix @-2.2pc { & b & \\ a & & d \\ & c & }}$.

Conway--Coxeter friezes were introduced in \cite{CC1} and \cite{CC2}
and inhabit a rich combinatorial theory.  For instance, each frieze
can be obtained by so-called {\em Conway--Coxeter counting} on a
triangulation of the $( n+3 )$-gon, see (28) and (29) in \cite{CC1}
and \cite{CC2} or Definition \ref{def:CC} below.

{\em $\SL2$-tilings} are infinite analogues of Conway--Coxeter
friezes.  They are bi-infinite matrices of positive integers such that
each adjacent $2 \times 2$-submatrix has determinant $1$, see Figure
\ref{fig:intro_tiling_new}.  They were introduced by Assem,
Reutenauer, and Smith in \cite{ARS} and have turned out to be
important objects with a wealth of connections to cluster algebras,
combinatorics, mathematical physics, and representation theory.

Some classes of $\SL2$-tilings were discovered in \cite{ARS} and
\cite{HJ}, but there were examples not belonging to the classes, see
\cite[exa.\ 2.9]{HJ}, and there was no insight into the structure of
the set of all $\SL2$-tilings.

We improve the results from \cite{ARS} and \cite{HJ} significantly by
showing that every $\SL2$-tiling can be obtained by Conway--Coxeter
counting on an {\em infinite triangulation of the disc with two,
  three, or four accumulation points}.  We also show that the
$\SL2$-tilings found in \cite{ARS} and \cite{HJ} are rather special,
because they have infinitely many entries equal to $1$.  Our methods
reveal that there are large classes of $\SL2$-tilings with only
finitely many $1$'s, and even a class of tilings with no $1$'s
at all, see Remark \ref{rmk:zig-zag}.

In the latter case, we show that the minimal entry of a tiling is
unique, see Lemma \ref{lem:minimum}.

{\bf Motivations for studying $\SL2$-tilings. } The introduction of
$\SL2$-tilings in \cite{ARS} was motivated by applications to linear
recurrence relations for certain friezes, and to formulae for cluster
variables in Euclidean type, see \cite[secs.\ 7 and 8]{ARS}.  There is
an application by Assem and Reutenauer in \cite{AR} to formulae for
cluster seeds in types $A$ and $\tilde{A}$.

$\SL2$-tilings were applied to the theory of cluster characters by
Assem, Dupont, Schiffler, and Smith in \cite{ADSS} and J\o rgensen and
Palu in \cite{JP}.  Cluster characters were introduced by Palu in
\cite{P} to formalise cluster categorification.

Di Francesco in \cite{DF2}, \cite{DF1} and Di Francesco and Kedem in
\cite{DFK1}, \cite{DFK2} showed how $\SL2$-tilings are linked to
mathematical physics, where a so-called T-system of type $A_1$ is
simply a pair of $\SL2$-tilings, albeit with Laurent polynomial
values.

$\SL2$-tilings were generalised by Bergeron and Reutenauer in
\cite{BR} to $\operatorname{SL}_k$-tilings.  Other types of
$\SL2$-tilings, relaxing parts of the definition, were obtained by
Baur, Parsons, and Tschabold in \cite{BPT}, Morier-Genoud, Ovsienko,
and Tabachnikov in \cite{MGOT}, Tschabold in \cite{T}, and also in
\cite{HJ} and \cite{JP}.

We continue with a more detailed explanation of this paper.

{\bf Primer on Conway--Coxeter counting. } Figure
\ref{fig:intro_triangulation} shows a triangulation $\fT$ of the disc
with two accumulation points, $D_2$.  The notches indicate marked
points on the boundary of the disc, also called {\em vertices}.  There
are countably many vertices in each of two {\em intervals} given by
the upper and lower half circles.  The vertices converge clockwise
and anticlockwise to two {\em accumulation points} marked with small
circles.  A numbering of the vertices is shown in black; the
superscripts $\I$ and $\III$ are not powers but distinguish between
the vertices on the two intervals.  The triangulation $\fT$ is a set
of arcs between non-neighbouring vertices which divides the disc into
triangular regions.  The figure shows only a finite part of the
infinite set $\fT$.

Conway--Coxeter counting on $\fT$ is the following procedure: Start
with a fixed vertex $\mu$ and label it $0$.  If vertex $\nu$ is a
neighbour of $\mu$, or linked to $\mu$ by an arc in $\fT$, then $\nu$
is labelled $1$.  Now iterate the following: If a triangular region
defined by $\fT$ has precisely two labelled vertices $\pi$ and $\rho$
with labels $i$ and $j$, then its third vertex $\sigma$ is labelled
$i+j$.  The label which eventually appears at $\sigma$ is denoted
$\fT( \mu,\sigma )$.  The green numbers in Figure
\ref{fig:intro_triangulation} show $\fT( \mu,\sigma )$ for $\mu =
-3^{\I}$.

It follows from results by Conway and Coxeter that
\begin{equation}
\label{equ:intro_t}
  t( b,v ) = \fT( b^{\I},v^{\III} )
\end{equation}
with $b,v \in \BZ$ defines an $\SL2$-tiling $t$, said to {\em arise from
$\fT$ by Conway--Coxeter counting}.  Part of $t$ is shown on the left
in Figure \ref{fig:intro_tiling_new}.  Note that we use matrix
notation so $b$ increases when going down, $v$ increases when going
right.

{\bf $\SL2$-tilings without $1$'s and the main result. } Not every
$\SL2$-tiling can be obtained as above.  To see so, observe that if
$\fT$ contains an arc between a vertex $b^{\I}$ on the top half circle
and a vertex $v^{\III}$ on the bottom half circle, then $\fT(
b^{\I},v^{\III} ) = 1$ so $t$ has at least one entry equal to $1$.
But the right half of Figure \ref{fig:intro_tiling_new} shows part of
an $\SL2$-tiling $t'$ with no entry equal to $1$.  One could try
to obtain $t'$ by letting $\fT$ have no arcs between the top half
circle and the bottom half circle, but this will not work: If there
are no such connecting arcs, then Conway--Coxeter counting does not
terminate.  Indeed, the procedure never reaches the bottom half circle
at all, so no labels are defined there.
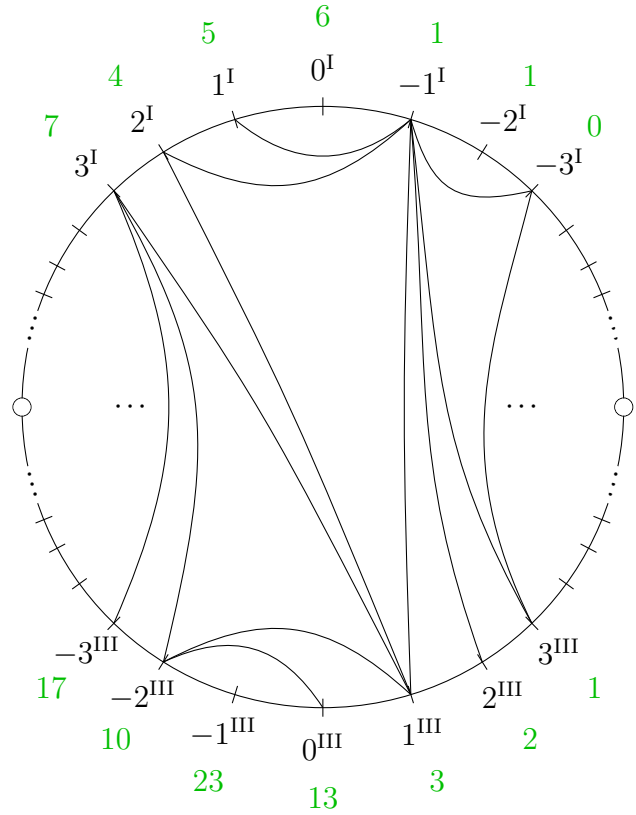
\begin{figure}
  \centering
    \begin{tikzpicture}[scale=4]
      \draw (0,0) circle (1cm);

      \draw (0:1cm) node[fill=white,circle,inner sep=0.080cm] {} circle (0.03cm);
      \draw (180:1cm) node[fill=white,circle,inner sep=0.080cm] {} circle (0.03cm);

      \draw (13:1.00cm) node[fill=white,circle,inner sep=0.004cm] {$\cdot$};
      \draw (15:1.00cm) node[fill=white,circle,inner sep=0.004cm] {$\cdot$};
      \draw (17:1.00cm) node[fill=white,circle,inner sep=0.004cm] {$\cdot$};
      \draw (22:0.97cm) -- (22:1.03cm);
      \draw (28:0.97cm) -- (28:1.03cm);
      \draw (36:0.97cm) -- (36:1.03cm);
      \draw (46:0.97cm) -- (46:1.03cm);
      \draw (46:1.13cm) node{$-3^{\I}$};
      \draw[color=green!75!black] (46:1.30cm) node{$0$};
      \draw (58:0.97cm) -- (58:1.03cm);
      \draw (58:1.13cm) node{$-2^{\I}$};
      \draw[color=green!75!black] (58:1.30cm) node{$1$};
      \draw (73:0.97cm) -- (73:1.03cm);
      \draw (73:1.13cm) node{$-1^{\I}$};
      \draw[color=green!75!black] (73:1.30cm) node{$1$};
      \draw (90:0.97cm) -- (90:1.03cm);
      \draw (90:1.13cm) node{$0^{\I}$};
      \draw[color=green!75!black] (90:1.30cm) node{$6$};
      \draw (107:0.97cm) -- (107:1.03cm);
      \draw (107:1.13cm) node{$1^{\I}$};
      \draw[color=green!75!black] (107:1.30cm) node{$5$};
      \draw (122:0.97cm) -- (122:1.03cm);
      \draw (122:1.13cm) node{$2^{\I}$};
      \draw[color=green!75!black] (122:1.30cm) node{$4$};
      \draw (134:0.97cm) -- (134:1.03cm);
      \draw (134:1.13cm) node{$3^{\I}$};
      \draw[color=green!75!black] (134:1.30cm) node{$7$};
      \draw (144:0.97cm) -- (144:1.03cm);
      \draw (152:0.97cm) -- (152:1.03cm);
      \draw (158:0.97cm) -- (158:1.03cm);
      \draw (163:1.00cm) node[fill=white,circle,inner sep=0.004cm] {$\cdot$};
      \draw (165:1.00cm) node[fill=white,circle,inner sep=0.004cm] {$\cdot$};
      \draw (167:1.00cm) node[fill=white,circle,inner sep=0.004cm] {$\cdot$};

      \draw (193:1.00cm) node[fill=white,circle,inner sep=0.004cm] {$\cdot$};
      \draw (195:1.00cm) node[fill=white,circle,inner sep=0.004cm] {$\cdot$};
      \draw (197:1.00cm) node[fill=white,circle,inner sep=0.004cm] {$\cdot$};
      \draw (202:0.97cm) -- (202:1.03cm);
      \draw (208:0.97cm) -- (208:1.03cm);
      \draw (216:0.97cm) -- (216:1.03cm);
      \draw (226:0.97cm) -- (226:1.03cm);
      \draw (226:1.13cm) node{$-3^{\III}$};
      \draw[color=green!75!black] (226:1.30cm) node{$17$};
      \draw (238:0.97cm) -- (238:1.03cm);
      \draw (238:1.13cm) node{$-2^{\III}$};
      \draw[color=green!75!black] (238:1.30cm) node{$10$};
      \draw (253:0.97cm) -- (253:1.03cm);
      \draw (253:1.13cm) node{$-1^{\III}$};
      \draw[color=green!75!black] (253:1.30cm) node{$23$};
      \draw (270:0.97cm) -- (270:1.03cm);
      \draw (270:1.13cm) node{$0^{\III}$};
      \draw[color=green!75!black] (270:1.30cm) node{$13$};
      \draw (287:0.97cm) -- (287:1.03cm);
      \draw (287:1.13cm) node{$1^{\III}$};
      \draw[color=green!75!black] (287:1.30cm) node{$3$};
      \draw (302:0.97cm) -- (302:1.03cm);
      \draw (302:1.13cm) node{$2^{\III}$};
      \draw[color=green!75!black] (302:1.30cm) node{$2$};
      \draw (314:0.97cm) -- (314:1.03cm);
      \draw (314:1.13cm) node{$3^{\III}$};
      \draw[color=green!75!black] (314:1.30cm) node{$1$};
      \draw (324:0.97cm) -- (324:1.03cm);
      \draw (332:0.97cm) -- (332:1.03cm);
      \draw (338:0.97cm) -- (338:1.03cm);
      \draw (343:1.00cm) node[fill=white,circle,inner sep=0.004cm] {$\cdot$};
      \draw (345:1.00cm) node[fill=white,circle,inner sep=0.004cm] {$\cdot$};
      \draw (347:1.00cm) node[fill=white,circle,inner sep=0.004cm] {$\cdot$};

      \draw (73:1cm) .. controls (20:0.41cm) and (-20:0.39cm) .. (314:1cm);
      \draw (46:1cm) .. controls (20:0.60cm) and (-20:0.44cm) .. (314:1cm);
      \draw (46:1cm) .. controls (56:0.80cm) and (63:0.80cm) .. (73:1cm);
      \draw (73:1cm) .. controls (20:0.38cm) and (-30:0.33cm) .. (302:1cm);
      \draw (73:1cm) .. controls (20:0.28cm) and (-20:0.28cm) .. (287:1cm);
      \draw (73:1cm) .. controls (89:0.70cm) and (106:0.70cm) .. (122:1cm);
      \draw (73:1cm) .. controls (83:0.80cm) and (97:0.80cm) .. (107:1cm);
      \draw (238:1cm) .. controls (254:0.70cm) and (271:0.70cm) .. (287:1cm);
      \draw (238:1cm) .. controls (248:0.80cm) and (260:0.80cm) .. (270:1cm);
      \draw (122:1cm) .. controls (180:0.10cm) .. (287:1cm);
      \draw (134:1cm) .. controls (180:0.20cm) .. (287:1cm);
      \draw (134:1cm) .. controls (160:0.38cm) and (200:0.38cm) .. (238:1cm);
      \draw (134:1cm) .. controls (160:0.48cm) and (200:0.48cm) .. (226:1cm);

      \draw (0:0.62cm) node{$\cdot$};
      \draw (0:0.66cm) node{$\cdot$};
      \draw (0:0.70cm) node{$\cdot$};

      \draw (180:0.60cm) node{$\cdot$};
      \draw (180:0.64cm) node{$\cdot$};
      \draw (180:0.68cm) node{$\cdot$};

    \end{tikzpicture} 
  \caption{A triangulation $\fT$ of the disc with two accumulation
    points, $D_2$.  Black numbers label the vertices, green numbers show an
    example of Conway--Coxeter counting starting at vertex $-3^{\I}$.} 
\label{fig:intro_triangulation}
\end{figure}

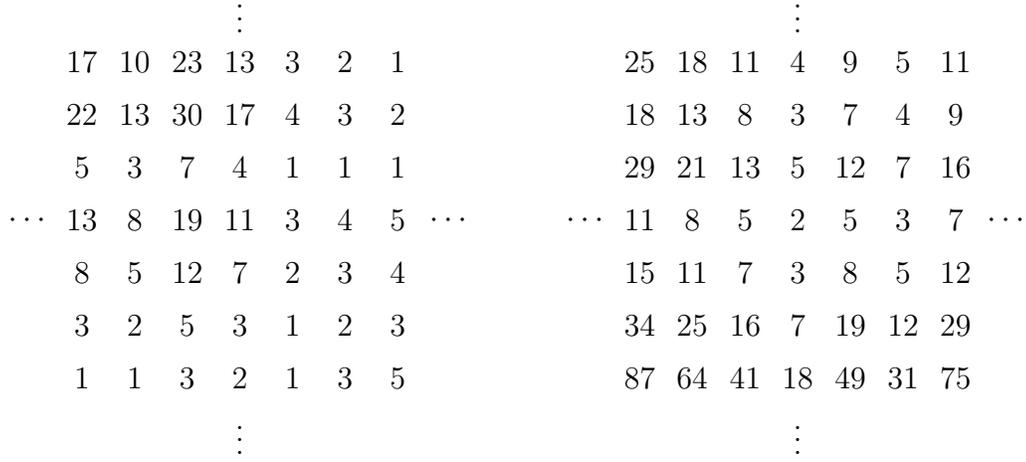
\begin{figure}
\begin{tabular}{ccc}
\begin{tikzpicture}

  \matrix [column sep={0.7cm,between origins},row sep={0.7cm,between origins}]
  {
    &&&& \node {$\vdots$}; &&&& \\
    &\node {17}; & \node {10}; & \node {23}; & \node {13}; & \node {3}; & \node {2}; & \node{1}; &\\
    &\node {22}; & \node {13}; & \node {30}; & \node {17}; & \node {4}; & \node {3}; & \node{2}; &\\
    &\node {5}; & \node {3}; & \node {7}; & \node {4}; & \node {1}; & \node {1}; & \node{1}; &\\
    \node {$\cdots$}; &\node {13}; & \node {8}; & \node {19}; & \node {11}; & \node {3}; & \node {4}; & \node{5}; &\node {$\cdots$};\\
    &\node {8}; & \node {5}; & \node {12}; & \node {7}; & \node {2}; & \node {3}; & \node{4}; &\\
    &\node {3}; & \node {2}; & \node {5}; & \node {3}; & \node {1}; & \node {2}; & \node{3}; &\\
    &\node {1}; & \node {1}; & \node {3}; & \node {2}; & \node {1}; & \node {3}; & \node{5}; &\\
    &&&& \node {$\vdots$}; &&&& \\
  };

\end{tikzpicture}
&&
\begin{tikzpicture}
  \matrix [column sep={0.7cm,between origins},row sep={0.7cm,between origins}]
  {
    &&&& \node {$\vdots$}; &&&& \\
    &\node {25}; & \node {18}; & \node {11}; & \node {4}; & \node {9}; & \node {5}; & \node{11}; &\\
    &\node {18}; & \node {13}; & \node {8}; & \node {3}; & \node {7}; & \node {4}; & \node{9}; &\\
    &\node {29}; & \node {21}; & \node {13}; & \node {5}; & \node {12}; & \node {7}; & \node{16}; &\\
    \node {$\cdots$}; &\node {11}; & \node {8}; & \node {5}; & \node {2}; & \node {5}; & \node {3}; & \node{7}; &\node {$\cdots$};\\
    &\node {15}; & \node {11}; & \node {7}; & \node {3}; & \node {8}; & \node {5}; & \node{12}; &\\
    &\node {34}; & \node {25}; & \node {16}; & \node {7}; & \node {19}; & \node {12}; & \node{29}; &\\
    &\node {87}; & \node {64}; & \node {41}; & \node {18}; & \node {49}; & \node {31}; & \node{75}; &\\
    &&&& \node {$\vdots$}; &&&& \\
  };

\end{tikzpicture}
\end{tabular}
  \caption{Left: The $\SL2$-tiling $t$ obtained by Conway--Coxeter counting
    on $\fT$ from Figure \ref{fig:intro_triangulation}.  Right:
    An $\SL2$-tiling $t'$ with no entry equal to $1$.}
\label{fig:intro_tiling_new}
\end{figure}

\begin{figure}
  \centering
    \begin{tikzpicture}[scale=4]
      \draw (0,0) circle (1cm);

      \draw (45:1cm) node[fill=white,circle,inner sep=0.080cm] {} circle (0.03cm);
      \draw (135:1cm) node[fill=white,circle,inner sep=0.080cm] {} circle (0.03cm);
      \draw (225:1cm) node[fill=white,circle,inner sep=0.080cm] {} circle (0.03cm);
      \draw (315:1cm) node[fill=white,circle,inner sep=0.080cm] {} circle (0.03cm);

      \draw (-34:1.00cm) node[fill=white,circle,inner sep=0.004cm] {$\cdot$};
      \draw (-32:1.00cm) node[fill=white,circle,inner sep=0.004cm] {$\cdot$};
      \draw (-30:1.00cm) node[fill=white,circle,inner sep=0.004cm] {$\cdot$};
      \draw (-24:0.97cm) -- (-24:1.03cm);
      \draw (-17:0.97cm) -- (-17:1.03cm);
      \draw (-9:0.97cm) -- (-9:1.03cm);
      \draw[color=green!75!black] (-9:1.13cm) node{$\scriptstyle 6$};
      \draw (0:0.97cm) -- (0:1.03cm);
      \draw[color=green!75!black] (0:1.13cm) node{$\scriptstyle 1$};
      \draw (9:0.97cm) -- (9:1.03cm);
      \draw[color=green!75!black] (9:1.13cm) node{$\scriptstyle 1$};
      \draw (17:0.97cm) -- (17:1.03cm);
      \draw (24:0.97cm) -- (24:1.03cm);
      \draw (30:1.00cm) node[fill=white,circle,inner sep=0.004cm] {$\cdot$};
      \draw (32:1.00cm) node[fill=white,circle,inner sep=0.004cm] {$\cdot$};
      \draw (34:1.00cm) node[fill=white,circle,inner sep=0.004cm] {$\cdot$};

      \draw (56:1.00cm) node[fill=white,circle,inner sep=0.004cm] {$\cdot$};
      \draw (58:1.00cm) node[fill=white,circle,inner sep=0.004cm] {$\cdot$};
      \draw (60:1.00cm) node[fill=white,circle,inner sep=0.004cm] {$\cdot$};
      \draw (66:0.97cm) -- (66:1.03cm);
      \draw (66:1.13cm) node{$\scriptstyle -3^{\I}$};
      \draw[color=green!75!black] (66:1.25cm) node{$\scriptstyle 0$};
      \draw (73:0.97cm) -- (73:1.03cm);
      \draw (73:1.13cm) node{$\scriptstyle -2^{\I}$};
      \draw[color=green!75!black] (73:1.25cm) node{$\scriptstyle 1$};
      \draw (81:0.97cm) -- (81:1.03cm);
      \draw (81:1.13cm) node{$\scriptstyle -1^{\I}$};
      \draw[color=green!75!black] (81:1.25cm) node{$\scriptstyle 3$};
      \draw (90:0.97cm) -- (90:1.03cm);
      \draw (90:1.13cm) node{$\scriptstyle 0^{\I}$};
      \draw[color=green!75!black] (90:1.25cm) node{$\scriptstyle 2$};
      \draw (99:0.97cm) -- (99:1.03cm);
      \draw (99:1.13cm) node{$\scriptstyle 1^{\I}$};
      \draw[color=green!75!black] (99:1.25cm) node{$\scriptstyle 5$};
      \draw (107:0.97cm) -- (107:1.03cm);
      \draw (107:1.13cm) node{$\scriptstyle 2^{\I}$};
      \draw[color=green!75!black] (107:1.25cm) node{$\scriptstyle 13$};
      \draw (114:0.97cm) -- (114:1.03cm);
      \draw (114:1.13cm) node{$\scriptstyle 3^{\I}$};
      \draw[color=green!75!black] (114:1.25cm) node{$\scriptstyle 34$};
      \draw (120:1.00cm) node[fill=white,circle,inner sep=0.004cm] {$\cdot$};
      \draw (122:1.00cm) node[fill=white,circle,inner sep=0.004cm] {$\cdot$};
      \draw (124:1.00cm) node[fill=white,circle,inner sep=0.004cm] {$\cdot$};

      \draw (146:1.00cm) node[fill=white,circle,inner sep=0.004cm] {$\cdot$};
      \draw (148:1.00cm) node[fill=white,circle,inner sep=0.004cm] {$\cdot$};
      \draw (150:1.00cm) node[fill=white,circle,inner sep=0.004cm] {$\cdot$};
      \draw (156:0.97cm) -- (156:1.03cm);
      \draw (163:0.97cm) -- (163:1.03cm);
      \draw[color=green!75!black] (163:1.13cm) node{$\scriptstyle 21$};
      \draw (171:0.97cm) -- (171:1.03cm);
      \draw[color=green!75!black] (171:1.13cm) node{$\scriptstyle 8$};
      \draw (180:0.97cm) -- (180:1.03cm);
      \draw[color=green!75!black] (180:1.13cm) node{$\scriptstyle 3$};
      \draw (189:0.97cm) -- (189:1.03cm);
      \draw[color=green!75!black] (189:1.13cm) node{$\scriptstyle 7$};
      \draw (197:0.97cm) -- (197:1.03cm);
      \draw (204:0.97cm) -- (204:1.03cm);
      \draw (210:1.00cm) node[fill=white,circle,inner sep=0.004cm] {$\cdot$};
      \draw (212:1.00cm) node[fill=white,circle,inner sep=0.004cm] {$\cdot$};
      \draw (214:1.00cm) node[fill=white,circle,inner sep=0.004cm] {$\cdot$};

      \draw (236:1.00cm) node[fill=white,circle,inner sep=0.004cm] {$\cdot$};
      \draw (238:1.00cm) node[fill=white,circle,inner sep=0.004cm] {$\cdot$};
      \draw (240:1.00cm) node[fill=white,circle,inner sep=0.004cm] {$\cdot$};
      \draw (246:0.97cm) -- (246:1.03cm);
      \draw (246:1.13cm) node{$\scriptstyle -3^{\III}$};
      \draw[color=green!75!black] (246:1.25cm) node{$\scriptstyle 25$};
      \draw (253:0.97cm) -- (253:1.03cm);
      \draw (253:1.13cm) node{$\scriptstyle -2^{\III}$};
      \draw[color=green!75!black] (253:1.25cm) node{$\scriptstyle 18$};
      \draw (261:0.97cm) -- (261:1.03cm);
      \draw (261:1.13cm) node{$\scriptstyle -1^{\III}$};
      \draw[color=green!75!black] (261:1.25cm) node{$\scriptstyle 11$};
      \draw (270:0.97cm) -- (270:1.03cm);
      \draw (270:1.13cm) node{$\scriptstyle 0^{\III}$};
      \draw[color=green!75!black] (270:1.25cm) node{$\scriptstyle 4$};
      \draw (279:0.97cm) -- (279:1.03cm);
      \draw (279:1.13cm) node{$\scriptstyle 1^{\III}$};
      \draw[color=green!75!black] (279:1.25cm) node{$\scriptstyle 9$};
      \draw (287:0.97cm) -- (287:1.03cm);
      \draw (287:1.13cm) node{$\scriptstyle 2^{\III}$};
      \draw[color=green!75!black] (287:1.25cm) node{$\scriptstyle 5$};
      \draw (294:0.97cm) -- (294:1.03cm);
      \draw (294:1.13cm) node{$\scriptstyle 3^{\III}$};
      \draw[color=green!75!black] (294:1.25cm) node{$\scriptstyle 11$};
      \draw (300:1.00cm) node[fill=white,circle,inner sep=0.004cm] {$\cdot$};
      \draw (302:1.00cm) node[fill=white,circle,inner sep=0.004cm] {$\cdot$};
      \draw (304:1.00cm) node[fill=white,circle,inner sep=0.004cm] {$\cdot$};

      \draw (66:1cm) .. controls (56:0.60cm) and (19:0.60cm) .. (9:1cm);
      \draw (66:1cm) .. controls (56:0.55cm) and (10:0.60cm) .. (0:1cm);
      \draw (73:1cm) .. controls (63:0.60cm) and (10:0.50cm) .. (0:1cm);
      \draw (73:1cm) .. controls (78:0.80cm) and (85:0.80cm) .. (90:1cm);
      \draw (90:1cm) .. controls (70:0.50cm) and (10:0.30cm) .. (0:1cm);
      \draw (90:1cm) .. controls (110:0.40cm) and (170:0.30cm) .. (180:1cm);
      \draw (99:1cm) .. controls (119:0.40cm) and (170:0.35cm) .. (180:1cm);
      \draw (99:1cm) .. controls (119:0.45cm) and (161:0.35cm) .. (171:1cm);
      \draw (107:1cm) .. controls (127:0.45cm) and (161:0.40cm) .. (171:1cm);
      \draw (107:1cm) .. controls (127:0.50cm) and (143:0.40cm) .. (163:1cm);
      \draw (114:1cm) .. controls (134:0.50cm) and (143:0.45cm) .. (163:1cm);
      \draw (0:1cm) .. controls (90:0.05cm) .. (180:1cm);
      \draw (180:1cm) .. controls (190:0.40cm) and (250:0.30cm) .. (270:1cm);
      \draw (360:1cm) .. controls (340:0.40cm) and (290:0.30cm) .. (270:1cm);
      \draw (189:1cm) .. controls (199:0.40cm) and (250:0.35cm) .. (270:1cm);
      \draw (189:1cm) .. controls (199:0.45cm) and (240:0.35cm) .. (261:1cm);
      \draw (189:1cm) .. controls (199:0.50cm) and (233:0.35cm) .. (253:1cm);
      \draw (189:1cm) .. controls (199:0.55cm) and (226:0.35cm) .. (246:1cm);
      \draw (270:1cm) .. controls (275:0.80cm) and (282:0.80cm) .. (287:1cm);
      \draw (287:1cm) .. controls (297:0.35cm) and (350:0.55cm) .. (360:1cm);
      \draw (287:1cm) .. controls (297:0.45cm) and (341:0.55cm) .. (351:1cm);
      \draw (294:1cm) .. controls (304:0.45cm) and (341:0.60cm) .. (351:1cm);

    \end{tikzpicture} 
  \caption{A triangulation $\fT'$ of the disc with four accumulation
    points, $D_4$.  Black numbers label the vertices, green numbers show an
    example of Conway--Coxeter counting starting at vertex $-3^{\I}$.}
\label{fig:intro_triangulation2}
\end{figure}
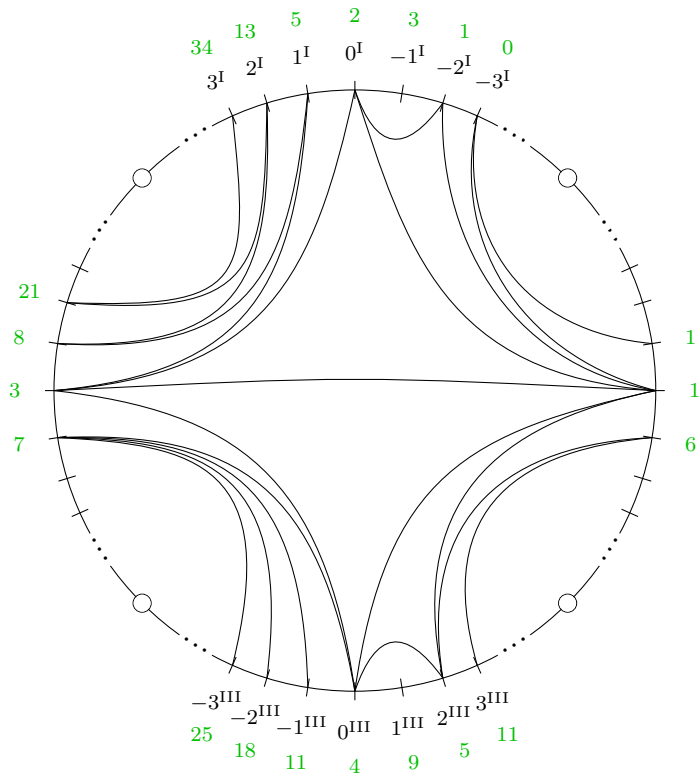
Figure \ref{fig:intro_triangulation2} shows a more sophisticated
triangulation $\fT'$ of the disc with four accumulation points.  There
are now vertices in four intervals, converging clockwise and
anticlockwise to four accumulation points marked with small circles.
The top and bottom intervals are numbered $\I$ and $\III$ as above;
indeed, two of the intervals on a disc will always be numbered $\I$ and
$\III$ regardless of how many accumulation points there are.  A
numbering of the vertices in the top and bottom intervals is shown in
black, and green numbers show the $\fT'( \mu,\sigma )$ for $\mu =
-3^{\I}$.  The $\SL2$-tiling arising from $\fT'$ by Conway--Coxeter
counting is defined as above: $t'( b,v ) = \fT'( b^{\I},v^{\III} )$,
and this is in fact the $t'$ in the right half of Figure
\ref{fig:intro_tiling_new}.

The extra accumulation points mean that there is room in $\fT'$ for a
horizontal arc which blocks $\fT'$ from having arcs between the top
and bottom intervals.  This means that $\fT'( b^{\I},v^{\III} )$ is
never equal to $1$, so $t'$ has no entry equal to $1$.  Note that in
this example, Conway--Coxeter counting does indeed terminate with
labels on the bottom interval because it can progress through the side
intervals.

Our main result is that four accumulation points are sufficient for 
every $\SL2$-tiling to arise:

{\bf Theorem A. }
{\em
Let $t$ be an $\SL2$-tiling.  There exists a good triangulation
$\fT$ of the disc with two, three, or four accumulation points,
such that $t$ arises from $\fT$ by Conway--Coxeter counting between
two of the intervals which go from one accumulation point to the next. 
\hfill $\Box\!\!\!$
}

The notion of a {\em good triangulation} is made precise in Definition
\ref{def:triangulation}.  The point is that Conway--Coxeter counting
always terminates for these.  Theorem A is a portmanteau of Theorems
\ref{thm:Case1}, \ref{thm:Case2}, \ref{thm:Case3}, \ref{thm:Case4},
\ref{thm:Case5}, and \ref{thm:Case6}, each of which starts with an
$\SL2$-tiling $t$ of a certain type and constructs a good
triangulation $\fT$.

{\bf On the proof of Theorem A. }
The construction of $\fT$ is split across six theorems because the
details depend strongly on $t$; specifically, on the pattern of
entries equal to $1$.  However, the philosophy is the same in all
cases as we now explain.  

Let $t$ be an $\SL2$-tiling.  On the one hand, $t$ gives rise to two
{\em infinite friezes} in the sense of Tschabold, see \cite[def.\
1.1]{T} or Definition \ref{def:tpq} and Figure
\ref{fig:intro_tiling3}.  They are defined by
\[
  p( a,d ) =
  \begin{vmatrix}
    t( a,w ) & t( a,w+1 ) \\
    t( d,w ) & t( d,w+1 )
  \end{vmatrix}
  \;\;,\;\;
  q( u,x ) =
  \begin{vmatrix}
    t( c,u ) & t( c,x ) \\
    t( c+1,u ) & t( c+1,x )
  \end{vmatrix}
\]
for integers $a \leqslant d$, $u \leqslant x$.  Note that the integers
$w$ and $c$ can be chosen freely; $p( a,d )$ and $q( u,x )$ do not
depend on them.  To say that $p$ is an infinite frieze means that $p(
a,a ) = 0$, $p( a,a+1 ) = 1$, $p( a,d ) \geqslant 1$ for $a < d$, and,
when writing $p$ as a matrix, each $2 \times 2$-submatrix which makes
sense has determinant $1$.  Note that to improve the compatibility
with $\SL2$-tilings, our convention for indexing an infinite frieze
differs from \cite[def.\ 1.1]{T}.
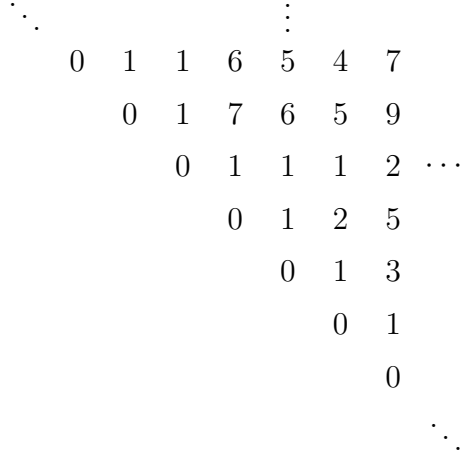
\begin{figure}
\begin{tikzpicture}
  \matrix [column sep={0.7cm,between origins},row sep={0.7cm,between origins}]
  {
    \node {$\dddots$}; &&&&& \node {$\vdots$}; &&& \\
    &\node {0}; & \node {1}; & \node {1}; & \node {6}; & \node {5}; & \node {4}; & \node{7}; &\\
    && \node {0}; & \node {1}; & \node {7}; & \node {6}; & \node {5}; & \node{9}; &\\
    &&& \node {0}; & \node {1}; & \node {1}; & \node {1}; & \node{2}; &\node {$\cdots$};\\
    &&&& \node {0}; & \node {1}; & \node {2}; & \node{5}; & \\
    &&&&& \node {0}; & \node {1}; & \node{3}; &\\
    &&&&&& \node {0}; & \node{1}; &\\
    &&&&&&& \node{0}; &\\
    &&&&&&&& \node {$\dddots$}; \\
  };

\end{tikzpicture}
  \caption{An infinite frieze.}
\label{fig:intro_tiling3}
\end{figure}

On the other hand, a putative good triangulation $\fT$ gives rise not
merely to the $\SL2$-tiling of Equation \eqref{equ:intro_t}, but also
to two infinite friezes defined by
\[
  ( a,d ) \mapsto \fT( a^{\I},d^{\I} )
  \;\;,\;\;
  ( u,x ) \mapsto \fT( u^{\III},x^{\III} )
\]
for integers $a \leqslant d$, $u \leqslant x$; this again follows from
results by Conway and Coxeter.

To prove Theorem A we must show that when $t$ is an $\SL2$-tiling,
there is a good triangulation $\fT$ satisfying Equation
\eqref{equ:intro_t}.  However, we will tackle the seemingly harder
problem of also asking for
\begin{align}
\label{equ:intro_p}
  p( a,d ) & = \fT( a^{\I},d^{\I} ), \\
\label{equ:intro_q}
  q( u,x ) & = \fT( u^{\III},x^{\III} )
\end{align}
for $a \leqslant d$, $u \leqslant x$.  This actually turns out to be
easier because the entries in the triple $( t,p,q )$ and the numbers
$\fT( \mu,\nu )$ satisfy two strong sets of equations called {\em
  Ptolemy relations} which we do not list here, but see Lemmas
\ref{lem:CC}(v) and \ref{lem:Ptolemy}.  They mean that, when $\fT$ has
been constructed, in order to prove Equations \eqref{equ:intro_t}
through \eqref{equ:intro_q} in general, it is sufficient to do so in a
relatively small set of special cases.

For example, suppose that $t$ has infinitely many entries equal to $1$
in both the first and the third quadrant; this is the case considered
in Theorem \ref{thm:Case1}.  For such a $t$, we will show that the set
of arcs
\begin{align}
\nonumber
  \Theta( t ) =
  \big\{ \{ b^{\I},v^{\III} \} \,\big|\, t( b,v ) = 1 \big\}
  & \cup
  \big\{ \{ a^{\I},d^{\I} \} \,\big|\, a+2 \leqslant d, \, p( a,d ) = 1 \big\} \\
\label{equ:pre_Theta}
  & \cup
  \big\{ \{ u^{\III},x^{\III} \} \,\big|\, u+2 \leqslant x, \, q( u,x ) = 1 \big\}
\end{align}
is a good triangulation of $D_2$ (observe that we think of an arc as
a purely combinatorial object specified by giving the end vertices).
Moreover, if we set $\fT = \Theta ( t )$ then Equations
\eqref{equ:intro_t} through \eqref{equ:intro_q} hold in some special
cases: If $t( b,v ) = 1$ then $\{ b^{\I},v^{\III} \} \in \fT$ whence
$\fT( b^{\I},v^{\III} ) = 1$, so Equation \eqref{equ:intro_t} holds.
Likewise, if $p( a,d ) = 1$ then Equation \eqref{equ:intro_p} holds,
and if $q( u,x ) = 1$ then Equation \eqref{equ:intro_q} holds.  Using
only this, the Ptolemy relations turn out to imply the three equations
in general.  In particular, Equation \eqref{equ:intro_t} holds in
general, so $t$ arises from $\fT$ by Conway--Coxeter counting.

Before ending this discussion, let us highlight another useful
phenomenon: The special cases $d = a+2$ of Equation
\eqref{equ:intro_p} and (symmetrically) $x = u+2$ of Equation
\eqref{equ:intro_q} imply the two equations in general.  Indeed, this
is just the easy fact that the second diagonal, or {\em quiddity
  sequence}, of an infinite frieze determines the whole frieze, see
\cite[rmk.\ 1.3]{T}.  When $t$ is given, it is hence important to be
able to construct a good triangulation $\fT$ which satisfies Equations
\eqref{equ:intro_p} and \eqref{equ:intro_q} in these special cases.
We will use the following approach: The vertices $a^{\I}$, $( a+1
)^{\I}$, $( a+2 )^{\I}$ are consecutive on the disc.  It is known that
hence, if $\fT$ can be constructed, then
\[
  \fT\big( a^{\I},( a+2 )^{\I} \big)
  = 1 + \mbox{\big(the number of arcs in $\fT$ which end at $( a+1 )^{\I}$\big)}.
\]
To get Equation \eqref{equ:intro_p} for $d = a+2$, we must construct
$\fT$ such that
\[
  p( a,a+2 )
  = 1 + \mbox{\big(the number of arcs in $\fT$ which end at $( a+1 )^{\I}$\big)}.
\]
In Theorems \ref{thm:Case2}, \ref{thm:Case3}, \ref{thm:Case4},
\ref{thm:Case5}, and \ref{thm:Case6}, this is accomplished by starting
with the set of arcs $\Theta( t )$ from Equation \eqref{equ:pre_Theta}
and adding arcs so that, eventually, there are $p( a,a+2 ) - 1$ arcs
ending at $( a+1 )^{\I}$ for each $a$.  See for instance Figure
\ref{fig:ft_in_Case2} where the arcs in $\Theta( t )$ are black and
the additional arcs are red.  The figure also illustrates that the
additional arcs need somewhere to end.  This is the reason we need
more intervals than $\I$ and $\III$.  The number of arcs to be added
at $( a+1 )^{\I}$ is given by the {\em defect} $\defect_p( a+1 )$
introduced in Definition \ref{def:defects}; this is the rationale for
defining and manipulating defects in Section \ref{sec:defects}.
See also Figure \ref{fig:ft_in_Case2} and its caption.

{\bf Link to the cluster categories of Igusa and Todorov. } Let $n$
be $2$, $3$, or $4$, and let $D_n$ be the disc with $n$ accumulation
points.  The set of vertices of $D_n$ is an example of a cyclic poset
in the sense of Igusa and Todorov, see \cite[def.\ 1.1.12]{IT}.  There
is an associated cluster category $\cC$ with infinite clusters, see
\cite[thm.\ 2.4.1]{IT}.  It categorifies $D_n$ in the sense that there
is a bijection between arcs in $D_n$ and indecomposable objects in
$\cC$, such that crossing of arcs corresponds to existence of
non-split extensions.  Moreover, if $\fT$ is a good triangulation of
$D_n$, then the arcs in $\fT$ correspond to a set of indecomposable
objects whose finite direct sums form a cluster tilting subcategory
$\cT$ of $\cC$.  See \cite{GHJ} for more details.

There is an arithmetic Caldero--Chapoton map $\rho_{\cT}$ associated
to $\cC$ and $\cT$.  As indicated by the name, the map is due to
Caldero and Chapoton, but the specific version we have in mind is the
one from \cite[def.\ 3.1]{HJ}.  It is a map
\[
  \varphi_{\cT} : \operatorname{obj}\,\cC \rightarrow \BZ
\]
which can be computed by Conway--Coxeter counting; this follows from
\cite[prop.\ 1.10]{JP} by the method used to prove \cite[thm.\
5.4]{HJ2}.  Hence if $a_{ \mu\nu }$ in $\cC$ is the indecomposable
object corresponding to the arc $\{ \mu,\nu \}$, then
\[
  \varphi_{ \cT }( a_{ \mu\nu } ) = \fT( \mu,\nu ).
\]
This means that we can view $\cC$ and $\varphi_{ \cT }$ as
categorifying the $\SL2$-tiling arising from $\fT$ by Conway--Coxeter
counting. 

This is of interest because there is a more general Caldero--Chapoton
map 
\[
  \rho_{\cT} : \operatorname{obj}\,\cC \rightarrow
  \BQ( x_t \,|\, t \mbox{ indecomposable in } \cT )
\]
with Laurent polynomial values whose image generates a cluster algebra
with infinite clusters, see \cite[thm.\ 2.3 and cor.\ 2.5]{JP}.  The
$\SL2$-tiling arising from $\fT$ by Conway--Coxeter counting can be
recovered by specialising the initial cluster variables $x_t$ to $1$.
Such cluster algebras have so far only been studied carefully for the
disc with one accumulation point.  They have several interesting
properties different from cluster algebras with finite clusters, and
seem likely to be of interest also for larger numbers of accumulation
points.  See \cite{GG} by Grabowski and Gratz.

{\bf Structure of the paper. }
Theorem A will not be proved in one go, but sums up Theorems
\ref{thm:Case1}, \ref{thm:Case2}, \ref{thm:Case3}, \ref{thm:Case4},
\ref{thm:Case5}, and \ref{thm:Case6}.  Each of these starts with an
$\SL2$-tiling $t$ with a certain pattern of entries equal to $1$ and
constructs a triangulation $\fT$ of the disc with two, three, or four
accumulation points.

Reading the theorems in order will make it clear that they cover every
possible $\SL2$-tiling $t$: They progress through $\SL2$-tilings $t$
with fewer and fewer entries equal to $1$, ending with no $1$'s at all
in Theorem \ref{thm:Case6}.

Conversely, $\SL2$-tilings of the types described in the theorems do
exist: In each case, they can be obtained as the $\SL2$-tilings
arising by Conway--Coxeter counting from triangulations of the type
constructed in the theorem, see Remark \ref{rmk:zig-zag}.

Section \ref{sec:triangulations} gives formal definitions relating to
triangulations of the disc with accumulation points.  Section
\ref{sec:CC} recalls some properties of Conway--Coxeter counting.
Section \ref{sec:SL2} shows some results on $\SL2$-tilings and their
associated infinite friezes.  Section \ref{sec:basic} starts with an
$\SL2$-tiling $t$ and constructs a partial triangulation $\Theta( t
)$.  In each subsequent case, the (full) triangulation $\fT$ is
obtained either as $\Theta( t )$ itself (Theorem \ref{thm:Case1}), or
is constructed by adding arcs to $\Theta( t )$ (Theorems
\ref{thm:Case2}, \ref{thm:Case3}, \ref{thm:Case4}, \ref{thm:Case5},
and \ref{thm:Case6}).  Section \ref{sec:defects} introduces what we
call defects and shows some properties.  The defects provide
information about how many arcs we must add to $\Theta( t )$ to get
$\fT$.

Sections \ref{sec:Case1} through \ref{sec:Case5} prove Theorems
\ref{thm:Case1}, \ref{thm:Case2}, \ref{thm:Case3}, \ref{thm:Case4},
\ref{thm:Case5}.  Section \ref{sec:CC2} shows a technical result on
Conway--Coxeter friezes, Section \ref{sec:minimum} shows that an
$\SL2$-tiling with no entry equal to $1$ has a unique minimum, and
Section \ref{sec:Case6} proves Theorem \ref{thm:Case6}, thereby
completing the proof of Theorem A.

\section{Triangulations of the disc with accumulation points and other
basic definitions}
\label{sec:triangulations}

\begin{Setup}
Throughout, $C$ is a circle with anticlockwise orientation, $D$ is a disc with boundary
$C$, and $n$ is $2$, $3$, or $4$.
\end{Setup}

\begin{Notation}
Let $\mu_1$, $\ldots$, $\mu_m$ be points on $C$.

The string of inequalities $\mu_1 < \cdots < \mu_m$ will mean that
each $\mu_i$ is different from its predecessor, and that if we start
from $\mu_1$ and move anticlockwise on $C$ by one full turn, then we
encounter the points in precisely the order $\mu_1$, $\ldots$,
$\mu_m$.

It is straightforward to modify this to permit the inequality sign
$\leqslant$ as well as infinite strings of inequalities.
\end{Notation}

\begin{Definition}
[The disc with four accumulation points]
\label{def:C4}
Let $D_4$, the {\em disc with four accumulation points}, be 
the object sketched in Figure \ref{fig:C4}.
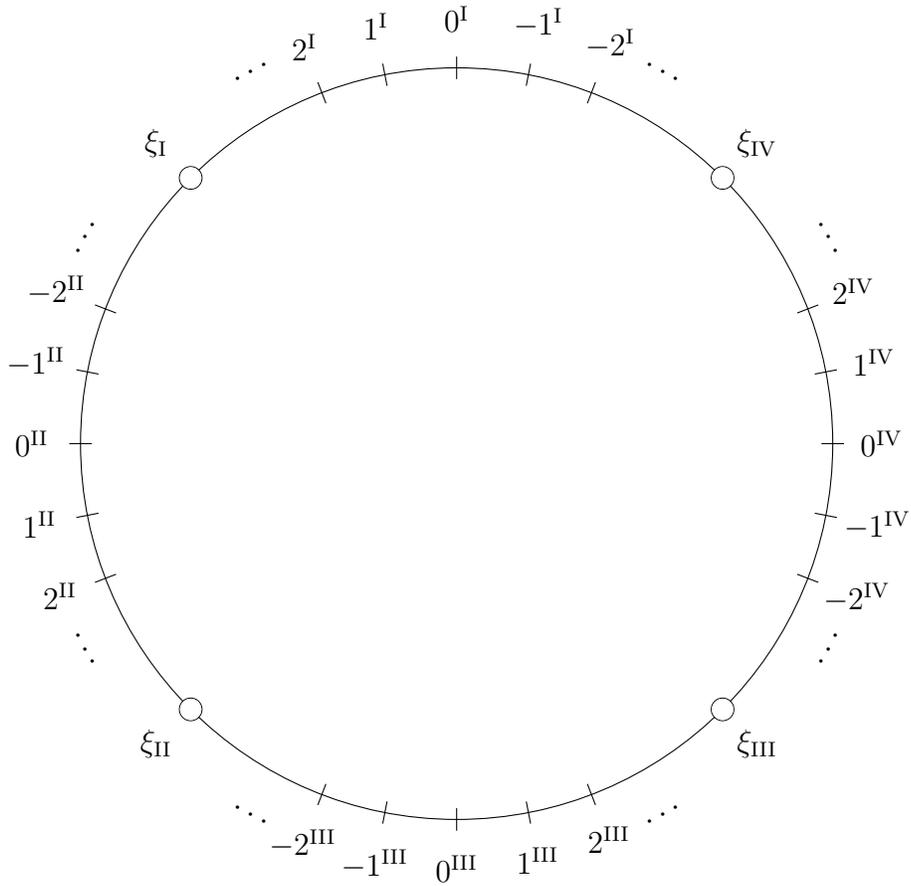
\begin{figure}
  \centering
    \begin{tikzpicture}[scale=5]
      \draw (0,0) circle (1cm);

      \draw (45:1cm) node[fill=white,circle,inner sep=0.101cm] {} circle (0.03cm);
      \draw (135:1cm) node[fill=white,circle,inner sep=0.101cm] {} circle (0.03cm);
      \draw (225:1cm) node[fill=white,circle,inner sep=0.101cm] {} circle (0.03cm);
      \draw (315:1cm) node[fill=white,circle,inner sep=0.101cm] {} circle (0.03cm);

      \draw (45:1.13cm) node{$\xi_{\IV}$};
      \draw (135:1.13cm) node{$\xi_{\I}$};
      \draw (225:1.13cm) node{$\xi_{\II}$};
      \draw (315:1.13cm) node{$\xi_{\III}$};

      \draw (-31:1.13cm) node{$\cdot$};
      \draw (-29:1.13cm) node{$\cdot$};
      \draw (-27:1.13cm) node{$\cdot$};
      \draw (-21:0.97cm) -- (-21:1.03cm);
      \draw (-21:1.14cm) node{$-2^{\IV}$};
      \draw (-11:0.97cm) -- (-11:1.03cm);
      \draw (-11:1.14cm) node{$-1^{\IV}$};
      \draw (0:0.97cm) -- (0:1.03cm);
      \draw (0:1.13cm) node{$0^{\IV}$};
      \draw (11:0.97cm) -- (11:1.03cm);
      \draw (11:1.13cm) node{$1^{\IV}$};
      \draw (21:0.97cm) -- (21:1.03cm);
      \draw (21:1.13cm) node{$2^{\IV}$};
      \draw (27:1.13cm) node{$\cdot$};
      \draw (29:1.13cm) node{$\cdot$};
      \draw (31:1.13cm) node{$\cdot$};

      \draw (59:1.13cm) node{$\cdot$};
      \draw (61:1.13cm) node{$\cdot$};
      \draw (63:1.13cm) node{$\cdot$};
      \draw (69:0.97cm) -- (69:1.03cm);
      \draw (69:1.14cm) node{$-2^{\I}$};
      \draw (79:0.97cm) -- (79:1.03cm);
      \draw (79:1.14cm) node{$-1^{\I}$};
      \draw (90:0.97cm) -- (90:1.03cm);
      \draw (90:1.13cm) node{$0^{\I}$};
      \draw (101:0.97cm) -- (101:1.03cm);
      \draw (101:1.13cm) node{$1^{\I}$};
      \draw (111:0.97cm) -- (111:1.03cm);
      \draw (111:1.13cm) node{$2^{\I}$};
      \draw (117:1.13cm) node{$\cdot$};
      \draw (119:1.13cm) node{$\cdot$};
      \draw (121:1.13cm) node{$\cdot$};

      \draw (149:1.13cm) node{$\cdot$};
      \draw (151:1.13cm) node{$\cdot$};
      \draw (153:1.13cm) node{$\cdot$};
      \draw (159:0.97cm) -- (159:1.03cm);
      \draw (159:1.14cm) node{$-2^{\II}$};
      \draw (169:0.97cm) -- (169:1.03cm);
      \draw (169:1.14cm) node{$-1^{\II}$};
      \draw (180:0.97cm) -- (180:1.03cm);
      \draw (180:1.13cm) node{$0^{\II}$};
      \draw (191:0.97cm) -- (191:1.03cm);
      \draw (191:1.13cm) node{$1^{\II}$};
      \draw (201:0.97cm) -- (201:1.03cm);
      \draw (201:1.13cm) node{$2^{\II}$};
      \draw (207:1.13cm) node{$\cdot$};
      \draw (209:1.13cm) node{$\cdot$};
      \draw (211:1.13cm) node{$\cdot$};

      \draw (239:1.13cm) node{$\cdot$};
      \draw (241:1.13cm) node{$\cdot$};
      \draw (243:1.13cm) node{$\cdot$};
      \draw (249:0.97cm) -- (249:1.03cm);
      \draw (249:1.14cm) node{$-2^{\III}$};
      \draw (259:0.97cm) -- (259:1.03cm);
      \draw (259:1.14cm) node{$-1^{\III}$};
      \draw (270:0.97cm) -- (270:1.03cm);
      \draw (270:1.13cm) node{$0^{\III}$};
      \draw (281:0.97cm) -- (281:1.03cm);
      \draw (281:1.13cm) node{$1^{\III}$};
      \draw (291:0.97cm) -- (291:1.03cm);
      \draw (291:1.13cm) node{$2^{\III}$};
      \draw (297:1.13cm) node{$\cdot$};
      \draw (299:1.13cm) node{$\cdot$};
      \draw (301:1.13cm) node{$\cdot$};

    \end{tikzpicture} 
  \caption{This is $D_4$, the disc with four accumulation points,
    $\xi_{\I}$ through $\xi_{\IV}$.}
\label{fig:C4}
\end{figure}

More formally, $D_4$ is the disc $D$ along with four points $\xi_{\I}
< \xi_{\II} < \xi_{\III} < \xi_{\IV}$ on the boundary $C$ called {\em
  accumulation points of $D_4$}, and infinitely many points on $C$
called {\em vertices of $D_4$}, defined as follows:

For each $\J \in \{ \I,\II,\III,\IV \}$, let $\ldots$,
$-1^{\J}$, $0^{\J}$, $1^{\J}$, $\ldots$ be countably many points on
$C$ which satisfy:
\begin{itemize}
\setlength\itemsep{4pt}

  \item  $\xi_{ \J-1 } < \cdots < -1^{\J} < 0^{\J} < 1^{\J} <
    \cdots < \xi_{\J}$, 

  \item  the sequence $0^{\J}$, $1^{\J}$, $2^{\J}$, $\ldots$ converges to
    $\xi_{\J}$,

  \item  the sequence $0^{\J}$, $-1^{\J}$, $-2^{\J}$, $\ldots$ converges to
    $\xi_{\J-1}$. 
\end{itemize}
Here $\J-1$ stands for the Roman numeral one below $\J$, or $\IV$ if
$\J = \I$.  The vertices of $D_4$ are the points $\ldots$, $-1^{\J}$,
$0^{\J}$, $1^{\J}$, $\ldots$ for $\J \in \{ \I,\II,\III,\IV \}$.

The set
\[
  \{ \omega \in C \mid \xi_{\J-1} < \omega < \xi_{\J} \}
\]
will be called {\em interval $\J$ of the boundary of $D_4$}.  There is
an obvious notion of when two intervals are {\em neighbouring}.

Our convention for numbering the intervals of the boundary of $D_4$ is
shown in simplified form in Figure \ref{fig:C4_simple}.
\end{Definition}

\begin{Definition}
[The disc with two or three accumulation points]
\label{def:C2C3}
We can mimic Definition \ref{def:C4} in order to define $D_2$, the
disc with two accumulation points, and $D_3$, the disc with three
accumulation points.  For reasons which will be explained later, in
case of $D_2$ we will denote the intervals by Roman numerals $\I$ and
$\III$, and in case of $D_3$ by Roman numerals $\{ \I,\II,\III \}$ or
$\{ \I,\III,\IV \}$.

That is, intervals $\I$ and $\III$ are always present, but $\II$
and/or $\IV$ may be dropped; see Figures \ref{fig:C2} and
\ref{fig:C3}.
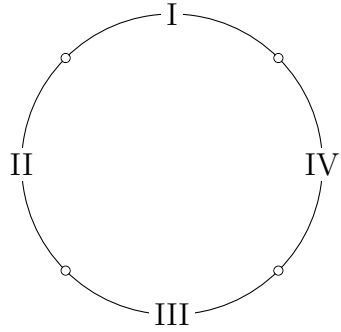
\begin{figure}
  \centering
    \begin{tikzpicture}[scale=2]
      \draw (0,0) circle (1cm);

      \draw (45:1cm) node[fill=white,circle,inner sep=0.037cm] {} circle (0.03cm);
      \draw (135:1cm) node[fill=white,circle,inner sep=0.037cm] {} circle (0.03cm);
      \draw (225:1cm) node[fill=white,circle,inner sep=0.037cm] {} circle (0.03cm);
      \draw (315:1cm) node[fill=white,circle,inner sep=0.037cm] {} circle (0.03cm);

      \draw (90:1cm) node[fill=white,rectangle,inner sep=0.07cm] {$\I$};
      \draw (180:1cm) node[fill=white,rectangle,inner sep=0.07cm] {$\II$};
      \draw (270:1cm) node[fill=white,rectangle,inner sep=0.07cm] {$\III$};
      \draw (360:1cm) node[fill=white,rectangle,inner sep=0.07cm] {$\IV$};

    \end{tikzpicture} 
    \caption{A simpler view of the disc with four accumulation
      points, $D_4$, and our convention for numbering the intervals of
      the boundary.}
\label{fig:C4_simple}
\end{figure}
\begin{figure}
  \centering
    \begin{tikzpicture}[scale=2]
      \draw (0,0) circle (1cm);

      \draw (0:1cm) node[fill=white,circle,inner sep=0.037cm] {} circle (0.03cm);
      \draw (180:1cm) node[fill=white,circle,inner sep=0.037cm] {} circle (0.03cm);

      \draw (90:1cm) node[fill=white,rectangle,inner sep=0.07cm] {$\I$};
      \draw (270:1cm) node[fill=white,rectangle,inner sep=0.07cm] {$\III$};

    \end{tikzpicture} 
  \caption{The disc with two accumulation points, $D_2$, and our
    convention for numbering the intervals of the boundary.}
\label{fig:C2}
\end{figure}
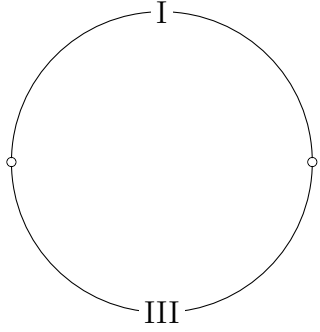
\begin{figure}
  \centering
    \begin{tabular}{cccc}
    \begin{tikzpicture}[scale=2]
      \draw (0,0) circle (1cm);

      \draw (0:1cm) node[fill=white,circle,inner sep=0.037cm] {} circle (0.03cm);
      \draw (120:1cm) node[fill=white,circle,inner sep=0.037cm] {} circle (0.03cm);
      \draw (240:1cm) node[fill=white,circle,inner sep=0.037cm] {} circle (0.03cm);

      \draw (60:1cm) node[fill=white,rectangle,inner sep=0.07cm] {$\I$};
      \draw (180:1cm) node[fill=white,rectangle,inner sep=0.07cm] {$\II$};
      \draw (300:1cm) node[fill=white,rectangle,inner sep=0.07cm] {$\III$};

    \end{tikzpicture} 
    &&&
    \begin{tikzpicture}[scale=2]
      \draw (0,0) circle (1cm);

      \draw (60:1cm) node[fill=white,circle,inner sep=0.037cm] {} circle (0.03cm);
      \draw (180:1cm) node[fill=white,circle,inner sep=0.037cm] {} circle (0.03cm);
      \draw (300:1cm) node[fill=white,circle,inner sep=0.037cm] {} circle (0.03cm);

      \draw (0:1cm) node[fill=white,rectangle,inner sep=0.07cm] {$\IV$};
      \draw (120:1cm) node[fill=white,rectangle,inner sep=0.07cm] {$\I$};
      \draw (240:1cm) node[fill=white,rectangle,inner sep=0.07cm] {$\III$};

    \end{tikzpicture} 
    \end{tabular}
  \caption{The disc with three accumulation points, $D_3$, and our
    two possible conventions for numbering the intervals of the boundary.}
\label{fig:C3}
\end{figure}
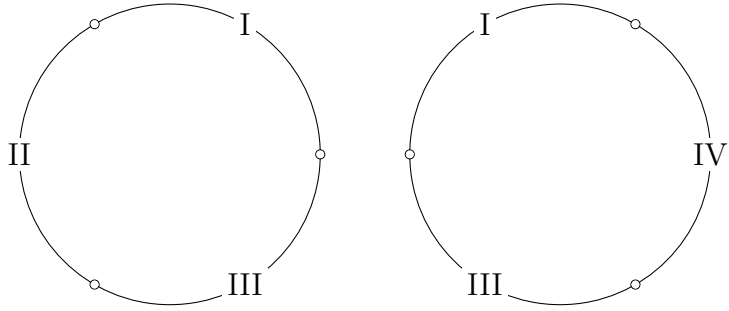
\end{Definition}

\begin{Notation}
Recall that $n$ is $2$, $3$ or $4$, so we may consider $D_n$, the disc
with $n$ accumulation points.

Generic integers will often be denoted by $i$, $j$, $k$, $\ell$, $m$
and generic vertices of $D_n$ often by $\iota$, $\kappa$, $\mu$,
$\nu$, $\pi$, $\rho$, $\sigma$.

If $\J$ is an interval of the boundary of $D_n$ and $m$ is an integer,
then the vertex $m^{\J}$ is in $\J$.  Depending on whether $\J$ is
$\I$, $\II$, $\III$, or $\IV$, we will typically replace $m$ by one of
the letters in Figure \ref{fig:letters}.
\begin{figure}
  \centering
    \begin{tikzpicture}[scale=4]
      \draw (0,0) circle (1cm);

      \draw (45:1cm) node[fill=white,circle,inner sep=0.080cm] {} circle (0.03cm);
      \draw (135:1cm) node[fill=white,circle,inner sep=0.080cm] {} circle (0.03cm);
      \draw (225:1cm) node[fill=white,circle,inner sep=0.080cm] {} circle (0.03cm);
      \draw (315:1cm) node[fill=white,circle,inner sep=0.080cm] {} circle (0.03cm);

      \draw (-21:0.97cm) -- (-21:1.03cm);
      \draw (-21:1.13cm) node{$\tau^{\IV}$};
      \draw (-7:0.97cm) -- (-7:1.03cm);
      \draw (-7:1.13cm) node{$\varphi^{\IV}$};
      \draw (7:0.97cm) -- (7:1.03cm);
      \draw (7:1.13cm) node{$\chi^{\IV}$};
      \draw (21:0.97cm) -- (21:1.03cm);
      \draw (21:1.13cm) node{$\psi^{\IV}$};

      \draw (69:0.97cm) -- (69:1.03cm);
      \draw (69:1.13cm) node{$a^{\I}$};
      \draw (83:0.97cm) -- (83:1.03cm);
      \draw (83:1.13cm) node{$b^{\I}$};
      \draw (97:0.97cm) -- (97:1.03cm);
      \draw (97:1.13cm) node{$c^{\I}$};
      \draw (111:0.97cm) -- (111:1.03cm);
      \draw (111:1.13cm) node{$d^{\I}$};

      \draw (159:0.97cm) -- (159:1.03cm);
      \draw (159:1.13cm) node{$\alpha^{\II}$};
      \draw (173:0.97cm) -- (173:1.03cm);
      \draw (173:1.13cm) node{$\beta^{\II}$};
      \draw (187:0.97cm) -- (187:1.03cm);
      \draw (187:1.13cm) node{$\gamma^{\II}$};
      \draw (201:0.97cm) -- (201:1.03cm);
      \draw (201:1.13cm) node{$\delta^{\II}$};

      \draw (249:0.97cm) -- (249:1.03cm);
      \draw (249:1.13cm) node{$u^{\III}$};
      \draw (263:0.97cm) -- (263:1.03cm);
      \draw (263:1.13cm) node{$v^{\III}$};
      \draw (277:0.97cm) -- (277:1.03cm);
      \draw (277:1.13cm) node{$w^{\III}$};
      \draw (291:0.97cm) -- (291:1.03cm);
      \draw (291:1.13cm) node{$x^{\III}$};

    \end{tikzpicture} 
  \caption{Depending on the interval, we typically use these
    labels for the vertices.}
\label{fig:letters}
\end{figure}
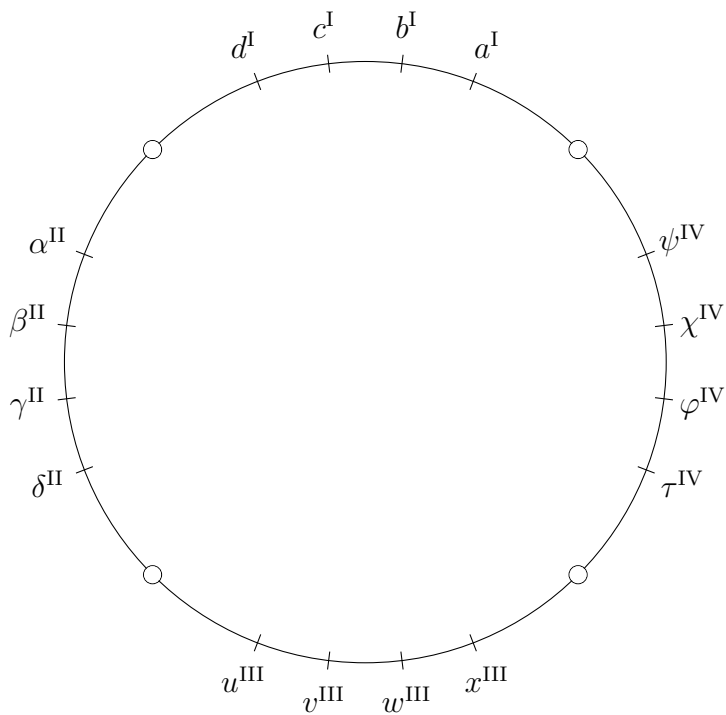

In subsequent figures, the superscripts of vertices will be omitted since
it is clear from a figure when two vertices belong to different
intervals.  Superscripts will, however, be used in the main text.
\end{Notation}

\begin{Definition}
[Edges, arcs, and crossing]
Let $\mu$ be a vertex of $D_n$.  There are evident notions of the
{\em previous} vertex $\mu^-$ and the {\em next} vertex $\mu^+$.
These are called the {\em neighbouring} vertices of $\mu$.

When $\mu$ and $\nu$ are different vertices of $D_n$, we can consider
the set $\{ \mu,\nu \}$.  If $\mu$ and $\nu$ are neighbouring vertices
then $\{ \mu,\nu \}$ is called the {\em edge between $\mu$ and $\nu$
  in $D_n$}, and if $\mu$ and $\nu$ are non-neighbouring vertices then
$\{ \mu,\nu \}$ is called the {\em arc between $\mu$ and $\nu$ in
  $D_n$}.  In either case, we say that {\em $\{ \mu,\nu \}$ ends at
  $\mu$ and $\nu$} and {\em links these two vertices}.

This is a combinatorial definition, but we keep in mind the
geometrical intuition to think of an edge as part of the circle $C$
bounding the disc $D$, and of an arc as an actual arc inside $D$.

The arcs $\{ \mu,\nu \}$ and $\{ \pi,\rho \}$ are said to {\em cross}
if $\mu < \pi < \nu < \rho$ or $\pi < \mu < \rho < \nu$.  This is
compatible in an evident way with the geometrical intuition of the
previous paragraph, see Figure \ref{fig:Ptolemy}.
\end{Definition}

\begin{Definition}
[Internal, connecting, clockwise, and anticlockwise arcs]
An arc $\{ \mu,\nu \}$ is called {\em internal} if $\mu$ and $\nu$
belong to the same interval.  Otherwise it is called {\em
  connecting} (because it connects two different intervals).  Note
that the words {\em peripheral} and {\em bridging} are used in
essentially the same sense in \cite{BPT} and \cite{T}.

If $\{ \mu,\nu \}$ is an internal arc or an edge, then either $\nu =
\mu^{++ \cdots +}$ or $\nu = \mu^{-- \cdots -}$.  In the former case,
we say that {\em $\{ \mu,\nu \}$ goes anticlockwise from $\mu$}, in
the latter case that {\em $\{ \mu,\nu \}$ goes clockwise from $\mu$}.
\end{Definition}

\begin{Definition}
[Blocking an accumulation point]
\label{def:block}
Let $\J$ and $\K$ be neighbouring intervals of the boundary of $D_n$
separated by the accumulation point $\xi$, such that if $\iota \in
\J$ and $\kappa \in \K$ are vertices then $\iota < \xi < \kappa$.

Let $\fT$ be a set of arcs in $D_n$.  We say that $\fT$ {\em blocks
  the accumulation point} $\xi$ if it contains the configuration shown
in Figure \ref{fig:block}.
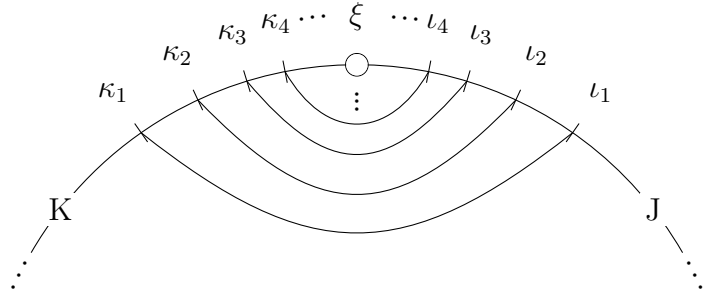
\begin{figure}
  \centering
    \begin{tikzpicture}[scale=5]
      \draw (30:1cm) arc (30:150:1cm);

      \draw (90:1cm) node[fill=white,circle,inner sep=0.101cm] {} circle (0.03cm);

      \draw (38:1cm) node[fill=white,rectangle,inner sep=0.07cm] {$\J$};
      \draw (142:1cm) node[fill=white,rectangle,inner sep=0.07cm] {$\K$};

      \draw (24:1cm) node{$\cdot$};
      \draw (26:1cm) node{$\cdot$};
      \draw (28:1cm) node{$\cdot$};

      \draw (152:1cm) node{$\cdot$};
      \draw (154:1cm) node{$\cdot$};
      \draw (156:1cm) node{$\cdot$};

      \draw (55:0.97cm) -- (55:1.03cm);
      \draw (55:1.13cm) node{$\iota_1$};
      \draw (65:0.97cm) -- (65:1.03cm);
      \draw (65:1.13cm) node{$\iota_2$};
      \draw (73:0.97cm) -- (73:1.03cm);
      \draw (73:1.13cm) node{$\iota_3$};
      \draw (79:0.97cm) -- (79:1.03cm);
      \draw (79:1.13cm) node{$\iota_4$};
      \draw (82:1.13cm) node{$\cdot$};
      \draw (83.5:1.13cm) node{$\cdot$};
      \draw (85:1.13cm) node{$\cdot$};

      \draw (90:1.13cm) node{$\xi$};

      \draw (94.5:1.13cm) node{$\cdot$};
      \draw (96:1.13cm) node{$\cdot$};
      \draw (97.5:1.13cm) node{$\cdot$};
      \draw (101:0.97cm) -- (101:1.03cm);
      \draw (101:1.13cm) node{$\kappa_4$};
      \draw (107:0.97cm) -- (107:1.03cm);
      \draw (107:1.13cm) node{$\kappa_3$};
      \draw (115:0.97cm) -- (115:1.03cm);
      \draw (115:1.13cm) node{$\kappa_2$};
      \draw (125:0.97cm) -- (125:1.03cm);
      \draw (125:1.13cm) node{$\kappa_1$};

      \draw (55:1cm) .. controls (75:0.48cm) and (105:0.48cm) .. (125:1cm);
      \draw (65:1cm) .. controls (80:0.58cm) and (100:0.58cm) .. (115:1cm);
      \draw (73:1cm) .. controls (84:0.70cm) and (96:0.70cm) .. (107:1cm);
      \draw (79:1cm) .. controls (84:0.8cm) and (96:0.8cm) .. (101:1cm);

      \draw (90:0.88cm) node{$\cdot$};
      \draw (90:0.90cm) node{$\cdot$};
      \draw (90:0.92cm) node{$\cdot$};

    \end{tikzpicture} 
  \caption{The arcs block the accumulation point $\xi$.}
\label{fig:block}
\end{figure}

More formally, $\fT$ must contain arcs $\{ \iota_i,\kappa_i \}$ for
$i \geqslant 1$ where the vertices $\iota_i \in \J$ and $\kappa_i \in
\K$ satisfy $\iota_1 < \iota_2 < \cdots < \xi < \cdots < \kappa_2 <
\kappa_1$.  Note that the $\iota_i$ and the $\kappa_i$ converge to
$\xi$ from opposite sides.
\end{Definition}

\begin{Definition}
[Triangulations]
\label{def:triangulation}
A set of pairwise non-crossing arcs in $D_n$ is called a {\em partial
  triangulation of $D_n$}, and a maximal set of pairwise non-crossing
arcs in $D_n$ is called a {\em triangulation of $D_n$}.

A partial triangulation $\fT$ of $D_n$ is called {\em good} if it
blocks each accumulation point of $D_n$.  It is called {\em locally
  finite} if, for each vertex $\mu$ of $D_n$, only finitely many arcs
in $\fT$ end at $\mu$.
\end{Definition}

\begin{Definition}
[Vertex sets compatible with a partial triangulation]
\label{def:restriction}
Let $\fT$ be a partial triangulation of $D_n$.  A finite set of $m
\geqslant 2$ vertices $\mu_1 < \mu_2 < \cdots < \mu_m$ of $D_n$ is
said to be {\em compatible with $\fT$} if each pair $\{ \mu_1,\mu_2
\}$, $\{ \mu_2,\mu_3 \}$, $\ldots$, $\{ \mu_m,\mu_1 \}$ is either an
edge or an arc in $\fT$.

The pairs $\{ \mu_1,\mu_2 \}$, $\{ \mu_2,\mu_3 \}$, $\ldots$, $\{
\mu_m,\mu_1 \}$ can be viewed as the edges of a finite polygon $P$
with vertices equal to the $\mu_i$, and we say that the set $M = \{
\mu_1, \ldots, \mu_m \}$ {\em spans} $P$.

The remaining arcs in $\fT$ between the $\mu_i$ form a partial
triangulation $\fT_P$ of $P$, and we say that {\em $\fT$ restricts to
$\fT_P$}.  If $\fT$ is a triangulation of $D_n$ then $\fT_P$ is a
triangulation of $P$.

The following special cases will play a prominent role.
\begin{enumerate}
\setlength\itemsep{4pt}

\item Let $\J$ be an interval of the boundary of $D_n$.  If $a < d$
  are such that $\{ a^{\J},d^{\J} \} \in \fT$ or $\{ a^{\J},d^{\J} \}$
  is an edge, then the set of vertices $\{ a^{\J}, \ldots, d^{\J} \}$
  is compatible with $\fT$ and spans a finite polygon $P$ called {\em
    the polygon below $\{ a^{\J},d^{\J} \}$}.  See the left part of
  Figure \ref{fig:PP} where $n = 4$ and $\J = \I$.

\item Let $\J$ and $\K$ be distinct intervals of the boundary of
  $D_n$.  If $a \leqslant d$ and $u \leqslant x$ are such that $\{ a^{
    \J },x^{ \K } \}, \{ d^{ \J },u^{ \K } \} \in \fT$ then the set of
  vertices $a^{ \J }, \ldots, d^{ \J }, u^{ \K }, \ldots, x^{ \K}$ is
  compatible with $\fT$ and spans a finite polygon $R$ called {\em the
    polygon between $\{ a^{ \J },x^{ \K } \}$ and $\{ d^{ \J },u^{ \K
    } \}$}.  See the right part of Figure \ref{fig:PP} where $n = 4$,
  $\J = \I$, $\K = \III$.

\end{enumerate}
\begin{figure}
  \centering
  \begin{tabular}[h]{ccc}
      \begin{tikzpicture}[scale=3]

        \draw (0,0) circle (1cm);

        \draw (45:1cm) node[fill=white,circle,inner sep=0.059cm] {} circle (0.03cm);
        \draw (135:1cm) node[fill=white,circle,inner sep=0.059cm] {} circle (0.03cm);
        \draw (225:1cm) node[fill=white,circle,inner sep=0.059cm] {} circle (0.03cm);
        \draw (315:1cm) node[fill=white,circle,inner sep=0.059cm] {} circle (0.03cm);

        \draw (64:0.97cm) -- (64:1.03cm);
        \draw (64:1.13cm) node{$\scriptstyle a$};
        \draw (78:0.97cm) -- (78:1.03cm);
        \draw (78:1.13cm) node{$\scriptstyle a+1$};
        \draw (88:1.12cm) node{$\cdot$};
        \draw (90:1.12cm) node{$\cdot$};
        \draw (92:1.12cm) node{$\cdot$};
        \draw (102:0.97cm) -- (102:1.03cm);
        \draw (102:1.13cm) node{$\scriptstyle d-1$};
        \draw (116:0.97cm) -- (116:1.03cm);
        \draw (116:1.13cm) node{$\scriptstyle d$};

        \draw (52.5:1cm) node[fill=white,rectangle,inner sep=0.07cm] {$\scriptstyle \I$};
        \draw (180:1cm) node[fill=white,rectangle,inner sep=0.07cm] {$\scriptstyle \II$};
        \draw (270:1cm) node[fill=white,rectangle,inner sep=0.07cm] {$\scriptstyle \III$};
        \draw (360:1cm) node[fill=white,rectangle,inner sep=0.07cm] {$\scriptstyle \IV$};

        \draw[white] (-64:1.13cm) node{$\scriptstyle x$};
        \draw[white] (-78:1.13cm) node{$\scriptstyle x-1$};
        \draw[white] (-88:1.12cm) node{$\cdot$};
        \draw[white] (-90:1.12cm) node{$\cdot$};
        \draw[white] (-92:1.12cm) node{$\cdot$};
        \draw[white] (-102:1.13cm) node{$\scriptstyle u+1$};
        \draw[white] (-116:1.13cm) node{$\scriptstyle u$};

        \draw (64:1cm) .. controls (80:0.65cm) and (100:0.65cm) .. (116:1cm);

        \draw (90:0.85cm) node{$P$};

      \end{tikzpicture} 
    & \;\; &
      \begin{tikzpicture}[scale=3]

        \draw (0,0) circle (1cm);

        \draw (45:1cm) node[fill=white,circle,inner sep=0.059cm] {} circle (0.03cm);
        \draw (135:1cm) node[fill=white,circle,inner sep=0.059cm] {} circle (0.03cm);
        \draw (225:1cm) node[fill=white,circle,inner sep=0.059cm] {} circle (0.03cm);
        \draw (315:1cm) node[fill=white,circle,inner sep=0.059cm] {} circle (0.03cm);

        \draw (64:0.97cm) -- (64:1.03cm);
        \draw (64:1.13cm) node{$\scriptstyle a$};
        \draw (78:0.97cm) -- (78:1.03cm);
        \draw (78:1.13cm) node{$\scriptstyle a+1$};
        \draw (88:1.12cm) node{$\cdot$};
        \draw (90:1.12cm) node{$\cdot$};
        \draw (92:1.12cm) node{$\cdot$};
        \draw (102:0.97cm) -- (102:1.03cm);
        \draw (102:1.13cm) node{$\scriptstyle d-1$};
        \draw (116:0.97cm) -- (116:1.03cm);
        \draw (116:1.13cm) node{$\scriptstyle d$};

        \draw (-64:0.97cm) -- (-64:1.03cm);
        \draw (-64:1.13cm) node{$\scriptstyle x$};
        \draw (-78:0.97cm) -- (-78:1.03cm);
        \draw (-78:1.13cm) node{$\scriptstyle x-1$};
        \draw (-88:1.12cm) node{$\cdot$};
        \draw (-90:1.12cm) node{$\cdot$};
        \draw (-92:1.12cm) node{$\cdot$};
        \draw (-102:0.97cm) -- (-102:1.03cm);
        \draw (-102:1.13cm) node{$\scriptstyle u+1$};
        \draw (-116:0.97cm) -- (-116:1.03cm);
        \draw (-116:1.13cm) node{$\scriptstyle u$};

        \draw (-64:1cm) .. controls (0.2,-0.3) and (0.2,0.3) .. (64:1cm);
        \draw (-116:1cm) .. controls (-0.2,-0.3) and (-0.2,0.3) .. (116:1cm);

        \draw (0,0) node{$R$};

        \draw (52.5:1cm) node[fill=white,rectangle,inner sep=0.07cm] {$\scriptstyle \I$};
        \draw (180:1cm) node[fill=white,rectangle,inner sep=0.07cm] {$\scriptstyle \II$};
        \draw (233.5:1cm) node[fill=white,rectangle,inner sep=0.07cm] {$\scriptstyle \III$};
        \draw (360:1cm) node[fill=white,rectangle,inner sep=0.07cm] {$\scriptstyle \IV$};

    \end{tikzpicture} 
  \end{tabular}
  \caption{There is a finite polygon $P$ below the arc $\{
    a^{\I},d^{\I} \}$.  The vertices of $P$ are $a^{\I}$, $( a+1
    )^{\I}$, $\ldots$, $( d-1 )^{\I}$, $d^{\I}$.  Among them, $( a+1
    )^{\I}$, $( a+2 )^{\I}$, $\ldots$, $( d-2 )^{\I}$, $( d-1 )^{\I}$
    are said to be strictly below $\{ a^{\I},d^{\I} \}$.  There is
    similar terminology for the finite polygon $R$, see Definitions
    \ref{def:restriction} and \ref{def:below}.}
\label{fig:PP}
\end{figure}
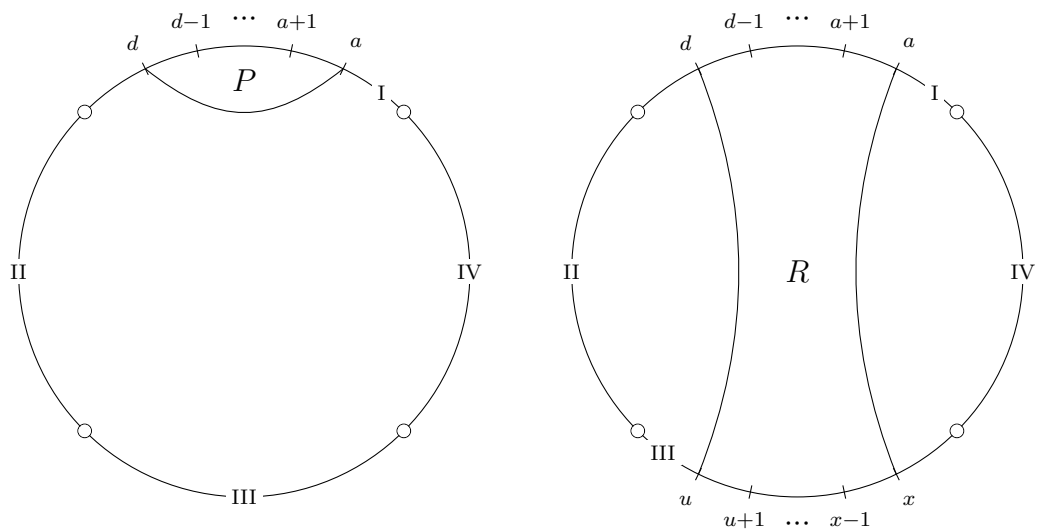
\end{Definition}

\begin{Lemma}
\label{lem:good}
Let $\fT$ be a good triangulation of $D_n$ and let $N$ be a finite set
of vertices of $D_n$.  Then there exists a finite set $M$ of vertices
such that $N \subseteq M$ and $M$ is compatible with $\fT$.

The set $M$ spans a finite polygon $P$, and $\fT$ restricts to
a triangulation $\fT_P$ of $P$.
\end{Lemma}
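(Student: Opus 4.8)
The plan is to exploit the only non-obvious hypothesis available, namely that $\fT$ blocks every accumulation point, in order to fence off a finite sub-region of $D_n$ containing $N$. First I would fix, for each accumulation point $\xi$ of $D_n$, a \emph{single} arc $\alpha_{\xi} \in \fT$ from the blocking configuration at $\xi$, chosen with both endpoints so close to $\xi$ that $\alpha_{\xi}$ separates $\xi$ from all of $N$. Concretely, Definition \ref{def:block} supplies arcs $\{ \iota_i,\kappa_i \} \in \fT$ for all $i \geqslant 1$, with $\iota_i$ in one interval $\J$ neighbouring $\xi$, with $\kappa_i$ in the other interval $\K$ neighbouring $\xi$, and with $\iota_i \to \xi$ and $\kappa_i \to \xi$ from opposite sides. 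Since $N$ is finite, taking $i$ large enough forces $\iota_i$ to lie (on $C$, moving toward $\xi$) past every vertex of $N$ that is in $\J$, and simultaneously forces $\kappa_i$ to lie past every vertex of $N$ that is in $\K$; set $\alpha_{\xi} := \{ \iota_i,\kappa_i \}$ for such an $i$. These finitely many arcs $\alpha_{\xi}$ are pairwise non-crossing for free, since they all belong to the triangulation $\fT$.

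Next I would build $M$ interval by interval. Each interval $\J$ of the boundary of $D_n$ is bounded by two accumulation points $\xi$ and $\xi'$, and exactly one endpoint of $\alpha_{\xi}$ and one endpoint of $\alpha_{\xi'}$ lie in $\J$; choosing the indices $i$ above large enough we may moreover assume that between these two endpoints, \emph{within} $\J$, there lie only finitely many vertices, and that this finite stretch contains every vertex of $N$ that is in $\J$. Let $M_{\J}$ be that finite stretch of vertices of $\J$ (the two endpoints included), and put $M := \bigcup_{\J} M_{\J}$, the union over all intervals. Then $M$ is finite, and $N \subseteq M$ because every vertex lies in some interval and $N \cap \J \subseteq M_{\J}$ for each $\J$.

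It remains to check that $M$ is compatible with $\fT$. List the elements of $M$ in the cyclic order of $C$. Inside a single interval $\J$, the vertices of $M_{\J}$ are consecutive vertices of $D_n$, so each consecutive pair among them is an edge of $D_n$. The only other kind of cyclically consecutive pair consists of the last vertex of $M$ before some accumulation point $\xi$ together with the first vertex of $M$ after $\xi$; by construction the former is the endpoint of $\alpha_{\xi}$ lying in the interval preceding $\xi$ (it is the vertex of that $M_{\J}$ closest to $\xi$, precisely because $\alpha_{\xi}$'s endpoint there was pushed past all vertices of $N$), and the latter is the endpoint of $\alpha_{\xi}$ lying in the interval following $\xi$. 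Hence that pair is exactly $\alpha_{\xi} \in \fT$. So every cyclically consecutive pair of vertices of $M$ is an edge of $D_n$ or an arc in $\fT$, i.e.\ $M$ is compatible with $\fT$; it therefore spans a finite polygon $P$. Finally, that $\fT$ restricts to a \emph{triangulation} $\fT_P$ of $P$ is recorded in Definition \ref{def:restriction}; the short reason is that any diagonal of $P$ which crosses no arc of $\fT_P$ crosses no arc of $\fT$ at all (a crossing arc of $\fT$ would have an endpoint strictly between two cyclically consecutive vertices of $M$, forcing it to cross the edge or $\fT$-arc joining them), so maximality of $\fT$ places that diagonal in $\fT$, hence in $\fT_P$.

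The substance of the argument is entirely in the first two steps; the main obstacle is bookkeeping rather than anything conceptual. One has to be careful about the directions in which the vertex sequences in the various intervals converge to the accumulation points, so that the phrase ``$\alpha_{\xi}$ separates $\xi$ from $N$'' is stated correctly and so that each stretch $M_{\J}$ is genuinely finite (it is the ``short'' arc of $\J$ between two of its vertices, not the one running through an accumulation point). What makes the proof painless is that the finitely many chosen arcs $\alpha_{\xi}$ automatically form a non-crossing family, so no compatibility between the choices at different accumulation points needs to be engineered beyond taking the indices large; nothing is used except the blocking hypothesis and the maximality of $\fT$.
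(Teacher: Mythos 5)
Your proof is correct and takes essentially the same route as the paper: both use goodness of $\fT$ to choose arcs of $\fT$ arbitrarily close to each accumulation point and then take the finite runs of consecutive vertices between their endpoints, so that $N \subseteq M$ and every cyclically consecutive pair of $M$ is an edge or one of the chosen arcs. The only difference is cosmetic — the paper phrases the construction as an anticlockwise traversal and simply cites Definition \ref{def:restriction} for the fact that $\fT$ restricts to a triangulation $\fT_P$ of $P$, a point you additionally justify.
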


\begin{proof}
Consider the following construction of a set $M$ of vertices
compatible with $\fT$:

Start by including in $M$ a vertex in interval $\I$.  Move
anticlockwise around $D_n$ and include in $M$ the vertices in interval
$\I$ encountered.  End with a vertex linked to interval $\II$ by an
arc in $\fT$.

Continue by including in $M$ the vertex at the other end of this arc.
Move anticlockwise around $D_n$ and include in $M$ the vertices in
interval $\II$ encountered.  End with a vertex linked to interval
$\III$ by an arc in $\fT$.

Continue in the same fashion, thereby defining a set of vertices $M$
as shown in Figure \ref{fig:good}.
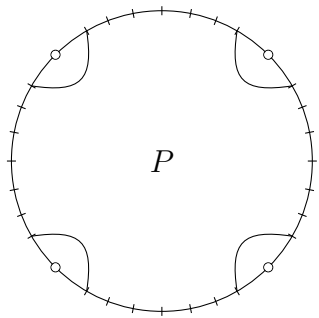
\begin{figure}
  \centering
    \begin{tikzpicture}[scale=2]
      \draw (0,0) circle (1cm);

      \draw (45:1cm) node[fill=white,circle,inner sep=0.037cm] {} circle (0.03cm);
      \draw (135:1cm) node[fill=white,circle,inner sep=0.037cm] {} circle (0.03cm);
      \draw (225:1cm) node[fill=white,circle,inner sep=0.037cm] {} circle (0.03cm);
      \draw (315:1cm) node[fill=white,circle,inner sep=0.037cm] {} circle (0.03cm);

      \draw (-30:0.97cm) -- (-30:1.03cm);
      \draw (-21:0.97cm) -- (-21:1.03cm);
      \draw (-11:0.97cm) -- (-11:1.03cm);
      \draw (0:0.97cm) -- (0:1.03cm);
      \draw (11:0.97cm) -- (11:1.03cm);
      \draw (21:0.97cm) -- (21:1.03cm);
      \draw (30:0.97cm) -- (30:1.03cm);

      \draw (60:0.97cm) -- (60:1.03cm);
      \draw (69:0.97cm) -- (69:1.03cm);
      \draw (79:0.97cm) -- (79:1.03cm);
      \draw (90:0.97cm) -- (90:1.03cm);
      \draw (101:0.97cm) -- (101:1.03cm);
      \draw (111:0.97cm) -- (111:1.03cm);
      \draw (120:0.97cm) -- (120:1.03cm);

      \draw (150:0.97cm) -- (150:1.03cm);
      \draw (159:0.97cm) -- (159:1.03cm);
      \draw (169:0.97cm) -- (169:1.03cm);
      \draw (180:0.97cm) -- (180:1.03cm);
      \draw (191:0.97cm) -- (191:1.03cm);
      \draw (201:0.97cm) -- (201:1.03cm);
      \draw (210:0.97cm) -- (210:1.03cm);

      \draw (240:0.97cm) -- (240:1.03cm);
      \draw (249:0.97cm) -- (249:1.03cm);
      \draw (259:0.97cm) -- (259:1.03cm);
      \draw (270:0.97cm) -- (270:1.03cm);
      \draw (281:0.97cm) -- (281:1.03cm);
      \draw (291:0.97cm) -- (291:1.03cm);
      \draw (300:0.97cm) -- (300:1.03cm);

      \draw (300:1cm) .. controls (310:0.7cm) and (320:0.7cm) .. (-30:1cm);
      \draw (30:1cm) .. controls (40:0.7cm) and (50:0.7cm) .. (60:1cm);
      \draw (120:1cm) .. controls (130:0.7cm) and (140:0.7cm) .. (150:1cm);
      \draw (210:1cm) .. controls (220:0.7cm) and (230:0.7cm) .. (240:1cm);

      \draw (0,0) node{$P$};

    \end{tikzpicture} 
  \caption{The four arcs are elements of a good triangulation $\fT$
      of $D_4$, so the set $M$ of vertices shown in the figure is
      compatible with $\fT$.  The set $M$ spans a finite
      polygon $P$.  The four arcs can be viewed as four of the edges
      of $P$, and $\fT$ restricts to a triangulation $\fT_P$ of $P$.}
\label{fig:good}
\end{figure}
The set $M$ spans a finite polygon $P$ and $\fT$ restricts to a
triangulation $\fT_P$ of $P$; see Definition \ref{def:restriction}.

This proves the lemma since we can always accomplish $N \subseteq M$
by making $M$ sufficiently big.  Namely, $N$ is finite, and the arcs
which link different intervals in the construction of $M$ can be
chosen arbitrarily close to the accumulation points because $\fT$ is
good.
\end{proof}

\section{Conway--Coxeter counting}
\label{sec:CC}

\begin{Definition}
[Conway--Coxeter counting]
\label{def:CC}
Let $P$ be a finite polygon with a triangulation $\fS$ and fix a
vertex $\mu$ of $P$.  The following procedure is due to
\cite[(32)]{CC2}, see also \cite[sec.\ 2]{BCI}.  We will refer to it
as {\em Conway--Coxeter counting}:

Each vertex of $P$ is assigned a non-negative integer by the following
inductive procedure.  The vertex $\mu$ is assigned $0$.  If $\{
\mu,\nu \}$ is an edge or an arc in $\fS$, then the vertex $\nu$
is assigned $1$.  If there is a triangle in $\fS$ of which only two
vertices, say $\pi$ and $\rho$, have been assigned integers, say $a$
and $b$, then the third vertex is assigned $a+b$.

We let $\fS( \mu,\nu )$ denote the integer assigned to vertex $\nu$.
\end{Definition}

\begin{Remark}
\label{rmk:CC}
Let $\fT$ be a good triangulation of $D_n$ and let $\mu$, $\nu$ be
vertices of $D_n$.

By Lemma \ref{lem:good}, we can pick a finite set of vertices $M$ such
that $\mu, \nu \in M$ and such that $M$ is compatible with $\fT$ in
the sense of Definition \ref{def:restriction}.  The set $M$ spans a
finite polygon $P$, and $\fT$ restricts to a triangulation $\fT_P$ of
$P$, so Conway--Coxeter counting defines a non-negative integer
$\fT_P( \mu,\nu )$.

It is easy to see that $\fT_P( \mu,\nu )$ does not depend on the
choice of vertex set $M$.  Indeed, $\fT_P( \mu,\nu )$ can be computed
by following the inductive procedure of Definition \ref{def:CC} on
$\fT$ itself.  Accordingly, we drop the subscript $P$ and write
$\fT( \mu,\nu )$.
\end{Remark}

\begin{Lemma}
[Basic properties of Conway--Coxeter counting]
\label{lem:CC}
Let $\fT$ be a triangulation of a finite polygon $P$ or a good
triangulation of $D_n$.  Then Conway--Coxeter counting has the
following properties. 
\begin{enumerate}
\setlength\itemsep{4pt}

  \item  Each $\fT( \mu,\nu )$ is a well-defined non-negative
    integer. 

  \item  $\fT( \mu,\nu ) = 0$ if and only if $\mu = \nu$. 

  \item  $\fT( \mu,\nu ) = 1$ if and only if $\mu$ and $\nu$ are
    consecutive vertices or $\{ \mu,\nu \} \in \fT$.  

  \item  $\fT( \mu,\nu ) = \fT( \nu,\mu )$.

  \item If the arcs $\{ \mu,\nu \}$ and $\{
    \pi,\rho \}$ cross, then we have the following Ptolemy relation
    illustrated by Figure \ref{fig:Ptolemy}.
\[
  \fT( \mu,\nu )\fT( \pi,\rho )
  = \fT( \mu,\pi )\fT( \nu,\rho )
    + \fT( \mu,\rho )\fT( \nu,\pi ).
\]
\begin{figure}
  \centering
    \begin{tikzpicture}[scale=2]
      \draw (0,0) circle (1cm);

      \draw (-20:0.97cm) -- (-20:1.03cm);
      \draw (-20:1.13cm) node{$\rho$};
      \draw (69:0.97cm) -- (69:1.03cm);
      \draw (69:1.13cm) node{$\mu$};
      \draw (179:0.97cm) -- (179:1.03cm);
      \draw (179:1.13cm) node{$\pi$};
      \draw (291:0.97cm) -- (291:1.03cm);
      \draw (291:1.13cm) node{$\nu$};

      \draw (-20:1cm) .. controls (-15:0.5cm) and (174:0.5cm) .. (179:1cm);
      \draw (69:1cm) .. controls (74:0.5cm) and (286:0.5cm) .. (291:1cm);

      \draw[dashed] (-20:1cm) .. controls (0:0.6cm) and (49:0.6cm) .. (69:1cm);
      \draw[dashed] (69:1cm) .. controls (74:0.6cm) and (174:0.6cm) .. (179:1cm);
      \draw[dashed] (179:1cm) .. controls (184:0.6cm) and (286:0.6cm) .. (291:1cm);
      \draw[dashed] (291:1cm) .. controls (296:0.8cm) and (-25:0.8cm) .. (-20:1cm);

    \end{tikzpicture} 
  \caption{The arcs $\{ \mu,\nu \}$ and $\{ \pi,\rho \}$ cross since
      $\mu < \pi < \nu < \rho$.  The crossing gives the Ptolemy
      relation $\fT( \mu,\nu )\fT( \pi,\rho ) = \fT( \mu,\pi )\fT(
      \nu,\rho ) + \fT( \mu,\rho )\fT( \nu,\pi )$.}
\label{fig:Ptolemy}
\end{figure}
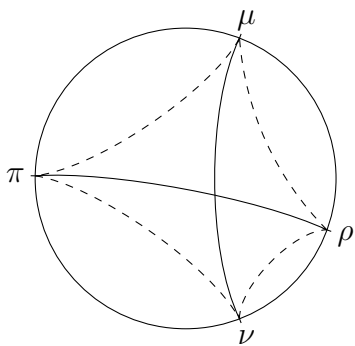

  \item  If $\mu^-, \mu, \mu^+$ are three consecutive vertices
    then  
\[
  \fT( \mu^-,\mu^+ ) = 1 + (\mbox{the number of arcs in $\fT$
    ending at $\mu$}).
\]

\end{enumerate}
\end{Lemma}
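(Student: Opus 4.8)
The plan is to reduce every one of (i)--(vi) to the case where the ambient object is a finite polygon rather than $D_n$, and to settle that case by the classical theory of Conway--Coxeter friezes. Fix the vertices that occur in the assertion to be proved ($\mu,\nu$ for (i)--(iv); additionally $\pi,\rho$ for (v); or $\mu^-,\mu,\mu^+$ for (vi)) and let $N$ be the finite set they span. By Lemma \ref{lem:good} there is a finite set $M\supseteq N$ compatible with $\fT$, spanning a finite polygon $P$ to which $\fT$ restricts as a triangulation $\fT_P$, and by Remark \ref{rmk:CC} one has $\fT(\alpha,\beta)=\fT_P(\alpha,\beta)$ for all $\alpha,\beta\in M$. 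Under this restriction one checks: two vertices of $M$ are consecutive in $P$ exactly when they are consecutive in $D_n$ or linked by an arc of $\fT$; a pair of vertices of $M$ is a diagonal of $\fT_P$ exactly when it is an arc of $\fT$ (and, since the two $P$-edges at $\mu$ are the genuine $D_n$-edges $\{\mu^-,\mu\}$ and $\{\mu,\mu^+\}$, such a pair is never a pair of $D_n$-consecutive vertices); and crossing of arcs in $D_n$ coincides with crossing of diagonals of $P$. Hence (i)--(v) for $\fT$ follow from the corresponding statements for $\fT_P$, with the only subtlety that in (iii) one must unwind ``consecutive in $P$'' into ``consecutive in $D_n$, or joined to $\mu$ by an arc of $\fT$''.

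\textbf{The finite-polygon case.} For a triangulation $\fS$ of a finite polygon, properties (i)--(v) are precisely the classical statements about the frieze pattern of a triangulated polygon, and I would cite \cite{CC1}, \cite{CC2}, \cite{BCI}. If a self-contained treatment of (i)--(iii) is wanted, it is a short induction on the number of triangles: a triangulation with at least two triangles has at least two ears, so it has an ear whose tip $\lambda$ is distinct from $\mu$; deleting $\lambda$ leaves the counting on the other vertices unchanged (the triangle at $\lambda$ is a leaf of the dual tree, so it is neither used nor affected) while $\fS(\mu,\lambda)=\fS(\mu,\lambda^-)+\fS(\mu,\lambda^+)$, which is positive, so the counting is well-defined, non-negative, and zero only at $\mu$ --- giving (i) and (ii); and since any vertex that is neither $\mu$-consecutive in $P$ nor joined to $\mu$ by a diagonal of $\fS$ acquires its label as a sum of two labels each $\geq 1$, only such vertices can carry the label $1$, which is (iii). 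For (vi) in the finite case, use (i) to run the counting from $\mu^-$ with the fan at $\mu$ processed first: if $d$ is the number of diagonals of $\fS$ ending at $\mu$, they and the two edges at $\mu$ split the angle at $\mu$ into triangles $\{\mu,w_{i-1},w_i\}$ for $i=1,\dots,d+1$, with $w_0=\mu^-$ and $w_{d+1}=\mu^+$; the labels come out as $w_0\mapsto 0$, $\mu\mapsto 1$, and then $w_i\mapsto i$, so $\fS(\mu^-,\mu^+)=d+1$.

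\textbf{Symmetry, the Ptolemy relation, and finiteness in (vi).} For (iv) and (v) I would again cite \cite{CC1}, \cite{CC2}, \cite{BCI}; for a proof from scratch the cleanest route is the Broline--Crowe--Isaacs interpretation of $\fS(\mu,\nu)$ as a count of matchings (equivalently, of perfect matchings of the snake graph of the arc $\{\mu,\nu\}$), from which symmetry is immediate (the combinatorial datum depends only on the unordered pair $\{\mu,\nu\}$) and the Ptolemy identity is the associated resolution/skein relation. I expect \emph{part (v) to be the main obstacle} in a ground-up argument: a naive ear-induction breaks down exactly when the four vertices $\mu,\nu,\pi,\rho$ already account for all ears of $\fS$ (a square with one diagonal is the smallest instance), so one must either strengthen the induction hypothesis to cover all Ptolemy configurations simultaneously or invoke the combinatorial model. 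Finally, the reduction of the first paragraph also shows that the right-hand side of (vi) is finite for a good triangulation of $D_n$: by (i) the left-hand side is a fixed integer, say $K$, and applying the finite-polygon (vi) to an $M$ chosen (via Lemma \ref{lem:good}) to contain any prescribed finite family of arcs ending at $\mu$ forces that family to have at most $K-1$ members; taking $M$ to contain all arcs of $\fT$ ending at $\mu$ then yields $\fT(\mu^-,\mu^+)=1+(\text{number of arcs of }\fT\text{ ending at }\mu)$, which is (vi).
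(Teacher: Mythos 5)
Your proposal is correct and follows essentially the same route as the paper: reduce to the finite-polygon case via Lemma \ref{lem:good} and Remark \ref{rmk:CC}, then settle that case by the classical Conway--Coxeter frieze results of \cite{CC1}, \cite{CC2}, \cite{BCI} (the paper additionally cites \cite{MG} for (v) and (vi)). Your extra material -- the ear-induction for (i)--(iii) and (vi), the matching model for (iv)--(v), and the argument that only finitely many arcs of $\fT$ end at a given vertex -- goes beyond the paper's two-line proof but is consistent with it and fills in details the paper leaves to the references.
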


\begin{proof}
By Remark \ref{rmk:CC}, the case of a good triangulation of $D_n$
reduces to the case of a triangulation of a finite polygon.  In this
case, all the properties are well-known; indeed, (i) through (iii) are
clear from Definition \ref{def:CC} and Remark \ref{rmk:CC}.  For (iv)
see \cite[cor.\ 1]{BCI}, for (v) see \cite[sec.\ 4.2]{MG}, and for (vi)
see (27) in \cite{CC1} and \cite{CC2} or \cite[thm.\ 4.3]{MG}.
\end{proof}

\section{$\SL2$-tilings in the abstract and $\SL2$-tilings arising
  from triangulations of the disc}
\label{sec:SL2}

\begin{Definition}
\label{def:tpq}
Let $A \subseteq \BZ \times \BZ$ be given.  A {\em partial
  $\SL2$-tiling defined on $A$} is a map $t : A \rightarrow \{
1,2,3,\ldots \}$ such that
\[
  \begin{vmatrix}
    t( i,j ) & t( i,j+1 ) \\
    t( i+1,j ) & t( i+1,j+1 )
  \end{vmatrix}
  = 1
\]
whenever the determinant makes sense.

The values of $t$ are called {\em entries} of the partial
$\SL2$-tiling.  We always write the entries $t( i,j )$ in matrix
style, so $i$ increases when we move down, $j$ increases when we move
right.  Compass directions and words like {\em row}, {\em column},
{\em first quadrant}, and {\em third quadrant} are to be interpreted
in this context, see Figure \ref{fig:coordinates}.
\begin{figure}
  \centering
    \begin{tikzpicture}[scale=1]

      \draw [blue!30, line width=8, dashed] (-4,1.6) -- (-3,1.6);
      \draw [blue!30, line width=8] (-3,1.6) -- (3,1.6);
      \draw [blue!30, line width=8, dashed] (3,1.6) -- (4,1.6);
      \draw (1.5,1.6) node{row};

      \draw [blue!30, line width=8, dashed] (-0.7,-4) -- (-0.7,-3);
      \draw [blue!30, line width=8] (-0.7,-3) -- (-0.7,3);
      \draw [blue!30, line width=8, dashed] (-0.7,3) -- (-0.7,4);
      \node[rotate=270] at (-0.7,-1.0) {column};

      \draw (2,0.7) node{first quadrant};
      \draw (-2.6,-0.7) node{third quadrant};

      \draw[->] (-4,0) -- (4,0);
      \draw[->] (0,4) -- (0,-4);
      \draw (0,-4.5) node{$i$};
      \draw (4.5,0) node{$j$};


      \draw[->] (2.5,2.5) -- (3.5,3.5);
      \draw (4,4) node{northeast};

      \draw[->] (2.5,-2.5) -- (3.5,-3.5);
      \draw (4,-4) node{southeast};

      \draw[->] (-2.5,-2.5) -- (-3.5,-3.5);
      \draw (-4,-4) node{southwest};

      \draw[->] (-2.5,2.5) -- (-3.5,3.5);
      \draw (-4,4) node{northwest};

    \end{tikzpicture} 
  \caption{When writing the entries $t( i,j )$ of an $\SL2$-tiling
    $t$, we use matrix style so $i$ increases downwards, $j$ increases
    to the right.}
\label{fig:coordinates}
\end{figure}
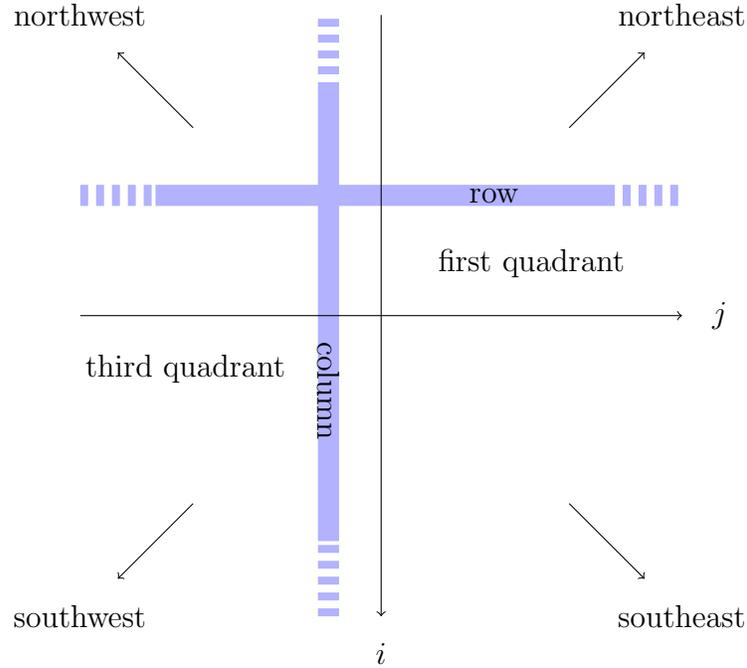

If $A = \BZ \times \BZ$ then $t$ is simply called an {\em
  $\SL2$-tiling}, see Figure \ref{fig:intro_tiling_new}.

If $A = \{ ( i,j ) \,\mid\, i < j \}$ and $t( i,i+1 ) = 1$ for each
$i$, then $t$ is called an {\em infinite frieze}, see Figure
\ref{fig:intro_tiling3}.  Infinite friezes were introduced in
\cite[def.\ 1.1]{T}.  Note that we index them differently from
\cite{T} to improve compatibility with $\SL2$-tilings.  When $t$ is an
infinite frieze, we set $t( i,i ) = 0$.  

If $A$ is a diagonal band running northwest to southeast and $t$ is
equal to $1$ on both edges of the band, then $t$ is called a {\em
  Conway--Coxeter frieze}.  These were introduced in \cite{CC1} and
\cite{CC2} where the band is typeset horizontally, that is, rotated by
$45$ degrees anticlockwise compared to our notation, see Figure
\ref{fig:frieze}.

By (21) in \cite{CC1} and \cite{CC2}, a Conway--Coxeter
frieze has a fundamental domain which is the restriction $t\!\mid_F$
of the frieze $t$ to a triangle $F$ as shown in Figure
\ref{fig:p_tiling}.  The lower edge of the frieze is the diagonal with
all entries equal to $1$.  The upper edge of the frieze contains the
$1$ at the upper right corner of the triangle.  The entries of the
whole frieze are obtained by tiling the diagonal band $A$ with translations of
$t\!\mid_F$ and its reflection in a line running northwest to southeast.
\end{Definition}

\begin{Setup}
\label{set:blanket}
Throughout, $t$ is an $\SL2$-tiling.  Recall from \cite[sec.\ 5]{HJ}
that there are associated infinite friezes $p$ and $q$ defined by 
\[
  p( a,d ) =
  \begin{vmatrix}
    t( a,w ) & t( a,w+1 ) \\
    t( d,w ) & t( d,w+1 )
  \end{vmatrix}
  \;\;,\;\;
  q( u,x ) =
  \begin{vmatrix}
    t( c,u ) & t( c,x ) \\
    t( c+1,u ) & t( c+1,x )
  \end{vmatrix}
\]
for integers $a \leqslant d$, $u \leqslant x$.  Note that the integers
$w$ and $c$ can be chosen freely by \cite[rmk.\ 5.2]{HJ}, and that $p(
a,d )$ and $q( u,x )$ are indeed positive for $a < d$ and $u < x$ by
\cite[prop.\ 5.6]{HJ}. 
\end{Setup}

\begin{Lemma}
\label{lem:Ptolemy}
We have the following Ptolemy relations.
\begin{enumerate}
\setlength\itemsep{4pt}

  \item  Let $a < b < c < d$ be integers.  Then
\[
  p( a,c )p( b,d ) = p( a,b )p( c,d ) + p( a,d )p( b,c )
  \; \mbox{ and } \;
  q( a,c )q( b,d ) = q( a,b )q( c,d ) + q( a,d )q( b,c ).
\]

  \item  Let $v$ and $a < b < c$ be integers.  Then
\[
  p( a,c )t( b,v ) = p( b,c )t( a,v ) + p( a,b )t( c,v )
  \; \mbox{ and } \;
  q( a,c )t( v,b ) = q( b,c )t( v,a ) + q( a,b )t( v,c ).
\]

  \item  Let $b < c$ and $v < w$ be integers.  Then
\[
  t( b,v )t( c,w ) = t( b,w )t( c,v ) + p( b,c )q( v,w ). 
\]

\end{enumerate}
\end{Lemma}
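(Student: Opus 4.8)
I want to prove Lemma \ref{lem:Ptolemy}, the three families of Ptolemy relations for the triple $(t,p,q)$ attached to an $\SL2$-tiling. The unifying idea is that $p$, $q$, and $t$ are all built out of $2\times 2$-minors (or $1\times 1$, $2\times 2$ mixed minors) of the same bi-infinite $\SL2$-matrix $t$, and that the identities in question are instances of the classical \emph{Desnanot--Jacobi} (Dodgson condensation) identity for determinants, possibly iterated or specialised. So the strategy is: (1) write each of $p$, $q$, $t$ as an explicit minor of a suitable submatrix of $t$ built from a few consecutive rows/columns; (2) invoke a Plücker/Desnanot--Jacobi relation among these minors; (3) check that the free indices ($w$ and $c$ in Setup \ref{set:blanket}) really can be chosen compatibly, which is guaranteed by \cite[rmk.\ 5.2]{HJ}.

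\textbf{Part (i).} Here only one infinite frieze is involved, say $p$; the statement for $q$ is symmetric (transpose the tiling). Fix any column index $w$ and consider the $4\times 2$-matrix with rows indexed by $a<b<c<d$ and columns $w,w+1$, with entries $t(\,\cdot\,,w)$ and $t(\,\cdot\,,w+1)$. By definition $p(i,j)$ is the $2\times 2$-minor of this matrix on rows $i,j$. The identity
\[
  p(a,c)p(b,d) = p(a,b)p(c,d) + p(a,d)p(b,c)
\]
is then exactly the three-term Plücker relation among the six maximal minors of a $2\times 4$-matrix (equivalently, the statement that the $2$-minors of a rank-$2$ matrix satisfy the Grassmann--Plücker syzygy). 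This needs no positivity or the $\SL2$-condition at all — it is a polynomial identity. The only subtlety is \emph{well-definedness}: $p(i,j)$ a priori depends on $w$, but \cite[rmk.\ 5.2]{HJ} says it does not, so we may compute all six minors using one and the same $w$, and the Plücker relation applies verbatim.

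\textbf{Parts (ii) and (iii).} For (ii), again fix $w$ and now form the $3\times 3$-matrix with rows $a<b<c$ and columns $v,w,w+1$; its entries are $t(\,\cdot\,,v),t(\,\cdot\,,w),t(\,\cdot\,,w+1)$. Laplace-expanding along the first column, or more precisely applying Desnanot--Jacobi to this $3\times 3$-matrix, expresses the full $3\times 3$-determinant two ways and yields precisely
\[
  p(a,c)\,t(b,v) = p(b,c)\,t(a,v) + p(a,b)\,t(c,v),
\]
once one observes that the $2\times 2$-minor on columns $w,w+1$ and rows $i,j$ is $p(i,j)$, and the cofactors of the $v$-column entries are $\pm t(i,v)$ times such minors, with signs working out as claimed because $a<b<c$. (One may alternatively use the well-known fact that the minors $p(i,j)$ together with the entries $t(i,v)$ satisfy the same recursions that govern Conway--Coxeter counting, and deduce (ii) from Lemma \ref{lem:CC}(v), but the direct determinantal route is cleaner.) For (iii), fix $v<w$ and consider the $2\times 2$-block structure: build the $3\times 3$-matrix whose rows are $b,c,\,c+1$ — no: the right object is the $2\times 2$-determinant
\[
  \begin{vmatrix} t(b,v) & t(b,w) \\ t(c,v) & t(c,w) \end{vmatrix}
\]
compared with $t(b,w)t(c,v)$; the difference $t(b,v)t(c,w)-t(b,w)t(c,v)$ must be shown to equal $p(b,c)q(v,w)$. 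The natural proof is to expand $t(b,v)t(c,w)-t(b,w)t(c,v)$ as a determinant and then telescope using the $\SL2$-condition row-by-row from $b$ to $c$ and column-by-column from $v$ to $w$; equivalently, recognise it as a ``long-range'' minor of $t$ that factors, via a Lindström--Gessel--Viennot / continued-fraction argument for totally-something matrices, into the product of the corresponding vertical minor $p(b,c)$ and horizontal minor $q(v,w)$.

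\textbf{Main obstacle.} Parts (i) and (ii) are essentially bookkeeping with Plücker/Desnanot--Jacobi and the well-definedness remark. The real content is part (iii): it is the statement that a ``rectangular'' relation across the tiling splits as a product of a purely vertical and a purely horizontal frieze value, and proving the factorisation cleanly — rather than by an opaque induction on $c-b$ and $w-v$ — is where care is needed. I expect the cleanest argument is an induction: fix $v<w$ and induct on $c-b\ge 1$; the base case $c=b+1$ is $t(b,v)t(b{+}1,w)-t(b,w)t(b{+}1,v)=q(v,w)$, which is just the definition of $q$ with $c:=b$ (using that $c$ is free), together with $p(b,b{+}1)=1$; the inductive step uses the $\SL2$-relations on rows $c,c{+}1$ together with the column-analogue of Lemma \ref{lem:Ptolemy}(ii) to rewrite $t(c{+}1,v)$ and $t(c{+}1,w)$, after which the identity for $c{-}b$ and the three-term relation $p(b,c{+}1)=p(b,c)p(c,c{+}1)+\cdots$ collapse everything. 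Throughout, the freedom in $w$ (for $p$) and in $c$ (for $q$) from \cite[rmk.\ 5.2]{HJ} is what lets us align all the auxiliary indices, and that is the hypothesis one must not forget to cite.
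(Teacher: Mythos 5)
Your route is necessarily different from the paper's, because the paper does not prove this lemma at all: it simply cites \cite{HJ}, propositions 5.4, 5.5 and 5.7. A self-contained determinantal proof is therefore a legitimate alternative, and your part (i) is complete as stated: the three-term Pl\"ucker relation for the $2\times 2$ minors $p(i,j)$ of the two columns $w,w+1$ gives exactly $p(a,c)p(b,d)=p(a,b)p(c,d)+p(a,d)p(b,c)$, and \cite[rmk.\ 5.2]{HJ} lets you compute all six minors with one $w$.

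Part (ii), however, has a genuine gap as written. Laplace expansion of your $3\times 3$ matrix $M$ (rows $a<b<c$, columns $v,w,w+1$) along the column $v$ gives
\[
  \det M \;=\; t(a,v)\,p(b,c)-t(b,v)\,p(a,c)+t(c,v)\,p(a,b),
\]
so the desired identity is equivalent to $\det M=0$, and nothing you have said forces this for a generic $w$: ``expressing the determinant two ways'' only ever reproduces $\det M$, and Desnanot--Jacobi applied to $M$ relates $\det M\cdot t(b,w)$ to the mixed minors on columns $v,w$ --- which are precisely the quantities evaluated in part (iii), so this is circular rather than a proof. The cheapest repair uses exactly the freedom you cite: take $w:=v$, so that $M$ has columns $v,v,v+1$, two of them equal, whence $\det M=0$ trivially and the expansion above is the identity (with $p$ computed at $w=v$, legitimate by \cite[rmk.\ 5.2]{HJ}). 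Alternatively, prove first that the tiling has rank two (every column is a linear combination of two consecutive columns; this needs the positivity of entries to propagate the relation $C_{j-1}+C_{j+1}=\lambda_j C_j$ along rows), which also gives part (iii) immediately by a change-of-basis comparison of the minors on rows $b,c$ and on rows $c,c+1$. For part (iii) as you set it up, drop the vague Lindstr\"om--Gessel--Viennot remark; your induction does work, but note that rewriting $t(c{+}1,v)$ uses the $p$-version of (ii) with rows $c-1,c,c+1$ (not the column-analogue), together with the recurrence $p(b,c{+}1)=p(c{-}1,c{+}1)p(b,c)-p(b,c{-}1)$ from (i), and it is a two-step induction needing both base cases $c=b$ and $c=b+1$; the latter is indeed the definition of $q$ with the free row index chosen to be $b$.
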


\begin{proof}
See \cite{HJ}, propositions 5.4, 5.5, and 5.7.
\end{proof}

\begin{Lemma}
\label{lem:finiteness_in_rows_and_colums}
Let $i$, $m$ be integers with $m > 0$.  The entry $m$ occurs only finitely many
times in each of the following (half-)rows and (half)-columns:
\[
  t( i,- )
  \;,\;
  t( -,i )
  \;,\;
  p( i,- )
  \;,\;
  p( -,i )
  \;,\;
  q( i,- )
  \;,\;
  q( -,i ).
\]
\end{Lemma}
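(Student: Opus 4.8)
The plan is to reduce the six assertions to a single statement about infinite friezes and then prove that statement by a lattice‑point argument.

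\emph{Reduction.} By Setup~\ref{set:blanket}, $p$ and $q$ are infinite friezes, so the half‑rows $p(i,-)$ and $q(i,-)$ are half‑rows of infinite friezes. For the half‑columns $p(-,i)$ and $q(-,i)$ I will use that if $f$ is an infinite frieze then $f^{\vee}(i,j):=f(-j,-i)$ (for $i<j$) is again an infinite frieze — the unit superdiagonal and the $2\times 2$‑determinant condition are invariant under the reflection $(i,j)\mapsto(-j,-i)$ of the index set — and that this reflection turns half‑columns of $f$ into half‑rows of $f^{\vee}$. For $t(i,-)$ I will apply Lemma~\ref{lem:Ptolemy}(iii) with $b=i$, $c=i+1$; since $p(i,i+1)=1$, this gives $q(v,w)<t(i,v)\,t(i+1,w)$ for all $v<w$, so for fixed $v$ the growth of the half‑row $q(v,-)$ forces $t(i+1,w)\to\infty$ as $w\to+\infty$. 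As $i$ is arbitrary, every row of $t$ tends to $\infty$ to the right; applying this to the $\SL2$‑tilings $(i,j)\mapsto t(-i,-j)$ and $(i,j)\mapsto t(j,i)$ (both are $\SL2$‑tilings, the determinant condition surviving these symmetries) handles the left‑hand tails and the columns $t(-,i)$. Hence it suffices to prove: in an infinite frieze $f$, for each integer $i$ and each $m>0$, only finitely many $j>i$ satisfy $f(i,j)=m$.

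\emph{The frieze statement.} Fix $i$ and, for $j\geq i+1$, put $V_j:=\bigl(f(i-1,j),\,f(i,j)\bigr)$, a point of $\BZ^2$ with both coordinates $\geq 1$. The $2\times 2$‑determinant condition for the adjacent rows $i-1,i$ and the adjacent columns $j,j+1$ reads $\det(V_j,V_{j+1})=1$. Since $V_j$ and $V_{j+1}$ lie in the open first quadrant and have determinant $+1$, the angle $\arg V_j\in(0,\pi/2)$ is strictly increasing in $j$; in particular $\arg V_j\geq\arg V_{i+1}>0$ for all $j\geq i+1$, and the points $V_j$ are pairwise distinct. Distinctness forces $|V_j|\to\infty$ as $j\to\infty$, since a bounded subsequence of lattice points would repeat a value. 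If $f(i,j)=m$ held for infinitely many $j$, then along those $j$ the second coordinate of $V_j$ would stay equal to $m$ while $|V_j|\to\infty$, so $f(i-1,j)\to\infty$ and hence $\arg V_j\to 0$ along that sequence, contradicting $\arg V_j\geq\arg V_{i+1}>0$. Thus $f(i,j)=m$ for only finitely many $j$, which completes the proof.

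\emph{Main obstacle.} The real work is the frieze statement, and the delicate point there is that one must exclude a fixed value $m$ from recurring infinitely often, which is strictly stronger than unboundedness because the half‑rows are not monotone; the determinant/angle argument is precisely what supplies this. The reduction step, by contrast, is bookkeeping, once one observes that the determinant condition is preserved by transposition and by the two coordinate reflections, and that the Ptolemy relation of Lemma~\ref{lem:Ptolemy}(iii) transfers the frieze growth for $q$ to the rows of $t$.
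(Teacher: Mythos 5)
Your proof is correct, but it takes a genuinely different route from the paper's. The paper disposes of $t(i,-)$ and $t(-,i)$ by citing \cite[prop.\ 6.1]{HJ}, and proves the four frieze statements by infinite descent: if $p(i,j)=p(i,k)=m$ with $j<k$, the Ptolemy relation of Lemma \ref{lem:Ptolemy}(i) yields $p(j,k)=m\big(p(i-1,j)-p(i-1,k)\big)>0$, so a value $m$ recurring in the half-row $p(i,-)$ forces a strictly decreasing sequence of positive integers in $p(i-1,-)$. Your lattice-point argument encodes the same mechanism geometrically: positivity of $\det(V_j,V_{j+1})$ says precisely that $f(i,j)/f(i-1,j)$ is strictly increasing in $j$, so a constant value $m$ in row $i$ forces strict descent in row $i-1$ --- indeed you could finish that way directly and skip the detour through pairwise distinctness and $|V_j|\to\infty$. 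What your approach buys is self-containedness and economy: you reduce all six assertions to one frieze lemma via the reflection $f^{\vee}(i,j)=f(-j,-i)$ and the symmetries of $\SL2$-tilings, and you reprove the $t(i,-)$, $t(-,i)$ cases (which the paper only cites) by combining Lemma \ref{lem:Ptolemy}(iii) with the divergence of $q(v,-)$; the paper instead handles $p(-,i)$, $q(i,-)$, $q(-,i)$ ``symmetrically'' without spelling the symmetries out, at the cost of an external reference for $t$. One small bookkeeping point: the two symmetries you list, $(i,j)\mapsto t(-i,-j)$ and $(i,j)\mapsto t(j,i)$, by themselves give only the left tails of rows and the downward tails of columns; for the upward tails of $t(-,i)$ you also need their composite $(i,j)\mapsto t(-j,-i)$, or equivalently you should apply the already established two-sided row statement to the transposed tiling. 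This is trivial to fix and does not affect correctness.
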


\begin{proof}
The statements about $t( i,- )$ and $t( -,i )$ are \cite[prop.\
6.1]{HJ}.  The proof of that result can be modified as follows to show 
the remaining statements:

Suppose that $i < j < k$ are integers with $p( i,j ) = p(
i,k ) = m$.  The Ptolemy relation \ref{lem:Ptolemy}(i) gives
\[
  p( i-1,j )p( i,k )
  = p( i-1,i )p( j,k ) + p( i-1,k )p( i,j )
  = p( j,k ) + p( i-1,k )p( i,j )
\]
whence
\[
  p( j,k )
  =
  \begin{vmatrix}
    p( i-1,j ) & p( i-1,k ) \\
    p( i,j )   & p( i,k )
  \end{vmatrix}
  =
  \begin{vmatrix}
    p( i-1,j ) & p( i-1,k ) \\
    m          & m
  \end{vmatrix}
  = m \cdot \big( p( i-1,j ) - p( i-1,k ) \big).
\]
Since $p( j,k ) > 0$ we learn $p( i-1,j ) > p( i-1,k )$.

Hence if there is a sequence of integers $i < j < k < \cdots$ with
$p( i,j ) = p( i,k ) = \cdots = m$, then $p( i-1,j ) > p( i-1,k ) >
\cdots$.  Since the entries of $p$ are non-negative, this implies that the
sequence is finite, so the half-row $p( i,- )$ has only finitely
many entries equal to $m$.

The remaining claims are proved symmetrically.
\end{proof}

\begin{Proposition}
\label{pro:tiling_from_fT}
Let $\fT$ be a good triangulation of $D_n$, the disc with $n$
accumulation points where $n \in \{ 2,3,4 \}$. 
\begin{enumerate}
\setlength\itemsep{4pt}

  \item  The map $( b,v ) \mapsto \fT( b^{\I},v^{\III} )$ is an
    $\SL2$-tiling. 

  \item  The map $( a,d ) \mapsto \fT( a^{\I},d^{\I} )$ is an infinite
    frieze defined in the half plane $\{ ( a,d ) \mid a \leqslant d
    \}$.

  \item  The map $( u,x ) \mapsto \fT( u^{\III},x^{\III} )$ is an
    infinite frieze defined in the half plane $\{ ( u,x ) \mid u
    \leqslant x \}$.

\end{enumerate}
\end{Proposition}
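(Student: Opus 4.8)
The plan is to verify each of the three claims by checking the defining properties of an $\SL2$-tiling (resp.\ infinite frieze) directly from Lemma \ref{lem:CC}. For part (i), I need to show that the map $t_{\fT}( b,v ) := \fT( b^{\I},v^{\III} )$ takes values in $\{1,2,3,\ldots\}$ and that every adjacent $2\times 2$ minor equals $1$. Positivity is immediate: since $b^{\I}$ and $v^{\III}$ lie in different intervals they are never equal and never consecutive, so by Lemma \ref{lem:CC}(i)--(ii) we get $\fT(b^{\I},v^{\III})\geqslant 1$. For the determinant condition, fix $b$ and $v$ and consider the four vertices $b^{\I} < (b+1)^{\I} < v^{\III} < (v+1)^{\III}$ in anticlockwise order on the boundary (using that $\I$ precedes $\III$). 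The arcs $\{b^{\I},v^{\III}\}$ and $\{(b+1)^{\I},(v+1)^{\III}\}$ cross in the sense of the crossing definition, so the Ptolemy relation Lemma \ref{lem:CC}(v) applies and yields
\[
  \fT(b^{\I},v^{\III})\,\fT((b{+}1)^{\I},(v{+}1)^{\III})
  = \fT(b^{\I},(b{+}1)^{\I})\,\fT(v^{\III},(v{+}1)^{\III})
    + \fT(b^{\I},(v{+}1)^{\III})\,\fT((b{+}1)^{\I},v^{\III}).
\]
Since $b^{\I}$ and $(b+1)^{\I}$ are consecutive vertices, $\fT(b^{\I},(b+1)^{\I})=1$ by Lemma \ref{lem:CC}(iii), and likewise $\fT(v^{\III},(v+1)^{\III})=1$. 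Rearranging, using the symmetry Lemma \ref{lem:CC}(iv) to match the sign convention of a $2\times 2$ determinant in matrix style, gives exactly
\[
  \begin{vmatrix}
    t_{\fT}(b,v) & t_{\fT}(b,v{+}1) \\
    t_{\fT}(b{+}1,v) & t_{\fT}(b{+}1,v{+}1)
  \end{vmatrix} = 1,
\]
which is the required condition. (One should double-check the orientation: in matrix style, moving down increases $b$ and moving right increases $v$, so the minor is $t_{\fT}(b,v)t_{\fT}(b{+}1,v{+}1) - t_{\fT}(b,v{+}1)t_{\fT}(b{+}1,v)$; the Ptolemy identity above produces precisely this.)

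For parts (ii) and (iii), which are symmetric, I treat (ii). I must show $f(a,d):=\fT(a^{\I},d^{\I})$ is an infinite frieze on $\{(a,d)\mid a\leqslant d\}$ in the sense of Definition \ref{def:tpq}, i.e.\ $f(a,a)=0$, $f(a,a+1)=1$, $f(a,d)\geqslant 1$ for $a<d$, and every $2\times 2$ minor that makes sense equals $1$. The first three are immediate from Lemma \ref{lem:CC}: $f(a,a)=\fT(a^{\I},a^{\I})=0$ by (ii), $f(a,a+1)=1$ because consecutive vertices by (iii), and $f(a,d)\geqslant 1$ for $a<d$ by (i). For the minor condition, fix $a<d$ and consider the four vertices $a^{\I} < (a+1)^{\I} < d^{\I} < (d+1)^{\I}$, all in interval $\I$; the arcs/diagonals $\{a^{\I},d^{\I}\}$ and $\{(a+1)^{\I},(d+1)^{\I}\}$ cross, so Lemma \ref{lem:CC}(v) gives a Ptolemy relation that, after using $\fT(a^{\I},(a+1)^{\I})=\fT(d^{\I},(d+1)^{\I})=1$ and symmetry, collapses to the statement that the relevant $2\times 2$ minor of $f$ is $1$ — exactly the same bookkeeping as in part (i). I should also note that by Remark \ref{rmk:CC} all the values $\fT(\mu,\nu)$ are well-defined for a good triangulation of $D_n$, so these computations are legitimate. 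Part (iii) is identical with $\III$ in place of $\I$.

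I expect the only genuine subtlety — not a deep obstacle, but the point requiring care — to be the bookkeeping of cyclic order and sign conventions: making sure that in each case the four vertices occur in the claimed anticlockwise order so that the two arcs really do cross in the sense of the crossing definition, and that the three Ptolemy products match up with the $+\,t_{\fT}(b,v)t_{\fT}(b{+}1,v{+}1)$ versus $-\,t_{\fT}(b,v{+}1)t_{\fT}(b{+}1,v)$ pattern of the determinant written in matrix style (where rows go downward). Since $\I$ always precedes $\III$ in the cyclic order on the boundary of $D_n$, the order $a^{\I} < (a+1)^{\I} < \cdots$ on interval $\I$ followed by $u^{\III} < (u+1)^{\III} < \cdots$ on interval $\III$ is consistent with the anticlockwise convention, and this is what makes the crossing hypotheses hold; once that is pinned down, everything else is a one-line rearrangement.
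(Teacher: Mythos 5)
Your proposal is correct and follows essentially the same route as the paper: the determinant condition is obtained by applying the Ptolemy relation of Lemma \ref{lem:CC}(v) to the crossing pair $\{b^{\I},v^{\III}\}$, $\{(b+1)^{\I},(v+1)^{\III}\}$ (and its analogue inside interval $\I$ or $\III$), then using Lemma \ref{lem:CC}(iii) to evaluate the edge terms as $1$. The paper's proof is just this computation for (i) with "(ii) and (iii) are analogous"; your extra checks of positivity and the frieze boundary values via Lemma \ref{lem:CC}(i)--(iii) are fine and merely make explicit what the paper leaves implicit.
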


\begin{proof}
(i)  We have
\begin{align*}
  \lefteqn{
    \begin{vmatrix}
       \fT( b^{\I},v^{\III} ) & \fT\big( b^{\I},( v+1 )^{\III} \big) \\
       \fT\big( ( b+1 )^{\I},v^{\III} \big) & \fT\big( ( b+1 )^{\I},( v+1 )^{\III} \big) \\
    \end{vmatrix}
          } & \\[1.5mm]
  & \;\;\;\;\; = \fT( b^{\I},v^{\III} )\fT\big( ( b+1 )^{\I},( v+1 )^{\III} \big)
    - \fT\big( b^{\I},( v+1 )^{\III} \big)\fT\big( ( b+1 )^{\I},v^{\III} \big)\\
  & \;\;\;\;\; = \fT\big( b^{\I},( b+1 )^{\I} \big)\fT\big( v^{\III},( v+1 )^{\III} \big)\\
  & \;\;\;\;\; = 1,
\end{align*}
where the second equality is by the Ptolemy relation in Lemma
\ref{lem:CC}(v) and the last equality is by Lemma \ref{lem:CC}(iii).

(ii) and (iii) are analogous to (i).
\end{proof}

\begin{Remark}
As explained, the point of the paper is to show that {\em every}
$\SL2$-tiling arises as in Proposition \ref{pro:tiling_from_fT}(i).
\end{Remark}

\begin{Definition}
The $\SL2$-tiling $( b,v ) \mapsto \fT( b^{\I},v^{\III} )$ from
Proposition \ref{pro:tiling_from_fT}(i) is said to {\em arise from
  $\fT$ by Conway--Coxeter counting}.  It will be denoted $\Phi( \fT
)$. 
\end{Definition}

\begin{Lemma}
\label{lem:agree}
Let $t$ be an $\SL2$-tiling with associated infinite friezes $p$, $q$.  Let $\fT$ be a good triangulation of $D_n$.

On the one hand, assume $t = \Phi( \fT )$, that is, $t( b,v ) = \fT(
b^{\I},v^{\III} )$ for all integers $b$, $v$.  Then
\begin{enumerate}
\setlength\itemsep{4pt}

  \item  $p( b-1,b+1 ) = \fT\big( ( b-1 )^{\I},( b+1 )^{\I} \big)$ for
    each $b$,

  \item  $q( v-1,v+1 ) = \fT\big( ( v-1 )^{\III},( v+1 )^{\III} \big)$
    for each $v$.  

\end{enumerate}

On the other hand, assume that (i) and (ii) hold along with at least
one of the following two conditions.
\begin{itemize}
\setlength\itemsep{4pt}

  \item[(iii)]  There are integers $e < f$ and $g < h$ such that
\[
  t( e,g ) = \fT( e^{\I},g^{\III} )
  \;,\;
  t( f,g ) = \fT( f^{\I},g^{\III} )
  \;,\;
  t( f,h ) = \fT( f^{\I},h^{\III} ).
\]

  \item[(iii)']  There are integers $e < f$ and $g < h$ such that
\[
  t( e,h ) = \fT( e^{\I},h^{\III} )
  \;,\;
  t( f,g ) = \fT( f^{\I},g^{\III} )
  \;,\;
  t( f,h ) = \fT( f^{\I},h^{\III} ).
\]

\end{itemize}
Then $t = \Phi( \fT )$, that is, $t( b,v ) = \fT( b^{\I},v^{\III} )$
for all $b$ and $v$.
\end{Lemma}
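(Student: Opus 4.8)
The plan is to prove the two implications separately. For the first, assume $t = \Phi( \fT )$. To get (i), I would combine two ``row recurrences''. On the $t$-side, Lemma \ref{lem:Ptolemy}(ii) applied with $a = b-1 < b < b+1 = c$, together with $p( b-1,b ) = p( b,b+1 ) = 1$, gives $p( b-1,b+1 )\,t( b,v ) = t( b-1,v ) + t( b+1,v )$. On the $\fT$-side, the arcs $\{ ( b-1 )^{\I},( b+1 )^{\I} \}$ and $\{ b^{\I},v^{\III} \}$ cross, since $( b-1 )^{\I} < b^{\I} < ( b+1 )^{\I} < v^{\III}$, so Lemma \ref{lem:CC}(v) combined with Lemma \ref{lem:CC}(iii) gives $\fT\big( ( b-1 )^{\I},( b+1 )^{\I} \big)\,\fT( b^{\I},v^{\III} ) = \fT\big( ( b-1 )^{\I},v^{\III} \big) + \fT\big( ( b+1 )^{\I},v^{\III} \big)$. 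Since $t = \Phi( \fT )$ makes the two right-hand sides equal and $t( b,v ) > 0$, I can cancel to obtain (i); part (ii) is symmetric, using columns and $q$.

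For the converse, set $\hat{t} = \Phi( \fT )$, so $\hat{t}( b,v ) = \fT( b^{\I},v^{\III} )$, and note that by Proposition \ref{pro:tiling_from_fT} this is an $\SL2$-tiling whose associated infinite friezes $\hat{p}$, $\hat{q}$ satisfy $\hat{p}( b-1,b+1 ) = \fT\big( ( b-1 )^{\I},( b+1 )^{\I} \big)$ and $\hat{q}( v-1,v+1 ) = \fT\big( ( v-1 )^{\III},( v+1 )^{\III} \big)$ by the first implication applied to $\hat{t}$. Hypotheses (i) and (ii) then say exactly that $p$ and $\hat{p}$ have the same quiddity sequence, as do $q$ and $\hat{q}$; since the quiddity sequence determines an infinite frieze (\cite[rmk.\ 1.3]{T}), I conclude $p = \hat{p}$ and $q = \hat{q}$. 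Now I would exploit the Ptolemy relation Lemma \ref{lem:Ptolemy}(iii), valid for $t$ with $p$, $q$ and for $\hat{t}$ with $\hat{p} = p$, $\hat{q} = q$, in the shape of a ``rectangle rule'': on the four positions $\{ b,c \} \times \{ v,w \}$ with $b < c$ and $v < w$, if $t$ and $\hat{t}$ agree at three of them they agree at the fourth, since the difference of the two Ptolemy identities leaves a single term $\big( t(\cdot) - \hat{t}(\cdot) \big)$ multiplied by a positive entry. Applying this once to the configuration in (iii) or in (iii') upgrades the three given agreements to agreement of $t$ and $\hat{t}$ at all four positions $( e,g )$, $( e,h )$, $( f,g )$, $( f,h )$.

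The remaining step spreads this agreement over all of $\BZ \times \BZ$ via the frieze recurrences. For fixed $b$, both $v \mapsto t( b,v )$ and $v \mapsto \hat{t}( b,v )$ satisfy $s_{v-1} + s_{v+1} = \fT\big( ( v-1 )^{\III},( v+1 )^{\III} \big)\,s_v$ for every $v$ --- for $t$ this is Lemma \ref{lem:Ptolemy}(ii) with hypothesis (ii), and $\hat{q} = q$ handles $\hat{t}$ --- and so does the auxiliary sequence $v \mapsto \fT( g^{\III},v^{\III} )$ for every $v \neq g$, by Lemma \ref{lem:CC}(v) (with the boundary cases $v = g \pm 1$ checked directly from Lemma \ref{lem:CC}(iii)); there is a mirror statement in the $b$-direction with $p( b-1,b+1 ) = \fT\big( ( b-1 )^{\I},( b+1 )^{\I} \big)$ and the auxiliary sequence $b \mapsto \fT( e^{\I},b^{\I} )$. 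The sub-lemma I would isolate is: a bi-infinite sequence $( y_v )$ obeying this recurrence for all $v$ and with $y_g = y_h = 0$ for some $g < h$ vanishes identically. To see it, note $w_v := y_v - y_{g+1}\,\fT( g^{\III},v^{\III} )$ vanishes at $v = g$ and $v = g+1$ and obeys the recurrence for $v \geqslant g+1$, hence $w_v = 0$ for all $v \geqslant g$; then $0 = w_h$ forces $y_{g+1} = 0$ because $\fT( g^{\III},h^{\III} ) \geqslant 1$ by Lemma \ref{lem:CC}(ii), and finally $y_g = y_{g+1} = 0$ propagates the recurrence both ways. Applying this to $v \mapsto t( j,v ) - \hat{t}( j,v )$ for $j \in \{ e,f \}$ gives agreement on the full rows $e$ and $f$; applying the mirror statement to $b \mapsto t( b,v ) - \hat{t}( b,v )$, which now vanishes at $b = e$ and $b = f$, for each fixed $v$ gives agreement everywhere, i.e.\ $t = \Phi( \fT )$. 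I expect this propagation to be the crux: the delicate point is that the two vanishing indices $g$, $h$ (and later $e$, $f$) need not be adjacent, so one cannot merely iterate the recurrence, and the auxiliary sequence $v \mapsto \fT( g^{\III},v^{\III} )$ --- which obeys the recurrence everywhere except at its own zero $v = g$ --- is precisely what lets one jump from index $g$ to index $h$.
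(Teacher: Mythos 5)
Your argument is correct, and its second half takes a genuinely different route from the paper's. The first implication and the opening move of the converse (conditions (i) and (ii) identify the quiddity sequences, hence $p$ and $q$ agree with the frieze data of $\Phi(\fT)$) coincide with the paper. From there the paper propagates differently: for each $b$ outside $\{e,f\}$ it writes the Ptolemy relation of Lemma \ref{lem:Ptolemy}(ii) for the triple $b,e,f$ against column $g$, writes the parallel relation of Lemma \ref{lem:CC}(v) in $\fT$, observes that five of the six factors already agree (the internal frieze values $p(b,e)$, $p(e,f)$, $p(b,f)$ being Ptolemy coefficients is what lets non-adjacent indices be bridged in one stroke), and cancels to get the sixth; this yields all of column $g$ and row $f$, after which it simply invokes the easy fact that an $\SL2$-tiling is determined by a row and a column, treating (iii) and (iii)$'$ symmetrically. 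You instead use the mixed relation Lemma \ref{lem:Ptolemy}(iii) as a ``rectangle rule'' to complete the four corners (which unifies (iii) and (iii)$'$), and then replace both the factor-matching and the row-plus-column fact by a single sub-lemma on the three-term recurrence $s_{v-1}+s_{v+1}=\fT\big((v-1)^{\III},(v+1)^{\III}\big)s_v$: a solution vanishing at two not-necessarily-adjacent indices vanishes identically, proved via the auxiliary solution $v\mapsto\fT(g^{\III},v^{\III})$, which plays the bridging role that the coefficients $p(b,e),p(e,f),p(b,f)$ play in the paper. Your version is a bit longer and re-proves what the paper quotes, but it is self-contained, avoids the case split $b<e$, $e<b<f$, $f<b$, and isolates a reusable uniqueness statement for solutions of the frieze recurrence; the paper's version is shorter and stays entirely inside the Ptolemy formalism. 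All the delicate points (the boundary cases $v=g\pm1$ of the auxiliary recurrence, positivity of $\fT(g^{\III},h^{\III})$, and the use of $\hat p=p$, $\hat q=q$ when applying Lemma \ref{lem:Ptolemy} to $\Phi(\fT)$ via Proposition \ref{pro:tiling_from_fT}) are handled correctly.
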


\begin{proof}
``On the one hand'':  Let $b$ be a given integer and pick an arbitrary
integer $v$.  Then 
\begin{align*}
  p( b-1,b+1 )t( b,v )
  & = p( b-1,b )t( b+1,v ) + p( b,b+1 )t( b-1,v ) \\
  & = t( b+1,v ) + t( b-1,v )
\end{align*}
where the first equality is by the Ptolemy relation in Lemma
\ref{lem:Ptolemy}(ii).  Moreover,
\begin{align*}
  \lefteqn{
    \fT\big( ( b-1 )^{\I},( b+1 )^{\I} \big)\fT( b^{\I},v^{\III} )
          }
  & \\
  & \;\;\;\; = \fT\big( ( b-1 )^{\I},b^{\I} \big)\fT\big( ( b+1 )^{\I},v^{\III} \big)
    + \fT\big( b^{\I},( b+1 )^{\I} \big)\fT\big( ( b-1 )^{\I},v^{\III} \big)\\
  & \;\;\;\; = \fT\big( ( b+1 )^{\I},v^{\III} \big)
    + \fT\big( ( b-1 )^{\I},v^{\III} \big)
\end{align*}
where the first equality is by the Ptolemy relation in Lemma \ref{lem:CC}(v)
and the second equality is by Lemma \ref{lem:CC}(iii).

By assumption, we have $t( c,v ) = \fT( c^{\I},v^{\III} )$ for each
$c$.  In particular this holds for $c$ equal to $b-1$, $b$, or
$b+1$.  The two displayed equations therefore combine to give $p(
b-1,b+1 ) = \fT\big( ( b-1 )^{\I},( b+1 )^{\I} \big)$.  This shows (i),
and (ii) follows by symmetry.

``On the other hand'': Consider the infinite friezes $p$ and $( a,d )
\mapsto \fT( a^{\I},d^{\I} )$, see Proposition
\ref{pro:tiling_from_fT}(ii).  Condition (i) says that they agree on
the diagonal $\{ (b-1,b+1 ) \mid b \in \BZ \}$.  However, it is easy
to see that an infinite frieze is determined entirely by its values on
this diagonal which is also known as its quiddity sequence; see
\cite[rmk.\ 1.3]{T}.  So the two infinite friezes agree:
\begin{equation}
\label{equ:agree_p}
  p( a,d ) = \fT( a^{\I},d^{\I} ) \mbox{ for } a \leqslant d.
\end{equation}
Similarly, condition (ii) implies
\begin{equation}
\label{equ:agree_q}
  q( u,x ) = \fT( u^{\III},x^{\III} ) \mbox{ for } u \leqslant x.
\end{equation}

Now suppose condition (iii) holds.  If $b < e$ then the Ptolemy
relation in \ref{lem:Ptolemy}(ii) says
\begin{equation}
\label{equ:Ptolemy1}
  p( b,f )t( e,g ) = p( b,e )t( f,g ) + p( e,f )t( b,g )
\end{equation}
while the Ptolemy relation in Lemma \ref{lem:CC}(v) says
\begin{equation}
\label{equ:Ptolemy2}
  \fT( b^{\I},f^{\I} )\fT( e^{\I},g^{\III} )
  = \fT( b^{\I},e^{\I} )\fT( f^{\I},g^{\III} )
    + \fT( e^{\I},f^{\I} )\fT( b^{\I},g^{\III} ).
\end{equation}
Equations \eqref{equ:agree_p} and \eqref{equ:agree_q} and condition
(iii) say that the first five of the six factors in Equation
\eqref{equ:Ptolemy1} are equal to the corresponding factors in Equation
\eqref{equ:Ptolemy2}.  Hence the last factors are also equal, so
\[
  t( b,g ) = \fT( b^{\I},g^{\III} ) \mbox{ for } b < e.
\]
Similar arguments can be applied for $e < b < f$ and $f < b$, and the
cases $b = e$ and $b = f$ are handled by condition (iii) itself, so we
get
\[
  t( b,g ) = \fT( b^{\I},g^{\III} ) \mbox{ for each } b.
\]

Similar arguments prove that
\[
  t( f,v ) = \fT( f^{\I},v^{\III} ) \mbox{ for each } v.
\]
Hence the $\SL2$-tilings $t$ and $( b,v ) \mapsto \fT( b^{\I},v^{\III}
)$ match on column number $g$ and row number $f$.  However, it is easy
to see that an $\SL2$-tiling is determined entirely by its values on a
column and a row, so the two $\SL2$-tilings must agree everywhere as
claimed. 

If condition (iii)' holds then we proceed symmetrically.
\end{proof}

\begin{Lemma}
\label{lem:zig-zag}
If $t$ has entries which are equal to $1$, then they occur on a
zig-zag as shown in Figure \ref{fig:zig-zag}.  The zig-zag is
bounded or unbounded to each side. 
\begin{figure}
  \centering
  \begin{tikzpicture}[xscale=0.4,yscale=0.4]


      \draw[->] (-8,0) -- (8,0);
      \draw[->] (0,8) -- (0,-8);
      \draw (0,-8.5) node{$i$};
      \draw (8.5,0) node{$j$};

    \draw [blue!30, line width=8, dashed] (-5,-7) -- (-5,-6);
    \draw [rounded corners, blue!30, line width=8] (-5,-6) -- (-5,-5)
    -- (-2,-5) -- (-2,-1) -- (-2,3) -- (0,3) -- (5,3) -- (5,5) -- (6,5);
    \draw [blue!30, line width=8, dashed] (6,5) -- (7,5);

    \draw (-5,-5) node {$\scriptstyle 1$};
    \draw (-2,-5) node {$\scriptstyle 1$};
    \draw (-2,-1) node {$\scriptstyle 1$};
    \draw (-2,3) node {$\scriptstyle1$};
    \draw (0,3) node {$\scriptstyle 1$};
    \draw (5,3) node {$\scriptstyle 1$};
    \draw (5,5) node {$\scriptstyle 1$};

  \end{tikzpicture} 
  \caption{The entries of $t$ which are equal to $1$ occur on a
    zig-zag which can be bounded or unbounded to each side.}
\label{fig:zig-zag}
\end{figure}

More formally, there is an interval $K \subseteq \BZ$ which is
bounded or unbounded to each side, along with coordinate pairs $(
b_k,v_k )$ for $k \in K$ such that the following hold.
\begin{enumerate}
\setlength\itemsep{4pt}

  \item  $t( b,v ) = 1$ if and only if $( b,v ) = ( b_k,v_k )$ for
    some $k \in K$. 

  \item  For each $k \neq \max(K)$, we have precisely one of the
    following two options:

\begin{enumerate}
\setlength\itemsep{4pt}

  \item  $b_{ k+1 } < b_{ k }$ and $v_{ k+1 } = v_{ k }$ or

  \item  $b_{ k+1 } = b_{ k }$ and $v_{ k+1 } > v_{ k }$.

\end{enumerate}

  \item  If $K$ is unbounded above, then there are infinitely many
    shifts between options (a) and (b) when $k$ increases.

  \item  If $K$ is unbounded below, then there are infinitely many
    shifts between options (a) and (b) when $k$ decreases.

\end{enumerate}
\end{Lemma}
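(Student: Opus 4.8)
The plan is to describe the set $S=\{(b,v)\mid t(b,v)=1\}$, assumed non-empty, as a ``staircase'': it carries a natural linear order under which it is order-isomorphic to an interval $K$ of $\BZ$, and consecutive elements differ in exactly one coordinate in the prescribed way. First I would establish an antichain property: there are no two distinct $(b,v),(c,w)\in S$ with $b<c$ and $v<w$. Indeed, the Ptolemy relation of Lemma \ref{lem:Ptolemy}(iii) would then give $1=t(b,v)t(c,w)=t(b,w)t(c,v)+p(b,c)q(v,w)\geqslant 1+1$, since $t(b,w),t(c,v)\geqslant 1$ and $p(b,c),q(v,w)\geqslant 1$ (Setup \ref{set:blanket}), a contradiction. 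Now order $S$ by declaring $(b,v)$ to precede $(c,w)$ iff $b>c$, or $b=c$ and $v<w$. Along this order the first coordinate is weakly decreasing (by definition) and the second is weakly increasing: if $b>c$ and we had $w<v$ then the pair $(c,w),(b,v)$ would violate the antichain property. Moreover any element of $S$ lying strictly between two comparable elements $(b,v)$ and $(c,w)$ lies in the finite box $[\min(b,c),\max(b,c)]\times[v,w]$, so only finitely many do; hence $S$ is order-isomorphic to an interval $K\subseteq\BZ$, and writing $S=\{(b_k,v_k)\mid k\in K\}$ with $k\mapsto(b_k,v_k)$ order-preserving immediately gives part (i), together with $b_{k+1}\leqslant b_k$ and $v_k\leqslant v_{k+1}$, not both equalities, for $k$ with $k+1\in K$.

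It remains for (ii) to exclude the ``diagonal step'' $b_{k+1}<b_k$ and $v_k<v_{k+1}$. If this occurred, then since $k$ and $k+1$ are consecutive and every intermediate point of $S$ lies in $[b_{k+1},b_k]\times[v_k,v_{k+1}]$, this rectangle would carry a partial $\SL2$-tiling (Definition \ref{def:tpq}), have at least two rows and two columns, and have an entry equal to $1$ exactly at its two opposite corners $(b_k,v_k)$ and $(b_{k+1},v_{k+1})$, all other entries being $\geqslant 2$. So (ii) reduces to the following \emph{Rectangle Lemma}, which I regard as the heart of the proof: a partial $\SL2$-tiling on $\{c,\ldots,b\}\times\{v,\ldots,w\}$ with $b>c$, $w>v$, and $t(b,v)=t(c,w)=1$ always has a further entry equal to $1$.

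I would prove the Rectangle Lemma by induction on $(b-c)+(w-v)$. When $(b-c)+(w-v)=2$ the rectangle is $2\times 2$ and the single determinant relation reads $t(c,v)t(b,w)-t(c,w)t(b,v)=1$, i.e. $t(c,v)t(b,w)=2$, so $t(c,v)=1$ or $t(b,w)=1$. For width one, i.e. $w=v+1$ (and, by passing to the transpose tiling, for height one), I would use a Stern--Brocot argument: the vectors $r_i=\binom{t(i,v)}{t(i,v+1)}$ for $c\leqslant i\leqslant b$ satisfy $\det(r_i\mid r_{i+1})=1$, so the slopes $t(i,v+1)/t(i,v)$ are strictly increasing; they start at $\leqslant 1/2$ (at $i=c$, where $t(c,v+1)=t(c,w)=1$ and $t(c,v)\geqslant 2$) and end at $\geqslant 2$ (at $i=b$, where $t(b,v)=1$ and $t(b,v+1)\geqslant 2$); looking at the consecutive pair $r_j,r_{j+1}$ where the slope passes $1$ and using that all entries other than the corner $1$'s are $\geqslant 2$ forces $\det(r_j\mid r_{j+1})\geqslant 2$ (a short case analysis according to whether $j=c$, $j+1=b$, or neither), a contradiction. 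For the general inductive step, when the rectangle has at least three rows and at least three columns, the vanishing of $3\times 3$ ``windows'' of an $\SL2$-tiling (which follows from the $2\times 2$ determinant conditions alone) yields the frieze-type recurrences $t(i-1,j)+t(i+1,j)=\alpha_i\,t(i,j)$ and $t(i,j-1)+t(i,j+1)=\gamma_j\,t(i,j)$ with positive integers $\alpha_i,\gamma_j$; passing to the transfer matrix $N\in\SL2(\BZ)$ relating the row pair $(c,c+1)$ to the row pair $(b-1,b)$, the identity $1=t(b,v)=N_{21}t(c,v)+N_{22}t(c+1,v)$ forces one of $N_{21},N_{22}$ to be negative, and combining this with positivity of all the intervening rows and columns (together with the ``$\geqslant 2$'' constraints at the non-corner positions) yields a contradiction. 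This last step, controlling an arbitrary $\SL2(\BZ)$ transfer matrix against the positivity constraints, is the main obstacle; everything else is routine.

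Finally, parts (iii) and (iv) follow from Lemma \ref{lem:finiteness_in_rows_and_colums}: if $K$ is unbounded above but the options (a)/(b) were eventually constant, then from some index onwards either the column $v_k$ is constant while $b_k$ strictly decreases, or the row $b_k$ is constant while $v_k$ strictly increases; in either case $t$ would have infinitely many entries equal to $1$ in a single column or a single row, contradicting Lemma \ref{lem:finiteness_in_rows_and_colums}. The case of $K$ unbounded below is symmetric.
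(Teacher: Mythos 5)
Your overall strategy is reasonable, and several pieces are correct: the antichain property from Lemma \ref{lem:Ptolemy}(iii), the induced linear order on the set of $1$'s and its identification with an interval $K\subseteq\BZ$, the reduction of part (ii) to excluding a ``diagonal step'', and the deduction of parts (iii) and (iv) from Lemma \ref{lem:finiteness_in_rows_and_colums} are all sound, as are the $2\times 2$ and width-/height-one cases of your Rectangle Lemma. (For comparison, the paper does not reprove the statement at all: it simply notes that the proof of \cite[prop.\ 8.2]{HJ} works, and that proof runs through Conway--Coxeter frieze theory rather than your elementary route.)

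However, there is a genuine gap exactly where you flag it: the Rectangle Lemma for rectangles with at least three rows and three columns, which is the heart of part (ii). Observing that $1=N_{21}t(c,v)+N_{22}t(c+1,v)$ forces a negative entry of the transfer matrix $N$ is nowhere near a contradiction by itself: products of the companion matrices $\bigl(\begin{smallmatrix} 0 & 1 \\ -1 & \alpha_i \end{smallmatrix}\bigr)$ with $\alpha_i\geqslant 1$ routinely have negative entries, and the very same identity holds in honest tilings that do have a $1$ at $(b,v)$; any contradiction must use the hypothesis that \emph{every} non-corner entry of the rectangle is $\geqslant 2$, and your sketch supplies no argument for this (nor does your ``induction on $(b-c)+(w-v)$'' ever invoke its inductive hypothesis). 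This is precisely the point where frieze-theoretic input is needed, and it can be closed with tools already in the paper: since $t(b,v)=t(c,w)=1$ with $c<b$ and $v<w$, Lemma \ref{lem:finite_triangulation2} (resting on \cite[prop.\ 7.2]{HJ}) says $\Theta(t)$ restricts to a triangulation of the finite polygon $R$ between the non-crossing arcs $\{c^{\I},w^{\III}\}$ and $\{b^{\I},v^{\III}\}$, whose boundary has at least two vertices on each of intervals $\I$ and $\III$. The triangle of this triangulation having $\{c^{\I},w^{\III}\}$ as a side then has a third vertex $z$, and whichever interval $z$ lies in, one of the two remaining sides is a connecting diagonal $\{e^{\I},y^{\III}\}$ of $R$ distinct from the two boundary arcs (it cannot be an edge of $R$ because $c<b$ and $v<w$), so $t(e,y)=1$ gives the further entry equal to $1$ in the rectangle that your argument needs. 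Without this (or an equivalent) argument, the proposed proof does not establish part (ii).
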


\begin{proof}
The proof of \cite[prop.\ 8.2]{HJ} works.
\end{proof}

\begin{Remark}
\label{rmk:zig-zag}
Lemma \ref{lem:zig-zag} says that the zig-zag of $1$'s in $t$, shown
in Figure \ref{fig:zig-zag}, is bounded or unbounded to each side.
There are hence four possibilities which can all be realised, as one
can see in the $\SL2$-tilings $t = \Phi( \fT )$ for various choices of
the triangulation $\fT$.

Namely, by Lemma \ref{lem:CC}(iii), the $1$'s in $t$ correspond to the
arcs in $\fT$ which connect intervals $\I$ and $\III$.

It follows that if $\fT$ has the form in Figure \ref{fig:ft_in_Case1},
then the zig-zag of $1$'s in $t$ is unbounded to both sides.  That is,
$t$ has infinitely many $1$'s in both the first and the third
quadrant.

If $\fT$ has the form in Figure \ref{fig:ft_in_Case2}, then the
zig-zag is bounded to the left and unbounded to the right.  That is,
$t$ has infinitely many $1$'s in the first, but not the third
quadrant.  The opposite situation can be obtained by reflecting $\fT$
in a vertical line.

If $\fT$ has the form in Figure \ref{fig:ft_in_Case3}, then the
zig-zag is bounded to the left and to the right.  That is, $t$ has only
finitely many $1$'s.

Finally, note that if $\fT$ has the form in Figure
\ref{fig:ft_in_Case6}, then $t$ has no entries equal to $1$.  This
corresponds to $K = \emptyset$ in Lemma \ref{lem:zig-zag}.
\end{Remark}

\section{The partial triangulation $\Theta( t )$ of an $\SL2$-tiling $t$}
\label{sec:basic}

Recall that $t$ is a fixed $\SL2$-tiling with associated infinite
friezes $p$ and $q$, see Setup \ref{set:blanket}.

\begin{Definition}
[The partial triangulation $\Theta( t )$]
\label{def:Theta}
We define a set of arcs in $D_n$ as follows. 
\begin{align*}
  \Theta( t ) =
  \big\{ \{ b^{\I},v^{\III} \} \,\big|\, t( b,v ) = 1 \big\}
  & \cup
  \big\{ \{ a^{\I},d^{\I} \} \,\big|\, a+2 \leqslant d, \, p( a,d ) = 1 \big\} \\
  & \cup
  \big\{ \{ u^{\III},x^{\III} \} \,\big|\, u+2 \leqslant x, \, q( u,x ) = 1 \big\}.
\end{align*}
The definition makes sense since intervals $\I$ and $\III$ are always
present on the boundary of $D_n$; see Definitions \ref{def:C4} and
\ref{def:C2C3}. 
\end{Definition}

\begin{Remark}
We remind the reader that when $t$ is given and we seek to construct a
good triangulation $\fT$ of $D_n$ such that $t = \Phi( \fT )$, we will
do so by adding arcs to $\Theta( t )$.   
\end{Remark}

\begin{Lemma}
\label{lem:non-crossing}
The set $\Theta( t )$ is a partial triangulation of $D_n$.
\end{Lemma}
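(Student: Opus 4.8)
The plan is to show that no two arcs in $\Theta( t )$ cross. There are three types of arcs --- internal arcs in interval $\I$ coming from $1$'s of $p$, internal arcs in interval $\III$ coming from $1$'s of $q$, and connecting arcs coming from $1$'s of $t$ --- so there are six pairs of types to consider (three ``same type'' pairs and three ``mixed'' pairs). Since two arcs lying in different intervals can never cross, the only genuinely nontrivial cases are: two $p$-arcs in interval $\I$; two $q$-arcs in interval $\III$; a $p$-arc versus a connecting arc; a $q$-arc versus a connecting arc; and a connecting arc versus a connecting arc. By the left--right symmetry between $\I$/$p$ and $\III$/$q$ (and the symmetry in Lemma \ref{lem:Ptolemy}), it suffices to treat, say, the $p$-versus-$p$ case, the $p$-versus-connecting case, and the connecting-versus-connecting case.

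The uniform tool is a Ptolemy relation from Lemma \ref{lem:Ptolemy}, used to derive a contradiction from a putative crossing. For two $p$-arcs $\{ a^{\I}, c^{\I} \}$ and $\{ b^{\I}, d^{\I} \}$ with $a < b < c < d$: crossing means $p( a,c ) = 1 = p( b,d )$, and Lemma \ref{lem:Ptolemy}(i) gives $1 = p( a,c )p( b,d ) = p( a,b )p( c,d ) + p( a,d )p( b,c )$. Since $a<b$, $c<d$, $a<d$, $b<c$, all four factors $p( a,b ), p( c,d ), p( a,d ), p( b,c )$ are strictly positive integers (positivity of $p$ on $a<d$ is in Setup \ref{set:blanket}), so the right-hand side is at least $2$, a contradiction. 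For a $p$-arc $\{ a^{\I}, c^{\I} \}$ versus a connecting arc $\{ b^{\I}, v^{\III} \}$: a crossing (in the combinatorial sense of the definition, once one reads off the cyclic order on $D_n$ --- here $a^{\I} < b^{\I} < c^{\I}$ in interval $\I$, and $v^{\III}$ lies in $\III$) forces $a < b < c$ and $p( a,c ) = 1 = t( b,v )$; then Lemma \ref{lem:Ptolemy}(ii) gives $1 = p( a,c )t( b,v ) = p( b,c )t( a,v ) + p( a,b )t( c,v )$, and again both summands are products of positive integers ($t$ is positive everywhere, $p( b,c ), p( a,b ) > 0$ since $a<b<c$), so the right-hand side is $\geqslant 2$, a contradiction. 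For two connecting arcs $\{ b^{\I}, v^{\III} \}$ and $\{ c^{\I}, w^{\III} \}$: a crossing forces (after reading the cyclic order) $b < c$ in $\I$ together with $v > w$ in $\III$ --- equivalently $b < c$ and $w < v$ --- with $t( b,v ) = 1 = t( c,w )$; Lemma \ref{lem:Ptolemy}(iii) with the roles of $v,w$ arranged so that $w < v$ gives $1 = t( b,w )t( c,v ) - $ wait; more carefully, with $b < c$ and $w < v$ one applies Lemma \ref{lem:Ptolemy}(iii) in the form $t( b,w )t( c,v ) = t( b,v )t( c,w ) + p( b,c )q( w,v )$, so $t( b,w )t( c,v ) = 1 + p( b,c )q( w,v )$ with $p( b,c ), q( w,v ) > 0$, forcing $t( b,w )t( c,v ) \geqslant 2$; this is not yet a contradiction, so instead one uses the relation with $(b,v)$ and $(c,w)$ genuinely crossing, i.e. $b<c$, $v<w$ fails --- the correct reading is that a crossing of connecting arcs means $b < c$ but the $\III$-endpoints occur in the order $w$ before $v$ as one continues anticlockwise, i.e. $w < v$, and then the two arcs $\{b^{\I},v^{\III}\}$, $\{c^{\I},w^{\III}\}$ cross iff in the cyclic order $b^{\I} < c^{\I} < w^{\III} < v^{\III}$; plugging $b<c$, $w<v$ into Lemma \ref{lem:Ptolemy}(iii) written as $t( b,w )t( c,v ) = t( b,v )t( c,w ) + p( b,c )q( w,v )$ shows the \emph{non-crossing} pair $\{b,w\},\{c,v\}$ has large product, while the crossing pair has product $1$, and comparing with the same identity applied after swapping shows $t( b,v )t( c,w ) = t( b,w )t( c,v ) - p( b,c )q( w,v ) \leqslant t( b,w )t( c,v ) - 1$; since $t( b,v ) = t( c,w ) = 1$ this reads $1 \leqslant t( b,w )t( c,v ) - 1$, which is consistent --- so the contradiction must instead come from re-examining which diagonal of the quadrilateral the hypothesis $t(b,v)=t(c,w)=1$ places on the ``crossing'' side: a crossing pair of arcs are the two diagonals of the quadrilateral with vertices $b^{\I}, c^{\I}, w^{\III}, v^{\III}$ in cyclic order, namely $\{b^{\I},w^{\III}\}$ and $\{c^{\I},v^{\III}\}$ are the \emph{sides}, and $\{b^{\I},v^{\III}\}$, $\{c^{\I},w^{\III}\}$ are the \emph{diagonals}; Ptolemy then reads $t( b,v )t( c,w ) = t( b,w )t( c,v ) + p( b,c )q( w,v )$ directly, i.e. $1 = t( b,w )t( c,v ) + p( b,c )q( w,v ) \geqslant 1 + 1 = 2$ (using $t( b,w ), t( c,v ) \geqslant 1$ and $p( b,c ), q( w,v ) \geqslant 1$), which is the desired contradiction.

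The one point requiring genuine care --- and the step I expect to be the main obstacle --- is \emph{bookkeeping the cyclic order on $D_n$}: translating ``the arcs $\{\mu,\nu\}$ and $\{\pi,\rho\}$ cross'' (Definition of crossing: $\mu < \pi < \nu < \rho$ in the anticlockwise cyclic order on $C$) into the correct inequalities among the integer labels, given that on interval $\I$ the labels increase anticlockwise, on interval $\III$ they also increase anticlockwise but $\III$ comes ``after'' $\I$, and the two intervals are separated by accumulation points. Once one fixes the convention (as in Figures \ref{fig:C4}, \ref{fig:C2}, \ref{fig:C3}: interval $\I$ then $\II$ then $\III$ then $\IV$, labels increasing anticlockwise within each), each crossing configuration pins down exactly one Ptolemy relation from Lemma \ref{lem:Ptolemy}, and in every case the crossing hypothesis says the product on one side equals $1$ while the right-hand side is a sum of two products of strictly positive integers (positivity of $t$ is built into Definition \ref{def:tpq}; positivity of $p$ on $a<d$ and $q$ on $u<x$ is Setup \ref{set:blanket}, and the ``$a+2 \leqslant d$'', ``$u+2 \leqslant x$'' conditions in the definition of $\Theta( t )$ guarantee the internal arcs really are arcs, not edges, so the intermediate indices that appear in the Ptolemy relations are strictly between the endpoints). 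Hence the right-hand side is $\geqslant 2$, a contradiction, and we conclude that $\Theta( t )$ is a set of pairwise non-crossing arcs, i.e. a partial triangulation of $D_n$.
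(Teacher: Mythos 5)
Your overall strategy is exactly the paper's: assume a crossing and contradict it with the appropriate Ptolemy relation from Lemma \ref{lem:Ptolemy}, using positivity of the entries of $t$, $p$, $q$. The internal--internal case (via Lemma \ref{lem:Ptolemy}(i)) and the internal--connecting case (via Lemma \ref{lem:Ptolemy}(ii)) are handled correctly and coincide with the paper's argument, which writes out the first case and notes the other two are handled similarly.

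The connecting--connecting case, however, is not correct as written, and the problem is exactly the cyclic-order bookkeeping you yourself flagged as the delicate point. For $\{ b^{\I},v^{\III} \}$ and $\{ c^{\I},w^{\III} \}$ with $b<c$, crossing means the endpoints alternate, i.e.\ $b^{\I} < c^{\I} < v^{\III} < w^{\III}$, which (since labels in $\III$ increase anticlockwise) is $b<c$ \emph{and} $v<w$. Your configuration $b<c$, $w<v$ gives the cyclic order $b^{\I} < c^{\I} < w^{\III} < v^{\III}$, in which both arcs join consecutive vertices of the quadrilateral: they are its \emph{sides}, not its diagonals, and they do not cross --- which is precisely why your first computation produced no contradiction. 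Your subsequent ``fix'' asserts that $\{ b^{\I},v^{\III} \}, \{ c^{\I},w^{\III} \}$ are the diagonals for that cyclic order and then uses $t( b,v )t( c,w ) = t( b,w )t( c,v ) + p( b,c )q( w,v )$; but under $b<c$, $w<v$ the identity Lemma \ref{lem:Ptolemy}(iii) actually provides is $t( b,w )t( c,v ) = t( b,v )t( c,w ) + p( b,c )q( w,v )$, so the equation you finish with is not available in the situation you set up --- the final contradiction rests on two compensating misreadings. The repair is immediate: with the crossing condition read correctly as $b<c$, $v<w$, these are exactly the hypotheses of Lemma \ref{lem:Ptolemy}(iii), and $1 = t( b,v )t( c,w ) = t( b,w )t( c,v ) + p( b,c )q( v,w ) \geqslant 2$ gives the contradiction, as in the paper.
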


\begin{proof}
If the internal arcs $\{ a^{\I},c^{\I} \}$ and $\{ b^{\I},d^{\I} \}$
cross, then we can suppose $a < b < c < d$.  Then the Ptolemy relation
in Lemma \ref{lem:Ptolemy}(i) says
\[
  p( a,c )p( b,d )
  = p( a,b )p( c,d ) + p( a,d )p( b,c )
  \geqslant 2,
\]
where the inequality holds since each of $p( a,b )$, $p( c,d )$, $p( a,d
)$, $p( b,c )$ is a positive integer.  Hence $p( a,c )$ and $p( b,d )$
cannot both be equal to $1$, so $\{ a^{\I},c^{\I} \}$ and $\{
b^{\I},d^{\I} \}$ cannot both be in $\Theta ( t )$.

A crossing of an internal arc and a connecting arc is handled similarly by
means of Lemma \ref{lem:Ptolemy}(ii), and a crossing of two connecting
arcs is handled by means of Lemma \ref{lem:Ptolemy}(iii).
\end{proof}

\begin{Lemma}
\label{lem:locally_finite}
The partial triangulation $\Theta( t )$ is locally finite in the sense of
Definition \ref{def:triangulation}.
\end{Lemma}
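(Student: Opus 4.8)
The plan is to fix an arbitrary vertex of $D_n$ and bound the number of arcs of $\Theta(t)$ ending there, treating separately the case of a vertex $\mu = m^{\I}$ on interval $\I$, a vertex on interval $\III$ (symmetric), and a vertex on interval $\II$ or $\IV$ (trivial). The arcs of $\Theta(t)$ ending at $m^{\I}$ are of two kinds: connecting arcs $\{ m^{\I}, v^{\III} \}$ with $t(m,v) = 1$, and internal arcs $\{ a^{\I}, m^{\I} \}$ with $p(a,m) = 1$ (either with $a < m$, i.e. $a + 2 \leqslant m$, or with $m < a$, i.e. $m + 2 \leqslant a$). So it suffices to show each of these three collections is finite.

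The key input is Lemma \ref{lem:finiteness_in_rows_and_colums}, applied with $m = 1$. First, the connecting arcs ending at $m^{\I}$ correspond to the values $v$ with $t(m,v) = 1$, and these are finitely many since the entry $1$ occurs only finitely often in the half-row $t(m,-)$ — more precisely, in the full row $t(m,-)$, which Lemma \ref{lem:finiteness_in_rows_and_colums} covers. Second, the internal arcs $\{a^{\I}, m^{\I}\}$ ending at $m^{\I}$ with $a < m$ correspond to the values $a$ with $p(a,m) = 1$ and $a \leqslant m-2$; these are finitely many because $1$ occurs only finitely often in the half-column $p(-,m)$. Third, the internal arcs with $a > m$ correspond to the values $a$ with $p(m,a) = 1$ and $a \geqslant m+2$, finitely many because $1$ occurs only finitely often in the half-row $p(m,-)$. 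Summing the three finite counts shows only finitely many arcs of $\Theta(t)$ end at $m^{\I}$. The argument for a vertex $x^{\III}$ on interval $\III$ is identical after swapping the roles of rows/columns and of $p$/$q$: connecting arcs give finitely many $b$ with $t(b,x) = 1$ in the column $t(-,x)$, and internal arcs on interval $\III$ give finitely many partners via $q(-,x)$ and $q(x,-)$. Finally, a vertex on interval $\II$ or $\IV$ (present only when $n \geqslant 3$, resp. $n = 4$) is the endpoint of \emph{no} arc of $\Theta(t)$ at all, since every arc in $\Theta(t)$ has both endpoints on interval $\I$ or interval $\III$; so there is nothing to check.

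I do not expect a genuine obstacle here: the statement reduces immediately to Lemma \ref{lem:finiteness_in_rows_and_colums}, and the only thing to be careful about is bookkeeping — making sure all three types of arcs at a vertex of interval $\I$ (one connecting type, two internal types depending on which side the other endpoint lies) are accounted for, and that the constraint $a + 2 \leqslant d$ in the definition of $\Theta(t)$ does not cause any arc to be missed. The mild subtlety is simply that an internal arc $\{a^{\I}, m^{\I}\}$ ending at $m^{\I}$ can have its other endpoint on either side of $m$, so one invokes finiteness of $1$'s in both the half-row $p(m,-)$ and the half-column $p(-,m)$; Lemma \ref{lem:finiteness_in_rows_and_colums} is stated exactly to supply both.
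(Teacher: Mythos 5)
Your proposal is correct and follows essentially the same route as the paper: the arcs at a vertex of interval $\I$ are matched with the entries equal to $1$ in the row $t(b,-)$ and in $p(b,-)$, $p(-,b)$, and Lemma \ref{lem:finiteness_in_rows_and_colums} supplies the finiteness, with interval $\III$ handled by symmetry. Your extra remark that vertices of intervals $\II$ and $\IV$ meet no arcs of $\Theta(t)$ is a harmless bookkeeping point the paper leaves implicit.
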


\begin{proof}
Consider the arcs in $\Theta( t )$ which end at the vertex $\mu =
b^{\I}$ in interval $\I$.  By Definition \ref{def:Theta} they
correspond to the entries which are equal to $1$ in the row $t( b,- )$
and the half-rows $p( b,- )$ and $p( -,b )$.  By Lemma
\ref{lem:finiteness_in_rows_and_colums} there are only finitely many
such entries.

The arcs in $\Theta( t )$ which end at the vertex $\mu = v^{\III}$ in
interval $\III$ are handled by symmetry.
\end{proof}

\begin{Lemma}
\label{lem:finite_triangulation1}
Let $a < d$ be such that $\{ a^{\I},d^{\I} \} \in \Theta( t )$ or $\{
a^{\I},d^{\I} \}$ is an edge, and let $P$ denote the finite polygon
below $\{ a^{\I},d^{\I} \}$, see Definition
\ref{def:restriction} and the left half of Figure \ref{fig:PP}.
\begin{enumerate}
\setlength\itemsep{4pt}

  \item The restriction $\Theta( t )_P$ is a triangulation of $P$.

  \item Conway-Coxeter counting on $\Theta( t )_P$ agrees with a
    certain part of the infinite frieze $p$ in the following sense: If
    $a \leqslant b \leqslant c \leqslant d$, then
\[
  p( b,c ) = \Theta( t )_P( b^{ \I },c^{ \I } ).
\]
\end{enumerate}
\end{Lemma}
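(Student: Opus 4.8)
The plan is to prove (i) and (ii) together by a single induction on the size of the polygon $P$, i.e.\ on $d-a$. The base case is $d-a=2$, where $P$ is a triangle on vertices $a^{\I},(a+1)^{\I},d^{\I}$; here $\Theta(t)_P$ is empty, so it is vacuously a triangulation of $P$, and the only nontrivial instance of (ii) is $p(a,d)=p(a,a+2)=\Theta(t)_P(a^{\I},(a+2)^{\I})$, which by Conway--Coxeter counting on a triangle equals $1$. But $\{a^{\I},(a+2)^{\I}\}=\{a^{\I},d^{\I}\}$ is either in $\Theta(t)$ or an edge; if it is in $\Theta(t)$ then $p(a,a+2)=1$ by Definition \ref{def:Theta}, and if it is an edge then $d=a+1$, contradicting $d-a=2$, so in fact $\{a^{\I},d^{\I}\}\in\Theta(t)$ and $p(a,d)=1$ as required. (One should also separately note the trivial cases $b=c$, giving $0$ on both sides, and $c=b+1$ or $\{b^{\I},c^{\I}\}$ an edge of $P$, giving $1$ on both sides by Lemma \ref{lem:CC}(iii).)

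For the inductive step with $d-a\geqslant 3$, the key is to locate the triangle of the putative triangulation $\Theta(t)_P$ that sits on the edge $\{a^{\I},d^{\I}\}$. First I would show there is a well-defined vertex $e^{\I}$ with $a<e<d$ such that $\{a^{\I},e^{\I}\}$ and $\{e^{\I},d^{\I}\}$ are each either an edge of $P$ or an arc in $\Theta(t)$, and such that no arc of $\Theta(t)$ strictly below $\{a^{\I},d^{\I}\}$ separates $e^{\I}$ from $\{a^{\I},d^{\I}\}$. The natural candidate is obtained as follows: among the arcs of $\Theta(t)$ ending at $a^{\I}$ and lying below $\{a^{\I},d^{\I}\}$, take the one $\{a^{\I},e^{\I}\}$ with $e$ maximal (and $e=d$, i.e.\ the edge, if none exists, but then we would be forced to use an arc ending at $d^{\I}$ instead — symmetric case); one then checks, using locally-finiteness (Lemma \ref{lem:locally_finite}) so that such a maximal $e$ exists, that $\{e^{\I},d^{\I}\}$ is indeed an edge or an arc in $\Theta(t)$. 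The point here is that $p(a,e)=1$ and $p(e,d)=1$ (or these segments are edges), and a Ptolemy relation will force the triangle $a^{\I}e^{\I}d^{\I}$ to be "empty" — precisely, any arc of $\Theta(t)$ below $\{a^{\I},d^{\I}\}$ lies weakly below either $\{a^{\I},e^{\I}\}$ or $\{e^{\I},d^{\I}\}$, because a crossing would contradict Lemma \ref{lem:non-crossing}, and an arc separating $e^{\I}$ from the edge $\{a^{\I},d^{\I}\}$ but crossing neither of $\{a^{\I},e^{\I}\},\{e^{\I},d^{\I}\}$ is impossible by the maximality of $e$. This decomposes $P$ into the triangle $a^{\I}e^{\I}d^{\I}$ together with the smaller polygons $P'$ below $\{a^{\I},e^{\I}\}$ and $P''$ below $\{e^{\I},d^{\I}\}$, and $\Theta(t)_P$ is the disjoint union of $\Theta(t)_{P'}$, $\Theta(t)_{P''}$, and the triangle. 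By the induction hypothesis $\Theta(t)_{P'}$ and $\Theta(t)_{P''}$ are triangulations of $P'$ and $P''$, so $\Theta(t)_P$ is a triangulation of $P$, proving (i).

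For (ii), with the triangulation decomposed as above, Conway--Coxeter counting on $\Theta(t)_P$ restricts to Conway--Coxeter counting on $\Theta(t)_{P'}$ and $\Theta(t)_{P''}$ (the values $\Theta(t)_P(\mu,\nu)$ do not depend on the ambient polygon — Remark \ref{rmk:CC}, or directly the gluing behaviour of CC counting across a triangulation edge). Hence for $b,c$ both in $\{a,\dots,e\}$ or both in $\{e,\dots,d\}$ the identity $p(b,c)=\Theta(t)_P(b^{\I},c^{\I})$ follows from the induction hypothesis applied to $P'$ or $P''$. For the remaining case, $a\leqslant b\leqslant e\leqslant c\leqslant d$ with $b<e<c$, I would use the Ptolemy relations on both sides simultaneously: Lemma \ref{lem:CC}(v) gives
\[
  \Theta(t)_P(b^{\I},c^{\I})\,\Theta(t)_P(a^{\I},d^{\I})
  = \Theta(t)_P(b^{\I},d^{\I})\,\Theta(t)_P(a^{\I},c^{\I})
  + \Theta(t)_P(b^{\I},a^{\I})\,\Theta(t)_P(c^{\I},d^{\I}),
\]
wait — one must pick the right crossing; rather, repeatedly apply Ptolemy to express $\Theta(t)_P(b^{\I},c^{\I})$ in terms of values with at least one endpoint in $\{a^{\I},e^{\I},d^{\I}\}$, and match these against the corresponding Ptolemy relations for $p$ from Lemma \ref{lem:Ptolemy}(i), using that $p(a,e)=p(e,d)=1$ (so those factors are $1$ on both sides) together with the already-established equalities $p(b,e)=\Theta(t)_P(b^{\I},e^{\I})$, $p(e,c)=\Theta(t)_P(e^{\I},c^{\I})$, $p(b,a)=\Theta(t)_P(b^{\I},a^{\I})$, $p(c,d)=\Theta(t)_P(c^{\I},d^{\I})$. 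Concretely, the relevant Ptolemy identity is, for $b\le e\le c$,
\[
  p(a,d)\,p(b,c) = p(a,c)\,p(b,d) + p(a,b)\,p(c,d)
\]
... no. I expect the cleanest route is: use Lemma \ref{lem:Ptolemy}(i) with the four indices $b<e<c$ suitably placed against $a$ or $d$, obtaining $p(b,c)$ as a $\BZ$-linear combination (with coefficients $p(b,e),p(e,c),p(b,a),p(c,d)$ and $p(a,e)=p(e,d)=1$) of $p$-values on $P'$ and $P''$, match term-by-term with the identical combination of $\Theta(t)_P$-values coming from Lemma \ref{lem:CC}(v), and conclude.

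\textbf{Main obstacle.} The technical heart — and the step I expect to be fiddly — is the existence and "emptiness" of the separating vertex $e^{\I}$: showing that the maximal-$e$ choice really yields $\{e^{\I},d^{\I}\}$ an edge or an arc in $\Theta(t)$ and that nothing in $\Theta(t)$ sneaks between the triangle $a^{\I}e^{\I}d^{\I}$ and the edge $\{a^{\I},d^{\I}\}$. This needs Lemma \ref{lem:non-crossing} (non-crossing), Lemma \ref{lem:locally_finite} (so maxima of relevant index sets exist), and a careful case analysis of which of $\{a^{\I},\cdot\},\{\cdot,d^{\I}\}$ supplies the apex — symmetric but requiring care. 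Once the combinatorial decomposition of $P$ is in hand, parts (i) and (ii) are formal consequences of induction plus the Ptolemy relations, as sketched above.
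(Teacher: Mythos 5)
Your route --- induction on $d-a$, splitting $P$ along a triangle $a^{\I}e^{\I}d^{\I}$ on the edge $\{a^{\I},d^{\I}\}$ and matching Ptolemy relations --- is genuinely different from the paper's proof, but as written it has a real gap, and it sits exactly where you write ``one then checks'': the existence of the apex. Non-crossing (Lemma \ref{lem:non-crossing}) and local finiteness (Lemma \ref{lem:locally_finite}) do not give you that the maximal $e$ with $\{a^{\I},e^{\I}\}\in\Theta(t)$ below $\{a^{\I},d^{\I}\}$ satisfies $p(e,d)=1$ (or $e=d-1$), nor, when no such arc from $a^{\I}$ exists, that the minimal vertex linked to $d^{\I}$ is $(a+1)^{\I}$. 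What you actually need is the arithmetic statement that $p(a,d)=1$ with $d\geqslant a+2$ forces some $e$ with $a<e<d$, $p(a,e)=1$ and $p(e,d)=1$ (neighbouring pairs counting as $1$). This is not a formality: it is essentially the Conway--Coxeter correspondence itself, which the paper does not re-derive but imports --- there, $p(a,d)=1$ makes $p|_F$ the fundamental domain of a Conway--Coxeter frieze ((10) in \cite{CC1}, \cite{CC2}; \cite[lem.\ 7.1]{HJ}), and the classification of such friezes by triangulations of $P$ (\cite{CC1}, \cite{CC2}, \cite[sec.\ 2]{BCI}) gives (i) and (ii) in both directions at once. If you want to keep your induction, you must prove the apex lemma yourself; one way: for $a<c<c'<d$, Lemma \ref{lem:Ptolemy}(i) together with $p(a,d)=1$ gives $p(a,c')p(c,d)-p(a,c)p(c',d)=p(c,c')\geqslant 1$, so the vectors $v_c=\big(p(a,c),p(c,d)\big)$ for $a\leqslant c\leqslant d$ run from $(0,1)$ to $(1,0)$ in the non-negative quadrant, consecutive pairs are unimodular and the slopes strictly decrease; the unimodular cones spanned by consecutive pairs cover the quadrant, so the primitive vector $(1,1)$ must equal some $v_e$ with $a<e<d$. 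Your maximal-$e$ recipe then also works, since among the $c$ with $p(a,c)=1$ the values $p(c,d)$ strictly decrease.

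Granting that lemma, the rest of your sketch can be completed, but part (ii) is not actually pinned down in your text (two Ptolemy displays are started and abandoned). The clean choices are the quadruples $(a,b,e,d)$ and $(b,e,c,d)$: the first, using $p(a,e)=p(e,d)=p(a,d)=1$ and the matching values $\Theta(t)_P(a^{\I},e^{\I})=\Theta(t)_P(e^{\I},d^{\I})=\Theta(t)_P(a^{\I},d^{\I})=1$ from Lemma \ref{lem:CC}(iii), gives $p(b,d)=p(a,b)+p(b,e)$ and the identical relation for $\Theta(t)_P$ via Lemma \ref{lem:CC}(v); the second then expresses $p(b,c)$ for $b<e<c$ through quantities already matched by the induction hypothesis. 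The trade-off against the paper's proof is clear: yours would be self-contained but substantially longer, since the apex lemma and the gluing argument re-prove the piece of Conway--Coxeter theory that the paper obtains by citation.
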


\begin{proof}
If $\{ a^{\I},d^{\I} \}$ is an edge then $P$ is a $2$-gon and the
lemma is trivial, so suppose $\{ a^{\I},d^{\I} \} \in \Theta( t )$.
In particular $\{ a^{\I},d^{\I} \}$ is an arc so $d \geqslant a+2$.

The infinite frieze $p$ is defined on the half plane $\{ ( b,c ) \in
\BZ \times \BZ \mid b < c \}$ and we have $p( b,b+1 ) = 1$ for each
$b$.  Recall that we set $p( b,b ) = 0$ so $p$ is as shown in Figure
\ref{fig:p_tiling}.
\begin{figure}
  \centering
    \begin{tikzpicture}[scale=0.7]

      \draw (-8,2) node{$0$};
      \draw (-7,2) node{$1$};
      \draw (-6,2) node{$\circ$};
      \draw (-5,2) node{$\circ$};
      \draw (-7,1) node{$0$};
      \draw (-6,1) node{$1$};
      \draw (-5,1) node{$\circ$};
      \draw (-4,1) node{$\circ$};
      \draw (-6,0) node{$0$};
      \draw (-5,0) node{$1$};
      \draw (-4,0) node{$\circ$};
      \draw (-3,0) node{$\circ$};
      \draw (-2.2,0) node{$\cdot$};
      \draw (-2,0) node{$\cdot$};
      \draw (-1.8,0) node{$\cdot$};
      \draw (-1,0) node{$\circ$};
      \draw (0,0) node {$1$};
      \draw (-5,-1) node{$0$};
      \draw (-4,-1) node{$1$};
      \draw (-3,-1) node{$\circ$};
      \draw (0,-1) node{$\circ$};
      \draw (-4,-2) node{$0$};
      \draw (-3,-2) node{$1$};
      \draw (-2,-2) node{$\circ$};
      \draw (0,-1.8) node{$\cdot$};
      \draw (0,-2) node{$\cdot$};
      \draw (0,-2.2) node{$\cdot$};
      \draw (-3,-3) node{$0$};
      \draw (-2,-3) node{$1$};
      \draw (-1,-3) node{$\circ$};
      \draw (0,-3) node{$\circ$};
      \draw (-2,-4) node{$0$};
      \draw (-1,-4) node{$1$};
      \draw (0,-4) node{$\circ$};
      \draw (1,-4) node{$\circ$};
      \draw (-1,-5) node{$0$};
      \draw (0,-5) node{$1$};
      \draw (1,-5) node{$\circ$};
      \draw (2,-5) node{$\circ$};
      \draw (0,-6) node{$0$};
      \draw (1,-6) node{$1$};
      \draw (2,-6) node{$\circ$};
      \draw (1,-7) node{$0$};
      \draw (2,-7) node{$1$};
      \draw (2,-8) node{$0$};

      \draw (-9.141,3.141) node{$\cdot$};
      \draw (-9,3) node{$\cdot$};
      \draw (-8.859,2.859) node{$\cdot$};

      \draw (-6.141,3.141) node{$\cdot$};
      \draw (-6,3) node{$\cdot$};
      \draw (-5.859,2.859) node{$\cdot$};

      \draw (-4.141,2.859) node{$\cdot$};
      \draw (-4,3) node{$\cdot$};
      \draw (-3.859,3.141) node{$\cdot$};

      \draw (-3.141,1.859) node{$\cdot$};
      \draw (-3,2) node{$\cdot$};
      \draw (-2.859,2.141) node{$\cdot$};

      \draw (2.859,-8.859) node{$\cdot$};
      \draw (3,-9) node{$\cdot$};
      \draw (3.141,-9.141) node{$\cdot$};

      \draw (2.859,-5.859) node{$\cdot$};
      \draw (3,-6) node{$\cdot$};
      \draw (3.141,-6.141) node{$\cdot$};

      \draw (1.859,-3.141) node{$\cdot$};
      \draw (2,-3) node{$\cdot$};
      \draw (2.141,-2.859) node{$\cdot$};

      \draw (2.859,-4.141) node{$\cdot$};
      \draw (3,-4) node{$\cdot$};
      \draw (3.141,-3.859) node{$\cdot$};

      \draw (-1.159,-1.159) node{$\cdot$};
      \draw (-1.3,-1.3) node{$\cdot$};
      \draw (-1.441,-1.441) node{$\cdot$};

      \draw[gray,very thin] (-5.1,0.5) -- (0,0.5);
      \draw[gray,very thin] (-5.1,0.5) .. controls (-5.9,0.5) and (-6.1,0.5) .. (-5.254,-0.354);
      \draw[gray,very thin] (-5.254,-0.354) -- (-0.254,-5.354);
      \draw[gray,very thin] (0.5,-5.2) .. controls (0.5,-6.0) and (0.5,-6.2) .. (-0.254,-5.354);
      \draw[gray,very thin] (0.5,0) -- (0.5,-5.2);
      \draw[gray,very thin] (0.5,0) .. controls (0.5,0.3) and (0.3,0.5) .. (0,0.5);

      \draw[gray,very thin] (0.25,0.0) to (1.5,0.6);
      \draw (3.40,0.6) node{position $(a,d)$};

      \draw[gray,very thin] (0.5,-1.1) to (1.5,-0.55);
      \draw (1.77,-0.55) node{$F$};

    \end{tikzpicture} 
    \caption{Assume that $d \geqslant a+2$ and that the entry at
      position $( a,d )$ in the infinite frieze $p$ is equal to $1$.
      Then the restriction of $p$ to the triangle $F$ is the
      fundamental domain of a Conway--Coxeter frieze.} 
\label{fig:p_tiling}
\end{figure}
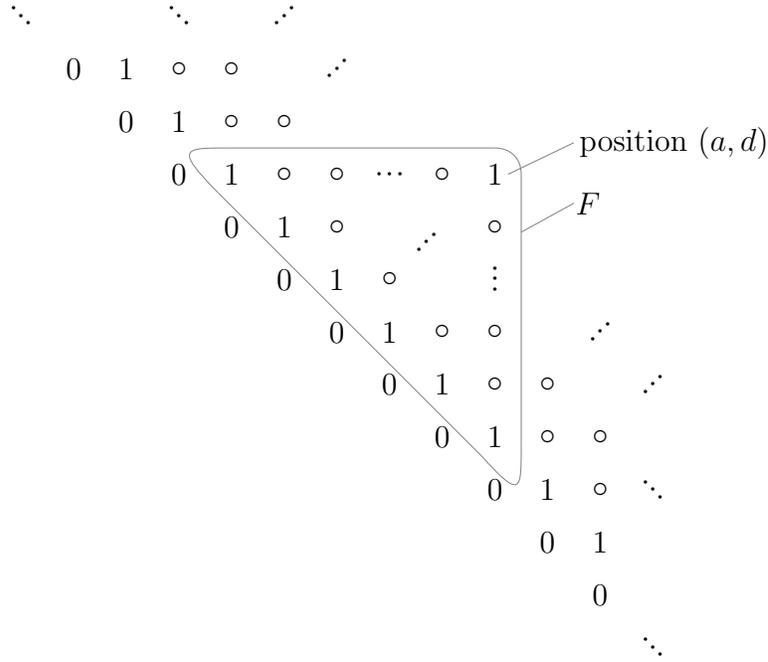
The condition $\{ a^{\I},d^{\I} \} \in \Theta( t )$ means $p( a,d ) =
1$.  In the figure, this entry is at the upper right corner of the
triangle 
\[
  F = \{ (b,c) \,\mid\, a \leqslant b < c \leqslant d \},
\]
and it implies that the restriction $p|_F$ coincides with the
fundamental domain of a Conway--Coxeter frieze $s$.  This follows from
(10) in \cite{CC1} and \cite{CC2}; see also \cite[lem.\ 7.1]{HJ}.

The elements $( b,c )$ of $F$ correspond to the edges and arcs
$\{ b^{\I},c^{\I} \}$ of the polygon with vertices $a^{\I}, \ldots,
d^{\I}$, that is, the polygon $P$.  Hence the frieze $s$ corresponds
to a triangulation $\fS$ of $P$; specifically,
\[
  \fS = \big\{ \{ b^{\I},c^{\I} \} \mbox{ is an arc of $P$}
        \,\big|\, s( b,c ) = 1 \big\}.
\]
This is due to \cite{CC1}, \cite{CC2}, and more details are given in
\cite[sec.\ 2]{BCI}.  Since $s$ and $p$ agree on $F$, the equation
shows that $\fS$ consists of some of the arcs in $\Theta( t )$.
Indeed, $\fS$ is precisely the restriction $\Theta( t )_P$ of $\Theta(
t )$ to $P$.  Hence $\Theta( t )_P$ is a triangulation of $P$, proving
part (i) of the lemma.

The same references show that, conversely, the fundamental region of
the frieze $s$ can be obtained by Conway--Coxeter counting on $\fS$,
namely if $\{ b^{\I},c^{\I} \}$ is an edge or an arc of $P$ then
\[
  s( b,c ) = \fS( b^{\I},c^{\I} ).
\]
This gives part (ii) of the lemma for $b < c$ because $s$ and $p$
agree on $F$ while $\fS = \Theta( t )_P$.  If $b = c$ then part (ii)
of the lemma is trivially true.
\end{proof}

\begin{Lemma}
\label{lem:finite_triangulation1b}
Let $u < x$ be such that $\{ u^{\III},x^{\III} \} \in \Theta( t )$ or
$\{ u^{\III},x^{\III} \}$ is an edge, and let $Q$ denote the finite
polygon below $\{ u^{\III},x^{\III} \}$.
\begin{enumerate}
\setlength\itemsep{4pt}

  \item The restriction $\Theta( t )_Q$ is a triangulation of $Q$.

  \item Conway-Coxeter counting on $\Theta( t )_Q$ agrees with a
    certain part of the infinite frieze $q$ in the following sense: If
    $u \leqslant v \leqslant w \leqslant x$, then
\[
  q( v,w ) = \Theta( t )_Q( v^{ \III },w^{ \III } ).
\]
\end{enumerate}
\end{Lemma}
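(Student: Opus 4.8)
The statement is the exact mirror of Lemma~\ref{lem:finite_triangulation1} under the symmetry that interchanges interval~$\I$ with interval~$\III$ and the infinite frieze $p$ with the infinite frieze $q$, so the plan is to re-run the proof of Lemma~\ref{lem:finite_triangulation1} with these substitutions, after checking that every ingredient it used is preserved by the symmetry.

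First I would record that the relevant data really are symmetric. The partial triangulation $\Theta(t)$ of Definition~\ref{def:Theta} is manifestly invariant under the swap $\I\leftrightarrow\III$, $p\leftrightarrow q$: an internal arc $\{u^{\III},x^{\III}\}$ lies in $\Theta(t)$ precisely when $u+2\leqslant x$ and $q(u,x)=1$, which is the $q$-analogue of the condition on $\{a^{\I},d^{\I}\}$. The frieze $q$ enjoys the same formal properties that the proof of Lemma~\ref{lem:finite_triangulation1} used for $p$: $q(u,u)=0$ and $q(u,u+1)=1$ by Setup~\ref{set:blanket} and Definition~\ref{def:tpq}; positivity $q(u,x)>0$ for $u<x$ by Setup~\ref{set:blanket}; the Ptolemy relation for $q$ by Lemma~\ref{lem:Ptolemy}(i); and finiteness of the occurrences of a given value in a half-row or half-column of $q$ by Lemma~\ref{lem:finiteness_in_rows_and_colums}. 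Finally, the notion of the polygon below an internal arc is supplied directly by Definition~\ref{def:restriction}(1), which is stated for an arbitrary interval, so it applies to $\{u^{\III},x^{\III}\}$.

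With this in place the argument is verbatim that of Lemma~\ref{lem:finite_triangulation1}. If $\{u^{\III},x^{\III}\}$ is an edge, $Q$ is a $2$-gon and the claim is trivial, so assume $\{u^{\III},x^{\III}\}\in\Theta(t)$, hence $x\geqslant u+2$ and $q(u,x)=1$. Since the entry of the infinite frieze $q$ at position $(u,x)$ equals $1$ and $x\geqslant u+2$, the restriction of $q$ to the triangle $\{(v,w)\mid u\leqslant v<w\leqslant x\}$ is the fundamental domain of a Conway--Coxeter frieze; this is the very fact from \cite{CC1}, \cite{CC2} and \cite[lem.\ 7.1]{HJ} used before, which makes no reference to the labelling of the interval. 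Translating positions $(v,w)$ into the edges and diagonals $\{v^{\III},w^{\III}\}$ of $Q$, the diagonals on which $q$ equals $1$ form a triangulation of $Q$, and by Definition~\ref{def:Theta} this triangulation is exactly $\Theta(t)_Q$; that is part~(i). The same references show the fundamental domain is recovered by Conway--Coxeter counting on that triangulation, i.e.\ $q(v,w)=\Theta(t)_Q(v^{\III},w^{\III})$ for $u\leqslant v<w\leqslant x$, while the case $v=w$ is trivial; that is part~(ii).

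\textbf{Main obstacle.} There is essentially none; the one point genuinely worth verifying is that the proof of Lemma~\ref{lem:finite_triangulation1} used nothing distinguishing $\I$ from $\III$ or $p$ from $q$ --- inspecting it confirms this, since it invoked only positivity of $p$, the Conway--Coxeter structure attached to a frieze with a $1$ in its interior, and the reconstruction of a frieze by counting, all of which are symmetric. Accordingly, the write-up may legitimately read ``this is proved exactly as Lemma~\ref{lem:finite_triangulation1}, interchanging $\I$ with $\III$ and $p$ with $q$,'' optionally spelling out the substitution for the reader's convenience.
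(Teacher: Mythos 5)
Your proposal is correct and matches the paper's own argument, which simply deduces this lemma from Lemma \ref{lem:finite_triangulation1} by the symmetry interchanging interval $\I$ with $\III$ and $p$ with $q$. Your additional verification that every ingredient of that proof is preserved by the symmetry is accurate but not a departure from the paper's route.
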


\begin{proof}
Follows from Lemma \ref{lem:finite_triangulation1} by symmetry.
\end{proof}

\begin{Lemma}
\label{lem:finite_triangulation2}
Let $a \leqslant d$ and $u \leqslant x$ be such that $\{ a^{ \I },x^{
  \III } \}, \{ d^{ \I },u^{ \III } \} \in \Theta( t )$ and let $R$
denote the finite polygon between the arcs $\{ a^{ \I },x^{ \III } \}$
and $\{ d^{ \I },u^{ \III } \}$, see the right half of Figure
\ref{fig:PP}. 
\begin{enumerate}
\setlength\itemsep{4pt}

  \item  The restriction $\Theta( t )_R$ is a
    triangulation of $R$. 

  \item  Conway-Coxeter counting on $\Theta( t )_R$ agrees with certain
    parts of the $\SL2$-tiling $t$ and the infinite friezes $p$, $q$
    in the following sense: If $a \leqslant b \leqslant c \leqslant d$
    and $u \leqslant v \leqslant w \leqslant x$, then
\[
  t( b,v ) = \Theta( t )_R( b^{ \I }, v^{ \III } )
  \;,\;
  p( b,c ) = \Theta( t )_R( b^{ \I },c^{ \I } )
  \;,\;
  q( v,w ) = \Theta( t )_R( v^{ \III },w^{ \III } ).
\]

\end{enumerate}
\end{Lemma}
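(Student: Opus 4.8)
The plan is to mimic the strategy used in Lemma \ref{lem:finite_triangulation1}, replacing the single infinite frieze $p$ by the triple $(t,p,q)$ restricted to a rectangular region, and replacing ``fundamental domain of a Conway--Coxeter frieze'' by the corresponding statement about $\SL2$-tilings, which goes back to \cite{CC1}, \cite{CC2} and is recorded in \cite{HJ}. First I would set up the combinatorial picture: the polygon $R$ has vertices $a^{\I},\ldots,d^{\I},u^{\III},\ldots,x^{\III}$, so its edges and diagonals $\{\mu,\nu\}$ are in bijection with pairs of distinct vertices, and these pairs are of three kinds --- two on interval $\I$, two on interval $\III$, or one on each. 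Accordingly, define a function $f$ on the edges and diagonals of $R$ by $f(\{b^{\I},c^{\I}\})=p(b,c)$ (with $p(b,b)=0$), $f(\{v^{\III},w^{\III}\})=q(v,w)$, and $f(\{b^{\I},v^{\III}\})=t(b,v)$. The content of the lemma is that $f$ is obtained by Conway--Coxeter counting on a triangulation $\fS$ of $R$, and that $\fS=\Theta(t)_R$.

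The key step is to identify this data $f$ with the fundamental domain of a (finite) Conway--Coxeter frieze --- equivalently, with the Conway--Coxeter labelling of a triangulated polygon --- and to read off which diagonals carry label $1$. By hypothesis $\{a^{\I},x^{\III}\},\{d^{\I},u^{\III}\}\in\Theta(t)$, i.e.\ $t(a,x)=t(d,u)=1$; these are the two ``connecting'' edges of $R$, and they play the role of the upper-right corner entry equal to $1$ in Figure \ref{fig:p_tiling}. The Ptolemy relations of Lemma \ref{lem:Ptolemy} show that $f$ satisfies, on every quadrilateral spanned by four vertices of $R$, the three-term Ptolemy identity $f(\text{diag}_1)f(\text{diag}_2)=f(e_1)f(e_3)+f(e_2)f(e_4)$: the three cases (four $\I$-vertices; four $\III$-vertices; mixed) are exactly Lemma \ref{lem:Ptolemy}(i), (i), and a combination of (ii) and (iii). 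Together with positivity of all entries (Setup \ref{set:blanket} and the positivity of $p,q$) and the normalisations $f=0$ on a ``degenerate'' pair and $f=1$ on the two connecting edges, this puts $f$ in the form to which the Conway--Coxeter correspondence applies: by \cite{CC1}, \cite{CC2} (see also \cite[sec.\ 2]{BCI} and \cite[lem.\ 7.1]{HJ}) there is a unique triangulation $\fS$ of $R$ with $f(\{\mu,\nu\})=\fS(\mu,\nu)$, and $\fS$ consists exactly of those diagonals $\{\mu,\nu\}$ of $R$ with $f(\{\mu,\nu\})=1$. Comparing with Definition \ref{def:Theta}, a diagonal $\{b^{\I},c^{\I}\}$ of $R$ lies in $\fS$ iff $p(b,c)=1$ and $c\geqslant b+2$, which is precisely the condition for $\{b^{\I},c^{\I}\}\in\Theta(t)$; likewise for $\{v^{\III},w^{\III}\}$ via $q$, and for $\{b^{\I},v^{\III}\}$ via $t(b,v)=1$. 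Hence $\fS=\Theta(t)_R$, giving part (i), and then $\Theta(t)_R(\mu,\nu)=\fS(\mu,\nu)=f(\{\mu,\nu\})$ for all edges and diagonals gives part (ii), where for $b=c$, $v=w$ one checks the trivial cases separately.

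The main obstacle is making the identification of $f$ with a Conway--Coxeter fundamental domain genuinely rigorous rather than hand-waving the ``mixed'' Ptolemy relation: one must check that the three-term relation for $f$ on an \emph{arbitrary} quadrilateral of $R$ --- in particular one whose diagonals are one internal and one connecting arc, or two connecting arcs --- really follows from Lemma \ref{lem:Ptolemy}, matching the roles of the factors correctly. I expect this to be a short but slightly fiddly bookkeeping argument; concretely, for a quadrilateral $b^{\I}<c^{\I}<v^{\III}<w^{\III}$ the diagonals are $\{b^{\I},v^{\III}\}$ and $\{c^{\I},w^{\III}\}$ and the relation $t(b,v)t(c,w)=t(b,w)t(c,v)+p(b,c)q(v,w)$ is exactly Lemma \ref{lem:Ptolemy}(iii); for $b^{\I}<c^{\I}<d^{\I}<v^{\III}$ the diagonals are $\{b^{\I},d^{\I}\}$ and $\{c^{\I},v^{\III}\}$ and the relation is Lemma \ref{lem:Ptolemy}(ii); the purely internal cases are Lemma \ref{lem:Ptolemy}(i). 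Once these cases are checked, the rest is a direct appeal to the Conway--Coxeter correspondence and the definition of $\Theta(t)$, exactly parallel to Lemma \ref{lem:finite_triangulation1}. Alternatively, and perhaps more cleanly, one can deduce the lemma from Lemmas \ref{lem:finite_triangulation1} and \ref{lem:finite_triangulation1b} by an induction on the number of vertices of $R$, peeling off an ``ear'' triangle of $\fS$ at a time; I would present whichever route turns out shorter, but the Ptolemy/Conway--Coxeter route above is the conceptually cleanest.
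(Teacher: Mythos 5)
Your proposal is correct and takes essentially the same route as the paper: both assemble the data $(t,p,q)$ on the polygon $R$ into the fundamental domain of a Conway--Coxeter frieze and then invoke the Conway--Coxeter correspondence to identify the resulting triangulation with $\Theta(t)_R$, exactly parallel to Lemma \ref{lem:finite_triangulation1}. The only difference is one of self-containedness: where the paper cites \cite[prop.\ 7.2 and fig.\ 15]{HJ} for positioning restrictions of $p$ and $q$ next to the rectangle of $t$ spanned by the entries $t(a,x)=t(d,u)=1$, you verify the required $\SL2$/Ptolemy conditions on the quadrilaterals of $R$ directly from Lemma \ref{lem:Ptolemy}.
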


\begin{proof}
If $\{ a^{ \I },x^{ \III } \} = \{ d^{ \I },u^{ \III } \}$ then $R$ is
a $2$-gon and the lemma is trivial, so suppose that $\{ a^{ \I },x^{
  \III } \}$ and $\{ d^{ \I },u^{ \III } \}$ are distinct.

Then the proof is analogous to that of Lemma
\ref{lem:finite_triangulation1}, except that a more sophisticated
method is needed to obtain a fundamental region of a Conway--Coxeter
frieze.  Specifically, $\{ a^{ \I },x^{ \III } \}$, $\{ d^{ \I },u^{
  \III } \} \in \Theta( t )$ implies $t( a,x ) = t( d,u ) = 1$, and
these two entries of $t$ span a rectangle in the plane to which $t$
can be restricted.  It is shown in \cite[prop.\ 7.2 and fig.\ 15]{HJ}
how to position suitable restrictions of $p$ and $q$ next to the
rectangle in order to obtain a partial $\SL2$-tiling defined on a
triangle $F$.  As in the proof of Lemma
\ref{lem:finite_triangulation1}, this tiling coincides with the
fundamental region of a Conway--Coxeter frieze, and the proof then
proceeds as for Lemma \ref{lem:finite_triangulation1}.
\end{proof}

\section{Saturated vertices and defects associated to an $\SL2$-tiling}
\label{sec:defects}

Recall that $t$ is a fixed $\SL2$-tiling with associated infinite
friezes $p$ and $q$, see Setup \ref{set:blanket}.

\begin{Definition}
[Vertices strictly below and strictly between arcs]
\label{def:below}
Let $\J,\K \in \{ \I,\II,\III,\IV \}$ be intervals of the boundary of
$D_n$.
\begin{enumerate}
\setlength\itemsep{4pt}

  \item If $a \leqslant d-2$ then the vertices $( a+1 )^{\J}$, $\ldots$,
  $( d-1 )^{\J}$ are said to be {\em strictly below the (internal) arc
    $\{ a^{\J},d^{\J} \}$}, see the left half of Figure \ref{fig:PP}.

  \item If $a \leqslant d$ and $u \leqslant x$ are such that $\{ a^{ \J
  },x^{ \K } \}, \{ d^{ \J },u^{ \K } \}$ are distinct arcs, then the
  vertices $( a+1 )^{\J}$, $\ldots$, $( d-1 )^{\J}, ( u+1 )^{\K}$,
  $\ldots$, $( x-1 )^{\K}$ are said to be {\em strictly between the
    (connecting) arcs $\{ a^{ \J },x^{ \K } \}, \{ d^{ \J },u^{ \K }
    \}$}, see the right half of Figure \ref{fig:PP}.

\end{enumerate}
\end{Definition}

\begin{Definition}
[Saturated vertices]
\label{def:saturated}
A vertex of intervals $\I$ or $\III$ is called {\em saturated} if it
is strictly below an internal arc $\{ a^{\I},d^{\I} \}$ or $\{
u^{\III},x^{\III} \}$ in $\Theta( t )$, or strictly between two
connecting arcs $\{ a^{ \I },x^{ \III } \}, \{ d^{ \I },u^{ \III } \}$
in $\Theta( t )$; see Definition \ref{def:below}.

A vertex of intervals $\I$ or $\III$ which is not saturated is called
{\em non-saturated}. 
\end{Definition}

\begin{Remark}
Suppose $\{ a^{ \I },d^{ \I } \} \in \Theta( t )$ and let $P$ denote
the finite polygon below the internal arc $\{ a^{ \I },d^{ \I } \}$.
If $a < b < d$ then
\[
  p( b-1,b+1 ) = 1 + (\mbox{the number of arcs in $\Theta( t )$ ending
  at $b^{ \I }$}).
\]
This follows from Lemmas \ref{lem:finite_triangulation1} and \ref{lem:CC}(vi).
A similar equality holds for $q$.  In general, there is no such equality, and this is captured by the {\em defects} introduced in the next definition.

In due course, the defects will be used to augment the partial
triangulation $\Theta( t )$ to a triangulation $\fT$ which satisfies
\[
  p( a,d ) = \fT( a^{ \I },d^{ \I } )
\]
for all $a \leqslant d$.  In turn, this will permit us to use Lemma
\ref{lem:agree} to prove 
\[
  t( b,v ) = \fT( b^{ \I },v^{ \III } )
\]
for all $b$, $v$, that is, to prove $t = \Phi( \fT )$.
\end{Remark}

\begin{Definition}
\label{def:defects}
The {\em $p$-defect} of an integer $b$ is
\[
  \defect_p( b )
  =
  p( b-1,b+1 )
  - 1
  - (\mbox{the number of arcs in $\Theta( t )$ ending at $b^{\I}$})
\]
and the {\em $q$-defect} of an integer $v$ is
\[
  \defect_q( v )
  =
  q( v-1,v+1 )
  - 1
  - (\mbox{the number of arcs in $\Theta( t )$ ending at $v^{\III}$}).
\]
\end{Definition}

\begin{Lemma}
\label{lem:internal_defect}
Suppose that $\Theta( t )$ has no connecting arcs which end at the
vertex $b_0^{\I}$.  We use the following notation, illustrated by Figure
\ref{fig:bik}, which makes sense because $\Theta( t )$ is locally
finite by Lemma \ref{lem:locally_finite}:
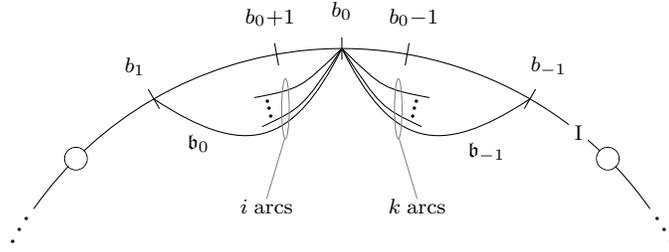
\begin{figure}
  \centering
    \begin{tikzpicture}[scale=5]

      \draw (35:1cm) arc (35:145:1cm);
      \draw (29:1cm) node{$\cdot$};
      \draw (31:1cm) node{$\cdot$};
      \draw (33:1cm) node{$\cdot$};
      \draw (147:1cm) node{$\cdot$};
      \draw (149:1cm) node{$\cdot$};
      \draw (151:1cm) node{$\cdot$};

      \draw (45:1cm) node[fill=white,circle,inner sep=0.101cm] {} circle (0.03cm);
      \draw (135:1cm) node[fill=white,circle,inner sep=0.101cm] {} circle (0.03cm);

      \draw (60:0.97cm) -- (60:1.03cm);
      \draw (60:1.10cm) node{$\scriptstyle b_{-1}$};
      \draw (80:0.97cm) -- (80:1.03cm);
      \draw (80:1.10cm) node{$\scriptstyle b_0-1$};
      \draw (90:0.97cm) -- (90:1.03cm);
      \draw (90:1.10cm) node{$\scriptstyle b_0$};
      \draw (100:0.97cm) -- (100:1.03cm);
      \draw (100:1.10cm) node{$\scriptstyle b_0+1$};
      \draw (120:0.97cm) -- (120:1.03cm);
      \draw (120:1.10cm) node{$\scriptstyle b_1$};

      \draw (90:1cm) .. controls (80:0.75cm) and (70:0.75cm) .. (60:1cm);
      \draw (90:1cm) .. controls (83:0.85cm) .. (75:0.82cm);
      \draw (90:1cm) .. controls (83:0.9cm) .. (75:0.90cm);
      \draw (77:0.84) node{$\cdot$};
      \draw (77:0.86) node{$\cdot$};
      \draw (77:0.88) node{$\cdot$};

      \draw (90:1cm) .. controls (100:0.75cm) and (110:0.75cm) .. (120:1cm);
      \draw (90:1cm) .. controls (97:0.85cm) .. (105:0.82cm);
      \draw (90:1cm) .. controls (97:0.9cm) .. (105:0.90cm);
      \draw (103:0.84) node{$\cdot$};
      \draw (103:0.86) node{$\cdot$};
      \draw (103:0.88) node{$\cdot$};

      \draw (62:0.82cm) node{$\scriptstyle \fb_{-1}$};
      \draw (117:0.84cm) node{$\scriptstyle \fb_0$};

      \draw[gray,very thin] (0.15,0.84) ellipse (0.01cm and 0.08cm);
      \draw[gray,very thin] (0.152,0.761) -- (0.20,0.60);
      \draw (0.20,0.58) node{${\scriptstyle k\;\mathrm{arcs}}$};

      \draw[gray,very thin] (-0.15,0.84) ellipse (0.01cm and 0.08cm);
      \draw[gray,very thin] (-0.152,0.761) -- (-0.20,0.60);
      \draw (-0.20,0.58) node{${\scriptstyle i\;\mathrm{arcs}}$};

      \draw (51:1cm) node[fill=white,rectangle,inner sep=0.07cm] {$\scriptstyle \I$};

    \end{tikzpicture} 
    \caption{This sketch shows $\Theta( t )$ when it has no connecting
      arcs ending at $b_0^{\I}$.  There are $i$ internal arcs in
      $\Theta( t )$ which go anticlockwise from $b_0^{\I}$ and the
      longest is $\fb_0 = \{ b_0^{\I},b_1^{\I} \}$.  There are $k$
      internal arcs in $\Theta( t )$ which go clockwise from
      $b_0^{\I}$ and the longest is $\fb_{-1} = \{
      b_0^{\I},b_{-1}^{\I} \}$.}
\label{fig:bik}
\end{figure}

Let $\fb_0 = \{ b_0^{\I},b_1^{\I} \}$ be either the longest internal
arc in $\Theta( t )$ going anticlockwise from $b_0^{\I}$, or, if there
are no such arcs, the edge going anticlockwise from $b_0^{\I}$.

Let $\fb_{-1} = \{ b_{-1}^{\I},b_0^{\I} \}$ be either the
longest internal arc in $\Theta( t )$ going clockwise from $b_0^{\I}$,
or, if there are no such arcs, the edge going clockwise from
$b_0^{\I}$.

Then
\[
  \defect_p( b_0 ) = p( b_{-1},b_1 ) - 1.
\]
\end{Lemma}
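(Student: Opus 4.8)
The plan is to prove the equivalent identity
\[
  p( b_{-1},b_1 ) = p( b_0-1,b_0+1 ) - ( i+k ),
\]
where $i+k$ is the total number of arcs in $\Theta( t )$ ending at $b_0^{\I}$: this count is correct precisely because the hypothesis rules out connecting arcs at $b_0^{\I}$, so every arc of $\Theta( t )$ ending there is internal and hence one of the $i$ anticlockwise or $k$ clockwise ones. Combined with Definition \ref{def:defects} the identity gives $\defect_p( b_0 ) = p( b_{-1},b_1 ) - 1$. Throughout I fix notation for the fan at $b_0^{\I}$: let $b_0+1 = c_0 < c_1 < \cdots < c_i = b_1$ be such that the internal arcs of $\Theta( t )$ going anticlockwise from $b_0^{\I}$ are exactly $\{ b_0^{\I},c_j^{\I} \}$ for $1 \leqslant j \leqslant i$, and symmetrically let $b_0-1 = d_0 > d_1 > \cdots > d_k = b_{-1}$ enumerate the clockwise ones. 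Since arcs join non-neighbouring vertices, $c_j \geqslant b_0+2$ and $d_j \leqslant b_0-2$ for $j \geqslant 1$, so all inequalities among the $c$'s, $d$'s, and $b_0-1,b_0,b_0+1$ used below are strict.

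The crucial input is that $p( c_{j-1},c_j ) = 1$ for $1 \leqslant j \leqslant i$ and $p( d_j,d_{j-1} ) = 1$ for $1 \leqslant j \leqslant k$. For the first, I would apply Lemma \ref{lem:finite_triangulation1} to the arc $\{ b_0^{\I},c_j^{\I} \} \in \Theta( t )$: the restriction $\Theta( t )_{P}$ to the polygon $P$ below $\{ b_0^{\I},c_j^{\I} \}$ is a triangulation, and by part (ii) of that lemma $p( c_{j-1},c_j ) = \Theta( t )_{P}( c_{j-1}^{\I},c_j^{\I} )$. In $P$ the vertices neighbouring $b_0^{\I}$ are $( b_0+1 )^{\I}$ and $c_j^{\I}$, and the arcs of $\Theta( t )_{P}$ ending at $b_0^{\I}$ are exactly the diagonals $\{ b_0^{\I},c_1^{\I} \}, \ldots, \{ b_0^{\I},c_{j-1}^{\I} \}$ (the anticlockwise internal arcs of $\Theta( t )$ at $b_0^{\I}$ shorter than $\{ b_0^{\I},c_j^{\I} \}$). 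Hence the fan at $b_0^{\I}$ in $\Theta( t )_{P}$ is of the form in Figure \ref{fig:bik}, and its outermost triangle, bounded by the rays $\{ b_0^{\I},c_{j-1}^{\I} \}$ and $\{ b_0^{\I},c_j^{\I} \}$, is $\{ b_0^{\I},c_{j-1}^{\I},c_j^{\I} \}$; so $\{ c_{j-1}^{\I},c_j^{\I} \}$ is either an edge of $P$ or an arc of $\Theta( t )_{P}$, whence $\Theta( t )_{P}( c_{j-1}^{\I},c_j^{\I} ) = 1$ by Lemma \ref{lem:CC}(iii). The claim for the $d_j$'s is proved symmetrically, using the polygon below $\{ d_j^{\I},b_0^{\I} \}$.

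With this in hand, the identity follows from two telescoping uses of the Ptolemy relation in Lemma \ref{lem:Ptolemy}(i). Applying it to $b_0-1 < b_0 < c_{j-1} < c_j$ gives
\[
  p( b_0-1,c_{j-1} )\,p( b_0,c_j ) = p( b_0-1,b_0 )\,p( c_{j-1},c_j ) + p( b_0-1,c_j )\,p( b_0,c_{j-1} ),
\]
where $p( b_0,c_j ) = 1$ since $\{ b_0^{\I},c_j^{\I} \} \in \Theta( t )$, $p( b_0-1,b_0 ) = 1$, $p( c_{j-1},c_j ) = 1$ by the previous paragraph, and $p( b_0,c_{j-1} ) = 1$ since $\{ b_0^{\I},c_{j-1}^{\I} \} \in \Theta( t )$ (or, for $j=1$, since it equals $p( b_0,b_0+1 )$). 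Hence $p( b_0-1,c_j ) = p( b_0-1,c_{j-1} ) - 1$, and induction on $j$ yields $p( b_0-1,b_1 ) = p( b_0-1,c_i ) = p( b_0-1,b_0+1 ) - i$. Symmetrically, applying Lemma \ref{lem:Ptolemy}(i) to $d_j < d_{j-1} < b_0 < b_1$ and using $p( d_j,b_0 ) = p( d_j,d_{j-1} ) = p( b_0,b_1 ) = p( d_{j-1},b_0 ) = 1$ gives $p( d_j,b_1 ) = p( d_{j-1},b_1 ) - 1$, hence by induction $p( b_{-1},b_1 ) = p( d_k,b_1 ) = p( b_0-1,b_1 ) - k$. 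Combining the two telescopes gives $p( b_{-1},b_1 ) = p( b_0-1,b_0+1 ) - i - k$, as required.

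I expect the main obstacle to be the crucial input $p( c_{j-1},c_j ) = 1$: one must convert the condition that $\{ b_0^{\I},c_{j-1}^{\I} \}$ and $\{ b_0^{\I},c_j^{\I} \}$ are consecutive arcs at $b_0^{\I}$ into the statement that $b_0^{\I},c_{j-1}^{\I},c_j^{\I}$ span a triangle of $\Theta( t )_{P}$, and it is here that the hypothesis on connecting arcs and the maximality in the definitions of $\fb_0$ and $\fb_{-1}$ genuinely enter. Everything after that is routine Ptolemy bookkeeping, with the edge cases $i=0$ or $k=0$ merely making the corresponding telescope empty.
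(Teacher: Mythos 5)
Your proof is correct, but it follows a different route from the paper's.  The paper computes the two quantities $p( b_0+1,b_1 ) = i$ and $p( b_{-1},b_0-1 ) = k$ in one stroke, by restricting to the polygons below $\fb_0$ and $\fb_{-1}$ (Lemma \ref{lem:finite_triangulation1}) and invoking the arc-counting formula of Lemma \ref{lem:CC}(vi) at $b_0^{\I}$; it then needs only two applications of the Ptolemy relation of Lemma \ref{lem:Ptolemy}(i), one using $p( b_0,b_1 ) = 1$ and one using $p( b_{-1},b_0 ) = 1$, to reach $p( b_0-1,b_0+1 ) = i + k + p( b_{-1},b_1 )$.  You instead prove $p( c_{j-1},c_j ) = 1$ for consecutive endpoints of the fan at $b_0^{\I}$ and then telescope with $i+k$ Ptolemy relations; in effect you re-derive the relevant instance of Lemma \ref{lem:CC}(vi) by hand, using Lemma \ref{lem:CC}(iii) instead.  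Both arguments rest on the same restriction lemma, and your telescopes are set up correctly (all the orderings are strict, the degenerate cases $i=0$, $k=0$, $j=1$ are handled, and the hypothesis on connecting arcs is used exactly where it must be, namely in identifying the total number of arcs at $b_0^{\I}$ as $i+k$).  The one step you assert rather than prove is that the triangle of $\Theta( t )_{P}$ adjacent to the edge $\{ b_0^{\I},c_j^{\I} \}$ of $P$ is $\{ b_0^{\I},c_{j-1}^{\I},c_j^{\I} \}$; this is true and needs only one line: the third vertex $\sigma$ of that triangle satisfies that $\{ b_0^{\I},\sigma \}$ is an edge of $P$ or a diagonal of $\Theta( t )_{P}$, so $\sigma \in \{ c_0^{\I},\ldots,c_{j-1}^{\I} \}$, and if $\sigma = c_l^{\I}$ with $l < j-1$ then the side $\{ c_l^{\I},c_j^{\I} \}$ of the triangle would cross the diagonal $\{ b_0^{\I},c_{j-1}^{\I} \} \in \Theta( t )_{P}$, which is impossible.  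With that line supplied your argument is complete; the paper's version is shorter, while yours makes the fan structure at $b_0^{\I}$ explicit and avoids citing Lemma \ref{lem:CC}(vi).
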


\begin{proof}
The special cases where $\fb_0$ or $\fb_{-1}$ is an edge are
omitted since they are easy.  We assume that $\fb_0$ and $\fb_{-1}$
are arcs.

We let $i$, respectively $k$, denote the number of internal arcs in
$\Theta( t )$ going anticlockwise, respectively clockwise, from
$b_0^{\I}$, see Figure \ref{fig:bik}.

First, consider the finite polygon $P$ below $\fb_0$.  Lemma
\ref{lem:finite_triangulation1} says that $\Theta( t )$ restricts to a
triangulation $\Theta( t )_P$ of $P$ and that
\[
  p( b_0+1,b_1 ) = \Theta( t )_P\big( ( b_0+1 )^{\I},b_1^{\I} \big) = (*).
\]
Viewed in $P$, the vertices $( b_0+1 )^{\I}$, $b_0^{\I}$, $b_1^{\I}$
are consecutive so Lemma \ref{lem:CC}(vi) gives
\[
  (*)
  = 1 + (\mbox{the number of arcs in $\Theta( t )_P$ ending at $b_0^{\I}$})
  = (**).
\]
The arcs in $\Theta( t )_P$ ending at $b_0^{\I}$ are precisely the
arcs in $\Theta( t )$ going anticlockwise from $b_0^{\I}$, except for
$\fb_0$ which is an edge of $P$.  Hence
\[
  (**) = i.
\]
This proves the first of the following equalities, and the second
follows by symmetry.
\begin{align}
\label{equ:connecting_p1b}
  p( b_0+1,b_1 ) & = i, \\
\label{equ:connecting_p2b}
  p( b_{-1},b_0-1 ) & = k.
\end{align}

Secondly, we show two consequences of the Ptolemy relations in
Lemma \ref{lem:Ptolemy}.
\begin{itemize}
\setlength\itemsep{4pt}

  \item Since $\fb_0 = \{ b_0^{\I},b_1^{\I} \}$ is in $\Theta( t )$, we have
\begin{equation}
\label{equ:connecting_p3}
  p( b_0,b_1 ) = 1.
\end{equation}
This gives the first equality in the following computation, 
\begin{align}
\nonumber
  p( b_0-1,b_0+1 )
  & = p( b_0-1,b_0+1 )p( b_0,b_1 ) \\
\nonumber
  & = p( b_0-1,b_0 )p( b_0+1,b_1 )
    + p( b_0-1,b_1 )p( b_0,b_0+1 ) \\
\label{equ:connecting_i}
  & = i + p( b_0-1,b_1 ),
\end{align}
where the second equality is by the Ptolemy relation in Lemma
\ref{lem:Ptolemy}(i) and the third equality uses Equation
\eqref{equ:connecting_p1b}.

  \item  Since $\fb_{-1} = \{ b_{-1}^{\I},b_0^{\I} \}$ is in
    $\Theta( t )$, we have $p( b_{-1},b_0 ) = 1$.  This
    gives the first equality in the following computation,
\begin{align}
\nonumber
  p( b_0-1,b_1 )
  & = p( b_{-1},b_0 )p( b_0-1,b_1 ) \\
\nonumber
  & = p( b_{-1},b_0-1 )p( b_0,b_1 )
    + p( b_{-1},b_1 )p( b_0-1,b_0 ) \\
\label{equ:connecting_v}
  & = k + p( b_{-1},b_1 ),
\end{align}
where the second equality is by the Ptolemy relation in Lemma
\ref{lem:Ptolemy}(i) and the third equality uses Equations
\eqref{equ:connecting_p2b} and \eqref{equ:connecting_p3}.

\end{itemize}

Finally, the previous equations combine as follows.
\[
  p( b_0-1,b_0+1 )
  \stackrel{ \eqref{equ:connecting_i} }{=}
  i + p( b_0-1,b_1 )
  \stackrel{ \eqref{equ:connecting_v} }{=}
  i + k + p( b_{-1},b_1 )
\]
Subtracting $i + k + 1$ from this equation turns the left hand
side into $\defect_p( b_0 )$ because there are a total of $i + k$
arcs in $\Theta ( t )$ which end at $b_0^{\I}$; see Figure
\ref{fig:bik} and Definition \ref{def:defects}.  This proves the
lemma. 
\end{proof}

\begin{Lemma}
\label{lem:connecting_defect}
Suppose that $\Theta( t )$ has at least one connecting arc which ends
at the vertex $b_0^{\I}$.  We use the following notation, illustrated by
Figure \ref{fig:bijk}, which makes sense because $\Theta( t )$ is
locally finite by Lemma \ref{lem:locally_finite}:
\begin{figure}
  \centering
    \begin{tikzpicture}[scale=5]

      \draw (0,0) circle (1cm);

      \draw (45:1cm) node[fill=white,circle,inner sep=0.101cm] {} circle (0.03cm);
      \draw (135:1cm) node[fill=white,circle,inner sep=0.101cm] {} circle (0.03cm);
      \draw (225:1cm) node[fill=white,circle,inner sep=0.101cm] {} circle (0.03cm);
      \draw (315:1cm) node[fill=white,circle,inner sep=0.101cm] {} circle (0.03cm);

      \draw (60:0.97cm) -- (60:1.03cm);
      \draw (60:1.10cm) node{$\scriptstyle b_{-1}$};
      \draw (80:0.97cm) -- (80:1.03cm);
      \draw (80:1.10cm) node{$\scriptstyle b_0-1$};
      \draw (90:0.97cm) -- (90:1.03cm);
      \draw (90:1.10cm) node{$\scriptstyle b_0$};
      \draw (100:0.97cm) -- (100:1.03cm);
      \draw (100:1.10cm) node{$\scriptstyle b_0+1$};
      \draw (120:0.97cm) -- (120:1.03cm);
      \draw (120:1.10cm) node{$\scriptstyle b_1$};

      \draw (255:0.97cm) -- (255:1.03cm);
      \draw (255:1.10cm) node{$\scriptstyle v_1$};
      \draw (285:0.97cm) -- (285:1.03cm);
      \draw (285:1.10cm) node{$\scriptstyle v_j$};

      \draw (90:1cm) .. controls (80:0.75cm) and (70:0.75cm) .. (60:1cm);
      \draw (90:1cm) .. controls (83:0.85cm) .. (75:0.82cm);
      \draw (90:1cm) .. controls (83:0.9cm) .. (75:0.90cm);
      \draw (77:0.84) node{$\cdot$};
      \draw (77:0.86) node{$\cdot$};
      \draw (77:0.88) node{$\cdot$};

      \draw (90:1cm) .. controls (100:0.75cm) and (110:0.75cm) .. (120:1cm);
      \draw (90:1cm) .. controls (97:0.85cm) .. (105:0.82cm);
      \draw (90:1cm) .. controls (97:0.9cm) .. (105:0.90cm);
      \draw (103:0.84) node{$\cdot$};
      \draw (103:0.86) node{$\cdot$};
      \draw (103:0.88) node{$\cdot$};

      \draw (90:1cm) .. controls (-0.05,0.3) and (-0.05,-0.3) .. (255:1cm);
      \draw (90:1cm) .. controls (-0.01,0.5) .. (-0.04,0);
      \draw (90:1cm) .. controls (0.01,0.5) .. (0.04,0);
      \draw (90:1cm) .. controls (0.05,0.3) and (0.05,-0.3) .. (285:1cm);
      \draw (-0.025,0.05) node{$\cdot$};
      \draw (0,0.05) node{$\cdot$};
      \draw (0.025,0.05) node{$\cdot$};

      \draw (62:0.84cm) node{$\scriptstyle \fb_{-1}$};
      \draw (117:0.86cm) node{$\scriptstyle \fb_0$};

      \draw (-0.17,-0.4) node{$\scriptstyle \fa_1$};
      \draw (0.17,-0.4) node{$\scriptstyle \fa_j$};

      \draw[gray,very thin] (0.15,0.84) ellipse (0.01cm and 0.08cm);
      \draw[gray,very thin] (0.152,0.761) -- (0.20,0.60);
      \draw (0.22,0.58) node{${\scriptstyle k\;\mathrm{arcs}}$};

      \draw[gray,very thin] (-0.15,0.84) ellipse (0.01cm and 0.08cm);
      \draw[gray,very thin] (-0.152,0.761) -- (-0.20,0.60);
      \draw (-0.20,0.58) node{${\scriptstyle i\;\mathrm{arcs}}$};

      \draw[gray,very thin] (0,0.15) ellipse (0.08cm and 0.01cm);
      \draw[gray,very thin] (-0.077,0.147) -- (-0.26,0.05);
      \draw (-0.26,0.025) node{${\scriptstyle j\;\mathrm{arcs}}$};

        \draw (52.5:1cm) node[fill=white,rectangle,inner sep=0.07cm] {$\scriptstyle \I$};
        \draw (180:1cm) node[fill=white,rectangle,inner sep=0.07cm] {$\scriptstyle \II$};
        \draw (233.5:1cm) node[fill=white,rectangle,inner sep=0.07cm] {$\scriptstyle \III$};
        \draw (360:1cm) node[fill=white,rectangle,inner sep=0.07cm] {$\scriptstyle \IV$};

    \end{tikzpicture} 
    \caption{This sketch shows $\Theta( t )$ when it contains $j
      \geqslant 1$ connecting arcs $\fa_1$, $\ldots$, $\fa_j$ ending
      at $b_0^{\I}$.  There are $i$ internal arcs in $\Theta( t )$
      which go anticlockwise from $b_0^{\I}$ and the longest is $\fb_0
      = \{ b_0^{\I},b_1^{\I} \}$.  There are $k$ internal arcs in
      $\Theta( t )$ which go clockwise from $b_0^{\I}$ and the longest
      is $\fb_{-1} = \{ b_0^{\I},b_{-1}^{\I} \}$.}
\label{fig:bijk}
\end{figure}

Let $v_1 < \cdots < v_j$ be such that $\fa_1 = \{ b_0^{\I},v_1^{\III}
\}, \ldots, \fa_j = \{ b_0^{\I},v_j^{\III} \}$ are all the connecting
arcs in $\Theta( t )$ which end at $b_0^{\I}$.

Let $\fb_0 = \{ b_0^{\I},b_1^{\I} \}$ be either the longest internal
arc in $\Theta( t )$ going anticlockwise from $b_0^{\I}$, or, if there
are no such arcs, the edge going anticlockwise from $b_0^{\I}$.

Let $\fb_{-1} = \{ b_{-1}^{\I},b_0^{\I} \}$ be either the longest
internal arc in $\Theta( t )$ going clockwise from $b_0^{\I}$, or, if
there are no such arcs, the edge going clockwise from $b_0^{\I}$.

Then
\[
  \defect_p( b_0 ) = t( b_{ -1 },v_j ) + t( b_1,v_1 ) - 2.
\]
\end{Lemma}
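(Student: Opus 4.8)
The plan is to mirror the proof of Lemma~\ref{lem:internal_defect} for the ``internal'' contribution to the $p$-defect, and to control the ``connecting'' contribution of $\fa_1,\dots,\fa_j$ by three applications of the Ptolemy relations in Lemma~\ref{lem:Ptolemy}.

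First I would record the numerology. By Definition~\ref{def:Theta} every connecting arc of $\Theta(t)$ runs between intervals $\I$ and $\III$, so the arcs of $\Theta(t)$ ending at $b_0^{\I}$ are exactly the $i$ internal anticlockwise ones, the $k$ internal clockwise ones, and $\fa_1,\dots,\fa_j$; hence $\defect_p(b_0)=p(b_0-1,b_0+1)-1-(i+j+k)$. Now the finite polygon $P$ below $\fb_0$ and the finite polygon below $\fb_{-1}$ consist only of vertices of interval $\I$, so no connecting arc of $\Theta(t)$ is a chord of either polygon, and the connecting arcs at $b_0^{\I}$ play no role in Conway--Coxeter counting on these polygons. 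Consequently the computations establishing Equations \eqref{equ:connecting_p1b}, \eqref{equ:connecting_p2b}, \eqref{equ:connecting_p3}, \eqref{equ:connecting_i} and \eqref{equ:connecting_v} in the proof of Lemma~\ref{lem:internal_defect} remain valid verbatim here, and chaining \eqref{equ:connecting_i} with \eqref{equ:connecting_v} gives $p(b_0-1,b_0+1)=i+k+p(b_{-1},b_1)$. Therefore $\defect_p(b_0)=p(b_{-1},b_1)-j-1$, and it remains to prove
\[
  p(b_{-1},b_1)=t(b_{-1},v_j)+t(b_1,v_1)+j-1.
\]

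Since $\fb_0$ is either an arc of $\Theta(t)$ or an edge we have $p(b_0,b_1)=1$; similarly $p(b_{-1},b_0)=1$; and $\fa_\ell\in\Theta(t)$ gives $t(b_0,v_\ell)=1$ for each $\ell$. Feeding these values into the Ptolemy relation of Lemma~\ref{lem:Ptolemy}(ii) for the rows $b_{-1}<b_0<b_1$ and the column $v_1$ collapses it to $p(b_{-1},b_1)=t(b_{-1},v_1)+t(b_1,v_1)$. So it suffices to show $t(b_{-1},v_1)=t(b_{-1},v_j)+(j-1)$; this is vacuous when $j=1$, so assume $j\geqslant 2$. The key step is $q(v_\ell,v_{\ell+1})=1$ for $\ell=1,\dots,j-1$: I would look at the polygon $R$ between $\fa_1$ and $\fa_j$ (Definition~\ref{def:restriction}(ii)), whose restriction $\Theta(t)_R$ is a triangulation by Lemma~\ref{lem:finite_triangulation2}; in $\Theta(t)_R$ the vertex $b_0^{\I}$ is linked precisely to $v_1^{\III},\dots,v_j^{\III}$, because any further chord of $R$ at $b_0^{\I}$ would be a connecting arc of $\Theta(t)$ ending at $b_0^{\I}$ and distinct from all of $\fa_1,\dots,\fa_j$. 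Thus the triangles of $\Theta(t)_R$ incident to $b_0^{\I}$ are the $\{b_0^{\I},v_\ell^{\III},v_{\ell+1}^{\III}\}$, so each $\{v_\ell^{\III},v_{\ell+1}^{\III}\}$ is an edge of $D_n$ or an arc of $\Theta(t)$, whence $q(v_\ell,v_{\ell+1})=1$ by the definition of an infinite frieze, respectively of $\Theta(t)$. Feeding $t(b_0,v_\ell)=t(b_0,v_{\ell+1})=1$, $p(b_{-1},b_0)=1$ and $q(v_\ell,v_{\ell+1})=1$ into the Ptolemy relation of Lemma~\ref{lem:Ptolemy}(iii) for the rows $b_{-1}<b_0$ and columns $v_\ell<v_{\ell+1}$ collapses it to $t(b_{-1},v_\ell)=t(b_{-1},v_{\ell+1})+1$; telescoping over $\ell=1,\dots,j-1$ yields $t(b_{-1},v_1)=t(b_{-1},v_j)+(j-1)$. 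Substituting back gives the displayed identity, hence $\defect_p(b_0)=t(b_{-1},v_j)+t(b_1,v_1)-2$.

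The step I expect to be the main obstacle is the verification that $q(v_\ell,v_{\ell+1})=1$, i.e.\ the combinatorial claim that $b_0^{\I}$ is an ``ear'' of $\Theta(t)_R$, so that consecutive connecting arcs at $b_0^{\I}$ cut off, on the interval-$\III$ side, either an edge of $D_n$ or an arc of $\Theta(t)$; this requires a careful look at the restriction $\Theta(t)_R$ and the list $\fa_1,\dots,\fa_j$ being exhaustive. As in Lemma~\ref{lem:internal_defect}, the degenerate cases in which $\fb_0$ and/or $\fb_{-1}$ is an edge (so that $i=0$ and/or $k=0$, and $b_1=b_0+1$ and/or $b_{-1}=b_0-1$) should be dealt with first; there the relevant Ptolemy identities degenerate to trivialities and the argument above goes through unchanged.
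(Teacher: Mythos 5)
Your argument is correct, and it runs on the same toolkit as the paper's proof (the counting identities \eqref{equ:connecting_p1b}--\eqref{equ:connecting_p3} together with the Ptolemy relations of Lemma \ref{lem:Ptolemy}), but the bookkeeping is organised differently. The paper keeps the vertex $(b_0-1)^{\I}$ in play: it retains \eqref{equ:connecting_i}, proves the single counting fact $q(v_1,v_j)=j-1$ (its Equation \eqref{equ:connecting_q}, obtained like \eqref{equ:connecting_p1b} from Lemma \ref{lem:CC}(vi)), and then chains $p(b_0-1,b_1)=t(b_1,v_1)+t(b_0-1,v_1)$, $t(b_0-1,v_1)=t(b_0-1,v_j)+j-1$, $t(b_0-1,v_j)=t(b_{-1},v_j)+k$; it does not reuse \eqref{equ:connecting_v}. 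You instead reuse \eqref{equ:connecting_v} (which indeed still holds, being a pure consequence of \eqref{equ:connecting_p2b}, \eqref{equ:connecting_p3} and $p(b_{-1},b_0)=1$) to eliminate $(b_0-1)^{\I}$, reducing to $\defect_p(b_0)=p(b_{-1},b_1)-j-1$, then get $p(b_{-1},b_1)=t(b_{-1},v_1)+t(b_1,v_1)$ and telescope along $v_1<\cdots<v_j$ using $q(v_\ell,v_{\ell+1})=1$. Your combinatorial input is thus slightly stronger than the paper's single relation $q(v_1,v_j)=j-1$: you need that $b_0^{\I}$ is the apex of a fan in $\Theta(t)_R$, so that each $\{v_\ell^{\III},v_{\ell+1}^{\III}\}$ is an edge or an arc of $\Theta(t)$. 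That fan claim is correct (it is the standard description of the link of a vertex in a triangulated polygon, and your exhaustiveness observation is exactly what makes it work), but if you prefer to stay strictly within the paper's stated lemmas you can obtain $q(v_\ell,v_{\ell+1})=1$ directly by applying Lemma \ref{lem:finite_triangulation2}(ii) and Lemma \ref{lem:CC}(vi) to the polygon between $\fa_\ell$ and $\fa_{\ell+1}$, in which $v_{\ell+1}^{\III}$, $b_0^{\I}$, $v_\ell^{\III}$ are consecutive and no arc of the restricted triangulation ends at $b_0^{\I}$, giving the value $1+0$. Your treatment of the degenerate cases ($\fb_0$ or $\fb_{-1}$ an edge, and $j=1$) matches the paper, which omits them as easy; your telescoping in fact handles $j=1$ vacuously. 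The trade-off is minor: the paper's route gives a slightly shorter Ptolemy chain, while yours isolates the clean intermediate statement $\defect_p(b_0)=p(b_{-1},b_1)-j-1$ and makes the role of the individual arcs $\fa_\ell$ more transparent.
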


\begin{proof}
The special cases where $\fb_0$ or $\fb_{-1}$ is an edge or where $j =
1$ are omitted since they are easy.  We assume that $\fb_0$ and
$\fb_{-1}$ are arcs and that $j \geqslant 2$.

We let $i$, respectively $k$, denote the number of internal arcs in
$\Theta( t )$ going anticlockwise, respectively clockwise, from
$b_0^{\I}$.  The choice of $v_1 < \cdots < v_j$ means that there are
$j$ connecting arcs in $\Theta( t )$ ending at $b_0^{ \I }$.  See
Figure \ref{fig:bijk}.

First, an argument like the one used to prove Equation
\eqref{equ:connecting_p1b} shows
\begin{equation}
\label{equ:connecting_q}
  q( v_1,v_j ) = j-1.
\end{equation}

Secondly, the same arguments as in the proof of Lemma
\ref{lem:internal_defect} show that Equations
\eqref{equ:connecting_p2b}, \eqref{equ:connecting_p3}, and
\eqref{equ:connecting_i} remain valid.  We collect three other
consequences of the Ptolemy relations from Lemma \ref{lem:Ptolemy}.
\begin{itemize}
\setlength\itemsep{4pt}

\item Since $\fa_1 = \{ b_0^{\I},v_1^{\III} \}$ is in $\Theta( t )$,
  we have $t( b_0,v_1 ) = 1$.  This gives the first equality in the
  following computation,
\begin{align}
\nonumber
  p( b_0-1,b_1 )
  & = p( b_0-1,b_1 )t( b_0,v_1 ) \\
\nonumber
  & = p( b_0-1,b_0 )t( b_1,v_1 ) + p( b_0,b_1 )t( b_0-1,v_1 ) \\
\label{equ:connecting_ii}
  & = t( b_1,v_1 ) + t( b_0-1,v_1 ),
\end{align}
where the second equality is by the Ptolemy relation in Lemma
\ref{lem:Ptolemy}(ii) and the third equality uses Equation
\eqref{equ:connecting_p3}.  

  \item Since $\fa_j = \{ b_0^{\I},v_j^{\III} \}$ is in $\Theta( t )$,
  we have $t( b_0,v_j ) = 1$.  This gives the first equality in the
  following computation,
\begin{align}
\nonumber
  t( b_0-1,v_1 )
  & = t( b_0-1,v_1 )t( b_0,v_j ) \\
\nonumber
  & = t( b_0-1,v_j )t( b_0,v_1 ) + p( b_0-1,b_0 )q( v_1,v_j ) \\
\label{equ:connecting_iii}
  & = t( b_0-1,v_j ) + j - 1,
\end{align}
where the second equality is by the Ptolemy relation in Lemma
\ref{lem:Ptolemy}(iii) and the third equality uses $t( b_0,v_1 ) = 1$ and
Equation \eqref{equ:connecting_q}.

  \item  Since $\fb_{-1} = \{ b_{-1}^{\I},b_0^{\I} \}$ is in
    $\Theta( t )$, we have $p( b_{-1},b_0 ) = 1$.  This
    gives the first equality in the following computation,
\begin{align}
\nonumber
  t( b_0-1,v_j )
  & = p( b_{-1},b_0 )t( b_0-1,v_j ) \\
\nonumber
  & = p( b_0-1,b_0 )t( b_{-1},v_j )
    + p( b_{-1},b_0-1 )t( b_0,v_j ) \\
\label{equ:connecting_iv}
  & = t( b_{-1},v_j ) + k,
\end{align}
where the second equality is by the Ptolemy relation in Lemma
\ref{lem:Ptolemy}(ii) and the third equality uses $t( b_0,v_j ) =1$ and
Equation \eqref{equ:connecting_p2b}.

\end{itemize}

Finally, the previous equations combine as follows.
\begin{align*}
  p( b_0-1,b_0+1 )
  & \stackrel{ \eqref{equ:connecting_i} }{=}
    i + p( b_0-1,b_1 ) \\
  & \stackrel{ \eqref{equ:connecting_ii} }{=}
    i + t( b_1,v_1 ) + t( b_0-1,v_1 ) \\
  & \stackrel{ \eqref{equ:connecting_iii} }{=}
    i + t( b_1,v_1 ) + t( b_0-1,v_j ) + j - 1 \\
  & \stackrel{ \eqref{equ:connecting_iv} }{=}
    i + t( b_1,v_1 ) + t( b_{-1},v_j ) + k + j - 1
\end{align*}
Subtracting $i + j + k + 1$ from this equation turns the left hand
side into $\defect_p( b_0 )$ because there are a total of $i + j + k$
arcs in $\Theta ( t )$ which end at $b_0^{\I}$; see Figure
\ref{fig:bijk} and Definition \ref{def:defects}.  This proves the
lemma.
\end{proof}

\begin{Lemma}
\label{lem:defect}
\begin{enumerate}
\setlength\itemsep{4pt}

  \item  $b^{\I}$ is saturated if and only if $\defect_p(
    b ) = 0$. 

  \item  $b^{\I}$ is non-saturated if and only if $\defect_p(
    b ) > 0$. 

  \item $v^{\III}$ is saturated if and only if $\defect_q(
    v ) = 0$.

  \item $v^{\III}$ is non-saturated if and only if $\defect_q(
    v ) > 0$.

\end{enumerate}
\end{Lemma}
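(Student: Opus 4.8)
The plan is to prove (i) directly from the definitions together with Lemmas~\ref{lem:internal_defect} and \ref{lem:connecting_defect}; then (ii), (iii), (iv) follow formally. Recall that $\defect_p(b)$ is by definition $p(b-1,b+1)-1-(\text{number of arcs in }\Theta(t)\text{ ending at }b^{\I})$, and $b^{\I}$ is saturated precisely when it lies strictly below an internal arc of $\Theta(t)$ or strictly between two connecting arcs of $\Theta(t)$. The crucial point is that, with the notation of the two preceding lemmas ($\fb_0=\{b_0^{\I},b_1^{\I}\}$ the longest internal arc or edge going anticlockwise, $\fb_{-1}=\{b_{-1}^{\I},b_0^{\I}\}$ the longest internal arc or edge going clockwise), the vertex $b_0^{\I}$ is saturated if and only if \emph{both} $\fb_0$ and $\fb_{-1}$ are genuine arcs (not edges) \emph{and}, in the connecting-arc case, these arcs are ``compatible'' with a connecting pair in the sense that $b_{-1}^{\I}$ and $b_1^{\I}$ are linked down to interval $\III$. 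I would first establish this combinatorial equivalence carefully.

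For (i), I split into the two cases of Lemmas~\ref{lem:internal_defect} and \ref{lem:connecting_defect}. \textbf{Case 1: no connecting arc of $\Theta(t)$ ends at $b_0^{\I}$.} Then by Lemma~\ref{lem:internal_defect}, $\defect_p(b_0)=p(b_{-1},b_1)-1$. If $b_0^{\I}$ is saturated it must be strictly below some internal arc, and since the longest internal arcs through $b_0^{\I}$ are $\fb_0$ and $\fb_{-1}$, being strictly below a single internal arc forces $b_{-1}<b_0<b_1$ with $\{b_{-1}^{\I},b_1^{\I}\}$ not crossing anything in $\Theta(t)$—in fact one shows that the arc or edge $\{b_{-1}^{\I},b_1^{\I}\}$ coincides with an arc of $\Theta(t)$, so $p(b_{-1},b_1)=1$ by Lemma~\ref{lem:CC}(iii) applied inside the relevant finite polygon (Lemma~\ref{lem:finite_triangulation1}), giving $\defect_p(b_0)=0$. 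Conversely, if $\defect_p(b_0)=0$ then $p(b_{-1},b_1)=1$; since $b_0$ lies strictly between $b_{-1}$ and $b_1$, the value $1$ forces $d\ge a+2$ so $\{b_{-1}^{\I},b_1^{\I}\}\in\Theta(t)$ (an edge would mean $\fb_0$ or $\fb_{-1}$ is an edge, handled separately and quickly), whence $b_0^{\I}$ is strictly below an internal arc of $\Theta(t)$, i.e. saturated. The edge subcases are immediate because then there is no arc above $b_0^{\I}$ on that side and $b_0^{\I}$ cannot be saturated, while $p(b_{-1},b_0-1)$ or the analogue reads off a strictly positive frieze entry making the defect positive.

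\textbf{Case 2: at least one connecting arc of $\Theta(t)$ ends at $b_0^{\I}$.} By Lemma~\ref{lem:connecting_defect}, $\defect_p(b_0)=t(b_{-1},v_j)+t(b_1,v_1)-2$, where $\{b_0^{\I},v_1^{\III}\},\dots,\{b_0^{\I},v_j^{\III}\}$ are the connecting arcs at $b_0^{\I}$. Since each $t(\cdot,\cdot)\ge 1$, we have $\defect_p(b_0)=0$ iff $t(b_{-1},v_j)=t(b_1,v_1)=1$, i.e. iff $\{b_{-1}^{\I},v_j^{\III}\}$ and $\{b_1^{\I},v_1^{\III}\}$ are both in $\Theta(t)$. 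But these are exactly a pair of connecting arcs of $\Theta(t)$ with $b_0^{\I}$ lying between them on interval $\I$; conversely if $b_0^{\I}$ is strictly between two connecting arcs of $\Theta(t)$, non-crossing (Lemma~\ref{lem:non-crossing}) forces the outermost such arcs to end at $b_{-1}^{\I}$ or further clockwise and $b_1^{\I}$ or further anticlockwise and at $v_j^{\III}$, $v_1^{\III}$ respectively, and another short Ptolemy/Conway--Coxeter argument using Lemma~\ref{lem:finite_triangulation2} pins down $t(b_{-1},v_j)=t(b_1,v_1)=1$. (One must also note that in this case $b_0^{\I}$ cannot be strictly below an internal arc of $\Theta(t)$ without also lying between connecting arcs, because an internal arc over $b_0^{\I}$ together with a connecting arc at $b_0^{\I}$ would cross.) This gives (i).

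Finally, (ii) is the contrapositive of (i) once we know $\defect_p(b)\ge 0$ always: this non-negativity is exactly the content of Lemmas~\ref{lem:internal_defect} and \ref{lem:connecting_defect} combined with positivity of frieze entries and of $t$ (so $p(b_{-1},b_1)\ge 1$, $t(b_{-1},v_j),t(b_1,v_1)\ge 1$). Statements (iii) and (iv) are proved by the symmetry interchanging intervals $\I$ and $\III$, $p$ and $q$, as already used repeatedly (e.g.\ Lemma~\ref{lem:finite_triangulation1b}). \textbf{The main obstacle} I anticipate is the bookkeeping in Case~2: correctly identifying which connecting arcs of $\Theta(t)$ bound the region in which $b_0^{\I}$ sits, ruling out mixed internal/connecting obstructions via non-crossing, and handling the boundary subcases ($j=1$, or $\fb_0$/$\fb_{-1}$ an edge) cleanly; the algebra itself is routine given the two defect lemmas.
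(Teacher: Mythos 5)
Your overall strategy is workable and rests on the same two computational pillars as the paper (Lemmas \ref{lem:internal_defect} and \ref{lem:connecting_defect}, plus symmetry for (iii),(iv)), and your deduction of (ii) from (i) together with $\defect_p(b)\geqslant 0$ is fine. However, there is a genuine error in your Case 1, namely in the edge subcase. You claim that if $\fb_0$ or $\fb_{-1}$ is an edge then ``$b_0^{\I}$ cannot be saturated'' and the defect is positive. This is false: whenever $p(b_0-1,b_0+1)=1$ the arc $\{(b_0-1)^{\I},(b_0+1)^{\I}\}$ lies in $\Theta(t)$, and by non-crossing (Lemma \ref{lem:non-crossing}) \emph{no} arc of $\Theta(t)$ can then end at $b_0^{\I}$, so both $\fb_0$ and $\fb_{-1}$ are edges; yet $b_0^{\I}$ is saturated (it is strictly below $\{(b_0-1)^{\I},(b_0+1)^{\I}\}$) and $\defect_p(b_0)=1-1-0=0$. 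So your dismissal of the edge subcase would make (i) fail exactly there; being an edge of $\fb_{-1}$ or $\fb_0$ only says no arc \emph{ends} at $b_0^{\I}$ on that side, not that no arc of $\Theta(t)$ passes over $b_0^{\I}$. The correct treatment is simply that Lemma \ref{lem:internal_defect} already allows edges and the main argument goes through unchanged.

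The second gap is that the substantive implications of your forward direction are asserted, not proved: that a saturated vertex with no connecting arcs at it is strictly below an internal arc, that then $\{b_{-1}^{\I},b_1^{\I}\}\in\Theta(t)$ (``one shows\dots''), and that in the connecting case $t(b_{-1},v_j)=t(b_1,v_1)=1$ (``another short argument\dots''). These statements are true, but they are the whole content of ``saturated $\Rightarrow$ $\defect=0$'' in your setup and need an argument, e.g.\ via the link of $b_0^{\I}$ in the triangulation of the polygon below/between the covering arcs (Lemmas \ref{lem:finite_triangulation1}, \ref{lem:finite_triangulation2}): consecutive neighbours of $b_0^{\I}$ in that triangulation are joined by the far side of a triangle, and the side joining the two extreme neighbours is neither an edge of $D_n$ nor a bounding connecting arc of the wrong type, hence lies in $\Theta(t)$. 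It is worth noting that the paper sidesteps all of this: it proves only the forward implications of (i) and (ii) and gets both converses formally; ``saturated $\Rightarrow$ $\defect_p(b)=0$'' is obtained by counting the arcs at $b^{\I}$ inside the covering polygon using Lemma \ref{lem:CC}(vi) together with Lemmas \ref{lem:finite_triangulation1}/\ref{lem:finite_triangulation2}, never identifying the extremal arcs $\fb_{-1},\fb_0$ at all, while Lemmas \ref{lem:internal_defect} and \ref{lem:connecting_defect} are used only for ``non-saturated $\Rightarrow$ $\defect_p(b)>0$'', where one merely needs the easy observation that if $\{b_{-1}^{\I},b_1^{\I}\}$ (resp.\ both $\{b_{-1}^{\I},v_j^{\III}\}$ and $\{b_1^{\I},v_1^{\III}\}$) were in $\Theta(t)$, the vertex would be saturated.
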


\begin{proof}
To prove parts (i) and (ii), it is enough to prove ``only if'' in each part.

Part (i), ``only if'': Let $b^{\I}$ be a saturated
vertex, see Definition \ref{def:saturated}.  There are two cases.

The first case is that $b^{\I}$ is strictly below the internal arc $\{
a^{\I},d^{\I} \} \in \Theta( t )$, whence $a < b < d$.  Let $P$ denote
the finite polygon below $\{ a^{\I},d^{\I} \}$.  Lemma
\ref{lem:finite_triangulation1} says that the restriction $\Theta( t
)_P$ is a triangulation of $P$ and that
\[
  p( b-1,b+1 ) = \Theta( t )_P\big( ( b-1 )^{\I},( b+1 )^{\I} \big) = (*).
\]
Lemma \ref{lem:CC}(vi) gives
\[
  (*) = 1 + (\mbox{the number of arcs in $\Theta( t )_P$
                  ending at $b^{\I}$}) = (**).
\]
However, $\Theta( t )$ is a partial triangulation of $D_n$ so none of its arcs
can cross $\{ a^{\I},d^{\I} \}$.  It follows that
\[
  (**) = 1 + (\mbox{the number of arcs in $\Theta( t )$
                   ending at $b^{\I}$}).
\]
This shows $\defect_p( b ) = 0$.

The second case is that $b^{\I}$ is strictly between the connecting
arcs $\{ a^{ \I },x^{ \III } \}, \{ d^{ \I },u^{ \III } \} \in \Theta(
t )$.  This is handled similarly, replacing Lemma
\ref{lem:finite_triangulation1} by Lemma
\ref{lem:finite_triangulation2}.

Part (ii), ``only if'': Let $b^{\I}$ be a
non-saturated vertex.  There are two cases.

The first case is that there are no connecting arcs in $\Theta( t )$
which end at $b^{\I}$.  Set $b_0 = b$ and apply Lemma
\ref{lem:internal_defect}; in the notation of the lemma, $\Theta( t )$
at $b_0^{\I}$ looks like Figure \ref{fig:bik}.  The lemma gives
\[
  \defect_p( b ) = \defect_p( b_0 ) = p( b_{-1},b_1 ) - 1 = (\dagger).
\]
However, since $b^{\I}$ is non-saturated, the arc $\{
b_{-1}^{\I},b_1^{\I} \}$ cannot be in $\Theta( t )$.  It follows that
$p( b_{-1},b_1 ) > 1$ so $(\dagger) > 0$ as desired.

The second case is that there are connecting arcs in $\Theta( t )$
which end at $b^{\I}$.  Set $b_0 = b$ and apply Lemma
\ref{lem:connecting_defect}; in the notation of the lemma, $\Theta( t
)$ at $b_0^{\I}$ looks like Figure \ref{fig:bijk}.  The lemma gives
\[
  \defect_p( b ) = \defect_p( b_0 )
  = t( b_{-1},v_j ) + t( b_1,v_1 ) - 2 = (\ddagger).
\]
However, since $b^{\I}$ is non-saturated, it cannot be that both arcs
$\{ b_{-1}^{\I},v_j^{\III} \}$ and $\{ b_1^{\I},v_1^{\III} \}$ are in
$\Theta( t )$.  It follows that $t( b_{-1},v_j ) > 1$ or $t( b_1,v_1 )
> 1$ so $(\ddagger) > 0$ as desired.

(iii) and (iv) follow by symmetry.
\end{proof}

\begin{Lemma}
\label{lem:saturated_vertices}
Let $\fT$ be a good triangulation of $D_n$ such that $\Theta( t )
\subseteq \fT$.
\begin{enumerate}
\setlength\itemsep{4pt}

  \item  $b^{\I}$ is saturated 
         $\Rightarrow
          \fT\big( ( b-1 )^{\I},( b+1 )^{\I} \big) = p( b-1,b+1 )$.

  \item  $v^{\III}$ is saturated
         $\Rightarrow
          \fT\big( ( v-1 )^{\III},( v+1 )^{\III} \big) = q( b-1,b+1 )$.

\end{enumerate}
\end{Lemma}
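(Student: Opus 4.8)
The plan is to compare arc-counts at $b^{\I}$ and feed the comparison into Lemma \ref{lem:CC}(vi) together with Lemma \ref{lem:defect}(i). Since $\fT$ is a good triangulation of $D_n$, Lemma \ref{lem:CC}(vi) gives
\[
  \fT\big( ( b-1 )^{\I},( b+1 )^{\I} \big)
  = 1 + (\mbox{the number of arcs in $\fT$ ending at $b^{\I}$}),
\]
while, because $b^{\I}$ is saturated, Lemma \ref{lem:defect}(i) yields $\defect_p( b ) = 0$, which by Definition \ref{def:defects} reads
\[
  p( b-1,b+1 ) = 1 + (\mbox{the number of arcs in $\Theta( t )$ ending at $b^{\I}$}).
\]
Hence part (i) follows once we show that $\fT$ and $\Theta( t )$ have equally many arcs ending at $b^{\I}$. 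As $\Theta( t ) \subseteq \fT$, the inequality $\geqslant$ is automatic, so the content is to prove that every arc of $\fT$ ending at $b^{\I}$ already belongs to $\Theta( t )$.

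To do this I would use saturation to confine the arcs of $\fT$ incident to $b^{\I}$ to a finite polygon. By Definition \ref{def:saturated} there are two cases. First, suppose $b^{\I}$ is strictly below an internal arc $\{ a^{\I},d^{\I} \} \in \Theta( t )$, so $a < b < d$, and let $P$ be the finite polygon below $\{ a^{\I},d^{\I} \}$. Since $\{ a^{\I},d^{\I} \} \in \Theta( t ) \subseteq \fT$, the vertex set of $P$ is compatible with $\fT$ in the sense of Definition \ref{def:restriction}, so $\fT$ restricts to a triangulation $\fT_P$ of $P$; by Lemma \ref{lem:finite_triangulation1}(i), $\Theta( t )$ restricts to a triangulation $\Theta( t )_P$ of $P$ as well. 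Both $\fT_P$ and $\Theta( t )_P$ are maximal sets of pairwise non-crossing diagonals of $P$, and $\Theta( t )_P \subseteq \fT_P$, so in fact $\fT_P = \Theta( t )_P$. Now let $\{ b^{\I},\mu \}$ be any arc of $\fT$ ending at $b^{\I}$. It cannot cross $\{ a^{\I},d^{\I} \}$, since both arcs lie in the triangulation $\fT$, and as $a < b < d$ this forces $\mu \in \{ a^{\I}, \ldots, d^{\I} \}$; moreover $\mu$ is not a neighbour of $b^{\I}$, so $\{ b^{\I},\mu \}$ is a diagonal of $P$. Thus $\{ b^{\I},\mu \} \in \fT_P = \Theta( t )_P \subseteq \Theta( t )$, as required.

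The second case, where $b^{\I}$ is strictly between two connecting arcs $\{ a^{\I},x^{\III} \}, \{ d^{\I},u^{\III} \} \in \Theta( t )$, is handled in exactly the same way, with $P$ replaced by the finite polygon $R$ between these two arcs and Lemma \ref{lem:finite_triangulation1}(i) replaced by Lemma \ref{lem:finite_triangulation2}(i): any arc of $\fT$ ending at $b^{\I}$ crosses neither bounding arc, hence is a diagonal of $R$ and so lies in $\fT_R = \Theta( t )_R \subseteq \Theta( t )$. Combining the two displays above with the now-established equality of arc-counts gives $\fT\big( ( b-1 )^{\I},( b+1 )^{\I} \big) = p( b-1,b+1 )$, which is part (i). Part (ii) is proved identically after interchanging the roles of intervals $\I$ and $\III$ and of the friezes $p$ and $q$, using Lemma \ref{lem:defect}(iii) in place of Lemma \ref{lem:defect}(i) and Lemmas \ref{lem:finite_triangulation1b} and \ref{lem:finite_triangulation2} in place of Lemmas \ref{lem:finite_triangulation1} and \ref{lem:finite_triangulation2}. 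The one step that carries the weight of the argument --- and the place where the hypothesis ``saturated'' is essential --- is the identity $\fT_P = \Theta( t )_P$ (respectively $\fT_R = \Theta( t )_R$): it is precisely the maximality of a triangulation of a finite polygon that prevents $\fT$ from acquiring extra arcs at an already-saturated vertex.
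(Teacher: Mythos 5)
Your proof is correct and follows essentially the same route as the paper's: reduce both sides to arc-counts at $b^{\I}$ via Lemma \ref{lem:CC}(vi) and Definition \ref{def:defects}, then use the finite polygon $P$ (or $R$) from Lemmas \ref{lem:finite_triangulation1} and \ref{lem:finite_triangulation2} together with the inclusion $\Theta(t)\subseteq\fT$ to get $\fT_P=\Theta(t)_P$ and hence equality of the counts. The only difference is presentational: you spell out the maximality argument behind $\fT_P=\Theta(t)_P$ and the non-crossing argument confining $\fT$-arcs at $b^{\I}$ to $P$, which the paper states more tersely.
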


\begin{proof}
(i)  Since $b^{\I}$ is saturated, Lemma \ref{lem:defect}(i) says
$\defect_p( b ) = 0$ which means 
\[
  p( b-1,b+1 ) = 1 + (\mbox{the number of arcs in $\Theta( t )$
                           ending at $b^{\I}$}) = (*)
\]
by Definition \ref{def:defects}.  There are two cases.

The first case is that $b^{\I}$ is strictly below an internal arc $\{
a^{\I},d^{\I} \} \in \Theta( t )$, whence $a < b < d$.  Let $P$ be the
finite polygon below $\{ a^{\I},d^{\I} \}$.  The restriction $\Theta(
t )_P$ is a triangulation of $P$ by Lemma
\ref{lem:finite_triangulation1}.  Since no arc in $\Theta( t )$
crosses $\{ a^{\I},d^{\I} \}$, we have
\[
  (*) = 1 + (\mbox{the number of arcs in $\Theta( t )_P$
                  ending at $b^{\I}$}) = (**).
\]
The inclusion $\Theta( t ) \subseteq \fT$ implies $\Theta( t )_P =
\fT_P$ so we get the first of the following equalities,
\begin{align*}
  (**) & = 1 + (\mbox{the number of arcs in $\fT_P$
                     ending at $b^{\I}$}) \\
       & = 1 + (\mbox{the number of arcs in $\fT$
                     ending at $b^{\I}$}) = (*\!*\!*),
\end{align*}
where the second equality holds since $a < b < d$ and since no arc in
$\fT$ crosses $\{ a^{\I},d^{\I} \} \in \fT$.  Finally,
\[
  (*\!*\!*) = \fT \big( ( b-1 )^{\I},( b+1 )^{\I} \big)
\]
by Lemma \ref{lem:CC}(vi).

The second case is that $b^{\I}$ is strictly between the connecting
arcs $\{ a^{ \I },x^{ \III } \}, \{ d^{ \I },u^{ \III } \} \in \Theta(
t )$.  This is handled similarly, replacing Lemma
\ref{lem:finite_triangulation1} by Lemma
\ref{lem:finite_triangulation2}.

(ii) follows by symmetry.
\end{proof}

\section{Case 1: $\SL2$-tilings with infinitely many entries equal to
  $1$ in each of the first and third quadrants}
\label{sec:Case1}

\begin{Theorem}
\label{thm:Case1}
Let $t$ be an $\SL2$-tiling with infinitely many entries equal to
$1$ in each of the first and third quadrants.

Consider $D_2$, the disc with two accumulation points and intervals
denoted $\I$ and $\III$, see Figure \ref{fig:C2}.

Then $\fT = \Theta( t )$ is a good triangulation of $D_2$ which
satisfies $\Phi( \fT ) = t$; see Figure \ref{fig:ft_in_Case1}.
\end{Theorem}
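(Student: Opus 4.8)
The plan is to establish three facts about $\fT = \Theta(t)$: that it blocks both accumulation points of $D_2$ (hence is good), that it is maximal among non-crossing sets of arcs (hence is a triangulation), and that $\Phi(\fT)=t$; the last will be deduced from Lemma~\ref{lem:agree}, for which the first two are prerequisites.

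\emph{Goodness.} Write $\xi$ for the accumulation point of $D_2$ approached by $b^{\I}$ as $b\to-\infty$ and by $v^{\III}$ as $v\to+\infty$, and $\xi'$ for the one approached by $b^{\I}$ as $b\to+\infty$ and by $v^{\III}$ as $v\to-\infty$. By Lemma~\ref{lem:zig-zag} the $1$-entries of $t$ lie on a zig-zag $(b_k,v_k)_{k\in K}$, and since $t$ has infinitely many $1$'s in the first quadrant, $K$ is unbounded above; by Lemma~\ref{lem:zig-zag}(iii) there are then infinitely many shifts between options (a) and (b) as $k\to+\infty$, forcing both $b_k\to-\infty$ and $v_k\to+\infty$. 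The corresponding connecting arcs $\{b_k^{\I},v_k^{\III}\}$ lie in $\Theta(t)$, are pairwise non-crossing by Lemma~\ref{lem:non-crossing}, and are nested because they come from a zig-zag; thinning to a subsequence with strictly monotone coordinates exhibits the configuration of Figure~\ref{fig:block}, so $\Theta(t)$ blocks $\xi$. The third-quadrant hypothesis together with Lemma~\ref{lem:zig-zag}(iv) gives, symmetrically, that $\Theta(t)$ blocks $\xi'$. Hence $\Theta(t)$ is good.

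\emph{Maximality.} I would show that any arc $\alpha=\{\mu,\nu\}$ of $D_2$ with $\alpha\notin\Theta(t)$ must cross some arc of $\Theta(t)$. Applying Lemma~\ref{lem:good} with $N=\{\mu,\nu\}$ produces a finite set $M\supseteq N$ compatible with $\Theta(t)$, spanning a finite polygon $P$ such that $\Theta(t)$ restricts to a triangulation $\Theta(t)_P$ of $P$. In the construction of $M$, the edges of $P$ lying inside one interval join $D_2$-consecutive vertices and the remaining edges of $P$ belong to $\Theta(t)$; so if $\{\mu,\nu\}$ were an edge of $P$ it would be either an edge of $D_2$ (impossible, as $\alpha$ is an arc) or an element of $\Theta(t)$ (excluded). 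Therefore $\{\mu,\nu\}$ is a diagonal of $P$ not contained in the triangulation $\Theta(t)_P$, hence it crosses a diagonal of $\Theta(t)_P\subseteq\Theta(t)$. Thus $\Theta(t)$ is a maximal non-crossing set of arcs, i.e.\ a triangulation of $D_2$.

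\emph{$\Phi(\fT)=t$.} By Lemma~\ref{lem:agree} it suffices to verify its conditions (i) and (ii) together with (iii) or (iii)$'$. For (i) and (ii) I claim every vertex of intervals $\I$ and $\III$ is saturated: fixing $b$, blocking of $\xi$ supplies a connecting arc $\{a^{\I},x^{\III}\}\in\Theta(t)$ with $a<b$, and blocking of $\xi'$ a connecting arc $\{d^{\I},u^{\III}\}\in\Theta(t)$ with $d>b$; these do not cross and $a<d$, which forces $u\leqslant x$ (otherwise the four endpoints interleave), so $b^{\I}$ is strictly between them in the sense of Definition~\ref{def:below}(ii), hence saturated, and the case of $v^{\III}$ is symmetric. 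Since $\fT=\Theta(t)$ is now known to be a good triangulation containing $\Theta(t)$, Lemma~\ref{lem:saturated_vertices} yields $p(b-1,b+1)=\fT\big((b-1)^{\I},(b+1)^{\I}\big)$ for all $b$ and $q(v-1,v+1)=\fT\big((v-1)^{\III},(v+1)^{\III}\big)$ for all $v$, which are conditions (i) and (ii). For (iii)$'$, pick a shift from option (b) to option (a) in the zig-zag — one exists because there are infinitely many shifts and only two options — and take the three consecutive zig-zag points around it, namely $(b_{k-1},v_{k-1})$, $(b_{k-1},v_k)$, $(b_{k+1},v_k)$ with $v_{k-1}<v_k$ and $b_{k+1}<b_{k-1}$. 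Setting $f=b_{k-1}$, $h=v_k$, $g=v_{k-1}$, $e=b_{k+1}$ gives $e<f$, $g<h$ and $t(f,g)=t(f,h)=t(e,h)=1$; the arcs $\{f^{\I},g^{\III}\}$, $\{f^{\I},h^{\III}\}$, $\{e^{\I},h^{\III}\}$ then lie in $\Theta(t)=\fT$, so each of these three entries equals the corresponding value of $\fT$ by Lemma~\ref{lem:CC}(iii). This is condition (iii)$'$, and Lemma~\ref{lem:agree} gives $\Phi(\fT)=t$. I expect the main obstacle to be the two pieces of geometric bookkeeping: extracting the nested blocking configuration of Definition~\ref{def:block} from the zig-zag of Lemma~\ref{lem:zig-zag}, and proving that every vertex of intervals $\I$ and $\III$ is saturated — these are precisely what make Lemmas~\ref{lem:saturated_vertices} and~\ref{lem:agree} applicable; the maximality step, though indispensable, is a routine consequence of Lemma~\ref{lem:good} and the standard fact that a polygon diagonal outside a triangulation must cross one of its diagonals.
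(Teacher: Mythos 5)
Your outline of goodness (extracting nested blocking arcs from the zig-zag), of saturation of every vertex of intervals $\I$ and $\III$ (sandwiching a given vertex between two non-crossing connecting arcs and invoking Lemma \ref{lem:saturated_vertices}), and of condition (iii)$'$ of Lemma \ref{lem:agree} (a corner of the zig-zag giving $t(e,h)=t(f,g)=t(f,h)=1$) is essentially the paper's argument for Theorem \ref{thm:Case1}, and those parts are fine.

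There is, however, a genuine gap in your maximality step: you apply Lemma \ref{lem:good} to $\fT=\Theta(t)$, but the hypothesis of that lemma is that $\fT$ is a good \emph{triangulation} of $D_n$ -- precisely the maximality you are trying to establish -- so the appeal is circular. What the construction in the proof of Lemma \ref{lem:good} gives you for a good \emph{partial} triangulation is only a compatible vertex set $M$ spanning a polygon $P$; by Definition \ref{def:restriction} the restriction $\Theta(t)_P$ is then merely a partial triangulation of $P$, and the assertion that it is a full triangulation is exactly the non-trivial content you are missing. In Case 1 the polygon $P$ produced this way sits between two connecting arcs of $\Theta(t)$ (one of the polygons $R_m$ cut out by the nested arcs coming from the zig-zag), and the needed statement is Lemma \ref{lem:finite_triangulation2}(i), which rests on the Conway--Coxeter frieze/triangulation correspondence (via \cite[prop.\ 7.2]{HJ}); the analogous facts for polygons below internal arcs are Lemmas \ref{lem:finite_triangulation1} and \ref{lem:finite_triangulation1b}. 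Without this input, the claim that every arc outside $\Theta(t)$ crosses an arc of $\Theta(t)$ is not justified; with it, your argument goes through and coincides with the paper's proof, which shows that $\Theta(t)$ restricts to a triangulation of each $R_m$ and concludes that $\Theta(t)$ is a triangulation of $D_2$.
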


\begin{proof}
Since $t$ has infinitely many $1$'s in the first and the third
quadrant, Lemma \ref{lem:zig-zag} implies that there are integers
$\cdots < b_{-1} < b_0 < b_1 < \cdots$ and $\cdots < v_{-1} < v_0 <
v_1 < \cdots$ such that $t( b_m,v_{ -m } ) = 1$ for each integer $m$.
There are corresponding (connecting) arcs $\fa_m = \{ b_m^{ \I
},v_{-m}^{ \III } \}$ in $\fT = \Theta( t )$.  By Definition
\ref{def:restriction}, the arcs $\fa_m$ can be viewed as dividing
$D_2$ into finite polygons $R_m$, see Figure \ref{fig:ft_in_Case1}.
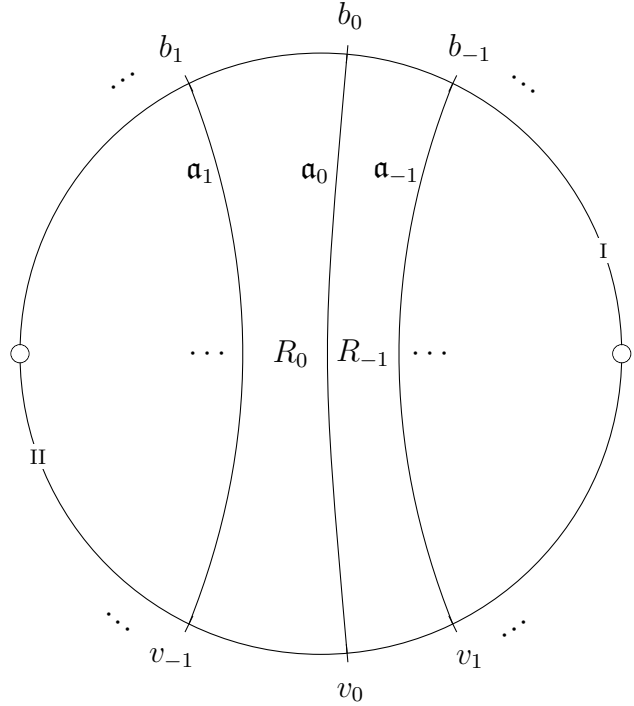
\begin{figure}
  \centering
    \begin{tikzpicture}[scale=4]

      \draw (0,0) circle (1cm);

      \draw (0:1cm) node[fill=white,circle,inner sep=0.080cm] {} circle (0.03cm);
      \draw (180:1cm) node[fill=white,circle,inner sep=0.080cm] {} circle (0.03cm);

      \draw (51:1.12cm) node{$\cdot$};
      \draw (53:1.12cm) node{$\cdot$};
      \draw (55:1.12cm) node{$\cdot$};
      \draw (64:0.97cm) -- (64:1.03cm);
      \draw (64:1.13cm) node{$b_{-1}$};
      \draw (85:0.97cm) -- (85:1.03cm);
      \draw (85:1.13cm) node{$b_0$};
      \draw (116:0.97cm) -- (116:1.03cm);
      \draw (116:1.13cm) node{$b_1$};
      \draw (124:1.12cm) node{$\cdot$};
      \draw (126:1.12cm) node{$\cdot$};
      \draw (128:1.12cm) node{$\cdot$};

      \draw (-53:1.12cm) node{$\cdot$};
      \draw (-55:1.12cm) node{$\cdot$};
      \draw (-57:1.12cm) node{$\cdot$};
      \draw (-64:0.97cm) -- (-64:1.03cm);
      \draw (-64:1.13cm) node{$v_1$};
      \draw (-85:0.97cm) -- (-85:1.03cm);
      \draw (-85:1.13cm) node{$v_0$};
      \draw (-116:0.97cm) -- (-116:1.03cm);
      \draw (-116:1.13cm) node{$v_{-1}$};
      \draw (-125:1.12cm) node{$\cdot$};
      \draw (-127:1.12cm) node{$\cdot$};
      \draw (-129:1.12cm) node{$\cdot$};

      \draw (-64:1cm) .. controls (0.2,-0.3) and (0.2,0.3) .. (64:1cm);
      \draw (-85:1cm) .. controls (0,0) .. (85:1cm);
      \draw (-116:1cm) .. controls (-0.2,-0.3) and (-0.2,0.3) .. (116:1cm);

      \draw (-0.4,0.6) node{$\fa_1$};
      \draw (-0.02,0.6) node{$\fa_0$};
      \draw (0.25,0.6) node{$\fa_{-1}$};

      \draw (-0.1,0) node{$R_0$};
      \draw (0.14,0) node{$R_{-1}$};

      \draw (-0.37,0) node{$\cdots$};
      \draw (0.37,0) node{$\cdots$};

        \draw (20:1cm) node[fill=white,rectangle,inner sep=0.07cm] {$\scriptstyle \I$};
        \draw (200:1cm) node[fill=white,rectangle,inner sep=0.07cm] {$\scriptstyle \II$};

    \end{tikzpicture} 
    \caption{Outline of the triangulation $\fT = \Theta( t )$ of $D_2$
      corresponding to an $\SL2$-tiling $t$ with infinitely many
      entries equal to $1$ in the first and the third quadrant, giving
      the connecting arcs $\fa_m$.  Between the $\fa_m$ are the finite
      polygons $R_m$.}
\label{fig:ft_in_Case1}
\end{figure}
For each $m$, Lemma \ref{lem:finite_triangulation2} says that the arcs
in $\fT$ between the vertices of $R_m$ form a triangulation
$\fT_{R_m}$ of $R_m$.  It follows that the whole of $\fT$ is a
triangulation of $D_2$.  The triangulation is good because the arcs
$\fa_m$ block both accumulation points, see Definition
\ref{def:block}. 

To show $\Phi( \fT ) = t$ we use Lemma \ref{lem:agree}:

Lemma \ref{lem:zig-zag} implies that there are integers $e < f$ and $g
< h$ such that $t( e,h ) = t( f,g ) = t( f,h ) = 1$.  Hence the
arcs $\{ e^{\I},h^{\III} \}$, $\{ f^{\I},g^{\III} \}$, $\{
f^{\I},h^{\III} \}$ are in $\fT$ whence $\fT( e^{\I},h^{\III} )
= \fT( f^{\I},g^{\III} ) = \fT( f^{\I},h^{\III} ) = 1$.  This
verifies condition (iii)' in Lemma \ref{lem:agree}.

The arcs $\fa_m$ mean that each vertex in intervals $\I$ and $\III$ is
saturated.  If $b^{\I}$ and $v^{\III}$ are given, Lemma
\ref{lem:saturated_vertices} hence confirms conditions (i) and (ii) in
Lemma \ref{lem:agree}.
\end{proof}

\section{Case 2: $\SL2$-tilings with infinitely many entries equal to
  $1$ only in the first or the third quadrant}
\label{sec:Case2}

By symmetry, it is enough to let $t$ be an $\SL2$-tiling with
infinitely many entries equal to $1$ in the first quadrant, but not in
the third quadrant.

\begin{Remark}
Consider $D_3$, the disc with three accumulation points and
intervals denoted $\I$, $\II$, and $\III$ as in the left part of
Figure \ref{fig:C3}.  We will construct a good triangulation $\fT$ of
$D_3$ such that $\Phi( \fT ) = t$.  The overall structure of $\fT$ is
shown in Figure \ref{fig:ft_in_Case2}. 

Note that if $t$ had infinitely many ones in the third quadrant, but
not in the first quadrant, then we would instead use the disc with
three accumulation points and intervals denoted $\I$, $\III$, and
$\IV$ as in the right part of Figure \ref{fig:C3}.  The overall
structure of $\fT$ would be the mirror image in a vertical line of
Figure \ref{fig:ft_in_Case2}.
\end{Remark}

\begin{Description}
[The partial triangulation $\Theta( t )$]
The black arcs in Figure \ref{fig:ft_in_Case2} show the overall
structure of $\Theta( t )$ in $D_3$ which we now describe:
\begin{figure}
  \centering
    \begin{tikzpicture}[scale=5]

      \draw (0,0) circle (1cm);

      \draw (0:1cm) node[fill=white,circle,inner sep=0.101cm] {} circle (0.03cm);
      \draw (120:1cm) node[fill=white,circle,inner sep=0.101cm] {} circle (0.03cm);
      \draw (240:1cm) node[fill=white,circle,inner sep=0.101cm] {} circle (0.03cm);

      \draw (28:1.12cm) node{$\cdot$};
      \draw (30:1.12cm) node{$\cdot$};
      \draw (32:1.12cm) node{$\cdot$};
      \draw (41:0.97cm) -- (41:1.03cm);
      \draw (41:1.13cm) node{$b_{-2}$};
      \draw (57:0.97cm) -- (57:1.03cm);
      \draw (57:1.13cm) node{$b_{-1}$};
      \draw (73:0.97cm) -- (73:1.03cm);
      \draw (73:1.13cm) node{$b_0$};
      \draw (86:0.97cm) -- (86:1.03cm);
      \draw (86:1.13cm) node{$b_1$};
      \draw (99:0.97cm) -- (99:1.03cm);
      \draw (99:1.13cm) node{$b_2$};

      \draw (-29:1.12cm) node{$\cdot$};
      \draw (-31:1.12cm) node{$\cdot$};
      \draw (-33:1.12cm) node{$\cdot$};
      \draw (-41:0.97cm) -- (-41:1.03cm);
      \draw (-41:1.13cm) node{$v_2$};
      \draw (-57:0.97cm) -- (-57:1.03cm);
      \draw (-57:1.13cm) node{$v_2$};
      \draw (-73:0.97cm) -- (-73:1.03cm);
      \draw (-73:1.13cm) node{$v_0$};
      \draw (-86:0.97cm) -- (-86:1.03cm);
      \draw (-86:1.13cm) node{$v_{-1}$};
      \draw (-99:0.97cm) -- (-99:1.03cm);
      \draw (-99:1.13cm) node{$v_{-2}$};

      \draw (152:0.97cm) -- (152:1.03cm);
      \draw (165:0.97cm) -- (165:1.03cm);
      \draw (165:1.13cm) node{$\beta$};
      \draw (180:0.97cm) -- (180:1.03cm);
      \draw (180:1.13cm) node{$0$};
      \draw (195:0.97cm) -- (195:1.03cm);
      \draw (208:0.97cm) -- (208:1.03cm);

      \draw (-41:1cm) .. controls (0.6,-0.3) and (0.6,0.3) .. (41:1cm);
      \draw (-57:1cm) .. controls (0.4,-0.3) and (0.4,0.3) .. (57:1cm);
      \draw (-73:1cm) .. controls (0.15,-0.3) and (0.15,0.3) .. (73:1cm);

      \draw (73:1cm) .. controls (77:0.8cm) and (82:0.8cm) .. (86:1cm);
      \draw (86:1cm) .. controls (90:0.8cm) and (95:0.8cm) .. (99:1cm);

      \draw (102:0.9cm) node{$\cdot$};
      \draw (104.5:0.9cm) node{$\cdot$};
      \draw (107:0.9cm) node{$\cdot$};

      \draw (-73:1cm) .. controls (-77:0.8cm) and (-82:0.8cm) .. (-86:1cm);
      \draw (-86:1cm) .. controls (-90:0.8cm) and (-95:0.8cm) .. (-99:1cm);

      \draw (-102:0.9cm) node{$\cdot$};
      \draw (-104.5:0.9cm) node{$\cdot$};
      \draw (-107:0.9cm) node{$\cdot$};

      \draw[red] (73:1cm) .. controls (110:0.1cm) and (130:0.1cm) .. (180:1cm);
      \draw[red] (73:1cm) .. controls (110:0.25cm) and (130:0.2cm) .. (165:1cm);
      \draw[red] (86:1cm) .. controls (110:0.25cm) and (140:0.3cm) .. (165:1cm);
      \draw[red] (86:1cm) .. controls (110:0.3cm) and (140:0.4cm) .. (152:1cm);

      \draw[red] (170:0.9cm) node{$\cdot$};
      \draw[red] (172.5:0.9cm) node{$\cdot$};
      \draw[red] (175:0.9cm) node{$\cdot$};

      \draw[red] (156:0.9cm) node{$\cdot$};
      \draw[red] (158.5:0.9cm) node{$\cdot$};
      \draw[red] (161:0.9cm) node{$\cdot$};

      \draw[red] (149:0.9cm) node{$\cdot$};
      \draw[red] (146.5:0.9cm) node{$\cdot$};
      \draw[red] (144:0.9cm) node{$\cdot$};

      \draw[red] (-73:1cm) .. controls (-110:0.1cm) and (-130:0.1cm) .. (-180:1cm);
      \draw[red] (-73:1cm) .. controls (-110:0.25cm) and (-130:0.2cm) .. (-165:1cm);
      \draw[red] (-86:1cm) .. controls (-110:0.25cm) and (-140:0.3cm) .. (-165:1cm);
      \draw[red] (-86:1cm) .. controls (-110:0.3cm) and (-140:0.4cm) .. (-152:1cm);

      \draw[red] (-170:0.9cm) node{$\cdot$};
      \draw[red] (-172.5:0.9cm) node{$\cdot$};
      \draw[red] (-175:0.9cm) node{$\cdot$};

      \draw[red] (-156:0.9cm) node{$\cdot$};
      \draw[red] (-158.5:0.9cm) node{$\cdot$};
      \draw[red] (-161:0.9cm) node{$\cdot$};

      \draw[red] (-149:0.9cm) node{$\cdot$};
      \draw[red] (-146.5:0.9cm) node{$\cdot$};
      \draw[red] (-144:0.9cm) node{$\cdot$};

      \draw (0.145,0) node{$\scriptstyle \fa_0$};
      \draw (0.373,0) node{$\scriptstyle \fa_{-1}$};
      \draw (0.58,0) node{$\scriptstyle \fa_{-2}$};

      \draw (0.75,0) node{$\cdots$};

      \draw (0.13,0.81) node{$\scriptstyle \fb_0$};
      \draw (-0.08,0.83) node{$\scriptstyle \fb_1$};

      \draw (0.13,-0.8) node{$\scriptstyle \fv_0$};
      \draw (-0.06,-0.815) node{$\scriptstyle \fv_{-1}$};

        \draw (12:1cm) node[fill=white,rectangle,inner sep=0.07cm] {$\scriptstyle \I$};
        \draw (130:1cm) node[fill=white,rectangle,inner sep=0.07cm] {$\scriptstyle \II$};
        \draw (247.5:1cm) node[fill=white,rectangle,inner sep=0.07cm] {$\scriptstyle \III$};

    \end{tikzpicture} 
    \caption{Outline of the triangulation $\fT$ of $D_3$ corresponding
      to an $\SL2$-tiling $t$ with infinitely many entries equal to
      $1$ only in the first quadrant.  The arcs in $\Theta( t )$ are
      black.  We add red arcs from the non-saturated vertices to define
      $\fT$.  They connect each non-saturated vertex in intervals $\I$
      and $\III$ to a block of consecutive vertices in interval $\II$.
      The number of red arcs added at vertex $b^{\I}$ is $\defect_p( b
      )$, and the number of red arcs added at vertex $v^{\III}$ is
      $\defect_q( v )$.}
\label{fig:ft_in_Case2}
\end{figure}
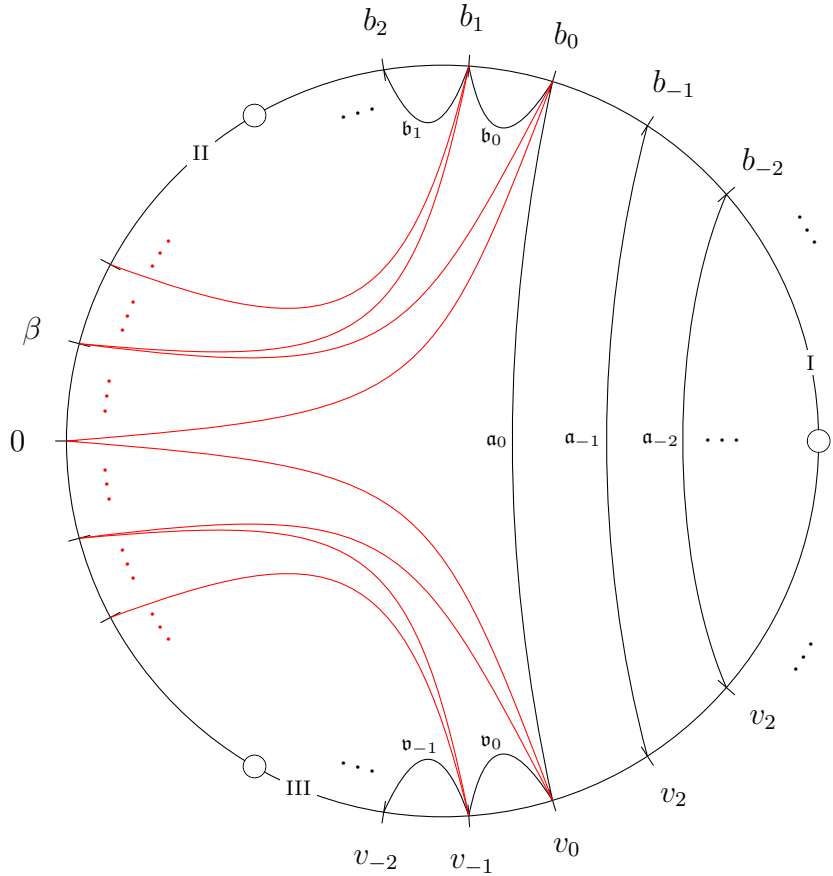

Using Lemma \ref{lem:zig-zag}, we can suppose that among the entries
in $t$ which are equal to $1$, the one which is furthest southwest is
$t( b_0,v_0 ) = 1$.  We can also choose integers $\cdots < b_{-1} <
b_0$ and $v_0 < v_1 < \cdots$ such that $t( b_m,v_{-m} ) = 1$ for $m
\leqslant 0$.  There are corresponding (connecting) arcs $\fa_m = \{
b_m^{\I},v_{-m}^{\III} \}$ in $\Theta( t )$.

The vertices $( b_0-1 )^{\I}$, $( b_0-2 )^{\I}$, $\ldots$ and $( v_0+1
)^{\III}$, $( v_0+2 )^{\III}$, $\ldots$ are saturated because of the
arcs $\fa_m$.  On the other hand, $b_0^{\I}$ is non-saturated: It is
not strictly between two connecting arcs in $\Theta( t )$, nor is it
strictly below an internal arc in $\Theta( t )$ because such an arc
would have to cross $\{ b_0^{\I},v_0^{\III} \} \in \Theta( t )$.

Let $\fb_0 = \{ b_0^{\I},b_1^{\I} \}$ be either the longest internal
arc in $\Theta( t )$ going anticlockwise from $b_0^{\I}$, or, if there
are no such arcs, the edge going anticlockwise from $b_0^{\I}$.  This
makes sense because $\Theta( t )$ is locally finite by Lemma
\ref{lem:locally_finite}.  It is easy to see that $b_1^{\I}$ is also
non-saturated, while the vertices strictly below $\fb_0$ are
saturated.

We can repeat this and thereby get integers $b_0 < b_1 < \cdots$ such
that the non-saturated vertices in interval $\I$ are precisely
$b_0^{\I}$, $b_1^{\I}$, $\ldots$.  A similar treatment provides
integers $\cdots < v_{-1} < v_0$ such that the non-saturated vertices
in interval $\III$ are precisely $\ldots$, $v_{-1}^{\III}$,
$v_0^{\III}$.
\end{Description}

\begin{Construction}
[The triangulation $\fT$]
\label{con:Case2}
We add arcs to $\Theta( t )$ as follows to create a triangulation
$\fT$ of $D_3$; see Figure \ref{fig:ft_in_Case2} where the added arcs
are shown in red:

From the non-saturated vertex $b_0^{\I}$, add $\defect_p(b_0)$ arcs
ending at the consecutive vertices $0^{\II}$, $-1^{\II}$, $\ldots$,
$\beta^{\II}$ in interval $\II$.  Note that $\defect_p(b_0) > 0$ by
Lemma \ref{lem:defect}(ii).  Then, from the non-saturated vertex
$b_1^{\I}$, add $\defect_p(b_1)$ arcs ending at the next block of
consecutive vertices $\beta^{\II}$, $( \beta-1 )^{\II}$, $\ldots$ in
interval $\II$.  Continue in the same way with the non-saturated
vertices $b_2^{\I}$, $b_3^{\I}$, $\ldots$.

Finally, add arcs by a similar recipe from the non-saturated vertices
$v_0^{\III}$, $v_{-1}^{\III}$, $\ldots$, using $\defect_q$ instead of
$\defect_p$.
\end{Construction}

\begin{Theorem}
\label{thm:Case2}
Let $t$ be an $\SL2$-tiling with infinitely many entries equal to $1$
only in the first or the third quadrant.

Then there is a good triangulation $\fT$ of $D_3$ such that $\Phi( \fT
) = t$.
\end{Theorem}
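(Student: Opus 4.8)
The triangulation whose existence is asserted is the one produced by Construction~\ref{con:Case2}: one starts from $\Theta( t )$ and attaches, at each non-saturated vertex $b_j^{\I}$ (resp.\ $v_j^{\III}$) of interval $\I$ (resp.\ $\III$), a fan of $\defect_p( b_j )$ (resp.\ $\defect_q( v_j )$) arcs ending in a block of consecutive vertices of interval $\II$, the successive blocks being laid out in cyclic order and overlapping in a single vertex. The plan is to verify that the resulting set $\fT$ is a good triangulation of $D_3$, and then to obtain $\Phi( \fT ) = t$ from Lemma~\ref{lem:agree}.

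First, $\fT$ is a partial triangulation. The black arcs $\Theta( t ) \subseteq \fT$ are pairwise non-crossing by Lemma~\ref{lem:non-crossing}, and none of them ends in interval $\II$. Two red arcs issuing from the same non-saturated vertex fan over a block of consecutive $\II$-vertices and hence do not cross, while red arcs issuing from distinct non-saturated vertices end in $\II$-blocks that are disjoint apart from a shared endpoint and are arranged compatibly with the cyclic order of the disc; so no two red arcs cross, and for the same reason no red arc crosses a black arc. Thus $\fT$ is a set of pairwise non-crossing arcs.

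The heart of the matter, and the step I expect to be hardest, is to show that $\fT$ is \emph{good} and \emph{maximal}. Goodness at $\xi_{\III}$ is immediate: the connecting arcs $\fa_m = \{ b_m^{\I}, v_{-m}^{\III} \}$, $m \leqslant 0$, converge to $\xi_{\III}$ from both sides, giving the configuration of Definition~\ref{def:block}. For $\xi_{\I}$ and $\xi_{\II}$ one must prove that the red fans from interval $\I$ reach $\II$-vertices arbitrarily close to $\xi_{\I}$, and likewise that the red fans from interval $\III$ reach $\II$-vertices arbitrarily close to $\xi_{\II}$; since $b_j^{\I} \to \xi_{\I}$ and $v_j^{\III} \to \xi_{\II}$ (the non-saturated vertices are cofinal in their intervals), this yields the two remaining blocking configurations and, simultaneously, shows that the red fans exhaust interval $\II$ so that no further arc can be added. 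In terms of the construction, the far end of the $j$-th $\I$-block is displaced from $0^{\II}$ by $\sum_{l \leqslant j} \big( \defect_p( b_l ) - 1 \big)$, so what is needed is that this sum tends to infinity, i.e.\ that $\defect_p( b_j ) \geqslant 2$ for infinitely many $j$ (and symmetrically for $\defect_q$). This should be extracted from the defect formula of Lemma~\ref{lem:internal_defect}, which for $j \geqslant 1$ reads $\defect_p( b_j ) = p( b_{j-1}, b_{j+1} ) - 1$ with $\{ b_{j-1}^{\I}, b_{j+1}^{\I} \} \notin \Theta( t )$ because $b_j^{\I}$ is non-saturated, combined with the finiteness result Lemma~\ref{lem:finiteness_in_rows_and_colums} and the Ptolemy relations of Lemma~\ref{lem:Ptolemy}, which together should forbid the frieze values $p( b_{j-1}, b_{j+1} )$ from being eventually equal to $2$. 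Granting this, $\fT$ restricts to a triangulation of each finite region cut out by the arcs $\fa_m$, by the internal chains $\fb_0, \fb_1, \ldots$ and $\fv_0, \fv_{-1}, \ldots$, and by the red arcs: inside the black polygons this is Lemmas~\ref{lem:finite_triangulation1}, \ref{lem:finite_triangulation1b} and~\ref{lem:finite_triangulation2}, and in the region lying between the two internal chains and interval $\II$ it is the fan structure of the red arcs. Any arc not in $\fT$ is captured in one such finite region, so $\fT$ is maximal; hence $\fT$ is a good triangulation of $D_3$.

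It remains to run Lemma~\ref{lem:agree}. For hypotheses (i) and (ii): if $b^{\I}$ is saturated then Lemma~\ref{lem:saturated_vertices}(i) applies (since $\Theta( t ) \subseteq \fT$ and $\fT$ is a good triangulation) and gives $\fT\big( ( b-1 )^{\I}, ( b+1 )^{\I} \big) = p( b-1, b+1 )$; if $b^{\I} = b_j^{\I}$ is non-saturated, the arcs of $\fT$ ending at $b_j^{\I}$ are exactly those of $\Theta( t )$ together with the $\defect_p( b_j )$ red arcs attached there, so their number equals $p( b_j-1, b_j+1 ) - 1$ by Definition~\ref{def:defects}, and Lemma~\ref{lem:CC}(vi) then gives $\fT\big( ( b_j-1 )^{\I}, ( b_j+1 )^{\I} \big) = p( b_j-1, b_j+1 )$; so (i) holds for all $b$, and (ii) follows symmetrically with $\defect_q$. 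For the extra input, $t$ has entries equal to $1$ on an infinite zig-zag that is unbounded towards the first quadrant, so Lemma~\ref{lem:zig-zag} supplies integers $e < f$ and $g < h$ with three entries of $t$ forming an ``L'' all equal to $1$ (a corner of the zig-zag), giving hypothesis (iii) or (iii)' of Lemma~\ref{lem:agree}: the corresponding arcs lie in $\Theta( t ) \subseteq \fT$, so the corresponding values of $\fT$ are $1$ by Lemma~\ref{lem:CC}(iii). Lemma~\ref{lem:agree} now yields $t = \Phi( \fT )$, and the theorem follows.
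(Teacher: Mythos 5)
Your proposal is, in outline, exactly the paper's own argument: the triangulation is the one of Construction~\ref{con:Case2}, goodness at the accumulation point between intervals $\I$ and $\III$ comes from the arcs $\fa_m$, and the verification of $\Phi(\fT)=t$ runs through Lemma~\ref{lem:agree} precisely as in the paper --- condition (iii)' from a corner of the zig-zag of $1$'s, condition (i) from Lemma~\ref{lem:saturated_vertices} at saturated vertices and from the bookkeeping of Definition~\ref{def:defects} together with Lemma~\ref{lem:CC}(vi) at the non-saturated vertices $b_j^{\I}$, and (ii) by symmetry. On all of these points there is nothing to object to.

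The genuine gap is the step you yourself single out and then only gesture at. You reduce goodness and maximality of $\fT$ to the statement that the blocks on interval $\II$ advance indefinitely, i.e.\ that $\defect_p(b_j)\geqslant 2$ for infinitely many $j$ (and symmetrically for $\defect_q$), and you write that this ``should be extracted'' from Lemma~\ref{lem:internal_defect} together with Lemma~\ref{lem:finiteness_in_rows_and_colums} and the Ptolemy relations. That deduction is not carried out, and it does not follow from the cited lemmas: Lemma~\ref{lem:internal_defect} only gives $\defect_p(b_j)=p(b_{j-1},b_{j+1})-1\geqslant 1$, and nothing in Lemma~\ref{lem:finiteness_in_rows_and_colums} or Lemma~\ref{lem:Ptolemy} forbids $p(b_{j-1},b_{j+1})=2$ for all large $j$. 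That scenario means exactly that $2\,t(b_j,v)=t(b_{j-1},v)+t(b_{j+1},v)$ for all $v$, so that the rows $t(b_j,-)$ eventually form arithmetic progressions in $j$; such data pass every determinant, positivity and finiteness test imposed by those lemmas (the finiteness lemma only forces the common differences $t(b_{j+1},v)-t(b_j,v)$ to be positive), and one can write down Conway--Coxeter-type counting data of ``fan'' shape, with all vertices $b_j^{\I}$ joined to a single extra marked point, which realise precisely this pattern. In that situation every block of Construction~\ref{con:Case2} consists of the single shared vertex, the red arcs all end at one vertex of interval $\II$, and the resulting $\fT$ is neither maximal nor good, so ruling the scenario out is not a formality but the actual content of the step. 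Any argument must therefore use the infinite zig-zag of $1$'s in an essential way, not just the local defect formula. Be aware that the paper's own proof is silent at exactly this point --- it asserts that the red arcs ``clearly complete $\fT$ to a triangulation'' and that they block the accumulation points adjacent to interval $\II$, without justification --- so you have correctly located the delicate spot of the argument, but your proposal, as written, does not close it.
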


\begin{proof}
As remarked at the start of the section, it is enough by symmetry to
let $t$ be an $\SL2$-tiling with infinitely many entries equal to $1$
in the first quadrant, but not in the third quadrant.  Let $\fT$ be as
in Construction \ref{con:Case2}, see Figure \ref{fig:ft_in_Case2}.

Consider the finite polygons between the arcs $\fa_m$ and below the
arcs $\fb_{-m}$ and $\fv_m$ shown in Figure \ref{fig:ft_in_Case2}, see
Definition \ref{def:restriction}.  In each such polygon, $\Theta( t )$
and hence $\fT$ restricts to a triangulation by Lemmas
\ref{lem:finite_triangulation1} through
\ref{lem:finite_triangulation2}.  The arcs added at the end of
Construction \ref{con:Case2} (red in Figure \ref{fig:ft_in_Case2})
clearly complete $\fT$ to a triangulation of $D_3$.

The arcs $\fa_m$ block the accumulation point between intervals
$\I$ and $\III$, see Definition \ref{def:block}.  The arcs added at
the end of Construction \ref{con:Case2} block the accumulation points
between interval $\II$ and the other intervals.  Hence $\fT$ is a good
triangulation of $D_3$.

To show $\Phi( \fT ) = t$ we use Lemma \ref{lem:agree}:

Lemma \ref{lem:zig-zag} implies that there are integers $e < f$ and $g
< h$ so that $t( e,h ) = t( f,g ) = t( f,h ) = 1$.  Hence the arcs $\{
e^{\I},h^{\III} \}$, $\{ f^{\I},g^{\III} \}$, $\{ f^{\I},h^{\III} \}$
are in $\fT$ whence $\fT( e^{\I},h^{\III} ) = \fT( f^{\I},g^{\III} ) =
\fT( f^{\I},h^{\III} ) = 1$.  This verifies condition (iii)' in Lemma
\ref{lem:agree}.

To verify Lemma \ref{lem:agree}, condition (i), note that if $b$ is
given such that $b^{\I}$ is saturated, then the condition holds by
Lemma \ref{lem:saturated_vertices}(i).  If $b^{\I}$ is non-saturated,
then $b = b_m$ for some $m \geqslant 0$.  Definition \ref{def:defects}
gives
\[
  (\mbox{the number of arcs in $\Theta( t )$ ending at $b^{\I}$})
  =
  p( b-1,b+1 )
  -
  \defect_p( b )
  - 1.
\]
Compared to $\Theta( t )$, the triangulation $\fT$ has an additional
$\defect_p( b )$ arcs ending at $b^{\I}$ by Construction
\ref{con:Case2}, so
\[
  (\mbox{the number of arcs in $\fT$ ending at $b^{\I}$})
  =
  p( b-1,b+1 )  - 1.
\]
On the other hand, $\fT$ is a good triangulation of $D_3$ so
\[
  (\mbox{the number of arcs in $\fT$ ending at $b^{\I}$})
  = \fT\big( (b-1)^{\I},(b+1)^{\I} \big) - 1
\]
by Lemma \ref{lem:CC}(vi).  The last two equations imply $p( b-1,b+1 )
= \fT\big( (b-1)^{\I},(b+1)^{\I} \big)$, verifying Lemma
\ref{lem:agree}, condition (i).

Lemma \ref{lem:agree}, condition (ii) is verified by symmetry.
\end{proof}

\section{Case 3: $\SL2$-tilings with entries equal to $1$ only in a
  proper rectangle}
\label{sec:Case3}

Let $t$ be an $\SL2$-tiling with entries equal to $1$ only in a proper
rectangle.  We do not permit all the entries equal to $1$ to occur in
a single row or a single column; these cases are handled separately in
Section \ref{sec:Case4}.

\begin{Construction}
[The triangulation $\fT$]
\label{con:Case3}
To construct $\fT$, proceed similarly to Construction \ref{con:Case2}.

The difference is that there are now only finitely many connecting
arcs between intervals $\I$ and $\III$.  There will consequently be
non-saturated vertices at {\em both} ends of each of intervals $\I$
and $\III$, so to go from $\Theta( t )$ to a triangulation $\fT$ we
will need {\em two} intervals in addition to $\I$ and $\III$.  Hence
$\fT$ will be a triangulation of $D_4$.

This is shown in Figure \ref{fig:ft_in_Case3} where the black arcs
show the overall structure of $\Theta( t )$ and the red arcs are added
to $\Theta( t )$ in order to obtain $\fT$.  At the vertex $b^{\I}$ we
add $\defect_p( b )$ red arcs.  At the vertex $v^{\III}$ we add
$\defect_q( v )$ red arcs.
\begin{figure}
  \centering
    \begin{tikzpicture}[scale=5]

      \draw (0,0) circle (1cm);

      \draw (45:1cm) node[fill=white,circle,inner sep=0.101cm] {} circle (0.03cm);
      \draw (135:1cm) node[fill=white,circle,inner sep=0.101cm] {} circle (0.03cm);
      \draw (225:1cm) node[fill=white,circle,inner sep=0.101cm] {} circle (0.03cm);
      \draw (315:1cm) node[fill=white,circle,inner sep=0.101cm] {} circle (0.03cm);

      \draw (100:0.97cm) -- (100:1.03cm);
      \draw (112:0.97cm) -- (112:1.03cm);
      \draw (124:0.97cm) -- (124:1.03cm);
      \draw (80:0.97cm) -- (80:1.03cm);
      \draw (68:0.97cm) -- (68:1.03cm);
      \draw (56:0.97cm) -- (56:1.03cm);

      \draw (-100:0.97cm) -- (-100:1.03cm);
      \draw (-112:0.97cm) -- (-112:1.03cm);
      \draw (-124:0.97cm) -- (-124:1.03cm);
      \draw (280:0.97cm) -- (280:1.03cm);
      \draw (292:0.97cm) -- (292:1.03cm);
      \draw (304:0.97cm) -- (304:1.03cm);

      \draw (152:0.97cm) -- (152:1.03cm);
      \draw (165:0.97cm) -- (165:1.03cm);
      \draw (180:0.97cm) -- (180:1.03cm);
      \draw (180:1.13cm) node{$0$};
      \draw (195:0.97cm) -- (195:1.03cm);
      \draw (208:0.97cm) -- (208:1.03cm);

      \draw (28:0.97cm) -- (28:1.03cm);
      \draw (15:0.97cm) -- (15:1.03cm);
      \draw (0:0.97cm) -- (0:1.03cm);
      \draw (0:1.13cm) node{$0$};
      \draw (-15:0.97cm) -- (-15:1.03cm);
      \draw (-28:0.97cm) -- (-28:1.03cm);

      \draw (-100:1cm) .. controls (-0.075,-0.3) and (-0.075,0.3) .. (100:1cm);
      \draw (280:1cm) .. controls (0.075,-0.3) and (0.075,0.3) .. (80:1cm);

      \draw (100:1cm) .. controls (104:0.8cm) and (108:0.8cm) .. (112:1cm);
      \draw (112:1cm) .. controls (116:0.8cm) and (120:0.8cm) .. (124:1cm);
      \draw (80:1cm) .. controls (76:0.8cm) and (72:0.8cm) .. (68:1cm);
      \draw (68:1cm) .. controls (64:0.8cm) and (60:0.8cm) .. (56:1cm);

      \draw (54:0.9cm) node{$\cdot$};
      \draw (52:0.9cm) node{$\cdot$};
      \draw (50:0.9cm) node{$\cdot$};

      \draw (126:0.9cm) node{$\cdot$};
      \draw (128:0.9cm) node{$\cdot$};
      \draw (130:0.9cm) node{$\cdot$};

      \draw (-100:1cm) .. controls (-104:0.8cm) and (-108:0.8cm) .. (-112:1cm);
      \draw (-112:1cm) .. controls (-116:0.8cm) and (-120:0.8cm) .. (-124:1cm);
      \draw (280:1cm) .. controls (284:0.8cm) and (288:0.8cm) .. (292:1cm);
      \draw (292:1cm) .. controls (296:0.8cm) and (300:0.8cm) .. (304:1cm);

      \draw (306:0.9cm) node{$\cdot$};
      \draw (308:0.9cm) node{$\cdot$};
      \draw (310:0.9cm) node{$\cdot$};

      \draw (-126:0.9cm) node{$\cdot$};
      \draw (-128:0.9cm) node{$\cdot$};
      \draw (-130:0.9cm) node{$\cdot$};

      \draw[red] (100:1cm) .. controls (130:0.15cm) and (150:0.15cm) .. (180:1cm);
      \draw[red] (100:1cm) .. controls (130:0.25cm) and (150:0.2cm) .. (165:1cm);
      \draw[red] (112:1cm) .. controls (130:0.3cm) and (150:0.3cm) .. (165:1cm);
      \draw[red] (112:1cm) .. controls (130:0.3cm) and (150:0.4cm) .. (152:1cm);

      \draw[red] (170:0.9cm) node{$\cdot$};
      \draw[red] (172.5:0.9cm) node{$\cdot$};
      \draw[red] (175:0.9cm) node{$\cdot$};

      \draw[red] (156:0.9cm) node{$\cdot$};
      \draw[red] (158.5:0.9cm) node{$\cdot$};
      \draw[red] (161:0.9cm) node{$\cdot$};

      \draw[red] (149:0.9cm) node{$\cdot$};
      \draw[red] (146.5:0.9cm) node{$\cdot$};
      \draw[red] (144:0.9cm) node{$\cdot$};

      \draw[red] (-100:1cm) .. controls (-130:0.15cm) and (-150:0.15cm) .. (-180:1cm);
      \draw[red] (-100:1cm) .. controls (-130:0.25cm) and (-150:0.2cm) .. (-165:1cm);
      \draw[red] (-112:1cm) .. controls (-130:0.3cm) and (-150:0.3cm) .. (-165:1cm);
      \draw[red] (-112:1cm) .. controls (-130:0.3cm) and (-150:0.4cm) .. (-152:1cm);

      \draw[red] (-170:0.9cm) node{$\cdot$};
      \draw[red] (-172.5:0.9cm) node{$\cdot$};
      \draw[red] (-175:0.9cm) node{$\cdot$};

      \draw[red] (-156:0.9cm) node{$\cdot$};
      \draw[red] (-158.5:0.9cm) node{$\cdot$};
      \draw[red] (-161:0.9cm) node{$\cdot$};

      \draw[red] (-149:0.9cm) node{$\cdot$};
      \draw[red] (-146.5:0.9cm) node{$\cdot$};
      \draw[red] (-144:0.9cm) node{$\cdot$};

      \draw[red] (80:1cm) .. controls (50:0.15cm) and (30:0.15cm) .. (0:1cm);
      \draw[red] (80:1cm) .. controls (50:0.25cm) and (30:0.2cm) .. (15:1cm);
      \draw[red] (68:1cm) .. controls (50:0.3cm) and (30:0.3cm) .. (15:1cm);
      \draw[red] (68:1cm) .. controls (50:0.3cm) and (30:0.4cm) .. (28:1cm);

      \draw[red] (10:0.9cm) node{$\cdot$};
      \draw[red] (7.5:0.9cm) node{$\cdot$};
      \draw[red] (5:0.9cm) node{$\cdot$};

      \draw[red] (24:0.9cm) node{$\cdot$};
      \draw[red] (21.5:0.9cm) node{$\cdot$};
      \draw[red] (19:0.9cm) node{$\cdot$};

      \draw[red] (31:0.9cm) node{$\cdot$};
      \draw[red] (33.5:0.9cm) node{$\cdot$};
      \draw[red] (36:0.9cm) node{$\cdot$};

      \draw[red] (-80:1cm) .. controls (-50:0.15cm) and (-30:0.15cm) .. (0:1cm);
      \draw[red] (-80:1cm) .. controls (-50:0.25cm) and (-30:0.2cm) .. (-15:1cm);
      \draw[red] (-68:1cm) .. controls (-50:0.3cm) and (-30:0.3cm) .. (-15:1cm);
      \draw[red] (-68:1cm) .. controls (-50:0.3cm) and (-30:0.4cm) .. (-28:1cm);

      \draw[red] (-10:0.9cm) node{$\cdot$};
      \draw[red] (-7.5:0.9cm) node{$\cdot$};
      \draw[red] (-5:0.9cm) node{$\cdot$};

      \draw[red] (-24:0.9cm) node{$\cdot$};
      \draw[red] (-21.5:0.9cm) node{$\cdot$};
      \draw[red] (-19:0.9cm) node{$\cdot$};

      \draw[red] (-31:0.9cm) node{$\cdot$};
      \draw[red] (-33.5:0.9cm) node{$\cdot$};
      \draw[red] (-36:0.9cm) node{$\cdot$};

      \draw (50.5:1cm) node[fill=white,rectangle,inner sep=0.07cm] {$\scriptstyle \I$};
      \draw (141:1cm) node[fill=white,rectangle,inner sep=0.07cm] {$\scriptstyle \II$};
      \draw (231:1cm) node[fill=white,rectangle,inner sep=0.07cm] {$\scriptstyle \III$};
      \draw (323:1cm) node[fill=white,rectangle,inner sep=0.07cm] {$\scriptstyle \IV$};

    \end{tikzpicture} 
    \caption{Outline of the triangulation $\fT$ of $D_4$ corresponding
      to an $\SL2$-tiling $t$ with entries equal to $1$ only in a
      proper rectangle.  The arcs in $\Theta( t )$ are black.  We add
      red arcs from the non-saturated vertices to define $\fT$.  They
      connect each non-saturated vertex in intervals $\I$ and $\III$
      to a block of consecutive vertices in interval $\II$ or $\IV$.
      The number of red arcs added at the vertex $b^{\I}$ is
      $\defect_p( b )$, and the number of red arcs added at the vertex
      $v^{\III}$ is $\defect_q( v )$.}
\label{fig:ft_in_Case3}
\end{figure}
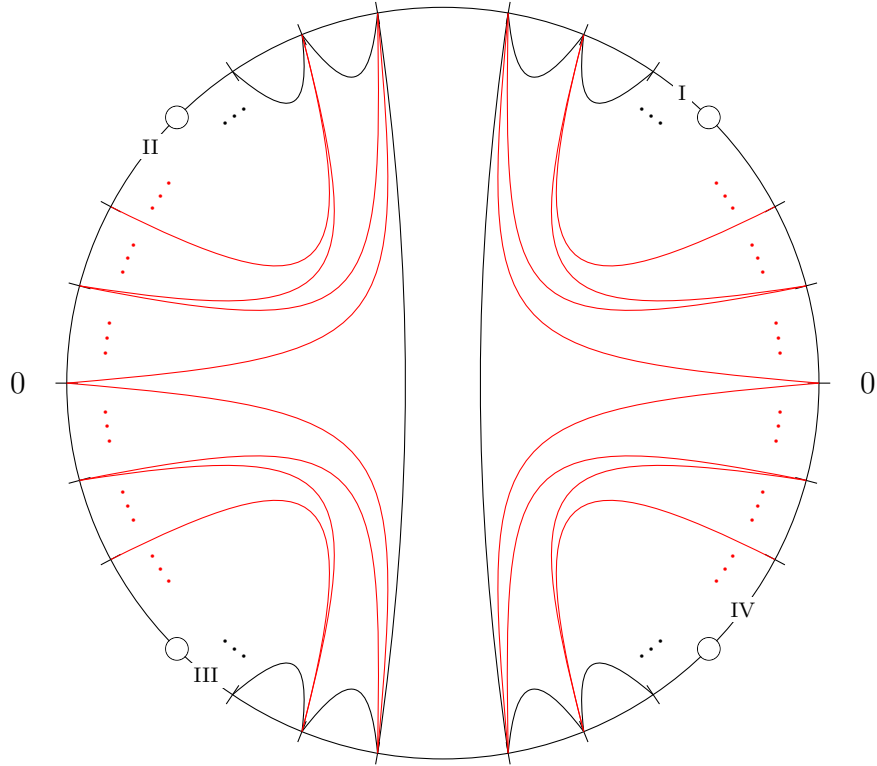
\end{Construction}

\begin{Theorem}
\label{thm:Case3}
Let $t$ be an $\SL2$-tiling with entries equal to $1$ only in a proper
rectangle.  We do not permit all the entries equal to $1$ to occur in
a single row or a single column.

Then there is a good triangulation $\fT$ of $D_4$ such that $\Phi( \fT
) = t$.
\end{Theorem}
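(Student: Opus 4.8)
The plan is to follow the proof of Theorem~\ref{thm:Case2} almost verbatim, the only structural difference being that now non-saturated vertices occur at \emph{both} ends of each of intervals $\I$ and $\III$, so we work in $D_4$ and must use \emph{two} auxiliary intervals $\II$, $\IV$ rather than one. First I would pin down the shape of $\Theta(t)$. Since the entries of $t$ equal to $1$ lie in a bounded rectangle, Lemma~\ref{lem:zig-zag} shows that there are only finitely many of them and hence only finitely many connecting arcs $\fa_m=\{b_m^{\I},v_{-m}^{\III}\}$ in $\Theta(t)$. Running the ``longest internal arc'' recursion from the Description preceding Construction~\ref{con:Case2} at each end of interval $\I$ produces the non-saturated vertices of $\I$ as two monotone sequences, one accumulating at $\xi_{\I}$ and one at $\xi_{\IV}$; likewise the non-saturated vertices of $\III$ form two sequences accumulating at $\xi_{\II}$ and $\xi_{\III}$. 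These sequences are infinite because each internal arc in $\Theta(t)$ spans only finitely many vertices (equivalently, each positive integer occurs only finitely often in the relevant rows and half-rows by Lemma~\ref{lem:finiteness_in_rows_and_colums}), so the recursion never reaches an accumulation point. By Lemma~\ref{lem:defect}(ii) and (iv), $\defect_p(b)>0$ at each non-saturated $b^{\I}$ and $\defect_q(v)>0$ at each non-saturated $v^{\III}$.

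Next I would verify that the set $\fT$ of Construction~\ref{con:Case3} is a good triangulation of $D_4$. The finitely many arcs $\fa_m$ together with the internal arcs of $\Theta(t)$ decompose $D_4$ into finitely many finite polygons, to each of which $\Theta(t)=\fT$ restricts to a triangulation by Lemmas~\ref{lem:finite_triangulation1}, \ref{lem:finite_triangulation1b} and \ref{lem:finite_triangulation2}, together with four regions meeting the accumulation points, which are triangulated by the red arcs: each non-saturated vertex of $\I$ (resp.\ $\III$) is joined by $\defect_p(b)\geqslant 1$ (resp.\ $\defect_q(v)\geqslant 1$) arcs to a block of consecutive vertices of the interval ($\II$ or $\IV$) that it faces, the blocks being arranged so that the two streams of red arcs entering $\II$ accumulate at its two ends $\xi_{\I}$ and $\xi_{\II}$, and likewise for $\IV$; hence the red arcs are pairwise non-crossing and cross no black arc. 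Maximality of $\fT$ is then clear, and $\fT$ blocks each of $\xi_{\I},\xi_{\II},\xi_{\III},\xi_{\IV}$ in the sense of Definition~\ref{def:block}, because infinitely many red arcs converge to each accumulation point from both sides. So $\fT$ is a good triangulation of $D_4$.

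Finally I would apply Lemma~\ref{lem:agree} to conclude $\Phi(\fT)=t$. For conditions (i) and (ii): if $b^{\I}$ is saturated then $p(b-1,b+1)=\fT\big((b-1)^{\I},(b+1)^{\I}\big)$ by Lemma~\ref{lem:saturated_vertices}(i); if $b^{\I}$ is non-saturated then $\Theta(t)$ has $p(b-1,b+1)-\defect_p(b)-1$ arcs ending at $b^{\I}$ by Definition~\ref{def:defects}, Construction~\ref{con:Case3} adds exactly $\defect_p(b)$ more, so $\fT$ has $p(b-1,b+1)-1$ arcs there, and Lemma~\ref{lem:CC}(vi) gives $\fT\big((b-1)^{\I},(b+1)^{\I}\big)=p(b-1,b+1)$; condition (ii) is symmetric in $q$. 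For condition (iii) (or (iii)$'$): by Lemma~\ref{lem:zig-zag} the $1$'s of $t$ lie on a zig-zag, and since they span more than one row and more than one column this zig-zag contains a horizontal run preceded by a vertical run, which yields integers $e<f$ and $g<h$ with $t$ equal to $1$ at the three positions $(e,g),(f,g),(f,h)$ required in Lemma~\ref{lem:agree}(iii); the three corresponding arcs then lie in $\fT$, so the matching $\fT$-values are also $1$. (In the one boundary configuration where no horizontal run is preceded by a vertical run, namely a single-corner zig-zag of the opposite orientation, one first replaces $t$ by its transpose, which is again an $\SL2$-tiling and leaves the whole construction intact after exchanging the roles of rows and columns, of $p$ and $q$, and of $\I/\III$ with $\II/\IV$.) Lemma~\ref{lem:agree} now gives $t=\Phi(\fT)$.

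The step I expect to be the main obstacle is the middle one: verifying rigorously that the red arcs of Construction~\ref{con:Case3} complete $\Theta(t)$ to a \emph{maximal} non-crossing family that moreover blocks \emph{all four} accumulation points. This rests on the bookkeeping that the non-saturated vertices genuinely accumulate at each of the four corners and that intervals $\II$ and $\IV$ each have room for two incoming streams of red arcs with no crossing between a stream from $\I$ and a stream from $\III$ — essentially the $D_4$-analogue of, and somewhat more delicate than, the corresponding check in the proof of Theorem~\ref{thm:Case2}.
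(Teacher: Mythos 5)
Your overall plan is the paper's own: take $\fT$ from Construction \ref{con:Case3} and rerun the proof of Theorem \ref{thm:Case2} (restriction of $\Theta(t)$ to the finite polygons, blocking of all four accumulation points by the added arcs, and Lemma \ref{lem:agree} with conditions (i) and (ii) checked via Lemma \ref{lem:saturated_vertices} and the defect count). Those parts of your write-up are fine and agree with the intended argument.

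The genuine problem is your verification of condition (iii) of Lemma \ref{lem:agree}. The triple of positions $(e,g),(f,g),(f,h)$ can never consist entirely of $1$'s: by Lemma \ref{lem:Ptolemy}(iii), $t(e,g)t(f,h)=t(e,h)t(f,g)+p(e,f)q(g,h)\geqslant 2$ for $e<f$, $g<h$, so $t(e,g)$ and $t(f,h)$ cannot both equal $1$ (equivalently, no $1$ lies strictly northwest of another on the staircase of Lemma \ref{lem:zig-zag}). What a corner of the zig-zag actually provides is: a horizontal run \emph{followed} by a vertical run (in the up-and-to-the-right enumeration) gives $1$'s at $(f,g),(f,h),(e,h)$, which is condition (iii)$'$ -- this is exactly what the paper uses in Theorems \ref{thm:Case1} and \ref{thm:Case2} -- whereas a corner of the opposite orientation gives $1$'s at $(f,g),(e,g),(e,h)$, which matches neither (iii) nor (iii)$'$. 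Your fallback for the exceptional orientation (a zig-zag that is one vertical run followed by one horizontal run) also fails: transposing $t$ reflects the $1$-set in the main diagonal and sends an ``up-then-right'' corner to another ``up-then-right'' corner, so the transposed tiling has exactly the same defect (as it must, since three $1$'s in the (iii)-pattern are impossible for \emph{every} $\SL2$-tiling). The correct symmetry is a half-turn, e.g.\ $t'(i,j)=t(-i,-j)$, which is again an $\SL2$-tiling, turns the ``up-then-right'' corner into a ``right-then-up'' one so that (iii)$'$ can be verified with three $1$'s, and corresponds to a symmetry of $D_4$ (exchanging intervals $\II$ and $\IV$ and reversing the numberings of $\I$ and $\III$), so a good triangulation for $t'$ transforms back into one for $t$. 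With (iii) replaced by (iii)$'$ and the transpose replaced by this half-turn (or, alternatively, by proving the obvious symmetric variant of Lemma \ref{lem:agree} for the triple $(e,g),(e,h),(f,g)$), your argument closes and coincides with the paper's proof.
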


\begin{proof}
Using $\fT$ from Construction \ref{con:Case3} (see Figure
\ref{fig:ft_in_Case3}) the proof is similar to the proof of Theorem
\ref{thm:Case2}. 
\end{proof}

\section{Case 4: $\SL2$-tilings with entries equal to $1$ only in a
  single row or column}
\label{sec:Case4}

By symmetry, it is enough to let $t$ be an $\SL2$-tiling with entries
equal to $1$ only in a single row.  We do not permit $t$ to have fewer
than two entries equal to $1$.  The case of a unique entry equal to
$1$ is handled in Section \ref{sec:Case5}.  The case of no entries
equal to $1$ is handled in Section \ref{sec:Case6}.

By Lemma \ref{lem:finiteness_in_rows_and_colums}, only finitely many
entries of $t$ are equal to $1$.  Let them be $t( b_0,v_1 ) = \cdots =
t( b_0,v_j ) = 1$ with $j \geqslant 2$.

\begin{Description}
[The partial triangulation $\Theta( t )$]
\label{des:Case4}
The black arcs in Figure \ref{fig:ft_in_Case4} show the overall
structure of $\Theta( t )$ which we now describe:

The only connecting arcs in $\Theta( t )$ are $\fa_{ \ell } = \{ b_0^{
  \I },v_{ \ell }^{ \III } \}$ for $\ell \in \{ 1,\ldots,j \}$.  The
vertices $( v_1+1 )^{\III}$, $\ldots$, $( v_j-1 )^{\III}$ strictly
between $\fa_1$ and $\fa_j$ are saturated, see Definition
\ref{def:saturated}.

On the other hand, $b_0^{\I}$ is non-saturated: It is not strictly
between two connecting arcs in $\Theta( t )$, nor is it strictly below
an internal arc in $\Theta( t )$ because such an arc would have to
cross $\{ b_0^{\I},v_1^{\III} \} \in \Theta( t )$.  Similarly,
$v_1^{\III}$ and $v_j^{\III}$ are non-saturated.

Let $\fb_0 = \{ b_0^{\I},b_1^{\I} \}$ be either the longest internal
arc in $\Theta( t )$ going anticlockwise from $b_0^{\I}$, or, if there
are no such arcs, the edge going anticlockwise from $b_0^{\I}$.  This
makes sense because $\Theta( t )$ is locally finite by Lemma
\ref{lem:locally_finite}.  It is easy to see that $b_1^{\I}$ is also
non-saturated, while the vertices strictly below $\fb_0$ are
saturated.

We can repeat this to both sides of $b_0$ and thereby get integers
$\cdots < b_{-1} < b_0 < b_1 < \cdots$ such that the non-saturated
vertices in interval $\I$ are precisely the $b_{\ell}^{\I}$.

A similar treatment provides integers $\cdots < v_{-1} < v_0 < v_1$ and $v_j
< v_{ j+1 } < v_{ j+2 } \cdots$ such that the non-saturated vertices in interval
$\III$ are precisely $\ldots$, $v_{-1}^{\III}$, $v_0^{\III}$, $v_1^{\III}$ and
$v_j^{\III}$, $v_{ j+1 }^{\III}$, $v_{ j+2 }^{\III}$, $\ldots$.
\end{Description}

\begin{Lemma}
\label{lem:Case4}
With the notation of Description \ref{des:Case4} and Figure
\ref{fig:ft_in_Case4}, we have
\[
  \defect_p( b_0 ) - \big( t( b_{-1},v_j ) - 1 \big) > 0.
\]
\end{Lemma}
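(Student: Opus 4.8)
The plan is to obtain the inequality by a single application of Lemma \ref{lem:connecting_defect}, after which everything reduces to the elementary observation that the only row of $t$ containing entries equal to $1$ is row $b_0$.

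First I would verify that Lemma \ref{lem:connecting_defect} applies at the vertex $b_0^{\I}$. Indeed, $\Theta(t)$ contains the connecting arc $\fa_1 = \{ b_0^{\I},v_1^{\III} \}$, so it has at least one connecting arc ending at $b_0^{\I}$; by Description \ref{des:Case4} the connecting arcs of $\Theta(t)$ ending at $b_0^{\I}$ are precisely $\fa_1,\ldots,\fa_j$, whose $\III$-endpoints are $v_1^{\III} < \cdots < v_j^{\III}$; and $\fb_0 = \{ b_0^{\I},b_1^{\I} \}$ and $\fb_{-1} = \{ b_{-1}^{\I},b_0^{\I} \}$ are exactly the arcs (or edges) described there. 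Hence Lemma \ref{lem:connecting_defect} gives
\[
  \defect_p( b_0 ) = t( b_{-1},v_j ) + t( b_1,v_1 ) - 2 .
\]

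Next I would subtract. The previous display yields
\[
  \defect_p( b_0 ) - \big( t( b_{-1},v_j ) - 1 \big) = t( b_1,v_1 ) - 1 ,
\]
so it suffices to prove $t( b_1,v_1 ) > 1$, i.e.\ $t( b_1,v_1 ) \neq 1$. For this I would use the hypothesis of the section together with the definition of $\fb_0$: since $\fb_0$ is either an internal arc of $\Theta(t)$ (forcing $b_1 \geqslant b_0 + 2$) or the edge going anticlockwise from $b_0^{\I}$ (forcing $b_1 = b_0 + 1$), in either case $b_1 \neq b_0$. By assumption the only entries of $t$ equal to $1$ are $t( b_0,v_1 ),\ldots,t( b_0,v_j )$, all lying in row $b_0$; since $b_1 \neq b_0$, we get $t( b_1,v_1 ) \neq 1$, and as $t$ takes positive integer values this gives $t( b_1,v_1 ) \geqslant 2$, whence $t( b_1,v_1 ) - 1 \geqslant 1 > 0$. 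This is the asserted inequality.

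I do not expect any genuine obstacle here: the proof is essentially one invocation of Lemma \ref{lem:connecting_defect} plus the remark that $b_1 \neq b_0$. The only step that needs a little attention is lining up the notation of Description \ref{des:Case4} with the hypotheses of Lemma \ref{lem:connecting_defect} — specifically that $\fa_1,\ldots,\fa_j$ are \emph{all} the connecting arcs of $\Theta(t)$ ending at $b_0^{\I}$ and that their endpoints are listed in increasing order — but this has already been established in Description \ref{des:Case4}.
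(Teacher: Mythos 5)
Your proposal is correct and follows essentially the same route as the paper: one application of Lemma \ref{lem:connecting_defect} at $b_0^{\I}$ to get $\defect_p(b_0) = t(b_{-1},v_j) + t(b_1,v_1) - 2$, followed by the observation that $t(b_1,v_1) \neq 1$ because all entries equal to $1$ lie in row $b_0$ and $b_1 \neq b_0$. The extra care you take in matching the notation of Description \ref{des:Case4} with the hypotheses of Lemma \ref{lem:connecting_defect} is fine but not a substantive difference.
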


\begin{proof}
We have
\[
  \defect_p( b_0 ) - \big( t( b_{-1},v_j ) - 1 \big)
  = t( b_1,v_1 ) - 1
  > 0
\]
where the equality is by Lemma \ref{lem:connecting_defect} and the
inequality is because $t( b_1,v_1 ) \neq 1$ by assumption.
\end{proof}

\begin{Construction}
[The triangulation $\fT$]
\label{con:Case4}
We add arcs to $\Theta( t )$ as follows to create a triangulation
$\fT$ of $D_4$; see Figure \ref{fig:ft_in_Case4} where the added arcs
are shown in red:

From vertex $b_0^{\I}$, add $t( b_{-1},v_j ) - 1$ arcs ending at the
consecutive vertices $0^{\IV}$, $1^{\IV}$, $\ldots$, $\varphi^{ \IV
}$.  Note that $t( b_{-1},v_j ) - 1 > 0$ since $b_{-1} \neq b_0$.

From vertex $b_{-1}^{\I}$, add $\defect_p( b_{-1} )$ arcs ending at
the next block of consecutive vertices in interval $\IV$.  Note that
$\defect_p( b_{-1} ) > 0$ by Lemma \ref{lem:defect}(ii).  Continue in
the same fashion with vertices $b_{-2}^{\I}$, $b_{-3}^{\I}$, $\ldots$.

Going back to vertex $b_0^{\I}$, add $\defect_p( b_0 ) - \big( t(
b_{-1},v_j ) - 1 \big)$ arcs ending at the consecutive vertices
$0^{\II}$, $-1^{\II}$, $\ldots$, $\beta^{\II}$.  This makes sense
because $\defect_p( b_0 ) - \big( t( b_{-1},v_j ) - 1 \big) > 0$ by
Lemma \ref{lem:Case4}.

From vertex $b_1^{\I}$, add $\defect_p( b_1 )$ arcs ending at the next
block of consecutive vertices in interval $\II$.  Continue in the same
fashion with vertices $b_2^{\I}$, $b_3^{\I}$, $\ldots$.

Finally, add arcs by a similar recipe from vertices $\ldots$,
$v_{-1}^{\III}$, $v_0^{\III}$, $v_1^{\III}$ and $v_j^{\III}$, $v_{ j+1
}^{\III}$, $v_{ j+2 }^{\III}$, $\ldots$ using $\defect_q$ instead of
$\defect_p$.
\begin{figure}
  \centering
    \begin{tikzpicture}[scale=5]

      \draw (0,0) circle (1cm);

      \draw (45:1cm) node[fill=white,circle,inner sep=0.101cm] {} circle (0.03cm);
      \draw (135:1cm) node[fill=white,circle,inner sep=0.101cm] {} circle (0.03cm);
      \draw (225:1cm) node[fill=white,circle,inner sep=0.101cm] {} circle (0.03cm);
      \draw (315:1cm) node[fill=white,circle,inner sep=0.101cm] {} circle (0.03cm);

      \draw (66:0.97cm) -- (66:1.03cm);
      \draw (66:1.13cm) node{$b_{-2}$};
      \draw (78:0.97cm) -- (78:1.03cm);
      \draw (78:1.13cm) node{$b_{-1}$};
      \draw (90:0.97cm) -- (90:1.03cm);
      \draw (90:1.13cm) node{$b_0$};
      \draw (102:0.97cm) -- (102:1.03cm);
      \draw (102:1.13cm) node{$b_1$};
      \draw (114:0.97cm) -- (114:1.03cm);
      \draw (114:1.13cm) node{$b_2$};

      \draw (-100:0.97cm) -- (-100:1.03cm);
      \draw (-100:1.13cm) node{$v_1$};
      \draw (-112:0.97cm) -- (-112:1.03cm);
      \draw (-112:1.13cm) node{$v_{-1}$};
      \draw (-124:0.97cm) -- (-124:1.03cm);
      \draw (-124:1.13cm) node{$v_{-2}$};
      \draw (280:0.97cm) -- (280:1.03cm);
      \draw (280:1.13cm) node{$v_j$};
      \draw (292:0.97cm) -- (292:1.03cm);
      \draw (292:1.13cm) node{$v_{j+1}$};
      \draw (304:0.97cm) -- (304:1.03cm);
      \draw (304:1.13cm) node{$v_{j+2}$};

      \draw (152:0.97cm) -- (152:1.03cm);
      \draw (165:0.97cm) -- (165:1.03cm);
      \draw (165:1.13cm) node{$\beta$};
      \draw (180:0.97cm) -- (180:1.03cm);
      \draw (180:1.13cm) node{$0$};
      \draw (195:0.97cm) -- (195:1.03cm);
      \draw (208:0.97cm) -- (208:1.03cm);

      \draw (28:0.97cm) -- (28:1.03cm);
      \draw (15:0.97cm) -- (15:1.03cm);
      \draw (15:1.13cm) node{$\varphi$};
      \draw (0:0.97cm) -- (0:1.03cm);
      \draw (0:1.13cm) node{$0$};
      \draw (-15:0.97cm) -- (-15:1.03cm);
      \draw (-28:0.97cm) -- (-28:1.03cm);

      \draw (-100:1cm) .. controls (-0.02,-0.3) and (-0.02,0.3) .. (90:1cm);
      \draw (280:1cm) .. controls (0.02,-0.3) and (0.02,0.3) .. (90:1cm);

      \draw (90:1cm) .. controls (94:0.8cm) and (98:0.8cm) .. (102:1cm);
      \draw (102:1cm) .. controls (106:0.8cm) and (110:0.8cm) .. (114:1cm);
      \draw (90:1cm) .. controls (86:0.8cm) and (82:0.8cm) .. (78:1cm);
      \draw (78:1cm) .. controls (74:0.8cm) and (70:0.8cm) .. (66:1cm);

      \draw (62:0.9cm) node{$\cdot$};
      \draw (60:0.9cm) node{$\cdot$};
      \draw (58:0.9cm) node{$\cdot$};

      \draw (118:0.9cm) node{$\cdot$};
      \draw (120:0.9cm) node{$\cdot$};
      \draw (122:0.9cm) node{$\cdot$};

      \draw (-100:1cm) .. controls (-104:0.8cm) and (-108:0.8cm) .. (-112:1cm);
      \draw (-112:1cm) .. controls (-116:0.8cm) and (-120:0.8cm) .. (-124:1cm);
      \draw (280:1cm) .. controls (284:0.8cm) and (288:0.8cm) .. (292:1cm);
      \draw (292:1cm) .. controls (296:0.8cm) and (300:0.8cm) .. (304:1cm);

      \draw (306:0.9cm) node{$\cdot$};
      \draw (308:0.9cm) node{$\cdot$};
      \draw (310:0.9cm) node{$\cdot$};

      \draw (-126:0.9cm) node{$\cdot$};
      \draw (-128:0.9cm) node{$\cdot$};
      \draw (-130:0.9cm) node{$\cdot$};

      \draw[red] (90:1cm) .. controls (130:0.15cm) and (150:0.15cm) .. (180:1cm);
      \draw[red] (90:1cm) .. controls (130:0.25cm) and (150:0.2cm) .. (165:1cm);
      \draw[red] (102:1cm) .. controls (130:0.3cm) and (150:0.3cm) .. (165:1cm);
      \draw[red] (102:1cm) .. controls (130:0.3cm) and (150:0.4cm) .. (152:1cm);

      \draw[red] (170:0.9cm) node{$\cdot$};
      \draw[red] (172.5:0.9cm) node{$\cdot$};
      \draw[red] (175:0.9cm) node{$\cdot$};

      \draw[red] (156:0.9cm) node{$\cdot$};
      \draw[red] (158.5:0.9cm) node{$\cdot$};
      \draw[red] (161:0.9cm) node{$\cdot$};

      \draw[red] (149:0.9cm) node{$\cdot$};
      \draw[red] (146.5:0.9cm) node{$\cdot$};
      \draw[red] (144:0.9cm) node{$\cdot$};

      \draw[red] (-100:1cm) .. controls (-131:0.15cm) and (-150:0.15cm) .. (-180:1cm);
      \draw[red] (-100:1cm) .. controls (-130:0.25cm) and (-150:0.2cm) .. (-165:1cm);
      \draw[red] (-112:1cm) .. controls (-130:0.3cm) and (-150:0.3cm) .. (-165:1cm);
      \draw[red] (-112:1cm) .. controls (-130:0.3cm) and (-150:0.4cm) .. (-152:1cm);

      \draw[red] (-170:0.9cm) node{$\cdot$};
      \draw[red] (-172.5:0.9cm) node{$\cdot$};
      \draw[red] (-175:0.9cm) node{$\cdot$};

      \draw[red] (-156:0.9cm) node{$\cdot$};
      \draw[red] (-158.5:0.9cm) node{$\cdot$};
      \draw[red] (-161:0.9cm) node{$\cdot$};

      \draw[red] (-149:0.9cm) node{$\cdot$};
      \draw[red] (-146.5:0.9cm) node{$\cdot$};
      \draw[red] (-144:0.9cm) node{$\cdot$};

      \draw[red] (90:1cm) .. controls (50:0.15cm) and (30:0.15cm) .. (0:1cm);
      \draw[red] (90:1cm) .. controls (50:0.25cm) and (30:0.2cm) .. (15:1cm);
      \draw[red] (78:1cm) .. controls (50:0.3cm) and (30:0.3cm) .. (15:1cm);
      \draw[red] (78:1cm) .. controls (50:0.3cm) and (30:0.4cm) .. (28:1cm);

      \draw[red] (10:0.9cm) node{$\cdot$};
      \draw[red] (7.5:0.9cm) node{$\cdot$};
      \draw[red] (5:0.9cm) node{$\cdot$};

      \draw[red] (24:0.9cm) node{$\cdot$};
      \draw[red] (21.5:0.9cm) node{$\cdot$};
      \draw[red] (19:0.9cm) node{$\cdot$};

      \draw[red] (31:0.9cm) node{$\cdot$};
      \draw[red] (33.5:0.9cm) node{$\cdot$};
      \draw[red] (36:0.9cm) node{$\cdot$};

      \draw[red] (-80:1cm) .. controls (-50:0.15cm) and (-30:0.15cm) .. (0:1cm);
      \draw[red] (-80:1cm) .. controls (-50:0.25cm) and (-30:0.2cm) .. (-15:1cm);
      \draw[red] (-68:1cm) .. controls (-50:0.3cm) and (-30:0.3cm) .. (-15:1cm);
      \draw[red] (-68:1cm) .. controls (-50:0.3cm) and (-30:0.4cm) .. (-28:1cm);

      \draw[red] (-10:0.9cm) node{$\cdot$};
      \draw[red] (-7.5:0.9cm) node{$\cdot$};
      \draw[red] (-5:0.9cm) node{$\cdot$};

      \draw[red] (-24:0.9cm) node{$\cdot$};
      \draw[red] (-21.5:0.9cm) node{$\cdot$};
      \draw[red] (-19:0.9cm) node{$\cdot$};

      \draw[red] (-31:0.9cm) node{$\cdot$};
      \draw[red] (-33.5:0.9cm) node{$\cdot$};
      \draw[red] (-36:0.9cm) node{$\cdot$};

      \draw[gray,very thin] (0.25,0.21) ellipse (0.01cm and 0.08cm);
      \draw[gray,very thin] (0.248,0.132) -- (0.23,0.060);
      \draw (0.29,0.030) node{${\scriptscriptstyle t(b_{-1},v_j)-1 \;\mathrm{arcs}}$};

      \draw (-0.085,-0.175) node{$\scriptstyle \fa_1$};
      \draw (0.085,-0.175) node{$\scriptstyle \fa_j$};
      \draw (67:0.835cm) node{$\scriptstyle \fb_{-2}$};
      \draw (80:0.825cm) node{$\scriptstyle \fb_{-1}$};
      \draw (98:0.825cm) node{$\scriptstyle \fb_0$};
      \draw (111:0.835cm) node{$\scriptstyle \fb_1$};

      \draw (-59:0.825cm) node{$\scriptstyle \fv_{j+1}$};
      \draw (-71:0.825cm) node{$\scriptstyle \fv_j$};
      \draw (-108:0.825cm) node{$\scriptstyle \fv_{-1}$};
      \draw (-121:0.835cm) node{$\scriptstyle \fv_{-2}$};

      \draw (50.5:1cm) node[fill=white,rectangle,inner sep=0.07cm] {$\scriptstyle \I$};
      \draw (141:1cm) node[fill=white,rectangle,inner sep=0.07cm] {$\scriptstyle \II$};
      \draw (231:1cm) node[fill=white,rectangle,inner sep=0.07cm] {$\scriptstyle \III$};
      \draw (323:1cm) node[fill=white,rectangle,inner sep=0.07cm] {$\scriptstyle \IV$};

    \end{tikzpicture} 
    \caption{Outline of the triangulation $\fT$ of $D_4$ corresponding
      to an $\SL2$-tiling $t$ with entries equal to $1$ only in a
      single row.  The arcs in $\Theta( t )$ are black.  We add red
      arcs from the non-saturated vertices to define $\fT$.  The number
      of red arcs added is given by the defect at the relevant vertex,
      but at $b_0^{\I}$ there is a choice of how many arcs should go
      to interval $\II$, and how many to $\IV$.  This is resolved by
      letting $t( b_{-1},v_j ) - 1$ arcs go to $\IV$. }
\label{fig:ft_in_Case4}
\end{figure}
\end{Construction}

\begin{Theorem}
\label{thm:Case4}
Let $t$ be an $\SL2$-tiling with at least two entries equal to $1$,
and assume these occur only in a single row or in a single column.

Then there is a good triangulation $\fT$ of $D_4$ such that $\Phi( \fT
) = t$.
\end{Theorem}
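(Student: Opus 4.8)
The plan is to show that the triangulation $\fT$ built in Construction \ref{con:Case4} (see Figure \ref{fig:ft_in_Case4}) works, arguing along the lines of the proofs of Theorems \ref{thm:Case2} and \ref{thm:Case3}. By symmetry we may assume the $1$'s of $t$ occupy a single row, so they are $t(b_0,v_1)=\cdots=t(b_0,v_j)=1$ with $j\geqslant 2$, and the only connecting arcs of $\Theta(t)$ are $\fa_1,\ldots,\fa_j$, all emanating from $b_0^{\I}$. First I would check that $\fT$ is a good triangulation of $D_4$. The black arcs $\fa_\ell$, $\fb_m$, $\fv_m$ cut $D_4$ into finitely-sided polygons (between $\fa_1$ and $\fa_j$, below each $\fb_m$, below each $\fv_m$) together with four ``fan'' regions abutting the accumulation points; Lemmas \ref{lem:finite_triangulation1} through \ref{lem:finite_triangulation2} triangulate the finite polygons and the red arcs of Construction \ref{con:Case4} triangulate the fans. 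The only place where a choice has to be legitimised is at $b_0^{\I}$, where the red arcs split into $t(b_{-1},v_j)-1$ arcs towards interval $\IV$ and $\defect_p(b_0)-\big(t(b_{-1},v_j)-1\big)$ arcs towards interval $\II$: the first count is positive because $b_{-1}\neq b_0$ forces $t(b_{-1},v_j)\geqslant 2$, and the second is positive by Lemma \ref{lem:Case4}. Finally, the red fans accumulate at all four accumulation points, so $\fT$ is good in the sense of Definition \ref{def:block}.

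Next I would deduce $\Phi(\fT)=t$ from Lemma \ref{lem:agree}. Conditions (i) and (ii) are verified exactly as in Theorem \ref{thm:Case2}: at a saturated vertex they follow from Lemma \ref{lem:saturated_vertices} together with $\Theta(t)\subseteq\fT$; at a non-saturated vertex $b_m^{\I}$ one observes that $\fT$ has precisely $\defect_p(b_m)$ more arcs ending at $b_m^{\I}$ than $\Theta(t)$ does -- at $b_0^{\I}$ this is where one uses that the two red blocks total $\defect_p(b_0)$ -- so by Definition \ref{def:defects} there are $p(b_m-1,b_m+1)-1$ arcs of $\fT$ ending there, and Lemma \ref{lem:CC}(vi) then gives $\fT\big((b_m-1)^{\I},(b_m+1)^{\I}\big)=p(b_m-1,b_m+1)$; the statement for $q$ is symmetric.

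The genuinely new point, and the one I expect to be the main obstacle, is supplying a \emph{seed} for Lemma \ref{lem:agree}. Since all $1$'s of $t$ lie in one row, there is no $\mathrm{L}$-shaped block of $1$'s to invoke as in Theorem \ref{thm:Case2}, so condition (iii)' must be established by direct computation; I would use $e=b_{-1}$, $f=b_0$, $g=v_1$, $h=v_j$. Here $t(b_0,v_1)=\fT(b_0^{\I},v_1^{\III})=1$ and $t(b_0,v_j)=\fT(b_0^{\I},v_j^{\III})=1$ because $\fa_1,\fa_j\in\fT$, so only the identity $t(b_{-1},v_j)=\fT(b_{-1}^{\I},v_j^{\III})$ remains. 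To get it, note that the $t(b_{-1},v_j)-1$ red arcs from $b_0^{\I}$ to interval $\IV$ end at the consecutive vertices $0^{\IV},\ldots,\varphi^{\IV}$ with $\varphi=t(b_{-1},v_j)-2$, and that together with $\fb_{-1}=\{b_{-1}^{\I},b_0^{\I}\}$ and the innermost arc $\{b_{-1}^{\I},\varphi^{\IV}\}$ of the next red fan they triangulate, as a fan from $b_0^{\I}$, the finite polygon on the vertices $b_{-1}^{\I},b_0^{\I},0^{\IV},\ldots,\varphi^{\IV}$ (Lemma \ref{lem:good}). Conway--Coxeter counting on this fan, starting at $b_{-1}^{\I}$, labels $k^{\IV}$ by $\varphi+1-k$, so $\fT(b_{-1}^{\I},0^{\IV})=\varphi+1=t(b_{-1},v_j)-1$. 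Now the arcs $\{b_{-1}^{\I},v_j^{\III}\}$ and $\{b_0^{\I},0^{\IV}\}$ cross in $D_4$ (the cyclic order of the four vertices is $b_{-1}^{\I},b_0^{\I},v_j^{\III},0^{\IV}$), so the Ptolemy relation of Lemma \ref{lem:CC}(v), combined with $\fT(b_{-1}^{\I},b_0^{\I})=\fT(v_j^{\III},0^{\IV})=\fT(b_0^{\I},v_j^{\III})=1$ (each of these being an edge or an arc of $\fT$, the middle one the innermost red arc of the $v_j^{\III}$-fan), gives
\[
  \fT(b_{-1}^{\I},v_j^{\III})\cdot 1 \;=\; 1\cdot 1 + \big(t(b_{-1},v_j)-1\big)\cdot 1 \;=\; t(b_{-1},v_j),
\]
which is what condition (iii)' requires. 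This is exactly the identity that the apparently ad hoc routing ``$t(b_{-1},v_j)-1$ arcs to $\IV$'' in Construction \ref{con:Case4} was engineered to produce. With (i), (ii) and (iii)' in hand, Lemma \ref{lem:agree} yields $\Phi(\fT)=t$, completing the proof; everything other than this seed computation is bookkeeping parallel to Cases 2 and 3.
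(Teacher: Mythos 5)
Your proposal is correct and follows essentially the same route as the paper: the same Construction \ref{con:Case4}, the same check that $\fT$ is a good triangulation, and the same use of Lemma \ref{lem:agree} via condition (iii)' with $e=b_{-1}$, $f=b_0$, $g=v_1$, $h=v_j$, with conditions (i) and (ii) handled exactly as in Theorem \ref{thm:Case2}. The only (harmless) difference is how the seed identity $t(b_{-1},v_j)=\fT(b_{-1}^{\I},v_j^{\III})$ is extracted: the paper applies Lemma \ref{lem:CC}(vi) to the consecutive vertices $b_{-1}^{\I},b_0^{\I},v_j^{\III}$ of the polygon spanned by $b_{-1}^{\I},b_0^{\I},v_j^{\III},0^{\IV},\ldots,\varphi^{\IV}$, whereas you count the fan at $b_0^{\I}$ to get $\fT(b_{-1}^{\I},0^{\IV})=t(b_{-1},v_j)-1$ and then transfer via a Ptolemy relation -- an equivalent local computation.
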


\begin{proof}
As remarked at the start of the section, it is enough by symmetry to
let $t$ be an $\SL2$-tiling with entries equal to $1$ only in a single
row.  Let $\fT$ be as in Construction \ref{con:Case4}, see Figure
\ref{fig:ft_in_Case4}. 

Consider the finite polygons between the arcs $\fa_1$ and $\fa_j$ and
below the arcs $\fb_m$ and $\fv_m$ shown in Figure
\ref{fig:ft_in_Case4}, see Definition \ref{def:below}.  In each such
polygon, $\Theta( t )$ and hence $\fT$ restricts to a triangulation by
Lemmas \ref{lem:finite_triangulation1} through
\ref{lem:finite_triangulation2}.  The arcs added at the end of
Construction \ref{con:Case4} (red in Figure \ref{fig:ft_in_Case4})
clearly complete $\fT$ to a triangulation of $D_4$.  These arcs also
block all four accumulation points so $\fT$ is a good triangulation.

To show $\Phi( \fT ) = t$ we use Lemma \ref{lem:agree}.

Since $\fa_1, \fa_j \in \fT$ we have
\begin{equation}
\label{equ:Case4_1}
  t( b_0,v_1 ) = 1 = \fT( b_0^{\I},v_1^{\III})
  \;,\;
  t( b_0,v_j ) = 1 = \fT( b_0^{\I},v_j^{\III}).
\end{equation}
Moreover, the vertices $b_{-1}^{\I}$, $b_0^{\I}$, $v_j^{\III}$,
$0^{\IV}$, $\ldots$, $\varphi^{\IV}$ can be viewed as the vertices of a
finite polygon $P$ inside which $\fT$ restricts to a triangulation
$\fT_P$.  In $P$, the vertices $b_{-1}^{\I}$, $b_0^{\I}$,
$v_j^{\III}$, are consecutive whence
\[
  \fT_P( b_{-1}^{\I},v_j^{\III})
  = 1 + (\mbox{the number of arcs in $\fT_P$ ending at $b_0^{\I}$})
  = t( b_{-1},v_j )
\]
where the first equality is by Lemma \ref{lem:CC}(vi) and the second
equality is by the construction of $\fT$, see Construction
\ref{con:Case4} and Figure \ref{fig:ft_in_Case4}.  It follows that
\begin{equation}
\label{equ:Case4_2}
  t( b_{-1},v_j )
  = \fT_P( b_{-1}^{\I},v_j^{\III})  
  = \fT( b_{-1}^{\I},v_j^{\III}),
\end{equation}
where the second equality is by Remark \ref{rmk:CC}.  Equations
\eqref{equ:Case4_1} and \eqref{equ:Case4_2} verify condition (iii)' in
Lemma \ref{lem:agree} with $e = b_{-1}$, $f = b_0$, $g = v_1$, $h =
v_j$.

Lemma \ref{lem:agree}, conditions (i) and (ii) are verified by the
same method as in the last paragraph of the proof of Theorem
\ref{thm:Case2}.
\end{proof}

\section{Case 5: $\SL2$-tilings with a unique entry equal to $1$}
\label{sec:Case5}

Let $t$ be an $\SL2$-tiling in which $t( b_0,v_1 ) = 1$ is the unique
entry equal to $1$.

\begin{Construction}
[The triangulation $\fT$]
\label{con:Case5}
To construct $\fT$, proceed similarly to Construction \ref{con:Case4}
with a small tweak.

The black arcs in Figure \ref{fig:ft_in_Case5} show the overall
structure of $\Theta( t )$ which can be obtained by the method used in
Description \ref{des:Case4}.  However, there is now only a single
connecting arc $\fa = \{ b_0^{\I},v_1^{\III} \}$.
\begin{figure}
  \centering
    \begin{tikzpicture}[scale=5]

      \draw (0,0) circle (1cm);

      \draw (45:1cm) node[fill=white,circle,inner sep=0.101cm] {} circle (0.03cm);
      \draw (135:1cm) node[fill=white,circle,inner sep=0.101cm] {} circle (0.03cm);
      \draw (225:1cm) node[fill=white,circle,inner sep=0.101cm] {} circle (0.03cm);
      \draw (315:1cm) node[fill=white,circle,inner sep=0.101cm] {} circle (0.03cm);

      \draw (76:0.97cm) -- (76:1.03cm);
      \draw (76:1.13cm) node{$b_{-2}$};
      \draw (88:0.97cm) -- (88:1.03cm);
      \draw (88:1.13cm) node{$b_{-1}$};
      \draw (100:0.97cm) -- (100:1.03cm);
      \draw (100:1.13cm) node{$b_0$};
      \draw (112:0.97cm) -- (112:1.03cm);
      \draw (112:1.13cm) node{$b_1$};
      \draw (124:0.97cm) -- (124:1.03cm);
      \draw (124:1.13cm) node{$b_2$};

      \draw (-76:0.97cm) -- (-76:1.03cm);
      \draw (-76:1.13cm) node{$v_3$};
      \draw (-88:0.97cm) -- (-88:1.03cm);
      \draw (-88:1.13cm) node{$v_2$};
      \draw (-100:0.97cm) -- (-100:1.03cm);
      \draw (-100:1.13cm) node{$v_1$};
      \draw (-112:0.97cm) -- (-112:1.03cm);
      \draw (-112:1.13cm) node{$v_0$};
      \draw (-124:0.97cm) -- (-124:1.03cm);
      \draw (-124:1.13cm) node{$v_{-1}$};

      \draw (152:0.97cm) -- (152:1.03cm);
      \draw (165:0.97cm) -- (165:1.03cm);
      \draw (180:0.97cm) -- (180:1.03cm);
      \draw (180:1.13cm) node{$0$};
      \draw (195:0.97cm) -- (195:1.03cm);
      \draw (208:0.97cm) -- (208:1.03cm);

      \draw (28:0.97cm) -- (28:1.03cm);
      \draw (15:0.97cm) -- (15:1.03cm);
      \draw (0:0.97cm) -- (0:1.03cm);
      \draw (0:1.13cm) node{$0$};
      \draw (-15:0.97cm) -- (-15:1.03cm);
      \draw (-28:0.97cm) -- (-28:1.03cm);

      \draw (-100:1cm) .. controls (-0.02,-0.3) and (-0.02,0.3) .. (100:1cm);

      \draw (100:1cm) .. controls (104:0.8cm) and (108:0.8cm) .. (112:1cm);
      \draw (112:1cm) .. controls (116:0.8cm) and (120:0.8cm) .. (124:1cm);
      \draw (100:1cm) .. controls (96:0.8cm) and (92:0.8cm) .. (88:1cm);
      \draw (88:1cm) .. controls (84:0.8cm) and (80:0.8cm) .. (76:1cm);

      \draw (72:0.9cm) node{$\cdot$};
      \draw (70:0.9cm) node{$\cdot$};
      \draw (68:0.9cm) node{$\cdot$};

      \draw (128:0.9cm) node{$\cdot$};
      \draw (130:0.9cm) node{$\cdot$};
      \draw (132:0.9cm) node{$\cdot$};

      \draw (-76:1cm) .. controls (-80:0.8cm) and (-84:0.8cm) .. (-88:1cm);
      \draw (-88:1cm) .. controls (-92:0.8cm) and (-96:0.8cm) .. (-100:1cm);
      \draw (-100:1cm) .. controls (-104:0.8cm) and (-108:0.8cm) .. (-112:1cm);
      \draw (-112:1cm) .. controls (-116:0.8cm) and (-120:0.8cm) .. (-124:1cm);

      \draw (286:0.9cm) node{$\cdot$};
      \draw (288:0.9cm) node{$\cdot$};
      \draw (290:0.9cm) node{$\cdot$};

      \draw (-126:0.9cm) node{$\cdot$};
      \draw (-128:0.9cm) node{$\cdot$};
      \draw (-130:0.9cm) node{$\cdot$};

      \draw[red] (100:1cm) .. controls (130:0.15cm) and (150:0.15cm) .. (180:1cm);
      \draw[red] (100:1cm) .. controls (130:0.25cm) and (150:0.2cm) .. (165:1cm);
      \draw[red] (112:1cm) .. controls (130:0.3cm) and (150:0.3cm) .. (165:1cm);
      \draw[red] (112:1cm) .. controls (130:0.3cm) and (150:0.4cm) .. (152:1cm);

      \draw[red] (170:0.9cm) node{$\cdot$};
      \draw[red] (172.5:0.9cm) node{$\cdot$};
      \draw[red] (175:0.9cm) node{$\cdot$};

      \draw[red] (156:0.9cm) node{$\cdot$};
      \draw[red] (158.5:0.9cm) node{$\cdot$};
      \draw[red] (161:0.9cm) node{$\cdot$};

      \draw[red] (149:0.9cm) node{$\cdot$};
      \draw[red] (146.5:0.9cm) node{$\cdot$};
      \draw[red] (144:0.9cm) node{$\cdot$};

      \draw[red] (-100:1cm) .. controls (-130:0.15cm) and (-150:0.15cm) .. (-180:1cm);
      \draw[red] (-100:1cm) .. controls (-130:0.25cm) and (-150:0.2cm) .. (-165:1cm);
      \draw[red] (-112:1cm) .. controls (-130:0.3cm) and (-150:0.3cm) .. (-165:1cm);
      \draw[red] (-112:1cm) .. controls (-130:0.3cm) and (-150:0.4cm) .. (-152:1cm);

      \draw[red] (-170:0.9cm) node{$\cdot$};
      \draw[red] (-172.5:0.9cm) node{$\cdot$};
      \draw[red] (-175:0.9cm) node{$\cdot$};

      \draw[red] (-156:0.9cm) node{$\cdot$};
      \draw[red] (-158.5:0.9cm) node{$\cdot$};
      \draw[red] (-161:0.9cm) node{$\cdot$};

      \draw[red] (-149:0.9cm) node{$\cdot$};
      \draw[red] (-146.5:0.9cm) node{$\cdot$};
      \draw[red] (-144:0.9cm) node{$\cdot$};

      \draw[red] (100:1cm) .. controls (50:0.15cm) and (30:0.15cm) .. (0:1cm);
      \draw[red] (100:1cm) .. controls (50:0.25cm) and (30:0.2cm) .. (15:1cm);
      \draw[red] (88:1cm) .. controls (50:0.3cm) and (30:0.3cm) .. (15:1cm);
      \draw[red] (88:1cm) .. controls (50:0.3cm) and (30:0.4cm) .. (28:1cm);

      \draw[red] (10:0.9cm) node{$\cdot$};
      \draw[red] (7.5:0.9cm) node{$\cdot$};
      \draw[red] (5:0.9cm) node{$\cdot$};

      \draw[red] (24:0.9cm) node{$\cdot$};
      \draw[red] (21.5:0.9cm) node{$\cdot$};
      \draw[red] (19:0.9cm) node{$\cdot$};

      \draw[red] (31:0.9cm) node{$\cdot$};
      \draw[red] (33.5:0.9cm) node{$\cdot$};
      \draw[red] (36:0.9cm) node{$\cdot$};

      \draw[red] (-100:1cm) .. controls (-70:0.15cm) and (-50:0.15cm) .. (0:1cm);
      \draw[red] (-100:1cm) .. controls (-70:0.25cm) and (-50:0.2cm) .. (-15:1cm);
      \draw[red] (-88:1cm) .. controls (-70:0.3cm) and (-50:0.3cm) .. (-15:1cm);
      \draw[red] (-88:1cm) .. controls (-70:0.3cm) and (-50:0.4cm) .. (-28:1cm);

      \draw[red] (-10:0.9cm) node{$\cdot$};
      \draw[red] (-7.5:0.9cm) node{$\cdot$};
      \draw[red] (-5:0.9cm) node{$\cdot$};

      \draw[red] (-25:0.9cm) node{$\cdot$};
      \draw[red] (-22.5:0.9cm) node{$\cdot$};
      \draw[red] (-20:0.9cm) node{$\cdot$};

      \draw[red] (-32:0.9cm) node{$\cdot$};
      \draw[red] (-34.5:0.9cm) node{$\cdot$};
      \draw[red] (-37:0.9cm) node{$\cdot$};

      \draw[gray,very thin] (0.200,0.235) ellipse (0.01cm and 0.08cm);
      \draw[gray,very thin] (0.197,0.157) -- (0.150,0.075);
      \draw (0.200,0.055) node{${\scriptscriptstyle t(b_{-1},v_1)-1 \;\mathrm{arcs}}$};

      \draw[gray,very thin] (0.200,-0.235) ellipse (0.01cm and 0.08cm);
      \draw[gray,very thin] (0.197,-0.157) -- (0.140,-0.105);
      \draw (0.175,-0.075) node{${\scriptscriptstyle t(b_0,v_2)-1 \;\mathrm{arcs}}$};

      \draw (-0.085,0) node{$\scriptstyle \fa$};


      \draw (52.5:1cm) node[fill=white,rectangle,inner sep=0.07cm] {$\scriptstyle \I$};
      \draw (142:1cm) node[fill=white,rectangle,inner sep=0.07cm] {$\scriptstyle \II$};
      \draw (231:1cm) node[fill=white,rectangle,inner sep=0.07cm] {$\scriptstyle \III$};
      \draw (323:1cm) node[fill=white,rectangle,inner sep=0.07cm] {$\scriptstyle \IV$};

    \end{tikzpicture} 
    \caption{Outline of the triangulation $\fT$ of $D_4$ corresponding
      to an $\SL2$-tiling $t$ with only a single entry equal to $1$.
      The arcs in $\Theta( t )$ are black.  We add red arcs from the
      non-saturated vertices to define $\fT$.  The number of red arcs
      added is given by the defect at the relevant vertex, but at
      $b_0^{\I}$ there is a choice of how many arcs should go to
      interval $\II$, and how many to $\IV$.  This is resolved by
      letting $t( b_{-1},v_1 ) - 1$ arcs go to $\IV$.
      Similarly at $v_1^{\III}$ we let $t( b_0,v_2 ) -1$
      arcs go to $\II$.}
\label{fig:ft_in_Case5}
\end{figure}
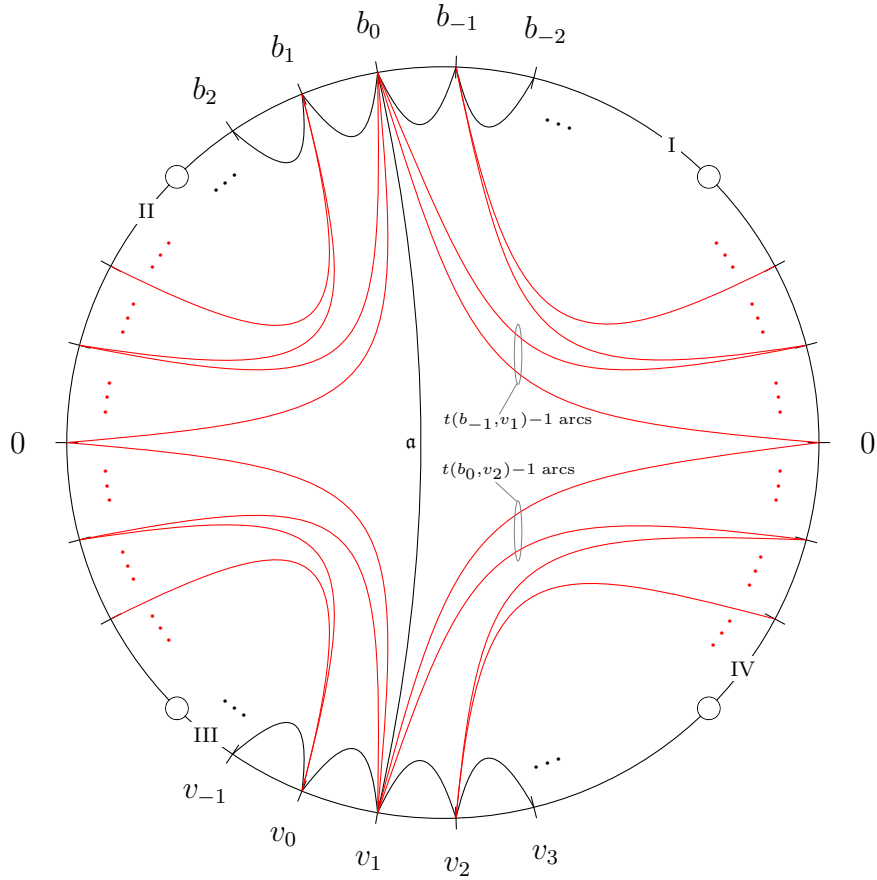

When adding red arcs to obtain the triangulation $\fT$ of $D_4$, the
red arcs go to interval $\II$ or interval $\IV$, depending on which
side of $\fa$ they are on.  We always add as many red arcs at a vertex
as dictated by the defect at that vertex.

At vertices $b_0^{\I}$ and $v_1^{\III}$ only, there are red arcs to
both intervals $\II$ and $\IV$.

From $b_0^{\I}$ there are $t( b_{-1},v_1 ) - 1$ arcs to $\IV$.  Note
that by Lemma \ref{lem:connecting_defect}, this number is strictly
smaller than $\defect_p( b_0 )$, so there will also be at least one arc
from $b_0^{\I}$ to $\II$.

From $v_1^{\III}$ there are $t( b_0,v_2 ) - 1$ arcs to $\IV$.  Again,
this number is strictly smaller than $\defect_q( v_1 )$, so there will
also be at least one arc from $v_1^{\III}$ to $\II$.
\end{Construction}

\begin{Theorem}
\label{thm:Case5}
Let $t$ be an $\SL2$-tiling with a unique entry equal to $1$.

Then there is a good triangulation $\fT$ of $D_4$ such that $\Phi( \fT
) = t$.
\end{Theorem}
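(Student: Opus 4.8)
The plan is to take $\fT$ from Construction~\ref{con:Case5} (Figure~\ref{fig:ft_in_Case5}) and argue exactly along the lines of the proofs of Theorems~\ref{thm:Case2} and~\ref{thm:Case4}. First I would check that $\fT$ really is a good triangulation of $D_4$: the single connecting arc $\fa = \{ b_0^{\I},v_1^{\III} \}$ together with the internal arcs or edges between consecutive non-saturated vertices (the arcs $\fb_m$ and $\fv_m$ in the Case~4 notation) cut $D_4$ into finite polygons, on each of which $\Theta( t )$ --- and hence $\fT$ --- restricts to a triangulation by Lemmas~\ref{lem:finite_triangulation1} through~\ref{lem:finite_triangulation2}; the red arcs added in Construction~\ref{con:Case5} then complete $\fT$ to a triangulation of $D_4$ and block all four accumulation points, so $\fT$ is good. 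Here one also notes that Construction~\ref{con:Case5} makes sense: the number of red arcs directed from $b_0^{\I}$ to interval $\IV$ is $t( b_{-1},v_1 )-1 > 0$ since $( b_{-1},v_1 ) \neq ( b_0,v_1 )$, and this is strictly smaller than $\defect_p( b_0 )$ by Lemma~\ref{lem:connecting_defect} in the case $j = 1$ (where $v_1 = v_j$), so at least one red arc from $b_0^{\I}$ also goes to $\II$; symmetrically at $v_1^{\III}$.

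To prove $\Phi( \fT ) = t$ I would invoke Lemma~\ref{lem:agree}, verifying condition (iii). First, $\fa \in \fT$ gives $t( b_0,v_1 ) = 1 = \fT( b_0^{\I},v_1^{\III} )$. Next, mimic the polygon argument from the proof of Theorem~\ref{thm:Case4}: the vertices $b_{-1}^{\I}$, $b_0^{\I}$, $v_1^{\III}$ are consecutive in a finite polygon $P$ on which $\fT$ restricts to a triangulation $\fT_P$, the remaining boundary of $P$ running through the $\IV$-vertices that receive the $t( b_{-1},v_1 )-1$ red arcs from $b_0^{\I}$; so Lemma~\ref{lem:CC}(vi) together with Construction~\ref{con:Case5} gives
\[
  \fT( b_{-1}^{\I},v_1^{\III} )
  = \fT_P( b_{-1}^{\I},v_1^{\III} )
  = 1 + \big( t( b_{-1},v_1 ) - 1 \big)
  = t( b_{-1},v_1 ).
\]
Symmetrically, using the finite polygon with consecutive vertices $b_0^{\I}$, $v_1^{\III}$, $v_2^{\III}$ together with the $\IV$-vertices receiving the $t( b_0,v_2 )-1$ red arcs from $v_1^{\III}$, one gets $\fT( b_0^{\I},v_2^{\III} ) = t( b_0,v_2 )$. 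These three equalities verify condition (iii) of Lemma~\ref{lem:agree} with $e = b_{-1}$, $f = b_0$, $g = v_1$, $h = v_2$.

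Finally, conditions (i) and (ii) of Lemma~\ref{lem:agree} are verified exactly as in the last paragraph of the proof of Theorem~\ref{thm:Case2}: if $b^{\I}$ is saturated then Lemma~\ref{lem:saturated_vertices}(i) applies directly; if $b^{\I}$ is non-saturated then by Definition~\ref{def:defects} the number of arcs of $\Theta( t )$ ending at $b^{\I}$ is $p( b-1,b+1 ) - \defect_p( b ) - 1$, and Construction~\ref{con:Case5} adds exactly $\defect_p( b )$ further arcs there (split between intervals $\II$ and $\IV$ only when $b = b_0$, but always with the correct total), so $\fT$ has $p( b-1,b+1 )-1$ arcs ending at $b^{\I}$, whence $\fT\big( (b-1)^{\I},(b+1)^{\I} \big) = p( b-1,b+1 )$ by Lemma~\ref{lem:CC}(vi); condition (ii) follows by symmetry. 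Applying Lemma~\ref{lem:agree} then yields $t = \Phi( \fT )$.

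I expect the main obstacle to be the combinatorial bookkeeping in condition (iii): one must be certain that $b_{-1}^{\I}$, $b_0^{\I}$, $v_1^{\III}$ (and, symmetrically, $b_0^{\I}$, $v_1^{\III}$, $v_2^{\III}$) are genuinely consecutive in the polygon used, i.e.\ that the red arcs emanating from $b_0^{\I}$ towards $\IV$ all lie on the side of $\fa$ away from the corresponding $\II$-arcs. This is exactly where the deliberate split in Construction~\ref{con:Case5} of how many red arcs go to $\IV$ versus $\II$ is exploited, and it is handled just as in Theorem~\ref{thm:Case4}; if anything the single-one case is easier, since $v_1 = v_j$ and there is no block of vertices strictly between two connecting arcs to worry about.
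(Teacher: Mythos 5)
Your proposal is correct and follows essentially the same route as the paper: the paper's proof of this theorem is precisely "argue as in Theorem \ref{thm:Case4}", using the choices in Construction \ref{con:Case5} to get $t( b_{-1},v_1 ) = \fT( b_{-1}^{\I},v_1^{\III} )$ and $t( b_0,v_2 ) = \fT( b_0^{\I},v_2^{\III} )$, verifying condition (iii) of Lemma \ref{lem:agree} with $e = b_{-1}$, $f = b_0$, $g = v_1$, $h = v_2$, and then handling goodness and conditions (i)--(ii) as in Theorems \ref{thm:Case4} and \ref{thm:Case2}. Your expanded polygon argument (and the observation that $t( b_{-1},v_1 ) - 1 < \defect_p( b_0 )$ via Lemma \ref{lem:connecting_defect} with $j = 1$) is exactly the bookkeeping the paper leaves implicit, so there is nothing to correct.
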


\begin{proof}
We suppose $t( b_0,v_1 ) = 1$ and let $\fT$ be as in Construction
\ref{con:Case5}, see Figure \ref{fig:ft_in_Case5}. 

Arguing like the proof of Theorem \ref{thm:Case4}, the choices at the
end of Construction \ref{con:Case5} imply that $\fT$ satisfies
\[
  t( b_{-1},v_1 ) = \fT( b_{-1}^{\I},v_1^{\III} ) 
  \;,\;
  t( b_0,v_2 ) = \fT( b_0^{\I},v_2^{\III} ).
\]
We also have
\[
  t( b_0,v_1 ) = 1 = \fT( b_0^{\I},v_1^{\III} )
\]
so condition (iii) of Lemma \ref{lem:agree} holds with $e = b_{-1}$,
$f = b_0$, $g = v_1$, $h = v_2$.  Now proceed 
like the proof of Theorem \ref{thm:Case4}.
\end{proof}

\section{A lemma on Conway--Coxeter friezes}
\label{sec:CC2}

\begin{Definition}
In this section we will write
$\cS = \biggl\{ \begin{pmatrix} i & j \\ k & \ell \end{pmatrix} \in
\SL2( \BZ ) \bigg| i,j,k,\ell \geqslant 0 \biggr\}$.
\end{Definition}

The following lemma was proved in \cite[thm.\ 6.2]{BG} and
\cite[lem.\ 4.1]{CL}. 

\begin{Lemma}
\label{lem:building_a_matrix}
Each $X$ in $\cS$ can be obtained by starting with the $2 \times 2$
identity matrix $E$ and performing a sequence of operations of the
form: add one of the rows to the other row, or add one of the columns
to the other column.
\end{Lemma}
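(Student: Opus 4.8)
The plan is to prove the equivalent assertion that every $X\in\cS$ can be brought to the $2\times 2$ identity $E$ by a sequence of \emph{reverse} operations of the same shape — subtract one row from the other, or subtract one column from the other — while staying inside $\cS$ at every stage. Reversing such a sequence then exhibits $X$ as $E$ followed by a sequence of row/column \emph{additions}, which is exactly what the lemma asks for. Writing $X=\begin{pmatrix} i & j \\ k & \ell\end{pmatrix}$, I would argue by induction on $\sigma(X):=i+j+k+\ell$. Since $i\ell=1+jk\geqslant 1$ forces $i\geqslant 1$ and $\ell\geqslant 1$, we always have $\sigma(X)\geqslant 2$, with equality exactly when $X=E$; this is the base case and needs no operations. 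Everything then comes down to a single \textbf{reduction step}: if $X\in\cS$ and $X\neq E$, then at least one of the four subtractions produces a matrix again in $\cS$ (the determinant is automatically unchanged, and nonnegativity of the result is precisely what we have to check) with strictly smaller $\sigma$. Granting the reduction step, the induction is immediate — the smaller matrix is reachable from $E$ by additions, and one further addition, the inverse of the subtraction just performed, gives $X$.

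For the reduction step I would first clear the degenerate cases $jk=0$. If $j=k=0$ then $i\ell=1$, so $X=E$, which is excluded. If $k=0$ and $j\geqslant 1$, then $i=\ell=1$, and subtracting column $1$ from column $2$ gives $\begin{pmatrix}1 & j-1 \\ 0 & 1\end{pmatrix}\in\cS$ with smaller $\sigma$; the case $j=0$, $k\geqslant 1$ is the mirror image. So from now on I may assume $j,k\geqslant 1$. Suppose, for contradiction, that none of the four subtractions keeps all entries nonnegative. ``Cannot subtract row $2$ from row $1$'' reads $i<k$ or $j<\ell$, and ``cannot subtract row $1$ from row $2$'' reads $i>k$ or $j>\ell$; intersecting these two disjunctions and deleting the impossible conjunctions $i<k<i$ and $j<\ell<j$ leaves exactly $(i<k\text{ and }j>\ell)$ or $(i>k\text{ and }j<\ell)$. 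The same reasoning on columns leaves $(i<j\text{ and }k>\ell)$ or $(i>j\text{ and }k<\ell)$.

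Combining one row-alternative with one column-alternative gives four cases, and each contradicts $\det X=1$. In two of them (the ``mixed'' combinations) the four strict inequalities chain into an impossible cycle $a>b>c>d>a$ among $i,j,k,\ell$. In the remaining two they separate $\{i,\ell\}$ from $\{j,k\}$: either $\max(i,\ell)<\min(j,k)$, whence $jk>i\ell$ and so $i\ell-jk<0\neq 1$; or $\min(i,\ell)>\max(j,k)$, whence, writing $m=\min(i,\ell)\geqslant 2$ (here is where $j,k\geqslant 1$ is used), we get $jk\leqslant(m-1)^2$ and $i\ell\geqslant m^2$, so $i\ell-jk\geqslant 2m-1\geqslant 3\neq 1$. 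This proves the reduction step. Finally, the subtraction actually carried out is never trivial: in the non-degenerate case $i,j,k,\ell$ are all positive, so the subtracted row or column has positive entries, and in the degenerate cases the subtracted column contains a $1$; hence $\sigma$ really does decrease. Reversing the whole reduction proves the lemma.

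The only place with genuine content is the last paragraph — showing that the four ``cannot subtract'' inequality systems are incompatible with $\det X=1$. I expect the main care to be needed in organising the four combinations without redundancy and in disposing of the $jk=0$ sub-cases so that positivity of $j$ and $k$ is available in the main argument; the transpose symmetry of $\cS$ (which interchanges row operations with column operations) can be used to cut the work roughly in half. No idea beyond this elementary case analysis appears to be required.
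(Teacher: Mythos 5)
Your argument is correct and complete: the descent on the entry sum $\sigma(X)=i+j+k+\ell$ works, the degenerate cases $jk=0$ are handled properly (there $i=\ell=1$, so one column subtraction suffices), and your case analysis showing that ``all four subtractions fail'' is incompatible with $i\ell-jk=1$ is sound — the two mixed combinations give cyclic chains of strict inequalities, and the two separating combinations give $i\ell-jk<0$ or $i\ell-jk\geqslant 3$, using $j,k\geqslant 1$ in the second. Note, however, that the paper does not prove this lemma at all; it simply cites Blass--Gurevich and Cai--Liu. So there is no internal proof to compare against, but your reverse-operation/Euclidean descent is the standard route to this fact (essentially the argument behind the cited results, and behind the classical statement that the nonnegative part of $\SL2(\BZ)$ is generated as a monoid by the two elementary matrices), and it would serve as a correct self-contained substitute for the citation. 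One could even sharpen it slightly: your reduction never needs both row and column operations simultaneously — subtracting the smaller row from the larger (or column from column) already works — but for the lemma as stated your weaker reduction step is entirely adequate.
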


\begin{Lemma}
\label{lem:a_matrix_exists}
Let $r$ and $m$ be coprime integers with $0 < r < m$.  There
exists
$\begin{pmatrix}
   i & j \\
   k & \ell
 \end{pmatrix}$
in $\cS$ such that $r = i+j$, $m = i+j+k+\ell$.
\end{Lemma}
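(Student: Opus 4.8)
The plan is to restate the problem in terms of row sums and then solve it by a Euclidean descent along the lines of Lemma~\ref{lem:building_a_matrix}. Put $s = m - r$. Then $s > 0$ since $r < m$, and $\gcd(r,s) = 1$ since any common divisor of $r$ and $m-r$ divides $m$ as well, hence divides $\gcd(r,m) = 1$. Because $i + j + k + \ell = (i+j) + (k+\ell)$, it is enough to produce a matrix $X = \begin{pmatrix} i & j \\ k & \ell \end{pmatrix}$ in $\cS$ whose first row sums to $r$ and whose second row sums to $s$: then $r = i+j$ and $m = r+s = i+j+k+\ell$, as wanted.

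To build such an $X$, I would first record how row sums transform under the operations of Lemma~\ref{lem:building_a_matrix}. If $Y \in \cS$ has row sums $(p,q)$, then $\begin{pmatrix} 1 & 1 \\ 0 & 1 \end{pmatrix} Y$ has row sums $(p+q,\, q)$ and $\begin{pmatrix} 1 & 0 \\ 1 & 1 \end{pmatrix} Y$ has row sums $(p,\, p+q)$, and left-multiplying a matrix of $\cS$ by either of these nonnegative $\SL2(\BZ)$-matrices again gives a matrix of $\cS$ (a product of matrices with nonnegative entries has nonnegative entries). The identity matrix $E$ lies in $\cS$ and has row sums $(1,1)$. Now argue by induction on $r+s$ (which is always $\geqslant 2$) that some $X \in \cS$ has row sums $(r,s)$: if $r = s$, then coprimality forces $r = s = 1$ and $X = E$ works; otherwise $r \ne s$, say $r > s$ (the case $s > r$ is symmetric), and then $(r-s,\, s)$ is again a coprime pair of positive integers of strictly smaller sum, so an inductively chosen $Y \in \cS$ with row sums $(r-s, s)$ gives the desired $X = \begin{pmatrix} 1 & 1 \\ 0 & 1 \end{pmatrix} Y$.

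An alternative, entirely explicit route is to solve the determinant equation directly: writing $j = r-i$ and $\ell = s-k$ gives $i\ell - jk = is - rk$, so one may take $X = \begin{pmatrix} 1 & 0 \\ s-1 & 1 \end{pmatrix}$ when $r = 1$, and otherwise choose $i \in \{1,\ldots,r-1\}$ with $is \equiv 1 \pmod r$ (possible since $\gcd(s,r)=1$) and set $k = (is-1)/r$; one then checks that $i,\; j = r-i,\; k,\; \ell = s-k$ are all $\geqslant 0$, using the bound $k = (is-1)/r < s$ to get $\ell \geqslant 1$.

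I do not anticipate a serious obstacle. The only delicate points are the bookkeeping that keeps all four entries nonnegative, and --- in the descent approach --- the observation that the Euclidean predecessor $(r-s, s)$ of a coprime positive pair is again a coprime positive pair with strictly smaller sum, which is exactly what makes the induction terminate at $(1,1)$.
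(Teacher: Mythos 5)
Your proposal is correct. Your primary route is genuinely different from the paper's: you run a Euclidean descent on the pair of row sums $(r,\,m-r)$, building the matrix as a product of the elementary matrices $\begin{pmatrix}1&1\\0&1\end{pmatrix}$ and $\begin{pmatrix}1&0\\1&1\end{pmatrix}$ applied to $E$, with the induction terminating at the coprime pair $(1,1)$; membership in $\cS$ is automatic at every step, so no entry-by-entry bookkeeping is needed, and the argument makes the kinship with Lemma \ref{lem:building_a_matrix} explicit (you are in effect re-deriving the row-operation half of that lemma, tracked through row sums). The paper instead writes the matrix down in closed form: setting $n=m-r$, it takes a B\'ezout relation $sr-pn=1$, normalises $0\leqslant p<r$, deduces $1\leqslant s\leqslant n$, and checks that $\begin{pmatrix} r-p & p\\ n-s & s\end{pmatrix}$ works; this is shorter but requires the small nonnegativity verification. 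Your second, ``entirely explicit'' route (choose $i$ with $is\equiv 1 \bmod r$, set $k=(is-1)/r$, and bound $k<s$) is essentially the paper's proof in different notation, so the two approaches you offer bracket the paper's argument: one more structural and self-contained, one matching it.
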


\begin{proof}
Set $n = m - r$.  Then $r$ and $n$ are coprime so there are integers
$s$, $p$ with $sr - pn = 1$.

We can replace $s$, $p$ with $s + tn$, $p + tr$ so may assume $0
\leqslant p < r$.  It follows that $0 \leqslant pn < rn$, and since
$pn = rs - 1$ this reads $0 \leqslant rs - 1 < rn$, that is $1
\leqslant rs < rn + 1$, that is $1 \leqslant rs \leqslant rn$.  Hence
$1 \leqslant s \leqslant n$.

It is now straightforward to check that
\[
  \begin{pmatrix}
    i & j \\
    k & \ell
  \end{pmatrix}
  =
  \begin{pmatrix}
    r-p & p \\
    n-s & s
  \end{pmatrix}
\]  
can be used in the lemma.
\end{proof}

\begin{Lemma}
\label{lem:CC2}
Let $r$ and $m$ be coprime integers with $0 < r < m$.

There exists a finite polygon $R$ which has two adjacent vertices $\chi$ and $\chi^+ = \beta$, two adjacent vertices $\gamma$ and $\gamma^+ = \varphi$,
and a triangulation $\fS$ such that
\begin{align*}
  r & = \fS( \chi,\gamma ) + \fS( \chi,\varphi ), \\
  m & = \fS( \chi,\gamma ) + \fS( \chi,\varphi )
        + \fS( \beta,\gamma ) + \fS( \beta,\varphi ).
\end{align*}
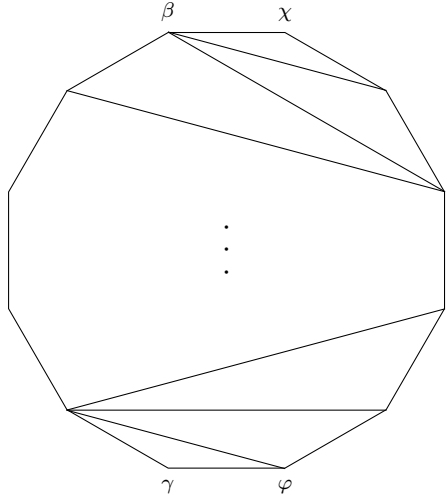
\begin{figure}
  \centering
    \begin{tikzpicture}[auto]
      \node[name=s, shape=regular polygon, regular polygon sides=12, minimum size=6cm, draw] {}; 
      \draw[shift=(s.corner 1)] node[above] {$\scriptstyle \chi$};
      \draw[shift=(s.corner 2)] node[above] {$\scriptstyle \beta$};
      \draw[shift=(s.corner 7)] node[below] {$\scriptstyle \gamma$};
      \draw[shift=(s.corner 8)] node[below] {$\scriptstyle \varphi$};
      \draw (s.corner 2) to (s.corner 12);
      \draw (s.corner 2) to (s.corner 11);
      \draw (s.corner 11) to (s.corner 3);
      \draw (s.corner 6) to (s.corner 8);
      \draw (s.corner 6) to (s.corner 9);
      \draw (s.corner 6) to (s.corner 10);
      \draw (0,0.3) node{$\cdot$};
      \draw (0,0) node{$\cdot$};
      \draw (0,-0.3) node{$\cdot$};
    \end{tikzpicture} 
    \caption{The polygon $R$ with two adjacent vertices $\chi$,
      $\beta$, two adjacent vertices $\gamma$, $\varphi$, and a
      triangulation $\fS$.}
\label{fig:chi_gamma}
\end{figure}
\end{Lemma}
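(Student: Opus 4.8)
The plan is to recast the statement as a fact about the set $\cS$, solve that fact with Lemmas \ref{lem:a_matrix_exists} and \ref{lem:building_a_matrix}, and then read off the polygon. Call a quadruple $( R,\fS,\chi,\gamma )$ \emph{admissible} if $R$ is a finite polygon with triangulation $\fS$, $\chi$ is a vertex of $R$ with successor $\beta := \chi^+$, and $\gamma$ is a vertex of $R$ with successor $\varphi := \gamma^+$; to it attach
\[
  M( R,\fS,\chi,\gamma )
  =
  \begin{pmatrix}
    \fS( \chi,\gamma ) & \fS( \chi,\varphi ) \\
    \fS( \beta,\gamma ) & \fS( \beta,\varphi )
  \end{pmatrix},
\]
whose entries are non-negative integers by Lemma \ref{lem:CC}(i); and when $\{ \chi,\gamma \}$, $\{ \beta,\varphi \}$ are genuine crossing arcs, the Ptolemy relation of Lemma \ref{lem:CC}(v) together with $\fS( \chi,\beta ) = \fS( \gamma,\varphi ) = 1$ (Lemma \ref{lem:CC}(iii)) gives $\det M = 1$, so $M \in \cS$. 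I will prove the converse: \emph{every} matrix in $\cS$ equals $M( R,\fS,\chi,\gamma )$ for some admissible quadruple. Granting this, pick by Lemma \ref{lem:a_matrix_exists} a matrix $\bigl(\begin{smallmatrix} i & j \\ k & \ell \end{smallmatrix}\bigr) \in \cS$ with $r = i+j$ and $m = i+j+k+\ell$, realise it as $M( R,\fS,\chi,\gamma )$, and then $\fS( \chi,\gamma ) + \fS( \chi,\varphi ) = i+j = r$ and $\fS( \chi,\gamma ) + \fS( \chi,\varphi ) + \fS( \beta,\gamma ) + \fS( \beta,\varphi ) = i+j+k+\ell = m$, which is exactly the lemma, with $R$ as in Figure \ref{fig:chi_gamma}.

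\textbf{The induction.} By Lemma \ref{lem:building_a_matrix}, a given $X \in \cS$ is reached from the $2 \times 2$ identity $E$ by a finite sequence of elementary operations, each adding one row to the other row or one column to the other column; I induct on the length of such a sequence. For $X = E$ take $R$ to be the digon with vertices $\chi$ and $\beta = \chi^+$ (so also $\chi = \beta^+$), empty triangulation $\fS$, and $\gamma = \beta$, $\varphi = \chi$; then $M( R,\fS,\chi,\gamma ) = \bigl(\begin{smallmatrix} \fS(\chi,\beta) & \fS(\chi,\chi) \\ \fS(\beta,\beta) & \fS(\beta,\chi) \end{smallmatrix}\bigr) = E$ by Lemma \ref{lem:CC}(ii),(iii). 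For the inductive step, let $( R',\fS',\chi',\gamma' )$ be admissible with $M( R',\fS',\chi',\gamma' ) = X' = \bigl(\begin{smallmatrix} i & j \\ k & \ell \end{smallmatrix}\bigr)$, and suppose $X$ arises from $X'$ by one elementary operation. According as it is a row or a column operation, form $R$ from $R'$ by gluing a triangle onto the side $\{ \chi',\beta' \}$, respectively $\{ \gamma',\varphi' \}$: this inserts a single new vertex $\sigma$ between the two endpoints $\rho_1,\rho_2$ of that side, and $\fS$ is $\fS'$ together with the new triangle $\rho_1\sigma\rho_2$. Since Conway--Coxeter counting is a deterministic, locally propagating procedure (Definition \ref{def:CC}, Lemma \ref{lem:CC}(i)), one checks directly that $\fS$ and $\fS'$ agree on all vertices of $R'$, and that $\fS( \mu,\sigma ) = \fS'( \mu,\rho_1 ) + \fS'( \mu,\rho_2 )$ for every vertex $\mu$ of $R'$. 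Now keep the untouched distinguished pair and, on the new side, use either $( \rho_1,\sigma )$ or $( \sigma,\rho_2 )$ as the new distinguished pair; a direct computation (using also the symmetry of Lemma \ref{lem:CC}(iv)) shows that the four resulting choices yield precisely the four matrices
\[
  \begin{pmatrix} i & j \\ i+k & j+\ell \end{pmatrix}, \quad
  \begin{pmatrix} i+k & j+\ell \\ k & \ell \end{pmatrix}, \quad
  \begin{pmatrix} i & i+j \\ k & k+\ell \end{pmatrix}, \quad
  \begin{pmatrix} i+j & j \\ k+\ell & \ell \end{pmatrix}
\]
obtained from $X'$ by the four elementary operations, so one of them realises $X$. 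This closes the induction and hence proves the lemma.

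\textbf{The main difficulty.} Nothing here is deep; the work is organisational. The one point needing genuine care is the behaviour of Conway--Coxeter counting under gluing a triangle --- that old labels are unchanged and the new vertex inherits the sum of the labels of its two neighbours on the subdivided side --- together with the small polygons that occur at the start of the induction (gluing onto a side of the digon produces a triangle, in which the ``subdivided side becomes a diagonal'' picture degenerates because the two endpoints remain adjacent). These degenerate cases must be handled so that the four distinguished vertices stay well-defined at every stage and the correspondence between the elementary operations and the gluing-plus-relabelling is kept consistent throughout.
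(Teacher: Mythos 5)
Your proposal is correct and follows essentially the same route as the paper: reduce via Lemma \ref{lem:a_matrix_exists} to realising every matrix in $\cS$, induct along the elementary operations of Lemma \ref{lem:building_a_matrix} starting from a $2$-gon, and perform the inductive step by gluing an ``ear'' onto the distinguished edge so that old labels persist and the new vertex's label is the sum of its two neighbours'. The only cosmetic difference is that the paper justifies the key identity $\fS(\mu,\sigma)=\fS'(\mu,\rho_1)+\fS'(\mu,\rho_2)$ via the Ptolemy relation of Lemma \ref{lem:CC}(v) (with the degenerate positions of $\gamma$ handled separately), whereas you read it off directly from the propagation rule of Definition \ref{def:CC}.
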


\begin{proof}
By Lemma \ref{lem:a_matrix_exists} there is
\[
  X = 
  \begin{pmatrix}
    i & j \\
    k & \ell
  \end{pmatrix}
\]
in $\cS$ with $r = i+j$, $m = i+j+k+\ell$.  It is hence enough to show
the following:
\renewcommand{\labelenumi}{(\alph{enumi})}
\begin{enumerate}
\setlength\itemsep{4pt}

  \item There exists a finite polygon $R$ which has two adjacent
  vertices $\chi$ and $\chi^+ = \beta$, two adjacent vertices $\gamma$
  and $\gamma^+ = \varphi$, and a triangulation $\fS$ such that $X$
  is equal to
\begin{equation}
\label{equ:CC2}
  Y = 
  \begin{pmatrix}
    \fS( \chi,\gamma )  & \fS( \chi,\varphi ) \\
    \fS( \beta,\gamma ) & \fS( \beta,\varphi )
  \end{pmatrix}.
\end{equation}

\end{enumerate}
\renewcommand{\labelenumi}{(\roman{enumi})}
To show that (a) holds for each $X$ in $\cS$, it is enough to show the
following by Lemma \ref{lem:building_a_matrix}.
\begin{enumerate}
\setlength\itemsep{4pt}

\item  If $X = E$ then (a) holds.

\item  If (a) holds for a matrix $X$ in $\cS$, then it also holds for
  the matrices $X'$ obtained from $X$ by operations of the form: add
  one of the rows to the other row, or add one of the columns to the
  other column.

\end{enumerate}

(i) is true since for $X = E$, we can let $R$ be the $2$-gon
with $\chi = \varphi$ equal to one of the vertices, $\beta = \gamma$
equal to the other, and $\fS$ empty.

(ii): Suppose that (a) holds for $X$ in $\cS$ with the polygon $R$ and
triangulation $\fS$.  Let $\Psi$ denote one of the operations
described in (ii) and perform $\Psi$ on $X$ to obtain a new matrix
$X'$.  To show that (a) holds for $X'$, it is enough to show that
there is a way to change $R$ and $\fS$ to $R'$ and $\fS'$ such that
$\Psi$ is performed on the matrix $Y$ in equation \eqref{equ:CC2}.

We specialise to $\Psi$ being the operation of adding the first row to
the second, since the other operations have similar proofs.
\begin{figure}
  \centering
    \begin{tikzpicture}[auto]
      \node[name=s, shape=regular polygon, regular polygon sides=12, minimum size=6cm, draw] {}; 
      \draw(72:3.25cm) node{$\scriptstyle \chi$};
      \draw(108:3.3cm) node{$\scriptstyle \beta_{\rm old}$};
      \draw(90:3.8cm) node{$\scriptstyle \beta_{\rm new}$};
      \draw[shift=(s.corner 7)] node[below] {$\scriptstyle \gamma$};
      \draw[shift=(s.corner 8)] node[below] {$\scriptstyle \varphi$};

      \draw (s.corner 1) to (90:3.6cm);
      \draw (s.corner 2) to (90:3.6cm);

      \draw (s.corner 2) to (s.corner 12);
      \draw (s.corner 2) to (s.corner 11);
      \draw (s.corner 11) to (s.corner 3);
      \draw (s.corner 6) to (s.corner 8);
      \draw (s.corner 6) to (s.corner 9);
      \draw (s.corner 6) to (s.corner 10);
      \draw (0,0.3) node{$\cdot$};
      \draw (0,0) node{$\cdot$};
      \draw (0,-0.3) node{$\cdot$};
    \end{tikzpicture} 
    \caption{Compared to Figure \ref{fig:chi_gamma}, an ``ear'' has
      been glued to the triangulated polygon $R$, resulting in a new
      triangulated polygon $R'$.}
\label{fig:chi_gamma2}
\end{figure}
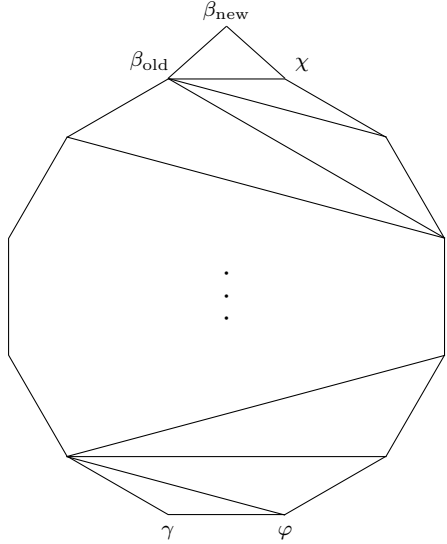
To go from $R$ and $\fS$ to $R'$ and $\fS'$, it turns out that we can
glue an ``ear'' as illustrated by going from Figure
\ref{fig:chi_gamma} to Figure \ref{fig:chi_gamma2}.  That is, $R'$
keeps the vertices of $R$, with $\beta$ renamed $\beta_{\rm old}$, and
acquires a new vertex, $\beta_{\rm new}$, between $\beta_{\rm old}$
and $\chi$.  And $\fS'$ keeps the arcs of $\fS$ and acquires a new arc, $\{ \chi,\beta_{\rm old} \}$.

It is clear from Definition \ref{def:CC} that for an arbitrary pair of
the vertices $\chi$, $\beta_{\rm old}$, $\gamma$, $\varphi$ in Figure
\ref{fig:chi_gamma2}, Conway--Coxeter counting on $\fS$ and $\fS'$ gives the
same result.  Hence the first row of $Y$ is unchanged by going to $R'$
and $\fS'$.

If $\gamma$ is not equal to $\beta_{\rm old}$ or $\chi$ then the
arcs $\{ \beta_{\rm old},\chi \}$ and $\{ \beta_{\rm new},\gamma
\}$ in $R'$ cross, so the Ptolemy formula in Lemma \ref{lem:CC}(v)
gives
\[
  \fS'( \beta_{\rm old},\chi )\fS'( \beta_{\rm new},\gamma )
  = \fS'( \beta_{\rm old},\gamma )\fS'( \beta_{\rm new},\chi )
  + \fS'( \beta_{\rm old},\beta_{\rm new} )\fS'( \chi,\gamma ).
\]
Since $\{ \beta_{\rm old},\chi \}$ is in $\fS'$ and $\{ \beta_{\rm
  new},\chi \}$, $\{ \beta_{\rm old},\beta_{\rm new} \}$ are edges,
the corresponding factors in the equation are equal to $1$.  This
gives the first of the following equalities:
\[
  \fS'( \beta_{\rm new},\gamma )
  = \fS'( \beta_{\rm old},\gamma ) + \fS'( \chi,\gamma )
  = \fS( \beta_{\rm old},\gamma ) + \fS( \chi,\gamma ).
\]
This also holds trivially for $\gamma$ equal to $\beta_{\rm old}$ or
$\chi$, and the same computation works with $\varphi$ instead of
$\gamma$.  Hence the first row of $Y$ is added to the second by going
to $R'$ and $\fS'$.
\end{proof}

\section{An $\SL2$-tiling with no entry equal to $1$ has a unique minimum}
\label{sec:minimum}

The following lemma is obvious.

\begin{Lemma}
\label{lem:no_neighbours}
Suppose that
$\begin{pmatrix}
    i & j \\
    k & \ell
 \end{pmatrix}$
is in $\SL2( \BZ )$ and that each entry is $\geqslant 1$.  If two entries
which are horizontal or vertical neighbours are equal, then they are
equal to $1$.
\end{Lemma}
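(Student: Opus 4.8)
The plan is to exploit the single defining equation $i\ell - jk = 1$ together with the positivity of the entries; no machinery beyond divisibility is needed. In a $2\times 2$ matrix the pairs of entries that are horizontal or vertical neighbours are exactly $\{i,j\}$ and $\{k,\ell\}$ (the two rows) and $\{i,k\}$ and $\{j,\ell\}$ (the two columns), so there are four cases, and by the evident symmetry of the argument it suffices to carry one of them out in detail.

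First I would treat the case $i = j$. Substituting $j = i$ into the determinant relation $i\ell - jk = 1$ gives $i(\ell - k) = 1$. Since $i$ is a positive integer dividing $1$, we must have $i = 1$, whence $i = j = 1$, which is the assertion. The remaining three cases follow by the identical manipulation: $k = \ell$ yields $k(i - j) = 1$; $i = k$ yields $i(\ell - j) = 1$; and $j = \ell$ yields $j(i - k) = 1$. In each case the repeated entry is a positive integer dividing $1$, hence equals $1$, and its neighbour, being equal to it, is also $1$.

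There is really no obstacle here, which is why the paper calls the statement obvious; the only point worth recording is that the argument uses nothing beyond factoring $i\ell - jk$ so that the common entry appears as a factor, and the fact that a positive integer dividing $1$ must be $1$. (One even gets for free that the complementary factor, for instance $\ell - k$ in the first case, is a positive integer, since its product with the positive integer $i$ equals $1 > 0$; but this is not needed for the conclusion.)
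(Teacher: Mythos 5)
Your argument is correct: in each of the four neighbour cases the determinant relation factors so that the repeated entry divides $1$, forcing it (and hence its equal neighbour) to be $1$. The paper offers no proof at all, simply declaring the lemma obvious, and your write-up is precisely the elementary divisibility argument that declaration has in mind.
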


\begin{Lemma}
\label{lem:SL2_inequalities}
Suppose that
$\begin{pmatrix}
    i & j \\
    k & \ell
 \end{pmatrix}$
is in $\SL2( \BZ )$, has each entry $\geqslant 2$,
and that $j < \ell$, $k < \ell$.  Then $i < j$, $i < k$.
\end{Lemma}

\begin{proof}
Lemma \ref{lem:no_neighbours} implies that we cannot have $i = j$.  If
we had $i > j$ then we would have $i\ell > j\ell$.  But we know $\ell
> k$ whence $j\ell > jk$.  Combining the inequalities would give
$i\ell > j\ell > jk$ so the determinant of the matrix would be $i\ell
- jk \geqslant 2$ which is false.

It follows that $i < j$, and $i < k$ is proved by considering the
transpose. 
\end{proof}

\begin{Lemma}
\label{lem:delete}
\begin{enumerate}
\setlength\itemsep{4pt}

  \item  If $t( b,v )$ is a local maximum in the $b$'th row of $t$ in
    the sense that
\begin{equation}
\label{equ:loc_max_1}
  t( b,v-1 ) < t( b,v ) > t( b,v+1 ),
\end{equation}
then deleting the $v$'th column from $t$ gives a new $\SL2$-tiling.

  \item  If $t( b,v )$ is a local maximum in the $v$'th column of $t$
    in the sense that
\[
  t( b-1,v ) < t( b,v ) > t( b+1,v ),
\]
then deleting the $b$'th row from $t$ gives a new $\SL2$-tiling.

\end{enumerate}
\end{Lemma}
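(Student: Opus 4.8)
By symmetry it suffices to prove part (i), since part (ii) follows by transposing $t$ (the transpose of an $\SL2$-tiling is again an $\SL2$-tiling, and rows become columns). So assume $t(b,v)$ is a local maximum in row $b$ in the sense of Equation \eqref{equ:loc_max_1}. Let $t'$ denote the array obtained from $t$ by deleting column $v$, i.e. $t'(i,j) = t(i,j)$ for $j < v$ and $t'(i,j) = t(i,j+1)$ for $j \geqslant v$. The only $2 \times 2$ submatrices of $t'$ whose determinant is not immediately inherited from $t$ are those straddling the seam, namely
\[
  \begin{pmatrix} t(i,v-1) & t(i,v+1) \\ t(i+1,v-1) & t(i+1,v+1) \end{pmatrix}
\]
for $i \in \BZ$. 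I must show each of these has determinant $1$, and also that every entry of $t'$ is a positive integer, which is automatic since the entries are a subset of the entries of $t$.

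The plan is to compute the seam determinant $D(i) := t(i,v-1)t(i+1,v+1) - t(i,v+1)t(i+1,v-1)$ using the $\SL2$-relation for the three adjacent $2\times 2$-blocks of $t$ involving columns $v-1,v,v+1$ and rows $i,i+1$. Writing out $t(i,v-1)t(i+1,v) - t(i,v)t(i+1,v-1) = 1$ and $t(i,v)t(i+1,v+1) - t(i,v+1)t(i+1,v) = 1$, one can solve for, say, $t(i+1,v)$ in terms of the column-$(v\pm1)$ entries and substitute; a cleaner route is to use the standard ``continued-fraction'' / recursion identity for $\SL2$-tilings, which says that along a fixed pair of adjacent rows the column vectors $\binom{t(i,j)}{t(i+1,j)}$ satisfy a linear three-term recurrence
\[
  \binom{t(i,j+1)}{t(i+1,j+1)} = t(i,i+1\text{-local coefficient})\binom{t(i,j)}{t(i+1,j)} - \binom{t(i,j-1)}{t(i+1,j-1)},
\]
where the scalar coefficient depends only on the row index pair $(i,i+1)$, not on $j$ (this is exactly the mechanism behind $p$ and $q$ being well-defined in Setup \ref{set:blanket}). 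Applying this at $j = v$ with the pair of rows $(i,i+1)$ and taking the determinant of the $2\times 2$ matrix whose columns are the vectors at $j = v-1$ and $j = v+1$, the recurrence gives $D(i) = -\det\big(\text{columns at } v-1 \text{ and } v-1\big) + (\text{scalar})\cdot\det(\text{columns at } v-1, v) = \det(\text{columns at } v-1,v) = 1$. Hence $t'$ is an $\SL2$-tiling.

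The main point to get right — and the only place the local-maximum hypothesis is genuinely used — is positivity is not the issue (it is inherited), but rather making sure the deleted array is still defined on all of $\BZ \times \BZ$ in the bi-infinite sense and that no degeneracy occurs; more substantively, one should double-check that the seam determinant computation does not secretly require $t(i,v-1) \neq t(i,v+1)$ or similar. In fact the determinant identity $D(i) = 1$ holds purely formally from the recurrence and needs no inequality at all; the hypothesis \eqref{equ:loc_max_1} is what guarantees that the resulting $t'$ is genuinely ``new'' (it is not obtained by a trivial shift, and the minimum entry can strictly decrease), which is presumably why it appears in the statement, though the literal assertion — that deletion yields an $\SL2$-tiling — is the determinant computation above. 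I expect the only real work is carefully setting up the three-term recurrence (or equivalently, the $2\times2$ elementary-matrix factorisation of the transfer along columns) and verifying the scalar coefficient is $j$-independent; after that the determinant cancellation is a one-line linear-algebra computation.
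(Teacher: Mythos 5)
There is a genuine gap, and it sits exactly at the point you wave away. Your three-term recurrence is correct: by the Ptolemy relation of Lemma \ref{lem:Ptolemy}(ii), using $q(v-1,v)=q(v,v+1)=1$, the columns satisfy
\[
\begin{pmatrix} t(i,v+1)\\ t(i+1,v+1)\end{pmatrix}
= q(v-1,v+1)\begin{pmatrix} t(i,v)\\ t(i+1,v)\end{pmatrix}
- \begin{pmatrix} t(i,v-1)\\ t(i+1,v-1)\end{pmatrix},
\]
but taking determinants then gives $D(i)=q(v-1,v+1)\cdot 1-0=q(v-1,v+1)$, not $1$: in your displayed chain you silently drop the scalar coefficient. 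So the seam determinant is not ``purely formal''; it equals the quiddity entry $q(v-1,v+1)$, which for a general column can be $\geqslant 2$, and deleting an arbitrary column of an $\SL2$-tiling does \emph{not} in general give an $\SL2$-tiling. Consequently your closing claim that the hypothesis \eqref{equ:loc_max_1} only serves to make $t'$ ``genuinely new'' is wrong: it is precisely what is needed to make the determinant come out to $1$.

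The repair is short and is what the paper does. Setting $c=b$ in the identity $q(v-1,v+1)\,t(c,v)=t(c,v-1)+t(c,v+1)$ and using \eqref{equ:loc_max_1} gives $q(v-1,v+1)\,t(b,v)<2\,t(b,v)$, so the positive integer $q(v-1,v+1)$ must equal $1$. Then $t(c,v)=t(c,v-1)+t(c,v+1)$ for every $c$, i.e.\ column $v$ is the sum of its two neighbouring columns, and the seam determinant reduces to $t(i,v-1)t(i+1,v)-t(i,v)t(i+1,v-1)=1$ (equivalently, $D(i)=q(v-1,v+1)=1$ in your formulation). With that step inserted, the rest of your argument --- positivity inherited from $t$, and part (ii) by transposing --- is fine and coincides with the paper's proof.
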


\begin{proof}
(i):  The second Ptolemy relation of Lemma \ref{lem:Ptolemy}(ii) gives
$q( v-1,v+1 )t( c,v ) = q( v-1,v )t( c,v+1 ) + q( v,v+1 )t( c,v-1 )$
for each $c$, that is
\[
  q( v-1,v+1 )t( c,v ) = t( c,v+1 ) + t( c,v-1 ).
\]
Set $c = b$ and combine with the inequalities in part (i) of the
lemma.  It follows that the positive integer $q( v-1,v+1 )$ must be
$1$, so the displayed equation reads
\[
  t( c,v ) = t( c,v+1 ) + t( c,v-1 ) 
\]
for each $c$.  It is elementary from this that deleting from $t$ the
$v$'th column with entries $t( -,v )$ gives a new $\SL2$-tiling.

(ii) follows by symmetry.
\end{proof}

\begin{Lemma}
\label{lem:minimum}
Let $t$ be an $\SL2$-tiling with no entry equal to $1$.  Then $t$ has
a unique minimal entry.
\end{Lemma}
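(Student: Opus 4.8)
The plan is as follows. First I would observe that $t$ has a minimal entry: the set of values of $t$ is a non-empty subset of $\{2,3,4,\dots\}$ (there are no entries equal to $1$), so it has a least element $m \geqslant 2$, and $m$ is attained at some position. Suppose, for a contradiction, that $m$ is attained at two distinct positions. By Lemma \ref{lem:no_neighbours}, applied to the adjacent $2\times 2$-submatrices of $t$, no two positions at which $t$ attains $m$ can be horizontal or vertical neighbours, since two equal neighbours would be equal to $1 < m$.

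Next I would treat the case where $m$ is attained at two positions lying in a common row (a common column being symmetric). Among all such pairs, choose one at $(b,v_1)$ and $(b,v_2)$ with $v_1 < v_2$ for which $v_2 - v_1$ is minimal; then $v_2 - v_1 \geqslant 2$, and by minimality together with the previous paragraph $t(b,v) > m$ for $v_1 < v < v_2$. Hence the finite sequence $t(b,v_1),\dots,t(b,v_2)$ begins and ends at $m$, is strictly larger in between, and by Lemma \ref{lem:no_neighbours} has no two consecutive equal terms, so it has a strict interior local maximum at some $v^*$. By Lemma \ref{lem:delete}(i), deleting column $v^*$ produces a new $\SL2$-tiling whose entries are those of $t$ with column $v^*$ removed; it thus has no entry equal to $1$, has the same minimal entry $m$, and attains $m$ at (the images of) $(b,v_1)$ and $(b,v_2)$, two positions of a common row now at distance $v_2 - v_1 - 1 \geqslant 1$. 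If this distance is $1$ we contradict Lemma \ref{lem:no_neighbours}; otherwise we repeat, descending on $v_2 - v_1$. So $m$ cannot be attained twice in a common row or column.

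Finally I would treat two positions at which $t$ attains $m$ lying in distinct rows and distinct columns; after relabelling there are two sub-cases. If they can be written as $(b_1,v_1)$ and $(b_2,v_2)$ with $b_1 < b_2$ and $v_1 < v_2$, the Ptolemy relation of Lemma \ref{lem:Ptolemy}(iii) gives
\[
  m^2 = t(b_1,v_1)\,t(b_2,v_2) = t(b_1,v_2)\,t(b_2,v_1) + p(b_1,b_2)\,q(v_1,v_2) \geqslant m^2 + 1,
\]
because $t(b_1,v_2), t(b_2,v_1) \geqslant m$ and $p(b_1,b_2), q(v_1,v_2) \geqslant 1$ by Setup \ref{set:blanket}; this is absurd. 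If instead they are of the form $(b,w)$ and $(c,v)$ with $b < c$ and $v < w$, I would argue by descent, using that (as noted, and as is preserved under deletion) no two positions at which $t$ attains $m$ are neighbours. If $c - b \geqslant 2$, delete a row strictly between $b$ and $c$ carrying a strict vertical local maximum of some column (Lemma \ref{lem:delete}(ii)); the new tiling has no $1$'s, minimal entry $m$, two $m$-positions of this same type at row-distance $c-b-1$, and still no two $m$-positions adjacent. Symmetrically, if $w - v \geqslant 2$, delete a column strictly between $v$ and $w$ carrying a strict horizontal local maximum. Iterating reduces to $c = b+1$, $w = v+1$, i.e. two diagonally adjacent positions with value $m$. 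Then the adjacent $2\times 2$-submatrix of $t$ in rows $b,b+1$ and columns $v,v+1$, which carries $m$ on its anti-diagonal, has determinant $1$, so $t(b,v)\,t(b+1,v+1) = m^2 + 1$; but both factors are $\geqslant m \geqslant 2$, and two such integers cannot have product $m^2+1$ (if one equals $m$ the quotient $(m^2+1)/m$ is not an integer, and if both are $\geqslant m+1$ the product is $\geqslant m^2 + 2m > m^2+1$). This contradiction finishes the proof.

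The step I expect to be the main obstacle is the one just used in the last sub-case: producing a row strictly between $b$ and $c$ (respectively a column strictly between $v$ and $w$) that carries a strict vertical (respectively horizontal) local maximum, so that Lemma \ref{lem:delete} applies. The route I would take is this: since each value occurs only finitely often in each row and column (Lemma \ref{lem:finiteness_in_rows_and_colums}), every row and every column of $t$ tends to $\infty$ in both directions, and combined with the no-equal-neighbours property of Lemma \ref{lem:no_neighbours} this forces each column, restricted to the rows between $b$ and $c$, to be monotone or else to have a strict interior local maximum (the latter giving the row to delete). The remaining work is to exclude the degenerate possibility that every one of these finitely many columns is monotone, or decreasing-then-increasing, with no interior maximum; here I would exploit the $\SL2$-relations together with the fact that $t$ takes the value $m$ at exactly the two corner positions of the relevant band. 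That shape analysis is where the bulk of the effort lies.
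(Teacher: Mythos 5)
Your overall strategy is the same as the paper's: establish existence of the minimum, rule out equal horizontal/vertical neighbours by Lemma \ref{lem:no_neighbours}, kill a repeated minimum in a common row or column by repeatedly deleting a column or row through a local maximum (Lemma \ref{lem:delete}), and kill a repetition in the ``northwest--southeast'' position by the Ptolemy relation of Lemma \ref{lem:Ptolemy}(iii). Those three cases are handled correctly and exactly as in the paper. The problem is the remaining ``southwest--northeast'' case, and there your argument has a genuine gap, which you yourself flag: the descent step asserts that whenever $c-b\geqslant 2$ there is a row strictly between $b$ and $c$ containing a strict local maximum of some column (and symmetrically for columns), so that Lemma \ref{lem:delete} lets you shrink the configuration all the way down to two diagonally adjacent $m$'s. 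Nothing guarantees such a deletable row or column exists. The deletion process can stall in the configuration where row $b$ increases away from its $m$, row $c$ decreases towards its $m$, column $v$ decreases down to its $m$ and column $w$ increases down from its $m$ (the paper's Figure \ref{fig:ineq}), with the two $m$'s still arbitrarily far apart; your closing paragraph (``exclude the degenerate possibility that every column is monotone or valley-shaped'') is precisely this unresolved configuration, and the ``shape analysis'' you defer is the actual content of the proof, not a technical remainder.

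The paper does not reduce to adjacency at all. It runs the deletions only along the two distinguished rows and columns until they stall, obtains the monotone configuration of Figure \ref{fig:ineq}, and then rules it out by a separate ingredient, Lemma \ref{lem:SL2_inequalities}: starting from the $2\times 2$ block at the corner where the increasing row and increasing column meet, that lemma is applied repeatedly to propagate strict inequalities leftward along the bottom two rows, and the final inequality forces an entry strictly smaller than $m$, contradicting minimality. Some argument of this propagation type (or an equivalent use of the $\SL2$-relations across the whole band) is indispensable; without it your induction cannot even be started in general, let alone reach $c=b+1$, $w=v+1$. Your endgame for two diagonally adjacent anti-diagonal $m$'s (the determinant identity $t(b,v)\,t(b+1,v+1)=m^2+1$ with both factors $\geqslant m\geqslant 2$) is correct but is never reached, so as written the proposal does not prove the lemma in this last case.
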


\begin{proof}
If the minimal entry $m \geqslant 2$ occurred twice in $t$, then it
would do so in one of the patterns shown in Figure \ref{fig:m}.
\begin{figure}
\begin{tabular}{cccccccccc}
  $m \cdots m$
  & & & $\begin{matrix} m \\ \vdots \\ m \end{matrix}$
  & & & $\begin{matrix}
       m & \cdots & * \\
       \vdots & & \vdots \\
       * & \cdots & m
     \end{matrix}$
  & & & $\begin{matrix}
       * & \cdots & m \\
       \vdots & & \vdots \\
       m & \cdots & *
     \end{matrix}$ \\[10mm]
  (i) & & & (ii) & & & (iii) & & & (iv)
\end{tabular}
\caption{A minimal entry $m$ appearing twice in an $\SL2$-tiling would
  have to do so in one of these patterns.}
\label{fig:m}
\end{figure}
We treat the cases separately.

(i): Suppose that the $b$'th row of $t$ has at least two entries equal
to $m$.  Pick two such entries which have no entries between them
equal to $m$.  Then either the two $m$'s are neighbours, or each entry
between them is $>m$.

In the latter case, somewhere between the two $m$'s is a local maximum
$t( b,v )$ in the sense of Equation \eqref{equ:loc_max_1}.  By Lemma
\ref{lem:delete}(i), we can delete column number $v$ and get a new
$\SL2$-tiling.  If we iterate this process, then all entries between
the two $m$'s will eventually be deleted, giving an $\SL2$-tiling
where the two $m$'s are neighbours.

However, two neighbouring $m$'s would contradict Lemma
\ref{lem:no_neighbours}.  

(ii): Symmetric to (i), replacing Lemma \ref{lem:delete}(i) with Lemma
\ref{lem:delete}(ii).  

(iii): Suppose that the two entries equal to $m$ are $t( b,v ) = t(
c,w ) = m$ with $b < c$, $v < w$.  Then the Ptolemy relation in Lemma
\ref{lem:Ptolemy}(iii) implies
\[
  m^2 = t( b,w )t( c,v ) + p( b,c )q( v,w ).
\]
However, this contradicts that $t( b,w ), t( c,v ) \geqslant m$ while
$p( b,c ), q( v,w ) \geqslant 1$.

(iv): Suppose that the two entries equal to $m$ are $t( b,v ) = t( c,w
) = m$ with $b > c$, $v < w$.  Repeat as many times as possible the
process of seeking out local maxima among the entries $t( b,v+1 )$,
$\ldots$, $t( b,w-1 )$ and $t( c,v+1 )$, $\ldots$, $t( c,w-1 )$ and
deleting the corresponding columns using Lemma \ref{lem:delete}(i).
Then repeat as many times as possible the process of seeking out local
maxima among the entries $t( c+1,v )$, $\ldots$, $t( b-1,v )$ and $t(
c+1,w )$, $\ldots$, $t( b-1,w )$ and deleting the corresponding rows
using Lemma \ref{lem:delete}(ii).

The resulting $\SL2$-tiling $t'$ still contains the two entries equal
to $m$ which we started with, and they are still minimal.  Since the
local maxima are gone, the entries of $t'$ satisfy the inequalities in
Figure \ref{fig:ineq}. 
\begin{figure}
\[
  \begin{matrix}
    * & > & * & > & \cdots & > & m \\
    \vertgeq &&&&&& \vertleq \\
    \vdots &&&&&& \vdots \\
    \vertgeq &&&&&& \vertleq \\
    * &&&&&& * \\
    \vertgeq &&&&&& \vertleq \\
    m & < & * & < & \cdots & < & *
  \end{matrix}
\]
\caption{If an $\SL2$-tiling $t$ has no entries equal to $1$ but has
  minimal entry $m$ occurring twice in the pattern from Figure
  \ref{fig:m}(iv), then we can achieve the inequalities shown here by
  deleting rows and columns from $t$.}
\label{fig:ineq}
\end{figure}
Note that the inequalities are sharp by Lemma \ref{lem:no_neighbours}
because each entry of $t'$ is $\geqslant 2$.

Starting from the lower right corner of Figure \ref{fig:ineq} and
moving left using Lemma \ref{lem:SL2_inequalities} repeatedly would
give that the two lower rows of Figure \ref{fig:ineq} satisfied the
following inequalities.
\[
  \begin{matrix}
    * & < & * & < & \cdots & < & * \\
    \vertleq &&&&&& \vertleq \\
    m & < & * & < & \cdots & < & *
  \end{matrix}
\]
However, the leftmost inequality contradicts Figure \ref{fig:ineq}.
\end{proof}

\section{Case 6: $\SL2$-tilings with no entry equal to $1$}
\label{sec:Case6}

Let $t$ be an $\SL2$-tiling with no entry equal to $1$ and unique
minimal entry $t( b,v )$, see Lemma \ref{lem:minimum}.

\begin{Notation}
\label{not:Case6}
Let us describe part of what is shown with black arcs in Figure
\ref{fig:ft_in_Case6}: Since $\Theta( t )$ is locally finite by Lemma
\ref{lem:locally_finite}, we can let $a < b < c$ be such that
\begin{itemize}
\setlength\itemsep{4pt}

  \item $\fb_{-1} = \{ b^{\I},a^{\I} \}$ is the longest internal arc in
  $\Theta( t )$ going clockwise from $b^{\I}$, or, if there are no
  such arcs, the edge going clockwise from $b^{\I}$,

  \item $\fb_0 = \{ b^{\I},c^{\I} \}$ is the longest internal arc in
  $\Theta( t )$ going anticlockwise from $b^{\I}$, or, if there are no
  such arcs, the edge going anticlockwise from $b^{\I}$.

\end{itemize}
Likewise, we can let $v < w$ be such that
\begin{itemize}
\setlength\itemsep{4pt}

  \item $\fv_1 = \{ v^{\III},w^{\III} \}$ is the longest internal arc in
  $\Theta( t )$ going anticlockwise from $v^{\III}$, or, if there are
  no such arcs, the edge going anticlockwise from $v^{\III}$.

\end{itemize}
\end{Notation}

\begin{Lemma}
\label{lem:DwR}
Consider the following divisions with remainders:
\begin{align*}
  t( a,v ) = \ell t( b,v ) + r, \;\;\; & 0 \leqslant r < t( b,v ), \\
  t( b,w ) = m t( b,v ) + s, \;\;\; & 0 \leqslant s < t( b,v ).
\end{align*}
Then
\begin{enumerate}
\setlength\itemsep{4pt}

  \item  $0 < \ell < \defect_p( b )$,

  \item  $0 < m < \defect_q( v )$,

  \item  $rs \equiv 1 \bmod t( b,v )$.  Note that since $0 \leqslant
    r,s < t( b,v )$ by definition, it follows that $0 < r,s < t( b,v
    )$ and that $r,s$ are inverses modulo $t( b,v )$.

\end{enumerate}
\end{Lemma}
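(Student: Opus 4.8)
The plan is to derive everything from a handful of Ptolemy identities made available by the hypothesis that $t$ has no entry equal to $1$. Since this hypothesis makes the first set in Definition \ref{def:Theta} empty, $\Theta(t)$ contains only internal arcs; in particular no connecting arc of $\Theta(t)$ ends at $b^{\I}$ or at $v^{\III}$. So I would apply Lemma \ref{lem:internal_defect} at $b^{\I}$, with $\fb_{-1} = \{b^{\I},a^{\I}\}$ and $\fb_0 = \{b^{\I},c^{\I}\}$ as in Notation \ref{not:Case6}, to get $\defect_p(b) = p(a,c) - 1$; and I would apply the interval-$\III$ counterpart of that lemma at $v^{\III}$, with $\fv_1 = \{v^{\III},w^{\III}\}$ and with the longest internal arc in $\Theta(t)$ going clockwise from $v^{\III}$ (or the edge, if there is none), say $\fv_0 = \{u^{\III},v^{\III}\}$ where $u < v < w$, to get $\defect_q(v) = q(u,w) - 1$.

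Next I would record the Ptolemy identities. Since $\fb_{-1},\fb_0,\fv_0,\fv_1$ are each either arcs of $\Theta(t)$ or edges, Definition \ref{def:Theta} together with the frieze identities $p(i,i+1) = q(i,i+1) = 1$ gives $p(a,b) = p(b,c) = 1$ and $q(u,v) = q(v,w) = 1$. Substituting these into Lemma \ref{lem:Ptolemy}(ii) yields
\[
  p(a,c)\,t(b,v) = t(a,v) + t(c,v)
  \qquad\text{and}\qquad
  q(u,w)\,t(b,v) = t(b,u) + t(b,w),
\]
while substituting $p(a,b) = q(v,w) = 1$ into Lemma \ref{lem:Ptolemy}(iii), applied to the rows $a < b$ and the columns $v < w$, yields
\[
  t(a,v)\,t(b,w) = t(a,w)\,t(b,v) + 1 .
\]

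Then I would assemble the conclusions, using that $t(b,v)$ is the \emph{unique} minimal entry and is $\geq 2$, so that $t(a,v),t(c,v),t(b,u),t(b,w)$ all strictly exceed $t(b,v)$. For (i): $\ell \geq 1$ because $t(a,v) > t(b,v)$, and substituting $t(a,v) = \ell\,t(b,v) + r$ into the first identity above and using $\defect_p(b) = p(a,c) - 1$ gives $\defect_p(b) - \ell = \bigl(r + t(c,v) - t(b,v)\bigr)/t(b,v)$, which is an integer and is positive since $t(c,v) > t(b,v)$, hence is $\geq 1$; so $\ell < \defect_p(b)$. Part (ii) is proved identically, with $q(u,w)$, $t(b,u)$, and $t(b,w) = m\,t(b,v) + s$ playing the roles of $p(a,c)$, $t(c,v)$, and $t(a,v)$. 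For (iii): reducing $t(a,v)\,t(b,w) = t(a,w)\,t(b,v) + 1$ modulo $t(b,v)$ gives $t(a,v)\,t(b,w) \equiv 1$, and since $r \equiv t(a,v)$ and $s \equiv t(b,w) \pmod{t(b,v)}$ this gives $rs \equiv 1 \pmod{t(b,v)}$; the closing remark is then immediate, since $t(b,v) \geq 2$ forces $r \not\equiv 0$ and $s \not\equiv 0$, so $0 < r,s < t(b,v)$ and $r,s$ are mutually inverse modulo $t(b,v)$.

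I expect no real obstacle here; the only care needed is in the bookkeeping — invoking Lemma \ref{lem:internal_defect} with its hypothesis about connecting arcs (automatic here) and its interval-$\III$ symmetry, and handling uniformly the cases where one of $\fb_{-1},\fb_0,\fv_0,\fv_1$ happens to be an edge (then the relevant $p$- or $q$-value is $1$ by the frieze property rather than by lying in $\Theta(t)$). Note that (iii) in fact uses only the last displayed identity and is independent of the defect computations.
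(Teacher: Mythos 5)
Your proposal is correct and follows essentially the same route as the paper: Lemma \ref{lem:internal_defect} to express $\defect_p(b)$ (and, by symmetry, $\defect_q(v)$) via $p(a,c)$ and $q(u,w)$, the Ptolemy relations of Lemma \ref{lem:Ptolemy}(ii) and (iii) with the unit factors coming from arcs of $\Theta(t)$ or edges, and the unique minimality of $t(b,v)$ for the strict inequalities, with part (iii) argued identically. Your write-up is only slightly more explicit than the paper's (the edge cases, the vertex $u$ for part (ii), and the final bookkeeping for $0<\ell<\defect_p(b)$), which the paper leaves to symmetry or to the reader.
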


\begin{proof}
(i): Since $t$ has no entries equal to $1$, there are no connecting
arcs in $\Theta( t )$.  In particular, $\Theta( t )$ has no connecting
arcs ending at $b^{\I}$, so Lemma \ref{lem:internal_defect} can be
applied; see also Figure \ref{fig:bik}.  In the lemma and the figure, we
must set $b_{-1} = a$, $b_0 = b$, $b_1 = c$ to match the notation of
this section.  The lemma gives
\[
  p( a,c ) = \defect_p( b ) + 1.
\]
The Ptolemy relation in Lemma \ref{lem:Ptolemy}(ii) implies 
\[
  p( a,c )t( b,v ) = p( a,b )t( c,v ) + p( b,c )t( a,v ).
\]
Here $p( a,b ) = p( b,c ) = 1$ since $\{ a^{\I},b^{\I} \}, \{
b^{\I},c^{\I} \} \in \Theta( t )$, so combining the displayed
equations shows $\big( \defect_p( b ) + 1 \big)t( b,v ) = t( c,v ) +
t( a,v )$, that is,
\[
  t( a,v ) = \big( \defect_p( b ) + 1 \big)t( b,v ) - t( c,v )
           < \defect_p( b )t( b,v )
\]
where the inequality holds since $t( b,v )$ is the unique minimal
entry of $t$.  This implies part (i).

(ii):  Follows by symmetry.

(iii):  The Ptolemy relation in Lemma \ref{lem:Ptolemy}(iii) implies 
\[
  t( a,v )t( b,w ) = t( a,w )t( b,v ) + p( a,b )q( v,w ).
\]
Here $p( a,b ) = q( v,w ) = 1$ since $\{ a^{\I},b^{\I} \}, \{
v^{\III},w^{\III} \} \in \Theta( t )$ so
\[
  t( a,v )t( b,w ) \equiv 1 \bmod t( b,v ).
\]
Since $t( a,v ) \equiv r \bmod t( b,v )$ and $t( b,w ) \equiv s \bmod
t( b,v )$ by definition of $r$ and $s$, part (iii) follows.
\end{proof}

\begin{Remark}
\label{rmk:DwR}
Parts (i) and (ii) of the lemma imply $\defect_p( b ) \geqslant 2$
and $\defect_q( v ) \geqslant 2$ so $b^{\I}$ and $v^{\III}$ are
non-saturated vertices by Lemma \ref{lem:defect}.
\end{Remark}

\begin{Description}
[The partial triangulation $\Theta( t )$]
The black arcs in Figure \ref{fig:ft_in_Case6} show the overall
structure of $\Theta( t )$ which we now describe:
\begin{figure}
  \centering
    \begin{tikzpicture}[scale=5]

      \draw (0,0) circle (1cm);

      \draw (45:1cm) node[fill=white,circle,inner sep=0.101cm] {} circle (0.03cm);
      \draw (135:1cm) node[fill=white,circle,inner sep=0.101cm] {} circle (0.03cm);
      \draw (225:1cm) node[fill=white,circle,inner sep=0.101cm] {} circle (0.03cm);
      \draw (315:1cm) node[fill=white,circle,inner sep=0.101cm] {} circle (0.03cm);

      \draw (66:0.97cm) -- (66:1.03cm);
      \draw (66:1.10cm) node{$\scriptstyle b_{-2}$};
      \draw (78:0.97cm) -- (78:1.03cm);
      \draw (78:1.10cm) node{$\scriptstyle b_{-1} = a$};
      \draw (90:0.97cm) -- (90:1.03cm);
      \draw (90:1.10cm) node{$\scriptstyle b_0 = b$};
      \draw (102:0.97cm) -- (102:1.03cm);
      \draw (102:1.10cm) node{$\scriptstyle b_1 = c$};
      \draw (114:0.97cm) -- (114:1.03cm);
      \draw (114:1.10cm) node{$\scriptstyle b_2$};

      \draw (-66:0.97cm) -- (-66:1.03cm);
      \draw (-66:1.10cm) node{$\scriptstyle v_2$};
      \draw (-78:0.97cm) -- (-78:1.03cm);
      \draw (-78:1.10cm) node{$\scriptstyle v_1 = w$};
      \draw (-90:0.97cm) -- (-90:1.03cm);
      \draw (-90:1.10cm) node{$\scriptstyle v_0 = v$};
      \draw (-102:0.97cm) -- (-102:1.03cm);
      \draw (-102:1.10cm) node{$\scriptstyle v_{-1}$};
      \draw (-114:0.97cm) -- (-114:1.03cm);
      \draw (-114:1.10cm) node{$\scriptstyle v_{-2}$};

      \draw (152:0.97cm) -- (152:1.03cm);
      \draw (160:0.97cm) -- (160:1.03cm);
      \draw (170:0.97cm) -- (170:1.03cm);
      \draw (170:1.10cm) node{$\scriptstyle \beta$};
      \draw (190:0.97cm) -- (190:1.03cm);
      \draw (190:1.10cm) node{$\scriptstyle \gamma$};
      \draw (200:0.97cm) -- (200:1.03cm);
      \draw (208:0.97cm) -- (208:1.03cm);

      \draw (28:0.97cm) -- (28:1.03cm);
      \draw (20:0.97cm) -- (20:1.03cm);
      \draw (20:1.10cm) node{$\scriptstyle \psi$};
      \draw (10:0.97cm) -- (10:1.03cm);
      \draw (10:1.10cm) node{$\scriptstyle \chi$};
      \draw (-10:0.97cm) -- (-10:1.03cm);
      \draw (-10:1.10cm) node{$\scriptstyle \varphi$};
      \draw (-20:0.97cm) -- (-20:1.03cm);
      \draw (-28:0.97cm) -- (-28:1.03cm);

      \draw (90:1cm) .. controls (94:0.8cm) and (98:0.8cm) .. (102:1cm);
      \draw (102:1cm) .. controls (106:0.8cm) and (110:0.8cm) .. (114:1cm);
      \draw (90:1cm) .. controls (86:0.8cm) and (82:0.8cm) .. (78:1cm);
      \draw (78:1cm) .. controls (74:0.8cm) and (70:0.8cm) .. (66:1cm);

      \draw (62:0.9cm) node{$\cdot$};
      \draw (60:0.9cm) node{$\cdot$};
      \draw (58:0.9cm) node{$\cdot$};

      \draw (118:0.9cm) node{$\cdot$};
      \draw (120:0.9cm) node{$\cdot$};
      \draw (122:0.9cm) node{$\cdot$};

      \draw (-66:1cm) .. controls (-70:0.8cm) and (-74:0.8cm) .. (-78:1cm);
      \draw (-78:1cm) .. controls (-82:0.8cm) and (-86:0.8cm) .. (-90:1cm);
      \draw (-90:1cm) .. controls (-94:0.8cm) and (-98:0.8cm) .. (-102:1cm);
      \draw (-102:1cm) .. controls (-106:0.8cm) and (-110:0.8cm) .. (-114:1cm);

      \draw (296:0.9cm) node{$\cdot$};
      \draw (298:0.9cm) node{$\cdot$};
      \draw (300:0.9cm) node{$\cdot$};

      \draw (-116:0.9cm) node{$\cdot$};
      \draw (-118:0.9cm) node{$\cdot$};
      \draw (-120:0.9cm) node{$\cdot$};

      \draw[red] (90:1cm) .. controls (130:0.3cm) and (150:0.3cm) .. (170:1cm);
      \draw[red] (90:1cm) .. controls (130:0.35cm) and (150:0.3cm) .. (160:1cm);
      \draw[red] (102:1cm) .. controls (120:0.4cm) and (140:0.4cm) .. (160:1cm);
      \draw[red] (102:1cm) .. controls (120:0.4cm) and (140:0.5cm) .. (152:1cm);

      \draw[red] (162:0.9cm) node{$\cdot$};
      \draw[red] (164:0.9cm) node{$\cdot$};
      \draw[red] (166:0.9cm) node{$\cdot$};

      \draw[red] (153:0.9cm) node{$\cdot$};
      \draw[red] (155:0.9cm) node{$\cdot$};
      \draw[red] (157:0.9cm) node{$\cdot$};

      \draw[red] (149:0.9cm) node{$\cdot$};
      \draw[red] (146.5:0.9cm) node{$\cdot$};
      \draw[red] (144:0.9cm) node{$\cdot$};

      \draw[red] (-90:1cm) .. controls (-130:0.3cm) and (-150:0.3cm) .. (190:1cm);
      \draw[red] (-90:1cm) .. controls (-130:0.35cm) and (-150:0.3cm) .. (200:1cm);
      \draw[red] (-102:1cm) .. controls (-120:0.4cm) and (-140:0.4cm) .. (200:1cm);
      \draw[red] (-102:1cm) .. controls (-120:0.4cm) and (-140:0.5cm) .. (208:1cm);

      \draw[red] (-162:0.9cm) node{$\cdot$};
      \draw[red] (-164:0.9cm) node{$\cdot$};
      \draw[red] (-166:0.9cm) node{$\cdot$};

      \draw[red] (-153:0.9cm) node{$\cdot$};
      \draw[red] (-155:0.9cm) node{$\cdot$};
      \draw[red] (-157:0.9cm) node{$\cdot$};

      \draw[red] (-149:0.9cm) node{$\cdot$};
      \draw[red] (-146.5:0.9cm) node{$\cdot$};
      \draw[red] (-144:0.9cm) node{$\cdot$};

      \draw[red] (90:1cm) .. controls (50:0.3cm) and (30:0.3cm) .. (10:1cm);
      \draw[red] (90:1cm) .. controls (50:0.35cm) and (30:0.3cm) .. (20:1cm);
      \draw[red] (78:1cm) .. controls (60:0.4cm) and (40:0.4cm) .. (20:1cm);
      \draw[red] (78:1cm) .. controls (60:0.4cm) and (40:0.5cm) .. (28:1cm);

      \draw[red] (18:0.9cm) node{$\cdot$};
      \draw[red] (16:0.9cm) node{$\cdot$};
      \draw[red] (14:0.9cm) node{$\cdot$};

      \draw[red] (27:0.9cm) node{$\cdot$};
      \draw[red] (25:0.9cm) node{$\cdot$};
      \draw[red] (23:0.9cm) node{$\cdot$};

      \draw[red] (31:0.9cm) node{$\cdot$};
      \draw[red] (33.5:0.9cm) node{$\cdot$};
      \draw[red] (36:0.9cm) node{$\cdot$};

      \draw[red] (-90:1cm) .. controls (-70:0.3cm) and (-50:0.3cm) .. (-10:1cm);
      \draw[red] (-90:1cm) .. controls (-70:0.35cm) and (-50:0.3cm) .. (-20:1cm);
      \draw[red] (-78:1cm) .. controls (-60:0.4cm) and (-40:0.4cm) .. (-20:1cm);
      \draw[red] (-78:1cm) .. controls (-60:0.4cm) and (-40:0.5cm) .. (-28:1cm);

      \draw[red] (-18:0.9cm) node{$\cdot$};
      \draw[red] (-16:0.9cm) node{$\cdot$};
      \draw[red] (-14:0.9cm) node{$\cdot$};

      \draw[red] (-27:0.9cm) node{$\cdot$};
      \draw[red] (-25:0.9cm) node{$\cdot$};
      \draw[red] (-23:0.9cm) node{$\cdot$};

      \draw[red] (-31:0.9cm) node{$\cdot$};
      \draw[red] (-33.5:0.9cm) node{$\cdot$};
      \draw[red] (-36:0.9cm) node{$\cdot$};

      \draw[red] (10:1cm) .. controls (0.2,0.075) and (-0.2,0.075) .. (170:1cm);
      \draw[red] (-10:1cm) .. controls (0.2,-0.075) and (-0.2,-0.075) .. (-170:1cm);

      \draw[gray,very thin] (0.200,0.425) ellipse (0.01cm and 0.06cm);
      \draw[gray,very thin] (0.197,0.367) -- (0.150,0.270);
      \draw (0.150,0.255) node{${\scriptscriptstyle \ell \;\mathrm{arcs}}$};

      \draw[gray,very thin] (0.200,-0.390) ellipse (0.01cm and 0.06cm);
      \draw[gray,very thin] (0.197,-0.332) -- (0.150,-0.240);
      \draw (0.150,-0.220) node{${\scriptscriptstyle m \;\mathrm{arcs}}$};

      \draw (67:0.835cm) node{$\scriptstyle \fb_{-2}$};
      \draw (80:0.82cm) node{$\scriptstyle \fb_{-1}$};
      \draw (98:0.825cm) node{$\scriptstyle \fb_0$};
      \draw (111:0.835cm) node{$\scriptstyle \fb_1$};

      \draw (-69:0.825cm) node{$\scriptstyle \fv_2$};
      \draw (-81:0.825cm) node{$\scriptstyle \fv_1$};
      \draw (-98:0.815cm) node{$\scriptstyle \fv_0$};
      \draw (-111:0.825cm) node{$\scriptstyle \fv_{-1}$};

      \draw (52:1cm) node[fill=white,rectangle,inner sep=0.07cm] {$\scriptstyle \I$};
      \draw (142:1cm) node[fill=white,rectangle,inner sep=0.07cm] {$\scriptstyle \II$};
      \draw (232.5:1cm) node[fill=white,rectangle,inner sep=0.07cm] {$\scriptstyle \III$};
      \draw (323:1cm) node[fill=white,rectangle,inner sep=0.07cm] {$\scriptstyle \IV$};

    \end{tikzpicture} 
    \caption{Outline of the triangulation $\fT$ of $D_4$ corresponding
      to an $\SL2$-tiling $t$ with no entry equal to $1$.  The arcs in
      $\Theta( t )$ are black.  We add red arcs from the non-saturated
      vertices to define $\fT$.  The total number of red arcs added is
      given by the defect at the relevant vertex.  The vertices
      $b_0^{\I}$ and $v_0^{\III}$ are the only ones with red arcs both
      to intervals $\II$ and $\IV$.  These vertices are chosen by $t(
      b_0,v_0 )$ being the unique minimal entry in $t$.}
\label{fig:ft_in_Case6}
\end{figure}

We will set $b_0 = b$ and $v_0 = v$.  The vertex $b_0^{\I}$ is
non-saturated by Remark \ref{rmk:DwR}.  Let $\fb_0 = \{
b_0^{\I},b_1^{\I} \}$ be either the longest internal arc in $\Theta( t
)$ going anticlockwise from $b_0^{\I}$, or, if there are no such arcs,
the edge going anticlockwise from $b_0^{\I}$.  This makes sense
because $\Theta( t )$ is locally finite by Lemma
\ref{lem:locally_finite}.  It is easy to see that $b_1^{\I}$ is also
non-saturated, while the vertices strictly below $\fb_0$ are
saturated.

We can repeat this to both sides of $b_0$ and thereby get integers
$\cdots < b_{-1} < b_0 < b_1 < \cdots$ such that the non-saturated
vertices in interval $\I$ are precisely $\ldots$, $b_{-1}^{\I}$,
$b_0^{\I}$, $b_1^{\I}$, $\ldots$.

A similar treatment provides integers $\cdots < v_{-1} < v_0 < v_1
< \cdots$ such that the non-saturated vertices in interval $\III$ are
precisely $\ldots$, $v_{-1}^{\III}$, $v_0^{\III}$, $v_1^{\III}$,
$\ldots$.

Note that we already considered some ``longest arcs'' in Notation
\ref{not:Case6}, and that hence,
\[
  a = b_{-1} \;,\; b = b_0 \;,\; c = b_1 \;,\; v = v_0 \;,\; w = v_1.
\]
\end{Description}

\begin{Construction}
[The triangulation $\fT$]
\label{con:Case6}
We add arcs to $\Theta( t )$ as follows to create a triangulation
$\fT$ of $D_4$; see Figure \ref{fig:ft_in_Case6} where the added arcs
are shown in red:

The vertices $\beta^{\II}$, $\gamma^{\II}$, $\varphi^{\IV}$,
$\chi^{\IV}$ will be explained at the end; for the time being,
consider them fixed and add the arcs $\{ \chi^{\IV},\beta^{\II} \}$
and $\{ \gamma^{\II},\varphi^{\IV} \}$.

Recall the numbers $\ell$, $m$, and $r$ from Lemma \ref{lem:DwR}.

From vertex $b_0^{\I}$, add $\ell$ arcs ending at the consecutive
vertices $\chi^{\IV}$, $( \chi+1 )^{\IV}$, $\ldots$, $\psi^{\IV}$.
Also from vertex $b_0^{\I}$, add $\defect_p( b_0 ) - \ell$ arcs ending
at the consecutive vertices $\beta^{\II}$, $( \beta - 1 )^{\II}$,
$\ldots$.  This makes sense because $\ell$ and $\defect_p( b_0 ) -
\ell$ are both positive by Lemma \ref{lem:DwR}(i).

From vertex $b_1^{\I}$, add $\defect_p( b_1 )$ arcs ending at the next
block of consecutive vertices in interval $\II$.  Continue in the same
fashion with vertices $b_2^{\I}$, $b_3^{\I}$, $\ldots$.

From vertex $b_{-1}^{\I}$, add $\defect_p( b_{-1} )$ arcs ending at the
next block of consecutive vertices in interval $\IV$.  Continue in the
same fashion with vertices $b_{-2}^{\I}$, $b_{-3}^{\I}$, $\ldots$.

Add arcs by a similar recipe from the vertices $\cdots$,
$v_{-1}^{\III}$, $v_0^{\III}$, $v_1^{\III}$, $\cdots$ using $m$,
$\defect_q$, $\gamma^{\II}$, $\varphi^{\IV}$, instead of $\ell$,
$\defect_p$, $\beta^{\II}$, $\chi^{\IV}$.

Now consider the finite polygon $R$ between the arcs $\{
\chi^{\IV},\beta^{\II} \}$ and $\{ \gamma^{\II},\varphi^{\IV} \}$.
Viewed in $R$, each of $\chi^{\IV}$, $\beta^{\II}$ and $\gamma^{\II}$,
$\varphi^{\IV}$ is a pair of adjacent vertices.  By Lemma
\ref{lem:DwR}(iii) we may apply Lemma \ref{lem:CC2}, and thus, if we
space each pair $\beta^{\II}$, $\gamma^{\II}$ and $\varphi^{\IV}$,
$\chi^{\IV}$ suitably, then there is a triangulation $\fS$ of $R$
which satisfies
\begin{align}
\nonumber
  r & = \fS( \chi^{\IV},\gamma^{\II} ) + \fS( \chi^{\IV},\varphi^{\IV} ), \\
\label{equ:rmb}
  t( b,v ) & = \fS( \chi^{\IV},\gamma^{\II} ) + \fS( \chi^{\IV},\varphi^{\IV} )
  + \fS( \beta^{\II},\gamma^{\II} ) + \fS( \beta^{\II},\varphi^{\IV} ).
\end{align}
The final step in constructing $\fT$ is to add to it the arcs in
$\fS$.
\end{Construction}

\begin{Proposition}
\label{pro:Case6}
The $\fT$ of Construction \ref{con:Case6} is a good triangulation of
$D_4$.
\end{Proposition}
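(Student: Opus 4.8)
The plan is to imitate the proofs of Theorems \ref{thm:Case2} and \ref{thm:Case4}: first I would show that $\fT$ is a triangulation of $D_4$, and then that it blocks all four accumulation points, which by Definition \ref{def:block} is exactly what ``good'' means.

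For the first part I would start from the observations recorded in the Description preceding Construction \ref{con:Case6}, namely that $\Theta( t )$ is a locally finite partial triangulation (Lemmas \ref{lem:non-crossing}, \ref{lem:locally_finite}) with no connecting arcs, whose non-saturated vertices in intervals $\I$ and $\III$ are precisely the bi-infinite sequences $\ldots, b_{-1}^{\I}, b_0^{\I}, b_1^{\I}, \ldots$ and $\ldots, v_{-1}^{\III}, v_0^{\III}, v_1^{\III}, \ldots$; in particular $b_m^{\I}$ converges to $\xi_{\I}$ as $m \to +\infty$ and to $\xi_{\IV}$ as $m \to -\infty$, and the $v_m^{\III}$ converge to $\xi_{\II}$ and $\xi_{\III}$. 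The arcs $\fb_m$, $\fv_m$, the two arcs $\{ \chi^{\IV},\beta^{\II} \}$ and $\{ \gamma^{\II},\varphi^{\IV} \}$, and the boundary edges cut $D_4$ into three kinds of region: the finite polygons below the $\fb_m$ and below the $\fv_m$; the finite polygon $R$ between $\{ \chi^{\IV},\beta^{\II} \}$ and $\{ \gamma^{\II},\varphi^{\IV} \}$; and the ``fan'' regions bounded by two consecutive non-saturated vertices of $\I$ or $\III$ together with a block of consecutive vertices of $\II$ or $\IV$. On the first kind, $\Theta( t )$ and hence $\fT$ restricts to a triangulation by Lemmas \ref{lem:finite_triangulation1} and \ref{lem:finite_triangulation1b}; on $R$, the arcs $\fS$ supplied by Lemma \ref{lem:CC2} form a triangulation by construction (see equation \eqref{equ:rmb}); and the red fans of Construction \ref{con:Case6} visibly triangulate the fan regions. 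I would then note that the red arcs are pairwise non-crossing and do not cross $\Theta( t )$, exactly as in Cases 2--5: between two consecutive non-saturated vertices the only intervening vertices are saturated and lie strictly below some $\fb_m$ or $\fv_m$, so the red fans target essentially disjoint blocks of $\II$ or $\IV$. Piecing the regions together shows $\fT$ is a triangulation of $D_4$.

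For the second part I would check that each $\xi_{\J}$ is blocked. Consider $\xi_{\I}$, between $\I$ and $\II$. By Lemma \ref{lem:defect}(ii) we have $\defect_p( b_m ) > 0$ for every $m$, so for each $m \geqslant 1$ at least one red arc leaves $b_m^{\I}$ for interval $\II$, landing at vertices that march towards $\xi_{\I}$; since $b_m^{\I} \to \xi_{\I}$ as well, these arcs realise the configuration of Figure \ref{fig:block}, so $\xi_{\I}$ is blocked. Symmetrically $\xi_{\IV}$ is blocked by the red arcs from $b_{-1}^{\I}, b_{-2}^{\I}, \ldots$ into $\IV$, and $\xi_{\II}$, $\xi_{\III}$ are blocked by the red arcs emanating from the $v_m^{\III}$, using $\defect_q( v_m ) > 0$ from Lemma \ref{lem:defect}(iv). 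Hence $\fT$ is good.

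The main obstacle I anticipate is the bookkeeping at the two distinguished vertices $b_0^{\I}$ and $v_0^{\III}$, which are the only ones emitting red arcs to both $\II$ and $\IV$. There one must know that the relevant counts $\ell$, $\defect_p( b_0 ) - \ell$, $m$, $\defect_q( v_0 ) - m$ are all strictly positive, so that the fans are non-empty and fit together without gaps or overlaps; this is precisely Lemma \ref{lem:DwR}(i),(ii). One must also check that, after spacing the vertices $\beta^{\II},\gamma^{\II},\varphi^{\IV},\chi^{\IV}$ correctly, the two arcs $\{ \chi^{\IV},\beta^{\II} \}$ and $\{ \gamma^{\II},\varphi^{\IV} \}$ really bound a polygon $R$ to which Lemma \ref{lem:CC2} applies, which is guaranteed by the coprimality/inverse relation $rs \equiv 1 \bmod t( b,v )$ of Lemma \ref{lem:DwR}(iii). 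Everything else is the same region-by-region verification already carried out in Cases 2 through 5.
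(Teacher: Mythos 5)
Your proposal is correct and takes essentially the same route as the paper's own (much terser) proof: cut $D_4$ into the finite polygons below the arcs $\fb_m$ and $\fv_m$, the polygon $R$, and the fan regions, note that $\Theta(t)$ respectively the added arcs triangulate each piece, and observe that the added arcs realise the blocking configuration at all four accumulation points. The extra points you flag (positivity of $\ell$, $\defect_p(b_0)-\ell$, $m$, $\defect_q(v_0)-m$ and the applicability of Lemma \ref{lem:CC2} via Lemma \ref{lem:DwR}(iii)) are exactly what the paper disposes of inside Construction \ref{con:Case6} and with the word ``clearly'' in its proof of Proposition \ref{pro:Case6}.
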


\begin{proof}
Consider the finite polygons below the arcs $\fb_j$ and $\fv_j$ shown
in Figure \ref{fig:ft_in_Case6}.  In each such polygon, $\Theta( t )$
and hence $\fT$ restricts to a triangulation by Lemmas
\ref{lem:finite_triangulation1} through
\ref{lem:finite_triangulation2}.  The arcs added in Construction
\ref{con:Case6} (red in Figure \ref{fig:ft_in_Case6}) clearly complete
$\fT$ to a triangulation of $D_4$.  The added arcs also block the
accumulation points of $D_4$ so $\fT$ is a good triangulation.
\end{proof}

The following lemma collects several consequences of the Ptolemy
relation in Lemma \ref{lem:CC}(v) applied to $\fT$.

\begin{Lemma}
\label{lem:Case6_eqs}
The numbers $\ell$ and $m$ from Lemma \ref{lem:DwR} and the triangulation $\fT$ from Construction \ref{con:Case6} satisfy the following.
\begin{enumerate}
\setlength\itemsep{4pt}

  \item  

    \begin{enumerate}
    \setlength\itemsep{4pt}

      \item  $\fT( a^{\I},\beta^{\II} ) = \ell+1$.

      \item  $\fT( w^{\III},\gamma^{\II} ) = m + 1$.

    \end{enumerate}

  \item  $\fT( \chi^{\IV},v^{\III} )\fT( \varphi^{\IV},b^{\I} )
          \equiv 1 \bmod \fT( b^{\I},v^{\III} )$.

  \item  

    \begin{enumerate}
    \setlength\itemsep{4pt}

      \item  $\fT( \chi^{\IV},\gamma^{\II} ) + \fT( \chi^{\IV},\varphi^{\IV} )
              = \fT( \chi^{\IV},v^{\III} )$.

      \item  $\fT( \varphi^{\IV},\beta^{\II} ) + \fT( \varphi^{\IV},\chi^{\IV} )
              = \fT( \varphi^{\IV},b^{\I} )$.

      \item  $\fT( \beta^{\II},\gamma^{\II} ) + \fT( \beta^{\II},\varphi^{\IV} )
              = \fT( \beta^{\II},v^{\III} )$.

    \end{enumerate}

  \item

    \begin{enumerate}
    \setlength\itemsep{4pt}

      \item  $\fT( \beta^{\II},v^{\III} ) + \fT( \chi^{\IV},v^{\III} )
               = \fT( b^{\I},v^{\III} )$.

      \item  $\fT( \gamma^{\II},b^{\I} ) + \fT( \varphi^{\IV},b^{\I} )
               = \fT( b^{\I},v^{\III} )$.

    \end{enumerate}

  \item  $\fT( b^{\I},v^{\III} ) = \fT( \chi^{\IV},\gamma^{\II} ) + \fT( \chi^{\IV},\varphi^{\IV} )
  + \fT( \beta^{\II},\gamma^{\II} ) + \fT( \beta^{\II},\varphi^{\IV} )$.

  \item

    \begin{enumerate}
    \setlength\itemsep{4pt}

      \item  $\big( \fT( a^{\I},\beta^{\II} ) - 1 \big)\fT( b^{\I},v^{\III} ) 
              + \fT( \chi^{\IV},v^{\III} )
              = \fT( a^{\I},v^{\III} )$.

      \item  $\big( \fT( w^{\III},\gamma^{\II} ) - 1 \big)\fT( b^{\I},v^{\III} ) 
              + \fT( \varphi^{\IV},b^{\I} )
              = \fT( b^{\I},w^{\III} )$.

    \end{enumerate}

\end{enumerate}
\end{Lemma}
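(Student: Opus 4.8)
The plan is to split the six items into two kinds. Items (i)(a), (i)(b) count the arcs of $\fT$ at a single vertex, via Lemma~\ref{lem:CC}(vi); items (ii)--(iv) are each read off from one well-chosen instance of the Ptolemy relation Lemma~\ref{lem:CC}(v) for $\fT$, in which three of the six factors collapse to $1$ because the corresponding vertex pairs are edges of $D_4$ or arcs of $\fT$ (Lemma~\ref{lem:CC}(iii)); and (v) then follows formally from (iii) and (iv). Throughout I would use Remark~\ref{rmk:CC} to evaluate $\fT$ on any sufficiently large compatible finite subpolygon, and I would read off from Construction~\ref{con:Case6} --- using the positivity of $\ell$, $\defect_p(b_0)-\ell$, $m$, $\defect_q(v_0)-m$ from Lemma~\ref{lem:DwR}(i),(ii) --- that $\{\chi^{\IV},\beta^{\II}\}$, $\{\gamma^{\II},\varphi^{\IV}\}$, $\{b^{\I},\chi^{\IV}\}$, $\{b^{\I},\beta^{\II}\}$, $\{v^{\III},\varphi^{\IV}\}$, $\{v^{\III},\gamma^{\II}\}$, $\{a^{\I},\psi^{\IV}\}$, $\fb_{-1}$, $\fv_1$ and their mirror images are edges of $D_4$ or arcs of $\fT$; here $\{a^{\I},\psi^{\IV}\}\in\fT$ because consecutive $\defect$-blocks in interval $\IV$ share their outermost vertex by Construction~\ref{con:Case6}.

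For (i)(a) I would apply Lemma~\ref{lem:good} to obtain a finite vertex set $M$ compatible with $\fT$; concretely $M=\{a^{\I},b^{\I},\beta^{\II},\chi^{\IV},(\chi+1)^{\IV},\dots,\psi^{\IV}\}$ works, because its elements occur along $C$ in precisely the cyclic order $a^{\I},b^{\I},\beta^{\II},\chi^{\IV},(\chi+1)^{\IV},\dots,\psi^{\IV}$ and every consecutive pair is an edge of $D_4$ or an arc of $\fT$ --- the non-obvious pairs being $\fb_{-1}=\{a^{\I},b^{\I}\}$, the arcs $\{b^{\I},\beta^{\II}\}$ and $\{\beta^{\II},\chi^{\IV}\}$ added in Construction~\ref{con:Case6}, and $\{\psi^{\IV},a^{\I}\}$. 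In the polygon $P$ spanned by $M$ the vertices $a^{\I},b^{\I},\beta^{\II}$ are consecutive, and the diagonals of $\fT_P$ ending at $b^{\I}$ are exactly the $\ell$ arcs $\{b^{\I},\chi^{\IV}\},\dots,\{b^{\I},\psi^{\IV}\}$: every other arc of $\fT$ at $b^{\I}$ has its second endpoint outside $M$, and $\{b^{\I},\beta^{\II}\}$, $\fb_{-1}$ are edges of $P$, not diagonals. Lemma~\ref{lem:CC}(vi) then yields $\fT(a^{\I},\beta^{\II})=\fT_P(a^{\I},\beta^{\II})=1+\ell$. Item (i)(b) is the mirror-symmetric argument with $w^{\III},v^{\III},\gamma^{\II}$ and the $m$ arcs from $v^{\III}$ to interval $\IV$ in place of $a^{\I},b^{\I},\beta^{\II}$ and the $\ell$ arcs from $b^{\I}$ to interval $\IV$.

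For (ii)--(iv), each equation is one instance of Lemma~\ref{lem:CC}(v). For (ii) I would cross the chords $\{v^{\III},\chi^{\IV}\}$ and $\{\varphi^{\IV},b^{\I}\}$ (these cross, since along $C$ one has $v^{\III}<\varphi^{\IV}<\chi^{\IV}<b^{\I}$); as $\{v^{\III},\varphi^{\IV}\},\{\chi^{\IV},b^{\I}\}\in\fT$, the Ptolemy identity reduces, after the symmetry $\fT(v^{\III},b^{\I})=\fT(b^{\I},v^{\III})$ of Lemma~\ref{lem:CC}(iv), to $\fT(\chi^{\IV},v^{\III})\fT(\varphi^{\IV},b^{\I})=1+\fT(b^{\I},v^{\III})\fT(\chi^{\IV},\varphi^{\IV})$, which is (ii). For (iii)(a), (iii)(c) and (iv)(b) I would cross the chord named on the left of the equation with the arc $\{\gamma^{\II},\varphi^{\IV}\}\in\fT$; for (iii)(b) and (iv)(a), with the arc $\{\chi^{\IV},\beta^{\II}\}\in\fT$. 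In each case exactly three of the six factors in Lemma~\ref{lem:CC}(v) are $\fT$-values on arcs of $\fT$, hence equal to $1$ by Lemma~\ref{lem:CC}(iii), and the surviving relation is the asserted one (up to $\fT(\mu,\nu)=\fT(\nu,\mu)$). Confirming that each chosen pair of chords actually crosses is bookkeeping: it amounts to listing the named vertices in cyclic order along $C$, which is determined by Definitions~\ref{def:C4}, \ref{def:C2C3} and Construction~\ref{con:Case6} (in particular $\varphi<\chi$ and $\beta<\gamma$, these being forced by $R$ being the polygon between $\{\chi^{\IV},\beta^{\II}\}$ and $\{\gamma^{\II},\varphi^{\IV}\}$).

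Finally (v) is immediate: adding (iii)(a) and (iii)(c) gives $\fT(\chi^{\IV},v^{\III})+\fT(\beta^{\II},v^{\III})=\fT(\chi^{\IV},\gamma^{\II})+\fT(\chi^{\IV},\varphi^{\IV})+\fT(\beta^{\II},\gamma^{\II})+\fT(\beta^{\II},\varphi^{\IV})$, and the left-hand side equals $\fT(b^{\I},v^{\III})$ by (iv)(a) --- note this matches Equation~\eqref{equ:rmb} with $\fS$ replaced by $\fT$, as it must since $\fS$ is the restriction of $\fT$ to $R$ (Remark~\ref{rmk:CC}). I expect the only genuinely fiddly step to be (i): one has to pin down $M$ precisely enough that $a^{\I},b^{\I},\beta^{\II}$ come out consecutive in $P$ while exactly the $\ell$ arcs $\{b^{\I},\chi^{\IV}\},\dots,\{b^{\I},\psi^{\IV}\}$ survive in $\fT_P$, which is where one must use that the $\defect$-blocks of Construction~\ref{con:Case6} overlap in one vertex (so that $\{\psi^{\IV},a^{\I}\}$ really is an arc of $\fT$). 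The Ptolemy items, and hence (v), are then routine.
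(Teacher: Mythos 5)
Your treatment of items (i) through (v) follows essentially the same route as the paper: (i) by restricting to the compatible polygon on $\{a^{\I},b^{\I},\beta^{\II},\chi^{\IV},\ldots,\psi^{\IV}\}$ and applying Lemma \ref{lem:CC}(vi), items (ii)--(iv) each from one Ptolemy relation of Lemma \ref{lem:CC}(v) in which three factors are $1$ by Lemma \ref{lem:CC}(iii), and (v) by adding (iii)(a), (iii)(c) and (iv)(a). However, you never prove item (vi), and this is a genuine gap rather than an omitted routine case: (vi)(a) reads $\fT(a^{\I},\beta^{\II})\fT(b^{\I},v^{\III}) = \fT(a^{\I},v^{\III}) + \fT(b^{\I},v^{\III}) - \fT(\chi^{\IV},v^{\III})$, a four-term identity which cannot arise from a single Ptolemy relation with three unit factors, so your announced recipe does not cover it. The paper proves it by crossing $\{a^{\I},\beta^{\II}\}$ with $\{b^{\I},v^{\III}\}$, where only the two factors $\fT(a^{\I},b^{\I})$ and $\fT(\beta^{\II},b^{\I})$ collapse to $1$, and then substituting (iv)(a) to eliminate $\fT(\beta^{\II},v^{\III})$; part (vi) is exactly what the proof of Theorem \ref{thm:Case6} needs, so it cannot be dropped.

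A second, smaller problem is your claim that $\varphi<\chi$ and $\beta<\gamma$ are ``forced'' by $R$ being the polygon between $\{\chi^{\IV},\beta^{\II}\}$ and $\{\gamma^{\II},\varphi^{\IV}\}$. That is false: Lemma \ref{lem:CC2} is proved by building up from the $2$-gon, and the degenerate case $\chi=\varphi$, $\beta=\gamma$ (the two bounding arcs coincide) really occurs --- for instance whenever the minimal entry is $t(b,v)=2$, since then $r=1$ forces $X=E$ in Lemma \ref{lem:a_matrix_exists}. In that case the chords you want to cross in (ii), (iii)(a) and (iii)(b) share an endpoint and do not cross, so the Ptolemy argument does not apply; the identities still hold, but trivially (e.g.\ $\fT(\chi^{\IV},v^{\III})=\fT(\varphi^{\IV},b^{\I})=1$ and $\fT(\chi^{\IV},\varphi^{\IV})=0$), and the paper dispatches this case separately at the start of its proofs of (ii) and (iii). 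You need to add that case distinction rather than assert it away.
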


\begin{proof}
(i)  Figure \ref{fig:ft_in_Case6} shows that the vertex set $\{ a^{\I},
b^{\I}, \beta^{\II}, \chi^{\IV}, \ldots, \psi^{\IV} \}$ is compatible
with $\fT$ in the sense of Definition \ref{def:restriction}.  These
vertices span a finite polygon $P$ and $\fT$ restricts to a
triangulation $\fT_P$ of $P$.  In $P$, the vertices $a^{\I}$,
$b^{\I}$, $\beta^{\II}$ are consecutive, so Lemma \ref{lem:CC}(vi)
gives
\[
  \fT_P( a^{\I},\beta^{\II} ) = 1 + (\mbox{the number of arcs in $\fT_P$
    ending at $b^{\I}$}) = 1 + \ell.
\]
By Remark \ref{rmk:CC} this implies part (i)(a), and part (i)(b)
follows by symmetry.

(ii)  If $\chi = \varphi$ then $\{ \chi^{\IV},v^{\III} \} = \{
\varphi^{\IV},v^{\III} \}$ and $\{ \varphi^{\IV},b^{\I} \} = \{
\chi^{\IV},b^{\I} \}$ are in $\fT$, so $\fT( \chi^{\IV},v^{\III} ) =
\fT( \varphi^{\IV},b^{\I} ) = 1$ and part (ii) holds even without the
congruence.

If $\chi \neq \varphi$ then the arcs $\{ \chi^{\IV},v^{\III} \}$ and
$\{ \varphi^{\IV},b^{\I} \}$ cross so the Ptolemy relation gives
\begin{align*}
  \fT( \chi^{\IV},v^{\III} )\fT( \varphi^{\IV},b^{\I} )
  & = \fT( \chi^{\IV},\varphi^{\IV} )\fT( b^{\I},v^{\III} ) 
    + \fT( \chi^{\IV},b^{\I} )\fT( \varphi^{\IV},v^{\III} ) \\
  & \equiv \fT( \chi^{\IV},b^{\I} )\fT( \varphi^{\IV},v^{\III} )
    \bmod \fT( b^{\I},v^{\III} ).
\end{align*}
This proves part (ii) because $\fT( \chi^{\IV},b^{\I} ) = \fT(
\varphi^{\IV},v^{\III} ) = 1$ since $\{ \chi^{\IV},b^{\I} \}, \{
\varphi^{\IV},v^{\III} \} \in \fT$.

(iii)  If $\chi = \varphi$ then part (iii)(a) claims
\[
  \fT( \varphi^{\IV},\gamma^{\II} ) + \fT( \chi^{\IV},\chi^{\IV} )
  = \fT( \varphi^{\IV},v^{\III} ).
\]
This equation just reads $1 + 0 = 1$ because $\{
\varphi^{\IV},\gamma^{\II} \}, \{ \varphi^{\IV},v^{\III} \} \in \fT$.

If $\chi \neq \varphi$ then the arcs $\{ \chi^{\IV},v^{\III} \}$ and
$\{ \gamma^{\II},\varphi^{\IV} \}$ cross so the Ptolemy relation gives
\[
  \fT( \chi^{\IV},v^{\III} )\fT( \gamma^{\II},\varphi^{\IV} )
  = \fT( \chi^{\IV},\gamma^{\II} )\fT( v^{\III},\varphi^{\IV} )
    + \fT( \chi^{\IV},\varphi^{\IV} )\fT( v^{\III},\gamma^{\II} ).
\]
This proves (iii)(a) because $\fT( \gamma^{\II},\varphi^{\IV} ) = \fT(
v^{\III},\varphi^{\IV} ) = \fT( v^{\III},\gamma^{\II} ) = 1$ since $\{
\gamma^{\II},\varphi^{\IV} \}$, $\{ v^{\III},\varphi^{\IV} \}$, $\{
v^{\III},\gamma^{\II} \}$ $\in \fT$.  Parts (iii)(b) and (iii)(c)
follow by symmetry.

(iv)  The arcs $\{ \beta^{\II},\chi^{\IV} \}$ and $\{ b^{\I},v^{\III}
\}$ cross so the Ptolemy relation gives
\[
  \fT( \beta^{\II},\chi^{\IV} )\fT( b^{\I},v^{\III} )
  = \fT( \beta^{\II},b^{\I} )\fT( \chi^{\IV},v^{\III} )
    + \fT( \beta^{\II},v^{\III} )\fT( \chi^{\IV},b^{\I} ).
\]
This proves (iv)(a) because $\fT( \beta^{\II},\chi^{\IV} ) = \fT(
\beta^{\II},b^{\I} ) = \fT( \chi^{\IV},b^{\I} ) = 1$ since $\{
\beta^{\II},\chi^{\IV} \}$, $\{ \beta^{\II},b^{\I} \}$, $\{
\chi^{\IV},b^{\I} \}$ $\in \fT$.  Part (iv)(b) is follows by symmetry.

(v)  Combine parts (iii)(a), (iii)(c), and (iv)(a).

(vi)  The arcs $\{ a^{\I},\beta^{\II} \}$ and $\{ b^{\I},v^{\III} \}$
cross so the Ptolemy relation gives
\[
  \fT( a^{\I},\beta^{\II} )\fT( b^{\I},v^{\III} )
  = \fT( a^{\I},b^{\I} )\fT( \beta^{\II},v^{\III} )
    + \fT( a^{\I},v^{\III} )\fT( \beta^{\II},b^{\I} ).
\]
We have $\fT( a^{\I},b^{\I} ) = \fT( \beta^{\II},b^{\I} ) = 1$ since
$\{ a^{\I},b^{\I} \}, \{ \beta^{\II},b^{\I} \} \in \fT$, so the
equation reads
\[
  \fT( a^{\I},\beta^{\II} )\fT( b^{\I},v^{\III} )
  = \fT( \beta^{\II},v^{\III} ) + \fT( a^{\I},v^{\III} ).
\]
Combining with part (iv)(a) gives
\[
  \fT( a^{\I},\beta^{\II} )\fT( b^{\I},v^{\III} )
  = \fT( b^{\I},v^{\III} ) - \fT( \chi^{\IV},v^{\III} ) + \fT( a^{\I},v^{\III} )
\]
which can be reorganised into (vi)(a).  Part (vi)(b) follows by
symmetry. 
\end{proof}

\begin{Theorem}
\label{thm:Case6}
Let $t$ be an $\SL2$-tiling with no entry equal to $1$.

Then there is a good triangulation $\fT$ of $D_4$ such that $\Phi( \fT
) = t$.
\end{Theorem}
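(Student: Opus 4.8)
The plan is to let $\fT$ be precisely the triangulation produced in Construction \ref{con:Case6}. By Proposition \ref{pro:Case6} this is a good triangulation of $D_4$, so the only remaining task is to prove $\Phi(\fT) = t$, and this will be done via Lemma \ref{lem:agree}, verifying its condition (iii) with $e = a$, $f = b$, $g = v$, $h = w$ (which satisfy $a<b$, $v<w$ by the choices of Notation \ref{not:Case6}), together with conditions (i) and (ii).

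The first step is the observation that inside the finite polygon $R$ between the arcs $\{\chi^{\IV},\beta^{\II}\}$ and $\{\gamma^{\II},\varphi^{\IV}\}$, the restriction $\fT_R$ equals the triangulation $\fS$ of Construction \ref{con:Case6}: every arc of $\fT$ strictly inside $R$ is an arc of $\fS$, since an arc from $b_0^{\I}$ or $v_0^{\III}$ with only one endpoint among the vertices of $R$ would cross one of the two bounding arcs. Hence, by Remark \ref{rmk:CC}, Conway--Coxeter counting on $\fS$ agrees with $\fT$ on the vertices $\chi^{\IV}$, $\beta^{\II}$, $\gamma^{\II}$, $\varphi^{\IV}$, so that Equation \eqref{equ:rmb} holds with $\fS$ replaced throughout by $\fT$. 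Its second line together with Lemma \ref{lem:Case6_eqs}(v) gives $t(b,v) = \fT(b^{\I},v^{\III})$, and its first line together with Lemma \ref{lem:Case6_eqs}(iii)(a) gives $\fT(\chi^{\IV},v^{\III}) = r$, where $r$ is the remainder from Lemma \ref{lem:DwR}. Substituting these and Lemma \ref{lem:Case6_eqs}(i)(a) into Lemma \ref{lem:Case6_eqs}(vi)(a) yields $\fT(a^{\I},v^{\III}) = \ell\,t(b,v) + r = t(a,v)$ by the division with remainder defining $\ell$ and $r$. For the third equality I would first prove $\fT(\varphi^{\IV},b^{\I}) = s$: Lemma \ref{lem:Case6_eqs}(ii) combined with $\fT(\chi^{\IV},v^{\III}) = r$ gives $r\,\fT(\varphi^{\IV},b^{\I}) \equiv 1 \bmod t(b,v)$, and since $rs \equiv 1 \bmod t(b,v)$ by Lemma \ref{lem:DwR}(iii) the residue $r$ is a unit modulo $t(b,v)$, whence $\fT(\varphi^{\IV},b^{\I}) \equiv s \bmod t(b,v)$; on the other hand Lemma \ref{lem:Case6_eqs}(iv)(b) and positivity of Conway--Coxeter numbers force $0 < \fT(\varphi^{\IV},b^{\I}) < t(b,v)$, and $0 < s < t(b,v)$ by Lemma \ref{lem:DwR}(iii), so the two coincide. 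Then Lemma \ref{lem:Case6_eqs}(vi)(b) and (i)(b) give $\fT(b^{\I},w^{\III}) = m\,t(b,v) + s = t(b,w)$. The three equalities $t(a,v) = \fT(a^{\I},v^{\III})$, $t(b,v) = \fT(b^{\I},v^{\III})$, $t(b,w) = \fT(b^{\I},w^{\III})$ are exactly condition (iii) of Lemma \ref{lem:agree}.

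Conditions (i) and (ii) of Lemma \ref{lem:agree}, namely $p(b-1,b+1) = \fT((b-1)^{\I},(b+1)^{\I})$ for each $b$ and the symmetric statement for $q$, I would then verify by the defect argument used in the last paragraph of the proof of Theorem \ref{thm:Case2}: for a saturated vertex $b^{\I}$ use Lemma \ref{lem:saturated_vertices}, and for a non-saturated one (necessarily $b = b_m$ for some $m$) note that Construction \ref{con:Case6} adds exactly $\defect_p(b)$ arcs at $b^{\I}$ on top of those of $\Theta(t)$, so by Definition \ref{def:defects} the number of arcs of $\fT$ ending at $b^{\I}$ is $p(b-1,b+1)-1$, and Lemma \ref{lem:CC}(vi) finishes it; likewise for $q$ and $v^{\III}$. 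With conditions (i), (ii), (iii) in hand, Lemma \ref{lem:agree} gives $\Phi(\fT) = t$, which completes the proof of Theorem \ref{thm:Case6} and hence of Theorem A.

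I expect the genuinely delicate step to be condition (iii). Because $t$ has no entry equal to $1$ there are no connecting arcs in $\Theta(t)$, so unlike in the earlier cases no equality $t(b,v) = \fT(b^{\I},v^{\III})$ comes for free; instead one must manufacture a single anchor equality at the unique minimal entry by solving the congruence $rs\equiv 1$ inside a triangulated polygon (Lemma \ref{lem:CC2}), and then propagate it by one step along a row and a column through the careful bookkeeping of Lemma \ref{lem:Case6_eqs}. Within that, the one non-formal point is upgrading the congruence $\fT(\varphi^{\IV},b^{\I}) \equiv s \bmod t(b,v)$ to the equality $\fT(\varphi^{\IV},b^{\I}) = s$, which relies on the size bound furnished by Lemma \ref{lem:Case6_eqs}(iv)(b).
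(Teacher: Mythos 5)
Your proposal is correct and follows essentially the same route as the paper: take $\fT$ from Construction \ref{con:Case6}, use Proposition \ref{pro:Case6} for goodness, verify Lemma \ref{lem:agree}(iii) with $e=a$, $f=b$, $g=v$, $h=w$ via Equations \eqref{equ:rmb}, Lemma \ref{lem:DwR}, and Lemma \ref{lem:Case6_eqs} (including the upgrade of the congruence $\fT(\varphi^{\IV},b^{\I})\equiv s$ to an equality by the bound $0<\fT(\varphi^{\IV},b^{\I})<\fT(b^{\I},v^{\III})$), and then verify conditions (i) and (ii) by the defect argument from Theorem \ref{thm:Case2}. The only cosmetic difference is that you obtain the size bound from Lemma \ref{lem:Case6_eqs}(iv)(b) where the paper combines parts (iii)(a), (iii)(b), and (v); both are valid.
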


\begin{proof}
Let $\fT$ be as in Construction \ref{con:Case6}, see Figure
\ref{fig:ft_in_Case6}.  It was proved in Proposition \ref{pro:Case6}
that $\fT$ is a good triangulation of $D_4$.  To show $\Phi( \fT ) =
t$ we use Lemma \ref{lem:agree} in which we first verify condition
(iii).

By construction, in the finite polygon $R$ between the arcs $\{
\chi^{\IV},\beta^{\II} \}$ and $\{ \gamma^{\II},\varphi^{\IV} \}$, the
triangulation $\fT$ agrees with the triangulation $\fS$ featured in
Equations \eqref{equ:rmb} which can hence be rewritten with $\fT$
instead of $\fS$.  Combining with Lemma \ref{lem:Case6_eqs}(iii)(a)
gives
\begin{align}
\label{equ:rm2a}
  r & = \fT( \chi^{\IV},v^{\III} ), \\
\label{equ:rm2b}
  t( b,v ) & = \fT( \chi^{\IV},\gamma^{\II} ) + \fT( \chi^{\IV},\varphi^{\IV} )
  + \fT( \beta^{\II},\gamma^{\II} ) + \fT( \beta^{\II},\varphi^{\IV} ).
\end{align}

Combining Equation \eqref{equ:rm2b} with Lemma \ref{lem:Case6_eqs}(v)
shows
\begin{equation}
\label{equ:last1}
  t( b,v ) = \fT( b^{\I},v^{\III} ).
\end{equation}
Combining this with Lemma \ref{lem:DwR}, Lemma
\ref{lem:Case6_eqs}(i)(a), and Equation \eqref{equ:rm2a} shows
\[
  t( a,v )
  = \ell t( b,v ) + r
  = \big( \fT( a^{\I},\beta^{\II} ) - 1 \big)\fT( b^{\I},v^{\III} )
    + \fT( \chi^{\IV},v^{\III} ) 
  = (*)
\]
and Lemma \ref{lem:Case6_eqs}(vi)(a) gives
\[
  (*) = \fT( a^{\I},v^{\III} ).
\]

Now, on the one hand, Lemma \ref{lem:DwR}(iii) says that $0 < r,s < t(
b,v )$ and that $r$ and $s$ are inverses modulo $t( b,v )$.  On the
other hand, Lemma \ref{lem:Case6_eqs}, (iii)(a), (iii)(b), and (v),
imply that $0 < \fT( \chi^{\IV},v^{\III} ), \fT( \varphi^{\IV},b^{\I}
) < \fT( b^{\I},v^{\III} )$ and Lemma \ref{lem:Case6_eqs}(ii) says
that $\fT( \chi^{\IV},v^{\III} )$ and $\fT( \varphi^{\IV},b^{\I} )$
are inverses modulo $\fT( b^{\I},v^{\III} )$.  Combining with
Equations \eqref{equ:rm2a} and \eqref{equ:last1} shows $s = \fT(
\varphi^{\IV},b^{\I} )$.  We can now proceed as above, combining this
with Lemma \ref{lem:DwR} and Lemma \ref{lem:Case6_eqs}(i)(b) to get
\[
  t( b,w ) = m t( b,v ) + s
  = \big( \fT( w^{\III},\gamma^{\II} ) - 1 \big)\fT( b^{\I},v^{\III} )
    + \fT( \varphi^{\IV},b^{\I} )
  = (**),
\]
and Lemma \ref{lem:Case6_eqs}(vi)(b) says
\[
  (**) = \fT( b^{\I},w^{\III} ).
\]

Combining the last five displayed equations verifies Lemma
\ref{lem:agree}, condition (iii), with $e = a$, $f = b$, $g = v$, $h =
w$.

Finally, Lemma \ref{lem:agree}, conditions (i) and (ii) are verified
by the same method as in the second half of the proof of Theorem
\ref{thm:Case2}.
\end{proof}

\medskip
\noindent
{\bf Acknowledgement.}
This work was carried out while Peter J\o rgensen was visiting the
Leibniz Universit\"{a}t Hannover.  He thanks Thorsten Holm and the
Institut f\"{u}r Algebra, Zahlentheorie und Diskrete Mathematik for
their hospitality.  He also gratefully acknowledges financial support
from Thorsten Holm's grant HO 1880/5-1, which is part of the research
priority programme SPP 1388 {\em Darstellungstheorie} of the Deutsche
Forschungsgemeinschaft (DFG).

\end{document}